\documentclass[10pt]{amsart}
\usepackage{geometry} 
\geometry{a4paper} 
\setlength{\textheight} {23.cm}

\newtheorem{theorem}{Theorem}[section]

\newtheorem{lemma}[theorem]{Lemma} 
\newtheorem{proposition}[theorem]{Proposition}
\newtheorem{remark}[theorem]{Remark}
\makeatletter
\numberwithin{equation}{section}
\makeatother

\usepackage{amsmath,hhline,epsfig,latexsym}
\usepackage{amssymb}
\usepackage{hyperref,color}

\newcommand{\eps}{\varepsilon}

\def\A{\mathcal A}

\def\C{\mathcal C}

\def\A{\mathcal A}

\title[Space-time least-squares methods for Navier-Stokes]{A fully space-time least-squares method for the unsteady Navier-Stokes system }

\date{11-09-2019}
\author{J\'er\^ome Lemoine}
\thanks{
Laboratoire de Math\'{e}matiques Blaise Pascal, Universit\'{e} Clermont Auvergne, UMR CNRS 6620, Campus des C\'ezeaux,  63177~Aubi\`ere, France. e-mail: {\tt jerome.lemoine@uca.fr}. }
\author{Arnaud M\"unch}
\thanks{
Laboratoire de Math\'{e}matiques Blaise Pascal, Universit\'{e} Clermont Auvergne, UMR CNRS 6620, Campus des C\'ezeaux,  63177~Aubi\`ere, France. e-mail: {\tt arnaud.munch@uca.fr} (\textbf{Corresponding author}).}

\begin{document}

\maketitle

\begin{abstract}
We introduce and analyze a space-time least-squares method associated to the unsteady Navier-Stokes system. Weak solution in the two dimensional case and regular solution in the three dimensional case are considered. From any initial guess, we construct a minimizing sequence for the least-squares functional which converges strongly to a solution of the Navier-Stokes system. After a finite number of iterates related to the value of the viscosity constant, the convergence is quadratic. Numerical experiments within the two dimensional case support our analysis. This globally convergent least-squares approach is related to the damped Newton method when used to solve the Navier-Stokes system through a variational formulation. 
\end{abstract}


{\bf Key Words.} Unsteady Navier-Stokes system,  Space-time Least-squares approach, Damped Newton method.


\section{Introduction}

\label{intro}

Let $\Omega\subset \mathbb{R}^d$, $d=2,3$ be a bounded connected open set whose boundary $\partial\Omega$ is Lipschitz.
   We denote by $\boldsymbol{\mathcal{V}}=\{v\in \mathcal{D}(\Omega)^d, \nabla\cdot v=0\}$, $\boldsymbol{H}$ the closure of $\boldsymbol{\mathcal{V}}$ in $L^2(\Omega)^d$ and $\boldsymbol{V}$ the closure of $\boldsymbol{\mathcal{V}}$ in $H^1(\Omega)^d$.  Endowed with the norm $\|v\|_{\boldsymbol{V}}=\|\nabla v\|_2:=\|\nabla v\|_{(L^2(\Omega))^{d^2}}$, $\boldsymbol{V}$ is an Hilbert space. The dual $\boldsymbol{V}'$ of $\boldsymbol{V}$, endowed with  the dual norm
   $$\|v\|_{\boldsymbol{V}'}=\sup_{w\in \boldsymbol{V},\ \|w\|_{\boldsymbol{V}}=1}\langle v,w\rangle_{\boldsymbol{V}'\times \boldsymbol{V}}$$
   is also an Hilbert space. We denote $\langle\cdot,\cdot\rangle_{\boldsymbol{V}'}$ the scalar product associated to the norm $\| \ \|_{\boldsymbol{V}'}$.
   
   Let $T>0$. We note $Q_T:=\Omega\times (0,T)$ and $\Sigma_T:=\partial\Omega\times [0,T]$.
   
The Navier-Stokes system describes a viscous incompressible fluid flow in the bounded domain $\Omega$ during the time interval $(0,T)$ submitted to the external force $f$. It reads as follows : 
\begin{equation}
\label{NS-intro}
\left\{
\begin{aligned}
 &     y_t- \nu\Delta y + (y\cdot \nabla)y + \nabla p = f, \quad  \nabla\cdot y=0  \quad \text{in}\ \ Q_T, \\
  &   y = 0 \quad \text{on}\ \ \Sigma_T,  \\
  & y(\cdot,0)=u_0, \quad \text{in}\quad  \Omega,
\end{aligned}
\right.
\end{equation}
where $y$ is the velocity of the fluid, $p$ its pressure and $\nu$ is the viscosity constant. We refer to \cite{Lions69, Simon, Temam}. 

In the case $d=2$, we recall (see \cite{Temam}) that for $f\in L^2(0,T,\boldsymbol{V}^\prime)$ and $u_0\in \boldsymbol{H}$, there exists a unique weak solution $y\in L^2(0,T; \boldsymbol{V})$, $\partial_t y\in L^2(0,T; \boldsymbol{V}')$ of the system
\begin{equation}\label{NS}
\left\{
\begin{aligned}
 &  
\frac{d}{dt}\int_\Omega y\cdot w+\nu\int_\Omega \nabla y\cdot\nabla w+\int_\Omega y\cdot\nabla y\cdot w=\langle f,w\rangle_{\boldsymbol{V}'\times \boldsymbol{V}},\quad \forall w\in \boldsymbol{V} \\
& y(\cdot,0)=u_0, \quad \text{in}\quad  \Omega.
\end{aligned}
\right.
\end{equation}

This work is concerned with the approximation of solution for (\ref{NS}), that is, the explicit construction of a sequence $(y_k)_{k\in\mathbb{N}}$ converging to a solution $y$ for a suitable norm. In most of the works devoted to this topic (we refer for instance to \cite{GLOWINSKI2003, pironneau}), the approximation of (\ref{NS}) is addressed through a time marching method. Given $\{t_n\}_{n=0...N}$, $N\in \mathbb{N}$, a uniform discretization of the time interval $(0,T)$ and $\delta t=T/N$ the corresponding time discretization step, we mention for instance the unconditionally stable backward Euler scheme
\begin{equation}\label{NS_euler}
\left\{
\begin{aligned}
 &  
\int_\Omega \frac{y^{n+1}-y^n}{\delta t}\cdot w+\nu\int_\Omega \nabla y^{n+1}\cdot\nabla w+\int_\Omega y^{n+1}\cdot\nabla y^{n+1}\cdot w=\langle f^n,w\rangle_{\boldsymbol{V}'\times \boldsymbol{V}},\, \forall n\geq 0, \, \forall w\in \boldsymbol{V} \\
& y^0(\cdot,0)=u_0, \quad \text{in}\quad  \Omega
\end{aligned}
\right.
\end{equation}
with $f^n:=\frac{1}{\delta t}\int_{t_n}^{t_{n+1}}f(\cdot,s)ds$. The piecewise linear interpolation (in time) of $\{y^n\}_{n\in [0,N]}$ weakly converges in $L^2(0,T,\boldsymbol{V})$ toward a solution $y$ of (\ref{NS}) as $\delta t$ goes to zero (we refer to \cite[chapter 3, section 4]{Temam}). Moreover, it achieves a first order convergence with respect to $\delta t$. 
For each $n\geq 0$, the determination of $y^{n+1}$ from $y^n$ requires the resolution of a steady Navier-Stokes equation, parametrized by $\nu$ and $\delta t$. This can be done using Newton type methods (see for instance \cite[Section 10.3]{quarteroni}) for the weak formulation of (\ref{NS_euler}). Alternatively, this can be done using least-squares method
which consists roughly in minimizing (the square of) a norm of the state equation with respect to $y^{n+1}$. We refer to \cite{bristeau,Hmoinsun-glo} where a so-called $H^{-1}(\Omega)$ least-squares method has been introduced, and recently analyzed and extended in \cite{lemoinemunch}. In \cite{lemoinemunch}, the convergence of the method is proved and leads in practice to the so-called damped Newton method, more robust and faster than the usual one. 

The main reason of this work is to explore if the analysis performed in \cite{lemoinemunch,lemoine2018} for the steady Navier-Stokes system can be extended to a full space-time setting. More precisely, following the terminology of \cite{bristeau}, one may introduced the following $L^2(0,T;\boldsymbol{V}^\prime)$ least-squares functional $\widetilde{E}:(H^1(0,T;\boldsymbol{V^\prime})\cap L^2(0,T;\boldsymbol{V})) \to \mathbb{R}^+$ 
\begin{equation}
\widetilde{E}(y):=\frac{1}{2}\Vert y_t+\nu B_1(y)+ B(y,y) -f\Vert^2_{L^2(0,T; \boldsymbol{V}^\prime)} \label{Etilde}
\end{equation}
where $B_1$ and $B$ are defined in Lemmas  \ref{B-V'}  and  \ref{B_1-V'}.
The real quantity $\widetilde{E}(y)$ measures how the element $y$ is close to the solution of (\ref{NS}). The minimization of this functional leads to a so-called continuous  weak least-squares type method. Least-squares methods to solve nonlinear boundary value problems have been the subject of intensive developments in the last decades, as they present several advantages, notably on computational and stability viewpoints. We refer to the book \cite{bochevgunz} devoted to the analysis of least-squares methods to solve discrete finite dimensional systems. We notably mention \cite{Chehab} where steady fluids flows are approximated in the two dimensional case of the lid-driven cavity. We show in the present work that some minimizing sequences for this so-called error functional $\widetilde{E}$ do actually converge strongly to the solution of (\ref{NS}). 

This approach which consist in minimizing an appropriate norm of the solution is refereed to in the literature as variational approach. We mention notably the work \cite{PedregalNS} where strong solution of \eqref{NS-intro} are characterized in the two dimensional case in term of the critical points of a quadratic functional, close to $\widetilde{E}$. Similarly, the authors in \cite{OrtizNS} show that the following functional 
$$
I^{\epsilon}(y)=\int_0^\infty\!\!\!\int_{\Omega} e^{-t/\epsilon} \biggl\{\vert \partial_t y+y\cdot \nabla y\vert^2+\vert y\cdot \nabla y\vert^2 +\frac{\nu}{\epsilon}\vert \nabla y\vert^2\biggl\}
$$
admits minimizers $u^{\eps}$ for all $\epsilon>0$ and, up to subsequences, such minimizers converge weakly to a Leray-Hopf solution of (\ref{NS-intro}) as $\epsilon\to 0$.
  
  The paper is organized as follows. We start in Section \ref{section-2D} in the two dimensional case with the weak solution of \eqref{NS} associated to initial data $u_0$ in $\boldsymbol{H}$ and source term $f\in L^2(0,T,\boldsymbol{V}^{\prime})$. We introduce our least-squares functional, quoted by $E$, in term of a corrector variable $v$. We show in two steps that any minimizing sequence for $E$ strongly converges to the solution (see Theorem \ref{th-convergence}). This is achieved in two steps: first, we obtain a coercivity type property which show that $E(y)$
is an upper  bound of the distance of $y$ to a solution of (\ref{NS}). Then, we introduce a bounded element $Y_1$ in (\ref{solution_Y1}) along which the differential $E^{\prime}$ of $E$ is parallel to $E$ (see \eqref{E_Eprime}). The use of the element $-Y_1$ as a descent direction allows to define iteratively a minimizing sequence $\{y_{k}\}_{k>0}$ which converges with a quadratic rate (except for the first iterates) to the solution of (\ref{NS}). It turns out that the underlying algorithm (\ref{algo_LS_Y}) coincides with the one derived from the damped Newton method when used to find solution of (\ref{NS}). In Section \ref{section-3D}, in the three dimensional case, we employ the same methodology to approximate regular solution of (\ref{NS}) associated to $u_0\in \boldsymbol{V}$ and $f\in L^2(Q_T)^3$. We obtain similar results of convergence. Numerical experiments in Section \ref{section-numeric} confirm the efficiency of the method based on the element $Y_1$, in particular for small values of the viscosity constant $\nu$.  Section \ref{section-conclusion} concludes with some perspectives.

  
\section{Space-time least squares method: the two dimensional case}\label{section-2D}

Adapting \cite{lemoinemunch}, we introduce and analyze a so-called weak least-squares functional allowing to approximate the solution of the boundary value problem (\ref{NS}).

\subsection{Preliminary technical results}
In the following, we repeatedly use the following classical estimate.

\begin{lemma} Let any $u\in  \boldsymbol{H}$, $v,w\in \boldsymbol{V}$. There exists a constant $c=c(\Omega)$ such that
\begin{equation}
\label{2D-Case}
\int_{\Omega} u\cdot \nabla v\cdot w \leq c \Vert u\Vert_{\boldsymbol{H}}  \Vert  v\Vert_{ \boldsymbol{V}}  \Vert  w\Vert_{ \boldsymbol{V}}.    
\end{equation}
\end{lemma}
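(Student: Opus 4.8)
The plan is to derive \eqref{2D-Case} from H\"older's inequality combined with the two--dimensional Ladyzhenskaya (Gagliardo--Nirenberg) interpolation inequality. Writing the trilinear term in coordinates as $\int_\Omega u\cdot\nabla v\cdot w=\sum_{i,j}\int_\Omega u_i\,(\partial_i v_j)\,w_j$, I would first apply H\"older's inequality with the exponents $4,2,4$ (whose reciprocals sum to one) to obtain
\[
\Bigl|\int_\Omega u\cdot\nabla v\cdot w\Bigr|\le \|u\|_{L^4(\Omega)}\,\|\nabla v\|_{L^2(\Omega)}\,\|w\|_{L^4(\Omega)}.
\]
The natural slot for the differentiated factor $\nabla v$ is $L^2$, which then forces the two undifferentiated factors $u$ and $w$ into $L^4$.

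The second ingredient is the interpolation inequality $\|\varphi\|_{L^4(\Omega)}\le c\,\|\varphi\|_{L^2(\Omega)}^{1/2}\,\|\nabla\varphi\|_{L^2(\Omega)}^{1/2}$, valid in dimension $d=2$ for $\varphi\in H^1_0(\Omega)$, together with the Poincar\'e inequality $\|\varphi\|_{L^2(\Omega)}\le c_\Omega\|\nabla\varphi\|_{L^2(\Omega)}$ available on $\boldsymbol V$. Applied to the $\boldsymbol V$--factors this yields the continuous embedding $\boldsymbol V\hookrightarrow L^4(\Omega)$, hence $\|w\|_{L^4(\Omega)}\le c\,\|w\|_{\boldsymbol V}$, while $\|\nabla v\|_{L^2(\Omega)}=\|v\|_{\boldsymbol V}$ by definition of the norm on $\boldsymbol V$. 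Controlling the remaining factor by the same two--dimensional interpolation bound and collecting all constants into a single $c=c(\Omega)$ then produces the right--hand side of \eqref{2D-Case}.

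The point that deserves care --- and the only genuinely dimension--dependent step --- is the distribution of the integrability exponents: in $d=2$ the embedding $H^1(\Omega)\hookrightarrow L^4(\Omega)$ is continuous, so both undifferentiated factors can be absorbed while the gradient stays in $L^2$; this is precisely what fails in $d=3$ (where $H^1\hookrightarrow L^6$ only), and it explains why the three--dimensional analysis of Section~\ref{section-3D} is carried out in a more regular functional setting. Finally, since the lemma will be invoked repeatedly, I would also record the companion antisymmetry obtained by integrating by parts: because $\nabla\cdot u=0$ and the fields vanish on $\partial\Omega$, one has $\int_\Omega u\cdot\nabla v\cdot w=-\int_\Omega u\cdot\nabla w\cdot v$, and in particular $\int_\Omega u\cdot\nabla v\cdot v=0$.
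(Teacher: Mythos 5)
There is a genuine gap, and it lies exactly where your argument puts the ``remaining factor'': the hypothesis of the lemma is $u\in\boldsymbol{H}$, not $u\in\boldsymbol{V}$, and the right-hand side of \eqref{2D-Case} involves only $\Vert u\Vert_{\boldsymbol{H}}=\Vert u\Vert_{L^2(\Omega)^2}$. After your H\"older step with exponents $(4,2,4)$ you need $\Vert u\Vert_{L^4(\Omega)}$, and the Ladyzhenskaya interpolation $\Vert u\Vert_{L^4}\le c\Vert u\Vert_{L^2}^{1/2}\Vert\nabla u\Vert_{L^2}^{1/2}$ requires $\nabla u\in L^2$, which is simply not available: an element of $\boldsymbol{H}$ is merely a divergence-free $L^2$ field with vanishing normal trace. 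Your route proves the classical trilinear estimate $|b(u,v,w)|\le c\Vert u\Vert_2^{1/2}\Vert\nabla u\Vert_2^{1/2}\Vert v\Vert_{\boldsymbol{V}}\Vert w\Vert_{\boldsymbol{V}}$, which is a different (and here insufficient) statement. Nor can the exponents be redistributed: the alternative $(2,2,\infty)$ splitting fails because $H^1(\Omega)\not\hookrightarrow L^\infty(\Omega)$ in dimension two --- the lemma sits precisely at this endpoint, and no amount of integrability bookkeeping closes it.

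The paper's proof instead exploits the algebraic structure of the integrand through compensated compactness: extending $u,v,w$ by zero to $\mathbb{R}^2$, the product $\tilde u\cdot\nabla\tilde v$ of a divergence-free field and a gradient belongs to the Hardy space $\mathcal{H}^1(\mathbb{R}^2)$ with $\Vert\tilde u\cdot\nabla\tilde v\Vert_{\mathcal{H}^1(\mathbb{R}^2)}\le c\Vert u\Vert_2\Vert\nabla v\Vert_2$ (the div--curl lemma of \cite{coifman}), and one then pairs against $\tilde w$ using the duality $(\mathcal{H}^1)'=BMO$ together with the two-dimensional endpoint embedding $\Vert\tilde w\Vert_{BMO(\mathbb{R}^2)}\le c\Vert\tilde w\Vert_{H^1(\mathbb{R}^2)}$; see also \cite{tartar}. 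The distinction is not cosmetic for the paper: Lemma \ref{B-V'} needs $\Vert B(u,v)(t)\Vert_{\boldsymbol{V}'}\le c\Vert u(t)\Vert_{\boldsymbol{H}}\Vert v(t)\Vert_{\boldsymbol{V}}$ pointwise in time to conclude $B(y,y)\in L^2(0,T;\boldsymbol{V}')$ for a weak solution $y\in L^\infty(0,T;\boldsymbol{H})\cap L^2(0,T;\boldsymbol{V})$, whereas your estimate would only give $\Vert B(y,y)\Vert_{\boldsymbol{V}'}\le c\Vert y\Vert_2^{1/2}\Vert\nabla y\Vert_2^{3/2}$, hence $B(y,y)\in L^{4/3}(0,T;\boldsymbol{V}')$ --- too weak for the $L^2(0,T;\boldsymbol{V}')$ least-squares framework built on it. (Your closing remark on antisymmetry and $\int_\Omega u\cdot\nabla v\cdot v=0$ is consistent with \eqref{B-nul} in the paper, though for $u\in\boldsymbol{H}$ its justification is by density rather than a direct integration by parts.)
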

\begin{proof} 
If $u\in  \boldsymbol{H}$, $v,w\in \boldsymbol{V}$,  denoting $\tilde u, \tilde v$ and $\tilde w$ their extension to $0$ in $\mathbb{R}^2$, we have, see \cite{coifman} 
and \cite{tartar}
$$
\begin{aligned}
\biggl|\int_\Omega u\cdot\nabla v\cdot w\biggr|=\biggl|\int_\Omega \tilde u\cdot\nabla \tilde v\cdot \tilde w\biggr|
&\le \|\tilde u\cdot\nabla\tilde v\|_{{\mathcal H}^1(\mathbb{R}^2)}\|\tilde w\|_{BMO(\mathbb{R}^2)}\le c\| \tilde u\|_2\|\nabla \tilde v\|_2\|\tilde w\|_{H^1(\mathbb{R}^2)}\\
&\le c\| u\|_2\|\nabla v\|_2\|w\|_{H^1(\Omega)^2} \le c\| u\|_{\boldsymbol{H}}\| v\|_{ \boldsymbol{V}}\| w\|_{ \boldsymbol{V}}.
\end{aligned}
$$
\end{proof} 

\begin{lemma}\label{B-V'}
Let any $u\in L^\infty(0,T;\boldsymbol{H})$ and $v\in L^2(0,T;\boldsymbol{V})$. Then the function $B(u,v)$ defined by 
$$
\langle B(u(t),v(t)),w\rangle =\int_{\Omega} u(t)\cdot \nabla v(t)\cdot w   \qquad\forall w\in \boldsymbol{V},\ \hbox{a.e in}\  t\in[0,T]
$$
belongs to $L^2(0,T;\boldsymbol{V}')$ and
\begin{equation}\label{Estim-B}
\|B(u,v)\|_{L^2(0,T;\boldsymbol{V}')}\le c\Big(\int_0^T\|u\|^2_{\boldsymbol{H}}\|v\|^2_{\boldsymbol{V}}\Big)^{\frac12}\le c\|u\|_{L^\infty(0,T;\boldsymbol{H})}\|v\|_{L^2(0,T;\boldsymbol{V})}.
\end{equation}
Moreover 
\begin{equation}\label{B-nul}
\langle B(u,v),v\rangle_{\boldsymbol{V}'\times \boldsymbol{V}}=0.
\end{equation}
\end{lemma}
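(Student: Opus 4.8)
The plan is to establish Lemma~\ref{B-V'} in three parts, following the structure of the statement: first the membership of $B(u,v)$ in $L^2(0,T;\boldsymbol{V}')$ together with the first inequality in \eqref{Estim-B}, then the second inequality by a trivial bound, and finally the antisymmetry property \eqref{B-nul}. The whole argument rests on the pointwise-in-time estimate \eqref{2D-Case} proved in the previous lemma, so the work is essentially to integrate that estimate in time and to verify measurability.

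First I would fix $t\in[0,T]$ and observe that, for almost every such $t$, we have $u(t)\in\boldsymbol{H}$ and $v(t)\in\boldsymbol{V}$. For each test function $w\in\boldsymbol{V}$, the trilinear form $\int_\Omega u(t)\cdot\nabla v(t)\cdot w$ is well defined and, by \eqref{2D-Case}, bounded by $c\Vert u(t)\Vert_{\boldsymbol{H}}\Vert v(t)\Vert_{\boldsymbol{V}}\Vert w\Vert_{\boldsymbol{V}}$. Hence $w\mapsto\langle B(u(t),v(t)),w\rangle$ is a bounded linear functional on $\boldsymbol{V}$, i.e. an element of $\boldsymbol{V}'$, with
\begin{equation*}
\Vert B(u(t),v(t))\Vert_{\boldsymbol{V}'}\le c\Vert u(t)\Vert_{\boldsymbol{H}}\Vert v(t)\Vert_{\boldsymbol{V}}\quad\text{a.e. }t\in[0,T].
\end{equation*}
Squaring and integrating over $(0,T)$ immediately yields the first inequality of \eqref{Estim-B}. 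The second inequality then follows by pulling $\Vert u(t)\Vert_{\boldsymbol{H}}$ out in the $L^\infty$ norm: $\int_0^T\Vert u\Vert_{\boldsymbol{H}}^2\Vert v\Vert_{\boldsymbol{V}}^2\le\Vert u\Vert_{L^\infty(0,T;\boldsymbol{H})}^2\int_0^T\Vert v\Vert_{\boldsymbol{V}}^2$. To complete the $L^2(0,T;\boldsymbol{V}')$ membership I would note that $t\mapsto B(u(t),v(t))$ is weakly measurable (for fixed $w$, $t\mapsto\langle B(u(t),v(t)),w\rangle$ is measurable as a composition/product of measurable functions), and since $\boldsymbol{V}'$ is separable, Pettis' theorem upgrades this to strong measurability; combined with the finite $L^2$-norm bound this gives $B(u,v)\in L^2(0,T;\boldsymbol{V}')$.

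For the antisymmetry relation \eqref{B-nul}, I would invoke the classical pointwise identity $\int_\Omega u\cdot\nabla v\cdot v=0$ valid for $u\in\boldsymbol{H}$ (divergence-free) and $v\in\boldsymbol{V}$ (vanishing on $\partial\Omega$), which follows by integration by parts using $\nabla\cdot u=0$: one writes $\int_\Omega u\cdot\nabla(\tfrac12\vert v\vert^2)=-\tfrac12\int_\Omega(\nabla\cdot u)\vert v\vert^2=0$, the boundary term dropping because $v=0$ on $\partial\Omega$. Applying this with $w=v(t)$ in the definition of $B$ gives $\langle B(u(t),v(t)),v(t)\rangle=0$ for a.e. $t$, which is exactly \eqref{B-nul}.

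I do not anticipate a genuine obstacle here, as the lemma is a standard functional-analytic packaging of the trilinear estimate. The only point requiring a little care is the measurability verification needed to legitimately claim $B(u,v)\in L^2(0,T;\boldsymbol{V}')$ rather than merely a pointwise-defined family with integrable norm; this is handled cleanly by weak measurability plus separability of $\boldsymbol{V}'$ via Pettis' theorem, and many authors simply take it for granted. Everything else is a direct consequence of \eqref{2D-Case} and the divergence-free integration-by-parts identity.
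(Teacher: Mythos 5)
Your proposal is correct and follows essentially the same route as the paper: the pointwise bound $\Vert B(u(t),v(t))\Vert_{\boldsymbol{V}'}\le c\Vert u(t)\Vert_{\boldsymbol{H}}\Vert v(t)\Vert_{\boldsymbol{V}}$ from the trilinear estimate \eqref{2D-Case}, squared and integrated in time, followed by the classical identity $\int_\Omega u\cdot\nabla v\cdot v=0$ (which the paper simply cites from Temam, and which for $u$ merely in $\boldsymbol{H}$ is rigorously justified by density of smooth divergence-free fields together with \eqref{2D-Case}, your integration by parts being the formal core of that argument). Your additional Pettis-theorem verification of strong measurability is a point the paper leaves implicit, and it is a welcome touch of care rather than a deviation.
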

\begin{proof} 
Indeed, a.e in $t\in[0,T]$ we have (see (\ref{2D-Case})), $\forall w\in \boldsymbol{V}$
$$
|\langle B(u(t),v(t)),w\rangle|\le c\|u(t)\|_{ \boldsymbol{H}}\|v(t)\|_{ \boldsymbol{V}}\|w\|_{\boldsymbol{V}}
$$
and thus, 
$$\int_0^T\|B(u,v)\|^2_{\boldsymbol{V}'}\le c\int_0^T\|u\|^2_{\boldsymbol{H}}\|v\|^2_{\boldsymbol{V}}\le  c\|u\|^2_{L^\infty(0,T,\boldsymbol{H})}\|v\|^2 _{L^2(0,T,\boldsymbol{V})}<+\infty.$$

We also have a.e in $t\in[0,T]$ (see \cite{Temam})
$$\langle B(u(t),v(t)),v(t)\rangle_{\boldsymbol{V}'\times \boldsymbol{V}}=\int_\Omega u(t)\cdot\nabla v(t)\cdot v(t)=0.$$
\end{proof} 

\begin{lemma}\label{B_1-V'}
Let any $u\in L^2(0,T;\boldsymbol{V})$. Then the function $B_1(u)$ defined by 
$$
\langle B_1(u(t)),w\rangle =\int_{\Omega} \nabla u(t)\cdot \nabla  w   \qquad\forall w\in \boldsymbol{V},\ \hbox{a.e in}\  t\in[0,T]
$$
belong to $L^2(0,T;\boldsymbol{V}')$ and 
\begin{equation}\label{Estim-B_2}
\Vert B_1(u)\Vert_{L^2(0,T;\boldsymbol{V^\prime})}\le \|u\|^2_{L^2(0,T,\boldsymbol{V})}<+\infty.\end{equation}
\end{lemma}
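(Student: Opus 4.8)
The plan is to follow the same two-step pattern as in Lemma \ref{B-V'}: establish a pointwise-in-time bound on $\|B_1(u(t))\|_{\boldsymbol{V}'}$ and then integrate its square over $(0,T)$. The only structural change is that the trilinear estimate \eqref{2D-Case} is no longer needed, since $B_1$ is \emph{linear} in $u$; a plain Cauchy-Schwarz inequality suffices.

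First, for a.e. $t\in[0,T]$ (those $t$ for which $u(t)\in\boldsymbol{V}$), I would regard $w\mapsto \int_\Omega \nabla u(t)\cdot\nabla w$ as a linear form on $\boldsymbol{V}$ and bound it by Cauchy-Schwarz in $(L^2(\Omega))^{d^2}$,
$$
\Big|\int_\Omega \nabla u(t)\cdot\nabla w\Big|\le \|\nabla u(t)\|_2\,\|\nabla w\|_2=\|u(t)\|_{\boldsymbol{V}}\,\|w\|_{\boldsymbol{V}},
$$
which shows the form is continuous on $\boldsymbol{V}$, hence $B_1(u(t))\in\boldsymbol{V}'$ with $\|B_1(u(t))\|_{\boldsymbol{V}'}\le \|u(t)\|_{\boldsymbol{V}}$. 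Squaring this pointwise bound and integrating over $(0,T)$ then yields
$$
\int_0^T\|B_1(u)\|_{\boldsymbol{V}'}^2\le \int_0^T\|u\|_{\boldsymbol{V}}^2=\|u\|_{L^2(0,T;\boldsymbol{V})}^2<+\infty,
$$
which is the claimed membership $B_1(u)\in L^2(0,T;\boldsymbol{V}')$ together with the estimate \eqref{Estim-B_2}.

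The only point deserving a word is the strong measurability of $t\mapsto B_1(u(t))$ as a $\boldsymbol{V}'$-valued map; this is immediate because $B_1:\boldsymbol{V}\to\boldsymbol{V}'$ is a bounded linear operator, so it composes with the strongly measurable map $t\mapsto u(t)$ to give a strongly measurable $\boldsymbol{V}'$-valued function. I anticipate no genuine difficulty: the argument is elementary and the bound is sharp. I would only remark that, since $B_1$ is linear, the natural right-hand side of \eqref{Estim-B_2} carries the first power $\|u\|_{L^2(0,T;\boldsymbol{V})}$ rather than its square, the latter appearing to be a misprint.
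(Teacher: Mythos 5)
Your proof is correct and coincides with the paper's own argument: the same pointwise Cauchy--Schwarz bound $\|B_1(u(t))\|_{\boldsymbol{V}'}\le\|u(t)\|_{\boldsymbol{V}}$ a.e.\ in $t$, squared and integrated over $(0,T)$. You are also right that the exponent $2$ on $\|u\|_{L^2(0,T,\boldsymbol{V})}$ in \eqref{Estim-B_2} is a misprint, since the paper's proof itself yields the first power, exactly as yours does.
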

\begin{proof} 
Indeed, a.e in $t\in[0,T]$ we have
$$
|\langle B_1(u(t)),w\rangle|\le \|\nabla u(t)\|_2\|\nabla w\|_2=\|u(t)\|_{\boldsymbol{V}}\|w\|_{\boldsymbol{V}}
$$
and thus,  a.e in $t\in[0,T]$
$$\|B_1(u(t))\|_{\boldsymbol{V}'}\le \|u(t)\|_{\boldsymbol{V}}$$
which gives (\ref{Estim-B_2}).\end{proof} 

We also have (see \cite{Temam, Lions69}) : 
\begin{lemma}\label{y'-V'}
For all $y\in   L^2(0,T,\boldsymbol{V})\cap H^1(0,T;\boldsymbol{V}')$ we have $y\in \C([0,T];\boldsymbol{H})$ and in  $\mathcal{D}'(0,T)$, for all $w\in \boldsymbol{V}$ :
\begin{equation}\label{partial_t y}
\langle \partial_t y,w\rangle_{\boldsymbol{V}'\times \boldsymbol{V}}=\int_{\Omega}\partial_t y\cdot w=\frac{d}{dt}\int_\Omega y\cdot w, \qquad \langle \partial_t y,y\rangle_{\boldsymbol{V}'\times \boldsymbol{V}}=\frac12\frac{d}{dt}\int_\Omega |y|^2
\end{equation}
and
\begin{equation}\label{estim-y-L-infini}
\|y\|^2_{L^\infty(0,T;\boldsymbol{H})}\le c\|y\|_{L^2(0,T;\boldsymbol{V})}\|\partial_t y\|_{L^2(0,T;\boldsymbol{V}')}.
\end{equation}
\end{lemma}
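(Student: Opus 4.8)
This lemma is the classical property of the Gelfand (evolution) triple $\boldsymbol{V}\hookrightarrow\boldsymbol{H}\equiv\boldsymbol{H}'\hookrightarrow\boldsymbol{V}'$, in which $\boldsymbol{H}$ is identified with its dual so that the pairing $\langle\cdot,\cdot\rangle_{\boldsymbol{V}'\times\boldsymbol{V}}$ extends the inner product of $\boldsymbol{H}$. The plan is to prove all three assertions first for smooth time-dependent functions, where they reduce to the ordinary product rule, and then to transfer them to a general $y$ by a density argument. Concretely, I would extend $y$ to a slightly larger time interval and regularize in time by convolution with a smooth mollifier, producing a sequence $y_n\in C^1([0,T];\boldsymbol{V})$ with $y_n\to y$ in $L^2(0,T;\boldsymbol{V})$ and $\partial_t y_n\to\partial_t y$ in $L^2(0,T;\boldsymbol{V}')$.

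For a smooth $\boldsymbol{V}$-valued function $u$ the derivative $\partial_t u$ lies in $\boldsymbol{H}$, so the three quantities in \eqref{partial_t y} coincide by the product rule, and in particular
\begin{equation*}
\tfrac{d}{dt}\|u(t)\|_{\boldsymbol{H}}^2=2\langle\partial_t u(t),u(t)\rangle_{\boldsymbol{V}'\times\boldsymbol{V}}.
\end{equation*}
Applying this to the difference $y_n-y_m$ and integrating from $s$ to $t$ gives
\begin{equation*}
\|(y_n-y_m)(t)\|_{\boldsymbol{H}}^2=\|(y_n-y_m)(s)\|_{\boldsymbol{H}}^2+2\int_s^t\langle\partial_t(y_n-y_m),y_n-y_m\rangle_{\boldsymbol{V}'\times\boldsymbol{V}}\,d\tau.
\end{equation*}
Bounding the integrand by $\|\partial_t(y_n-y_m)\|_{\boldsymbol{V}'}\|y_n-y_m\|_{\boldsymbol{V}}$, averaging over $s\in[0,T]$ and using Cauchy--Schwarz, one controls $\sup_{t}\|(y_n-y_m)(t)\|_{\boldsymbol{H}}^2$ by the $L^2(0,T;\boldsymbol{H})$-distance of $y_n,y_m$ together with $\|y_n-y_m\|_{L^2(0,T;\boldsymbol{V})}\|\partial_t(y_n-y_m)\|_{L^2(0,T;\boldsymbol{V}')}$. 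Hence $\{y_n\}$ is Cauchy in $C([0,T];\boldsymbol{H})$; its limit agrees with $y$ a.e., giving the continuity $y\in\C([0,T];\boldsymbol{H})$. Passing to the limit identifies the distributional derivative of $t\mapsto\int_\Omega y\cdot w$ with $\langle\partial_t y,w\rangle_{\boldsymbol{V}'\times\boldsymbol{V}}$, which is \eqref{partial_t y}, and the energy identity survives the limit in the form $\langle\partial_t y,y\rangle=\tfrac12\tfrac{d}{dt}\|y\|_{\boldsymbol{H}}^2$.

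For \eqref{estim-y-L-infini} I would integrate the energy equality between a time $t_0$ realizing the minimum of $t\mapsto\|y(t)\|_{\boldsymbol{H}}$ (which exists by the continuity just proved) and an arbitrary $t$; estimating $\|y(t_0)\|_{\boldsymbol{H}}^2$ by the time average $\tfrac1T\|y\|_{L^2(0,T;\boldsymbol{H})}^2$ via the embedding $\boldsymbol{V}\hookrightarrow\boldsymbol{H}$ and bounding $2\int_{t_0}^t\langle\partial_t y,y\rangle$ by Cauchy--Schwarz leads to a control of $\|y\|^2_{L^\infty(0,T;\boldsymbol{H})}$ by $\|y\|_{L^2(0,T;\boldsymbol{V})}\|\partial_t y\|_{L^2(0,T;\boldsymbol{V}')}$, i.e. \eqref{estim-y-L-infini}. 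I expect the regularization step to be the main obstacle: one must build the approximating sequence so that $y_n\to y$ in $L^2(0,T;\boldsymbol{V})$ and $\partial_t y_n\to\partial_t y$ in $L^2(0,T;\boldsymbol{V}')$ hold \emph{simultaneously}, which requires a careful extension of $y$ past the endpoints before mollifying, since a naive truncation would spoil the convergence of the time derivative near $t=0$ and $t=T$ and thereby break the Cauchy estimate above.
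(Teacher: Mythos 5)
Your mollification-and-density scheme is precisely the classical argument behind this lemma, which the paper itself does not prove but delegates to \cite{Temam, Lions69}: the extension of $y$ past the endpoints before convolving, the Cauchy estimate in $\C([0,T];\boldsymbol{H})$ obtained by averaging over the initial time $s$ (legitimate here, since for differences $y_n-y_m$ both the $L^2(0,T;\boldsymbol{H})$ term and the product term tend to zero), and the passage to the limit in the identities \eqref{partial_t y} are all correct and constitute the standard route.

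The final step for \eqref{estim-y-L-infini}, however, does not close as you describe it, and cannot. Integrating the energy identity from your minimizing time $t_0$ and estimating $\|y(t_0)\|^2_{\boldsymbol{H}}$ by the time average gives
\begin{equation*}
\|y(t)\|^2_{\boldsymbol{H}}\le \frac1T\int_0^T\|y(s)\|^2_{\boldsymbol{H}}\,ds+2\|y\|_{L^2(0,T;\boldsymbol{V})}\|\partial_t y\|_{L^2(0,T;\boldsymbol{V}')},
\end{equation*}
and the first term on the right is of order $\frac{c}{T}\|y\|^2_{L^2(0,T;\boldsymbol{V})}$; nothing allows you to absorb it into the product $\|y\|_{L^2(0,T;\boldsymbol{V})}\|\partial_t y\|_{L^2(0,T;\boldsymbol{V}')}$. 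Indeed, \eqref{estim-y-L-infini} read literally for every $y\in L^2(0,T;\boldsymbol{V})\cap H^1(0,T;\boldsymbol{V}')$ is false: take $y(t)\equiv y_0\in\boldsymbol{V}\setminus\{0\}$, so that $\partial_t y=0$ and the right-hand side vanishes while the left-hand side does not. The estimate is accurate only with an additional term on the right (e.g.\ $\|y(0)\|^2_{\boldsymbol{H}}$, or the time average above), or under the hypothesis $y(0)=0$. The latter is the situation in which the paper actually invokes it: \eqref{estim-y-L-infini} is applied to elements of $\A_0$ (the increments $Y$ and $v'$ in the proof of Proposition \ref{differentiable}), for which your own energy identity integrated from $t_0=0$ yields directly $\|y(t)\|^2_{\boldsymbol{H}}=2\int_0^t\langle\partial_t y,y\rangle_{\boldsymbol{V}'\times\boldsymbol{V}}\le 2\|y\|_{L^2(0,T;\boldsymbol{V})}\|\partial_t y\|_{L^2(0,T;\boldsymbol{V}')}$, i.e.\ \eqref{estim-y-L-infini} with $c=2$. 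So amend your last paragraph by either assuming $y(0)=0$ and integrating from $0$, or by keeping the extra term in the conclusion; everything upstream of this point in your proof is sound.
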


We recall that along this section, we suppose that $u_0\in \boldsymbol{H}$, $f\in L^2(0,T,\boldsymbol{V}')$ and $\Omega$  is a bounded lipschitz domain of $\mathbb{R}^2$.
We also denote 
$$\A=\{y\in L^2(0,T;\boldsymbol{V})\cap H^1(0,T;\boldsymbol{V}'),\ y(0)=u_0\}$$
and 
$$\A_0=\{y\in L^2(0,T;\boldsymbol{V})\cap H^1(0,T;\boldsymbol{V}'),\ y(0)=0\}.$$
Endowed with the scalar product
$$\langle y,z\rangle_{\A_0}=\int_0^T\langle y,z\rangle_{\boldsymbol{V}}+\langle \partial_t y,\partial_t z\rangle_{\boldsymbol{V}'}$$
and the associated norm
$$\|y\|_{\A_0}=\sqrt{\|y\|^2_{ L^2(0,T;\boldsymbol{V})}+\|\partial_t y\|^2_{L^2(0,T;\boldsymbol{V}')}}$$
$\A_0$ is an Hilbert space.

We also recall and introduce several technical results. The first one is well-known  (we refer to \cite{Lions69} and  \cite{Temam}). 
 \begin{proposition}\label{Existence-NS}
There exists a unique $\bar y\in  \A$  solution in $\mathcal{D}^{\prime}(0,T)$ of (\ref{NS}). This solution satisfies the following estimates :
  $$
  \Vert   \bar y\Vert_{L^\infty(0,T;\boldsymbol{H})}^2+\nu\| \bar y\|_{L^2(0,T;\boldsymbol{V})}^2  \leq  \Vert u_0\Vert^2_{\boldsymbol{H}}+ \frac1\nu\Vert f\Vert^2_{L^2(0,T;\boldsymbol{V}')},
  $$
  $$
  \Vert \partial_t \bar y\Vert_{L^2(0,T;\boldsymbol{V}')}\le \sqrt{\nu}\Vert u_0\Vert_{\boldsymbol{H}}+ 2\Vert f\Vert_{L^2(0,T;\boldsymbol{V}')}+\frac c{\nu^{\frac32}}(\nu\Vert u_0\Vert^2_{\boldsymbol{H}}+ \Vert f\Vert^2_{L^2(0,T;\boldsymbol{V}')}).$$
\end{proposition}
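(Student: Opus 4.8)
The plan is to prove existence by the Faedo--Galerkin method, establishing the two estimates first at the level of the finite-dimensional approximations and then passing to the limit, and to obtain uniqueness by a Gronwall argument resting on the two-dimensional estimate (\ref{2D-Case}). First I would take the orthonormal basis $\{w_j\}_{j\ge 1}$ of $\boldsymbol{H}$ formed by the eigenfunctions of the Stokes operator, which is also orthogonal in $\boldsymbol{V}$, set $V_m=\mathrm{span}\{w_1,\dots,w_m\}$, and look for $y_m(t)=\sum_{j=1}^m g_{jm}(t)w_j$ solving the projection of (\ref{NS}) onto $V_m$, with $y_m(0)$ the $\boldsymbol{H}$-projection of $u_0$. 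This is a system of ODEs in the $g_{jm}$ which admits a local solution by Cauchy--Lipschitz, global once the a priori bound below is in hand.

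For the first estimate I would test the projected equation with $w=y_m$. Since $\langle B(y_m,y_m),y_m\rangle=0$ by (\ref{B-nul}) and $\langle\partial_t y_m,y_m\rangle=\frac12\frac{d}{dt}\|y_m\|_{\boldsymbol{H}}^2$ by (\ref{partial_t y}), this gives $\frac12\frac{d}{dt}\|y_m\|_{\boldsymbol{H}}^2+\nu\|y_m\|_{\boldsymbol{V}}^2=\langle f,y_m\rangle\le\frac{\nu}{2}\|y_m\|_{\boldsymbol{V}}^2+\frac1{2\nu}\|f\|_{\boldsymbol{V}'}^2$; integrating in time and taking the supremum over $t\in[0,T]$ yields the first estimate, uniformly in $m$, with $M^2:=\|u_0\|_{\boldsymbol{H}}^2+\frac1\nu\|f\|_{L^2(0,T;\boldsymbol{V}')}^2$ bounding both $\|y_m\|_{L^\infty(0,T;\boldsymbol{H})}^2$ and $\nu\|y_m\|_{L^2(0,T;\boldsymbol{V})}^2$. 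For the second, I read the equation as $\partial_t y_m=\Pi_m\big(f-\nu B_1(y_m)-B(y_m,y_m)\big)$, with $\Pi_m$ the $\boldsymbol{H}$-orthogonal projection onto $V_m$, which is a contraction in $\boldsymbol{V}'$ because the basis is $\boldsymbol{V}$-orthogonal. Bounding each term by Lemmas \ref{B-V'} and \ref{B_1-V'} --- using $\|B_1(y_m(t))\|_{\boldsymbol{V}'}\le\|y_m(t)\|_{\boldsymbol{V}}$ from the proof of the latter and (\ref{Estim-B}) for the quadratic term --- and inserting the first estimate together with the elementary inequality $\sqrt{\nu}\,M\le\sqrt{\nu}\|u_0\|_{\boldsymbol{H}}+\|f\|_{L^2(0,T;\boldsymbol{V}')}$ reproduces the claimed bound on $\|\partial_t\bar y\|_{L^2(0,T;\boldsymbol{V}')}$.

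The main obstacle is the passage to the limit in the nonlinear term. The two uniform estimates make $(y_m)$ bounded in $L^2(0,T;\boldsymbol{V})\cap H^1(0,T;\boldsymbol{V}')$, so along a subsequence $y_m\rightharpoonup\bar y$ weakly in $L^2(0,T;\boldsymbol{V})$, weakly in $H^1(0,T;\boldsymbol{V}')$ and weakly-$\ast$ in $L^\infty(0,T;\boldsymbol{H})$. Since $\boldsymbol{V}\hookrightarrow\boldsymbol{H}$ is compact and $\boldsymbol{H}\hookrightarrow\boldsymbol{V}'$ continuous, the Aubin--Lions lemma gives strong convergence in $L^2(0,T;\boldsymbol{H})$, which is exactly what is needed to pass to the limit $B(y_m,y_m)\to B(\bar y,\bar y)$ and, together with the weak convergences, in every other term of the projected equation. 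The two estimates persist in the limit by weak lower semicontinuity of the norms, and Lemma \ref{y'-V'} ensures $\bar y\in\C([0,T];\boldsymbol{H})$, so the trace $\bar y(0)=u_0$ is well defined and $\bar y\in\A$.

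For uniqueness I would take two solutions $y_1,y_2\in\A$, set $w:=y_1-y_2\in\A_0$, and use $B(y_1,y_1)-B(y_2,y_2)=B(y_1,w)+B(w,y_2)$. Testing the equation for $w$ against $w$ and discarding $\langle B(y_1,w),w\rangle=0$ by (\ref{B-nul}) leaves $\frac12\frac{d}{dt}\|w\|_{\boldsymbol{H}}^2+\nu\|w\|_{\boldsymbol{V}}^2=-\langle B(w,y_2),w\rangle$. Here the two-dimensional structure is essential: estimate (\ref{2D-Case}) bounds the right-hand side by $c\|w\|_{\boldsymbol{H}}\|y_2\|_{\boldsymbol{V}}\|w\|_{\boldsymbol{V}}$ with a factor $\|w\|_{\boldsymbol{H}}$ rather than $\|w\|_{\boldsymbol{V}}$, so Young's inequality absorbs the $\nu\|w\|_{\boldsymbol{V}}^2$ term and yields $\frac{d}{dt}\|w\|_{\boldsymbol{H}}^2\le\frac{c^2}{\nu}\|y_2\|_{\boldsymbol{V}}^2\|w\|_{\boldsymbol{H}}^2$ with $\|y_2\|_{\boldsymbol{V}}^2\in L^1(0,T)$. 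As $w(0)=0$, Gronwall's inequality forces $w\equiv0$, which is exactly the step that would fail in three dimensions.
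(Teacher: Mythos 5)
Your proof is correct and is essentially the paper's own: the authors give no argument for Proposition \ref{Existence-NS}, referring instead to Lions and Temam, and your Faedo--Galerkin construction with the Stokes eigenbasis (so that the projection $\Pi_m$ is nonexpansive on $\boldsymbol{V}'$, which is indeed the point of that special basis), Aubin--Lions compactness to pass to the limit in the quadratic term, and the Lions--Prodi uniqueness argument via \eqref{2D-Case} and Gronwall is precisely the classical proof in those references. Your constant bookkeeping also reproduces the two stated estimates exactly, including the term $\frac{c}{\nu^{3/2}}\bigl(\nu\Vert u_0\Vert^2_{\boldsymbol{H}}+\Vert f\Vert^2_{L^2(0,T;\boldsymbol{V}')}\bigr)$ obtained from $\Vert B(\bar y,\bar y)\Vert_{L^2(0,T;\boldsymbol{V}')}\le c\Vert \bar y\Vert_{L^\infty(0,T;\boldsymbol{H})}\Vert \bar y\Vert_{L^2(0,T;\boldsymbol{V})}$ together with the first energy estimate.
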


We also introduce the following result :
\begin{proposition}\label{prop_existence_v}
For all $y\in   L^2(0,T,\boldsymbol{V})\cap H^1(0,T;\boldsymbol{V}')$,  there exists  a unique $v\in \A_0$  solution in $\mathcal{D}^{\prime}(0,T)$ of
\begin{equation}\label{corrector}
\left\{
\begin{aligned}
& \frac{d}{dt} \int_\Omega v\cdot w +\int_{\Omega} \nabla v\cdot \nabla w +\frac{d}{dt} \int_\Omega y\cdot w+\nu\int_{\Omega} \nabla y\cdot \nabla w  \\
&\hspace*{6cm}+\int_{\Omega} y\cdot \nabla y\cdot w =<f,w>_{\boldsymbol{V}'\times \boldsymbol{V}}, \quad \forall w\in \boldsymbol{V}\\
&v(0)=0.
\end{aligned}
\right.
\end{equation}
Moreover, for all $t\in [0,T]$, 
$$
\| v(t)\|^2_{\boldsymbol{H}} +\| v\|^2_{L^2(0,t;\boldsymbol{V})}\le  \|f-B(y,y)-\nu B_1y-\partial_t y\|^2_{L^2(0,t;\boldsymbol{V}')}
$$
and
$$
\begin{aligned}
 \|\partial_t v\|_{L^2(0,T;\boldsymbol{V}')}
 &\le \|v\|_{L^2(0,T,\boldsymbol{V})}+\|f-B(y,y)-\nu B_1y-\partial_t y\|_{L^2(0,T; \boldsymbol{V}')}\\
 &\le2\|f-B(y,y)-\nu B_1y-\partial_t y\|_{L^2(0,T; \boldsymbol{V}')}.
 \end{aligned}
$$
\end{proposition}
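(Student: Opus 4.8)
The plan is to recognize that, once $y$ is fixed, the problem \eqref{corrector} for $v$ is a linear, coercive, parabolic (Stokes-type) Cauchy problem whose right-hand side is the Navier--Stokes residual of $y$. First I would set
$$
g := f - \partial_t y - \nu B_1(y) - B(y,y),
$$
and check that $g\in L^2(0,T;\boldsymbol{V}')$: indeed $f\in L^2(0,T;\boldsymbol{V}')$ and $\partial_t y\in L^2(0,T;\boldsymbol{V}')$ by hypothesis, $B_1(y)\in L^2(0,T;\boldsymbol{V}')$ by Lemma~\ref{B_1-V'}, while $y\in L^2(0,T;\boldsymbol{V})\cap H^1(0,T;\boldsymbol{V}')$ yields $y\in\C([0,T];\boldsymbol{H})\subset L^\infty(0,T;\boldsymbol{H})$ via Lemma~\ref{y'-V'}, so that $B(y,y)\in L^2(0,T;\boldsymbol{V}')$ by Lemma~\ref{B-V'}. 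Using Lemmas~\ref{B_1-V'} and~\ref{y'-V'} to identify the first two terms of \eqref{corrector}, the problem is then equivalent to the abstract Cauchy problem
$$
\partial_t v + B_1(v) = g \ \text{ in } L^2(0,T;\boldsymbol{V}'), \qquad v(0)=0,
$$
where $B_1:\boldsymbol{V}\to\boldsymbol{V}'$ is the operator associated with the continuous, symmetric, coercive bilinear form $a(v,w)=\int_\Omega\nabla v\cdot\nabla w$, satisfying $\langle B_1 v,v\rangle=\|v\|_{\boldsymbol{V}}^2$.

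Existence and uniqueness of $v\in\A_0$ then follow from the standard theory for linear evolution equations governed by a coercive bilinear form, as in \cite{Lions69,Temam}. Concretely, I would run a Galerkin scheme on a Hilbert basis of $\boldsymbol{V}$, derive the a priori bounds below uniformly in the discretization parameter, extract a subsequence converging weakly in $L^2(0,T;\boldsymbol{V})$ with $\partial_t v_m$ bounded in $L^2(0,T;\boldsymbol{V}')$, and pass to the limit; the coercivity $\langle B_1 v,v\rangle=\|v\|_{\boldsymbol{V}}^2$ makes this limit procedure routine. Uniqueness is immediate: the difference of two solutions solves the same problem with $g=0$ and zero initial datum, and the energy identity below forces it to vanish.

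For the first estimate I would test the equation with $w=v(t)$, which is licit since $v\in\C([0,T];\boldsymbol{H})$ by Lemma~\ref{y'-V'}. Using $\langle\partial_t v,v\rangle=\tfrac12\frac{d}{dt}\|v\|_{\boldsymbol{H}}^2$ and $\langle B_1 v,v\rangle=\|v\|_{\boldsymbol{V}}^2$, and then Cauchy--Schwarz together with Young's inequality $\langle g,v\rangle\le\tfrac12\|g\|_{\boldsymbol{V}'}^2+\tfrac12\|v\|_{\boldsymbol{V}}^2$, one obtains
$$
\frac{d}{dt}\|v\|_{\boldsymbol{H}}^2 + \|v\|_{\boldsymbol{V}}^2 \le \|g\|_{\boldsymbol{V}'}^2 .
$$
Integrating on $(0,t)$ and using $v(0)=0$ yields exactly $\|v(t)\|_{\boldsymbol{H}}^2+\|v\|_{L^2(0,t;\boldsymbol{V})}^2\le\|g\|_{L^2(0,t;\boldsymbol{V}')}^2$, which is the announced bound.

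The second estimate is then immediate: from $\partial_t v=g-B_1(v)$ and the pointwise bound $\|B_1 v(t)\|_{\boldsymbol{V}'}\le\|v(t)\|_{\boldsymbol{V}}$ of Lemma~\ref{B_1-V'}, taking $L^2(0,T)$ norms gives $\|\partial_t v\|_{L^2(0,T;\boldsymbol{V}')}\le\|v\|_{L^2(0,T;\boldsymbol{V})}+\|g\|_{L^2(0,T;\boldsymbol{V}')}$; bounding $\|v\|_{L^2(0,T;\boldsymbol{V})}\le\|g\|_{L^2(0,T;\boldsymbol{V}')}$ by the first estimate at $t=T$ produces the factor $2$. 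None of these steps is genuinely difficult; the only point requiring real care is the rigorous justification of the energy identity (hence of uniqueness), which is why I would establish the a priori bound at the Galerkin level and pass to the limit, rather than testing the limit equation formally with $v$.
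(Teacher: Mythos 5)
Your proof is correct and takes essentially the same route as the paper: both rewrite \eqref{corrector} as the linear problem $\partial_t v + B_1(v)=F$, $v(0)=0$, with $F=f-B(y,y)-\nu B_1 y-\partial_t y\in L^2(0,T;\boldsymbol{V}')$ (via Lemmas \ref{B-V'}, \ref{B_1-V'} and \ref{y'-V'}), and then appeal to the standard linear parabolic theory. The only difference is that the paper simply cites this standard result as Proposition \ref{Existence} (with the two estimates \eqref{estimate-z-1}--\eqref{estimate-z-2} specialized to $z_0=0$), whereas you re-derive it by a Galerkin scheme and the energy estimate obtained by testing with $v$, which is precisely the proof behind the cited proposition.
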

The proof of this proposition is a consequence of the following standard  result (see \cite{tartar, Lions69}).

\begin{proposition}\label{Existence}
For all $z_0\in \boldsymbol{H}$ and all $F\in L^2(0,T;\boldsymbol{V}')$, there exists a unique $z\in   L^2(0,T,\boldsymbol{V})\cap H^1(0,T;\boldsymbol{V}')$  solution in $\mathcal{D}^\prime(0,T)$ of 
\begin{equation}\label{sol_z}
\left\{
\begin{aligned}
&\frac{d}{dt} \int_\Omega z\cdot w +\int_{\Omega} \nabla z\cdot \nabla w =<F,w>_{\boldsymbol{V}'\times \boldsymbol{V}}, \quad \forall w\in \boldsymbol{V} \\
& z(0)=z_0.
\end{aligned}
\right.
\end{equation}
Moreover, for all $t\in [0,T]$, 
\begin{equation}\label{estimate-z-1}
\|z(t)\|_{\boldsymbol{H}}^2+\|z\|_{L^2(0,t;\boldsymbol{V})}^2  \le  \|F\|^2_{L^2(0,t;\boldsymbol{V}')}+\| z_0\|^2_{\boldsymbol{H}}
\end{equation}
and
\begin{equation}\label{estimate-z-2}
 \|\partial_t z\|_{L^2(0,T;\boldsymbol{V}')}\le\|z\|_{L^2(0,T,\boldsymbol{V})}+\|F\|_{L^2(0,T; \boldsymbol{V}')}\le 2\|F\|_{L^2(0,T; \boldsymbol{V}')}+ \|z_0\|_{\boldsymbol{H}}. 
\end{equation}
\end{proposition}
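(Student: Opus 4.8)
The plan is to prove Proposition \ref{Existence}, which is a standard existence, uniqueness and a priori estimate result for the linear non-stationary Stokes problem with right-hand side $F\in L^2(0,T;\boldsymbol{V}')$ and initial data $z_0\in\boldsymbol{H}$. I would obtain the solution via the Faedo--Galerkin method. First I would fix a sequence $\{w_j\}_{j\ge 1}$ of elements of $\boldsymbol{V}$ that is a Hilbert basis of $\boldsymbol{V}$ and is total in $\boldsymbol{H}$; the eigenfunctions of the Stokes operator are the natural choice. Seeking an approximate solution $z_m(t)=\sum_{j=1}^m g_{jm}(t)w_j$ and imposing \eqref{sol_z} against $w=w_1,\dots,w_m$ reduces the problem to a linear system of ordinary differential equations for the coefficients $g_{jm}$, with initial data obtained by projecting $z_0$ onto $\mathrm{span}\{w_1,\dots,w_m\}$. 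Since the matrices involved (the mass matrix $\int_\Omega w_i\cdot w_j$ and the stiffness matrix $\int_\Omega\nabla w_i\cdot\nabla w_j$) are constant and the mass matrix is invertible, the Cauchy--Lipschitz theorem gives a unique global solution $g_{jm}\in H^1(0,T)$.

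The heart of the argument is the a priori estimate, which I would derive directly at the continuous level, the Galerkin computation being identical. Taking $w=z(t)$ in \eqref{sol_z} and using \eqref{partial_t y} from Lemma \ref{y'-V'} to write $\langle\partial_t z,z\rangle_{\boldsymbol{V}'\times\boldsymbol{V}}=\frac12\frac{d}{dt}\|z\|^2_{\boldsymbol{H}}$, together with $\int_\Omega\nabla z\cdot\nabla z=\|z\|^2_{\boldsymbol{V}}$, yields
\begin{equation*}
\frac12\frac{d}{dt}\|z\|^2_{\boldsymbol{H}}+\|z\|^2_{\boldsymbol{V}}=\langle F,z\rangle_{\boldsymbol{V}'\times\boldsymbol{V}}\le \|F\|_{\boldsymbol{V}'}\|z\|_{\boldsymbol{V}}\le \frac12\|F\|^2_{\boldsymbol{V}'}+\frac12\|z\|^2_{\boldsymbol{V}}.
\end{equation*}
Absorbing $\frac12\|z\|^2_{\boldsymbol{V}}$ into the left-hand side and integrating on $(0,t)$ gives exactly \eqref{estimate-z-1}. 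For \eqref{estimate-z-2} I would estimate $\partial_t z$ in $\boldsymbol{V}'$ from the equation itself: for $w\in\boldsymbol{V}$ with $\|w\|_{\boldsymbol{V}}=1$, \eqref{sol_z} gives $\langle\partial_t z,w\rangle=-\int_\Omega\nabla z\cdot\nabla w+\langle F,w\rangle$, hence $\|\partial_t z\|_{\boldsymbol{V}'}\le\|z\|_{\boldsymbol{V}}+\|F\|_{\boldsymbol{V}'}$; squaring, integrating over $(0,T)$, and using \eqref{estimate-z-1} at $t=T$ to bound $\|z\|_{L^2(0,T;\boldsymbol{V})}$ by $(\|F\|^2_{L^2(0,T;\boldsymbol{V}')}+\|z_0\|^2_{\boldsymbol{H}})^{1/2}\le\|F\|_{L^2(0,T;\boldsymbol{V}')}+\|z_0\|_{\boldsymbol{H}}$ produces the stated bound.

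These bounds, uniform in $m$ for the Galerkin solutions, show that $\{z_m\}$ is bounded in $L^2(0,T;\boldsymbol{V})\cap H^1(0,T;\boldsymbol{V}')$. I would then extract a subsequence converging weakly in this space to some limit $z$; passing to the limit in the Galerkin equations (linearity makes this immediate, as no compactness on the nonlinear term is needed here) shows that $z$ solves \eqref{sol_z} in $\mathcal{D}'(0,T)$, and $z$ inherits the estimates by weak lower semicontinuity of the norms. By Lemma \ref{y'-V'}, $z\in\C([0,T];\boldsymbol{H})$, so the initial condition $z(0)=z_0$ makes sense and is recovered in the limit from the approximate initial data. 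Uniqueness follows by linearity: the difference of two solutions solves \eqref{sol_z} with $F=0$ and $z_0=0$, and \eqref{estimate-z-1} forces it to vanish. The main (and only real) obstacle is the rigorous justification of the energy identity $\langle\partial_t z,z\rangle_{\boldsymbol{V}'\times\boldsymbol{V}}=\frac12\frac{d}{dt}\|z\|^2_{\boldsymbol{H}}$ for the limit $z$, but this is precisely what Lemma \ref{y'-V'} supplies, so the proof reduces to assembling these standard ingredients.
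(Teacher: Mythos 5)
Your proof is correct and follows exactly the route the paper intends: Proposition \ref{Existence} is stated without proof, with a reference to \cite{tartar, Lions69}, and the Faedo--Galerkin construction you assemble (uniform energy estimate with the absorption $\langle F,z\rangle_{\boldsymbol{V}'\times\boldsymbol{V}}\le\frac12\|F\|^2_{\boldsymbol{V}'}+\frac12\|z\|^2_{\boldsymbol{V}}$, the bound $\|\partial_t z\|_{\boldsymbol{V}'}\le\|z\|_{\boldsymbol{V}}+\|F\|_{\boldsymbol{V}'}$ read off from the equation, weak-limit passage justified by linearity, and Lemma \ref{y'-V'} to legitimize the energy identity and the pointwise values $z(t)$) is precisely the standard argument of those references, reproducing \eqref{estimate-z-1} and \eqref{estimate-z-2} with the stated constants. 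No gaps: uniqueness via the $F=0$, $z_0=0$ energy estimate and the recovery of the initial datum in $\C([0,T];\boldsymbol{H})$ are handled correctly.
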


\begin{proof} (of Proposition \ref{prop_existence_v}) 
Let $y\in  L^2(0,T,\boldsymbol{V})\cap H^1(0,T;\boldsymbol{V}')$. 
Then the functions $B(y,y)$ and $B_1(y)$ defined in $\mathcal{D}'(0,T)$ by 
$$
\langle B(y,y),w\rangle =\int_{\Omega} y\cdot \nabla y\cdot w \quad\hbox{ and } \quad\langle B_1(y),w\rangle =\int_{\Omega} \nabla y\cdot \nabla w,  \qquad\forall w\in \boldsymbol{V}
$$
belong to $L^2(0,T;\boldsymbol{V}')$ (see Lemma  \ref{B-V'}  and  \ref{B_1-V'}). 

Moreover, since $y\in   L^2(0,T,\boldsymbol{V})\cap H^1(0,T,\boldsymbol{V}')$ then, in view of (\ref{partial_t y}), in  $\mathcal{D}'(0,T)$, for all $w\in \boldsymbol{V}$  we have :
$$
\frac{d}{dt}\int_\Omega y\cdot w=\langle \partial_t y,w\rangle_{\boldsymbol{V}'\times \boldsymbol{V}}.
$$
Then (\ref{corrector}) may be rewritten as
$$
\left\{
\begin{aligned}
& \frac{d}{dt} \int_\Omega v\cdot w +\int_{\Omega} \nabla v\cdot \nabla w =<F,w>_{\boldsymbol{V}'\times \boldsymbol{V}}, \quad \forall w\in \boldsymbol{V}\\
& v(0)=0,
\end{aligned}
\right.
$$
where $F=f-B(y,y)-\nu B_1y-\partial_t y\in L^2(0,T,\boldsymbol{V}')$;  Proposition \ref{prop_existence_v} is therefore a consequence of Proposition \ref{Existence}.
\end{proof}

\subsection{The least-squares functional}

We now introduce our least-squares functional $E: H^1(0,T,\boldsymbol{V}')\cap L^2(0,T,\boldsymbol{V})\to\mathbb{R}^+$ by putting
\begin{equation}\label{foncE}
E(y)=\frac{1}{2}\int_0^T\| v\|^2_{\boldsymbol{V}}+\frac{1}{2}\int_0^T \| \partial_t v\|^2_{\boldsymbol{V}'}=\frac12\|v\|^2_{\A_0}
\end{equation}
where the corrector $v$ is the unique solution of (\ref{corrector}). The infimum of $E$ is equal to zero and is reached by a solution of (\ref{NS}). In this sense, the functional $E$ is a so-called error functional which measures, through the corrector variable $v$, the deviation of  $y$ from being a solution of the underlying equation (\ref{NS}).  

Beyond this statement, we would like to argue why we believe it is a good idea to use a (minimization) least-squares approach to approximate the solution of \eqref{NS} by minimizing the functional $E$. Our main result of this section is a follows:

\begin{theorem}\label{th-convergence}
Let $\{y_k\}_{k\in \mathbb{N}}$ be a sequence of $\A$ bounded in $L^2(0,T,\boldsymbol{V})\cap H^1(0,T;\boldsymbol{V}')$. If  $E^{\prime}(y_k)\to 0$ as $k\to \infty$, then the whole sequence $\{y_k\}_{k\in\mathbb{N}}$ converges strongly as $k\to \infty$ in $L^2(0,T,\boldsymbol{V})\cap H^1(0,T;\boldsymbol{V}')$ to the solution $\bar y$ of \eqref{NS}.
\end{theorem}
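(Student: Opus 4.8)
The plan is to show that if $E'(y_k)\to 0$ along a bounded sequence, then the associated correctors $v_k$ (solutions of \eqref{corrector}) must vanish in the limit, which forces the limit of $y_k$ to solve \eqref{NS}. The core mechanism should be a coercivity-type inequality: I expect that $E(y)$ itself controls the squared distance of $y$ to the solution $\bar y$, and that $E'(y)$ controls $E(y)$ along the sequence. Concretely, I would look for an estimate of the form $E(y_k)\le C\,\langle E'(y_k),Y_1\rangle$ for a suitable bounded direction $Y_1$ (the element announced in the introduction via \eqref{solution_Y1} and \eqref{E_Eprime}), so that $E'(y_k)\to 0$ together with boundedness of $Y_1$ yields $E(y_k)\to 0$.

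\medskip

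First I would linearize: differentiating $E$ at $y$ in a direction $Y\in\A_0$ gives $\langle E'(y),Y\rangle=\langle v,V\rangle_{\A_0}$, where $v$ is the corrector associated to $y$ and $V$ solves the linearized corrector problem obtained by differentiating \eqref{corrector}, namely a heat-type equation with right-hand side $-(\partial_t Y+\nu B_1(Y)+B(Y,y)+B(y,Y))$ and zero initial data. The key algebraic observation, which I would verify by a direct computation, is that one can choose $Y=Y_1$ so that the linearized corrector $V$ associated to $Y_1$ equals $v$ itself; this is exactly the ``$E'$ parallel to $E$'' property alluded to before \eqref{E_Eprime}. With that choice, $\langle E'(y),Y_1\rangle=\|v\|_{\A_0}^2=2E(y)$, and hence $E(y)\le \tfrac12\|E'(y)\|_{\A_0'}\,\|Y_1\|_{\A_0}$. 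The point $Y_1$ should be constructed as the solution of a linear system solvable by Proposition \ref{Existence}, with norm bounded in terms of $\|v\|_{\A_0}$ and the (uniform) bounds on $y_k$, using the estimates \eqref{Estim-B}, \eqref{Estim-B_2} and \eqref{estim-y-L-infini}; since $\{y_k\}$ is bounded in $L^2(0,T;\boldsymbol{V})\cap H^1(0,T;\boldsymbol{V}')$ and thus, by \eqref{estim-y-L-infini}, in $L^\infty(0,T;\boldsymbol{H})$, the factor $\|Y_1\|_{\A_0}$ stays controlled.

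\medskip

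Given $E(y_k)\to 0$, i.e. $\|v_k\|_{\A_0}\to 0$, I would then extract the limit. Using boundedness in the reflexive space $\A$, pass to a weakly convergent subsequence $y_k\rightharpoonup y^\star$ in $L^2(0,T;\boldsymbol{V})\cap H^1(0,T;\boldsymbol{V}')$; by the Aubin--Lions compactness lemma $y_k\to y^\star$ strongly in $L^2(0,T;\boldsymbol{H})$, which is what is needed to pass to the limit in the nonlinear term $B(y_k,y_k)$ in \eqref{corrector}. Since $v_k\to 0$ in $\A_0$, the corrector equation \eqref{corrector} degenerates in the limit to $\partial_t y^\star+\nu B_1(y^\star)+B(y^\star,y^\star)=f$ with $y^\star(0)=u_0$, so $y^\star$ is a weak solution of \eqref{NS}; by the uniqueness in Proposition \ref{Existence-NS}, $y^\star=\bar y$, and since the limit is the same along every subsequence, the whole sequence converges. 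Finally, to upgrade weak to strong convergence in $\A$, I would use the coercivity inequality once more to bound $\|y_k-\bar y\|_{\A_0}$ by $E(y_k)\to 0$: writing the equation satisfied by $y_k-\bar y$ and testing against an admissible multiplier, $E(y_k)$ should dominate the full $\A_0$-norm of the difference, giving strong convergence directly.

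\medskip

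\emph{The main obstacle} I anticipate is establishing the two-sided link between $E$, $E'$ and the distance to $\bar y$: the upper bound $E(y)\le C\|E'(y)\|\,\|Y_1\|$ requires exhibiting the special direction $Y_1$ and verifying the parallelism identity, while the reverse coercivity bound (that $E(y)$ controls $\|y-\bar y\|_{\A_0}$) requires an energy estimate on the difference $y-\bar y$ in which the nonlinear term $\langle B(y,y)-B(\bar y,\bar y),\,y-\bar y\rangle$ must be absorbed. Controlling that trilinear term uniformly — via \eqref{2D-Case}, \eqref{B-nul} and the $L^\infty(0,T;\boldsymbol{H})$ bound from \eqref{estim-y-L-infini} — is where the two-dimensional structure and the uniform bound on $\{y_k\}$ are essential, and getting the constants to close (so that viscosity $\nu$ enters only through how many initial iterates are needed, not through loss of coercivity) is the delicate point.
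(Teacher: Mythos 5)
Your proposal follows essentially the same route as the paper's proof: the parallelism identity $E'(y_k)\cdot Y_{1,k}=2E(y_k)$ (Propositions \ref{differentiable} and \ref{convergence}), combined with the uniform bound on $Y_{1,k}$ in $\A_0$ along a bounded sequence (Proposition \ref{prop_boundY1}), gives $E(y_k)\to 0$, and the Gronwall-based coercivity estimate \eqref{coercivity} of Proposition \ref{main-estimatee} then yields strong convergence of the whole sequence to $\bar y$. Two minor remarks that do not affect the correctness of your scheme: the intermediate weak-compactness/Aubin--Lions identification of the limit is redundant once the coercivity bound is in hand (the paper omits it), and the existence of $Y_1$ does not follow directly from Proposition \ref{Existence} because of the extra terms $B(y,Y_1)+B(Y_1,y)$ --- the paper handles this with a contraction-mapping argument.
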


As in \cite{lemoinemunch}, we divide the proof in two main steps. 
\begin{enumerate}
\item First, we use a typical \textit{a priori} bound to show that leading the error functional $E$ down to zero implies strong convergence to the unique solution of \eqref{NS}. 
\item Next, we show that taking the derivative $E'$ to zero actually suffices to take $E$ to zero. 
\end{enumerate}

Before to prove this result, we mention the following equivalence which justifies the least-squares terminology we have used in the following sense: the minimization of the functional $E$ is equivalent to the minimization of the $L^2(0,T,\boldsymbol{V^\prime})$-norm of the main equation of the Navier-Stokes system. 

\begin{lemma}
There exists $c_1>0$ and $c_2>0$ such that 
$$c_1E(y)\le \| y_t+\nu B_1(y)+ B(y,y) -f\|^2_{L^2(0,T;  \boldsymbol{V}')}\le c_2E(y)$$
for all $y\in L^2(0,T,\boldsymbol{V})\cap H^1(0,T;\boldsymbol{V}^\prime)$. 
\end{lemma}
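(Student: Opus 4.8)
The plan is to establish the two-sided norm equivalence by directly relating the corrector $v$ to the residual $F := y_t + \nu B_1(y) + B(y,y) - f$. The crucial observation is that by the definition of the corrector in \eqref{corrector}, $v$ is precisely the solution of the linear parabolic problem \eqref{sol_z} with source term $-F$ and zero initial datum. Indeed, rewriting \eqref{corrector} using \eqref{partial_t y} gives $\frac{d}{dt}\int_\Omega v\cdot w + \int_\Omega \nabla v\cdot\nabla w = \langle -F, w\rangle_{\boldsymbol{V}'\times\boldsymbol{V}}$ with $v(0)=0$. So the corrector map $F\mapsto v$ is a bounded \emph{linear} isomorphism between $L^2(0,T;\boldsymbol{V}')$ and $\A_0$, and the equivalence of norms reduces to showing that the two norms $\|v\|_{\A_0}$ and $\|F\|_{L^2(0,T;\boldsymbol{V}')}$ are comparable.

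For the upper bound $\|F\|^2_{L^2(0,T;\boldsymbol{V}')}\le c_2 E(y)$, I would recover $F$ from $v$. Since $v$ solves the linear equation above, we have in $L^2(0,T;\boldsymbol{V}')$ the identity $F = -(\partial_t v + B_1(v))$ (interpreting $B_1(v)$ via Lemma~\ref{B_1-V'}). The triangle inequality then yields
$$
\|F\|_{L^2(0,T;\boldsymbol{V}')}\le \|\partial_t v\|_{L^2(0,T;\boldsymbol{V}')}+\|B_1(v)\|_{L^2(0,T;\boldsymbol{V}')}\le \|\partial_t v\|_{L^2(0,T;\boldsymbol{V}')}+\|v\|_{L^2(0,T;\boldsymbol{V})},
$$
where the estimate on $B_1(v)$ comes from Lemma~\ref{B_1-V'}. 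Squaring and using $(a+b)^2\le 2a^2+2b^2$ gives $\|F\|^2_{L^2(0,T;\boldsymbol{V}')}\le 2\|v\|^2_{\A_0}=4E(y)$, so $c_2=4$ works.

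For the lower bound $c_1 E(y)\le \|F\|^2_{L^2(0,T;\boldsymbol{V}')}$, I would invoke directly the a priori estimates already recorded in Proposition~\ref{prop_existence_v} (equivalently Proposition~\ref{Existence} with $z_0=0$ and source $-F$). From \eqref{estimate-z-1} at $t=T$ we get $\|v\|^2_{L^2(0,T;\boldsymbol{V})}\le \|F\|^2_{L^2(0,T;\boldsymbol{V}')}$, and from \eqref{estimate-z-2} we get $\|\partial_t v\|_{L^2(0,T;\boldsymbol{V}')}\le 2\|F\|_{L^2(0,T;\boldsymbol{V}')}$. Combining,
$$
2E(y)=\|v\|^2_{\A_0}=\|v\|^2_{L^2(0,T;\boldsymbol{V})}+\|\partial_t v\|^2_{L^2(0,T;\boldsymbol{V}')}\le 5\|F\|^2_{L^2(0,T;\boldsymbol{V}')},
$$
which gives $c_1=\tfrac{2}{5}$. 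Both inequalities are thus immediate from the linear theory already established.

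I do not expect a genuine obstacle here, as both directions follow from the boundedness estimates of Proposition~\ref{Existence} together with Lemma~\ref{B_1-V'}; the only point requiring a little care is the clean identification of the corrector equation \eqref{corrector} with the linear problem \eqref{sol_z} for source $-F$, which legitimizes writing $F=-(\partial_t v+B_1(v))$ in $L^2(0,T;\boldsymbol{V}')$ and hence lets the upper bound be read off directly rather than re-derived. Once that identification is made explicit, the constants $c_1,c_2$ are fully explicit and depend only on the Poincaré/coercivity constant implicit in the norm $\|\cdot\|_{\boldsymbol{V}}$.
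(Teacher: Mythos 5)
Your proposal is correct and follows essentially the same route as the paper's proof: the lower bound comes from the a priori estimates of Proposition \ref{prop_existence_v} giving $2E(y)=\|v\|^2_{\A_0}\le 5\|F\|^2_{L^2(0,T;\boldsymbol{V}')}$, and the upper bound from identifying $F=-(\partial_t v+B_1(v))$ via \eqref{corrector} and applying the triangle inequality with Lemma \ref{B_1-V'}. Even your constants $c_1=\tfrac{2}{5}$ and $c_2=4$ match the paper's, which writes the upper bound as $\|F\|_{L^2(0,T;\boldsymbol{V}')}\le\sqrt{2}\,\|v\|_{\A_0}=2\sqrt{E(y)}$.
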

\begin{proof}
From Proposition \ref{prop_existence_v} we deduce that 
$$
2E(y)=\|v\|^2_{\A_0}\le 5\| y_t+\nu B_1(y)+ B(y,y) -f\|^2_{L^2(0,T;  \boldsymbol{V}')}.
$$
On the other hand, from the definition of $v$,
$$
\begin{aligned}
 &\| y_t+\nu B_1(y)+ B(y,y) -f\|_{L^2(0,T;  \boldsymbol{V}')}=\|v_t+B_1(v) \|_{L^2(0,T;  \boldsymbol{V}')}\\
&\hspace*{1.5cm}\le \|v_t \|_{L^2(0,T;  \boldsymbol{V}')}+\| B_1(v) \|_{L^2(0,T;  \boldsymbol{V}')}\le \sqrt{2} \|v\|_{\A_0}=2\sqrt{E(y)}.
\end{aligned}
$$
\end{proof}

We start with the following proposition which establishes that as we take down the error $E$ to zero, we get closer, in the norm $L^2(0,T; \boldsymbol{V})$  and $H^1(0,T; \boldsymbol{V}')$, to the solution $\bar y$ of the problem (\ref{NS}), and so, it justifies why a promising strategy to find good approximations of the solution of problem \eqref{NS} is to look for global minimizers of \eqref{foncE}. 

\begin{proposition}\label{main-estimatee}
Let $\bar y\in\A $ be the solution of (\ref{NS}), $M\in \mathbb{R}$ such that $\Vert \partial_t \bar y\Vert_{L^{2}(0,T,\boldsymbol{V}')}\leq M$ and $\sqrt{\nu}\Vert \nabla \bar y\Vert_{L^2(Q_T)^4}\leq M$ and let $y\in \A$. If $\Vert \partial_t y\Vert_{L^{2}(0,T,\boldsymbol{V}')}\leq M$ and $\sqrt{\nu}\Vert \nabla y\Vert_{L^2(Q_T)^4}\leq M$, then there exists a constant $c(M)$ such that 
\begin{equation}
\| y-\bar y\|_{L^\infty(0,T;\boldsymbol{H})}  +\sqrt{\nu}\| y-\bar y\|_{L^2(0,T;\boldsymbol{V})}+\|\partial_t y-\partial_t \bar y\|_{L^2(0,T,\boldsymbol{V}')}\leq c(M) \sqrt{E(y)}. \label{coercivity}
\end{equation}
\end{proposition}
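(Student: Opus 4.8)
The plan is to establish the estimate \eqref{coercivity} by subtracting the equation satisfied by $\bar y$ from the defining equation of the corrector $v$ associated to $y$, thereby obtaining a linear evolution equation for the error $z := y - \bar y$ with $v$ acting as a source term. First I would write down what $v$ satisfies: by definition \eqref{corrector}, for all $w \in \boldsymbol{V}$,
\begin{equation*}
\frac{d}{dt}\int_\Omega v\cdot w + \int_\Omega \nabla v\cdot\nabla w = \langle f,w\rangle - \frac{d}{dt}\int_\Omega y\cdot w - \nu\int_\Omega\nabla y\cdot\nabla w - \int_\Omega y\cdot\nabla y\cdot w,
\end{equation*}
while $\bar y$ satisfies \eqref{NS}. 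Subtracting and using the bilinearity structure of $B$ and $B_1$, I expect to get an equation of the form
\begin{equation*}
\langle \partial_t z,w\rangle + \nu\langle B_1(z),w\rangle + \langle B(y,y),w\rangle - \langle B(\bar y,\bar y),w\rangle = -\langle \partial_t v + B_1(v),w\rangle, \quad z(0)=0.
\end{equation*}

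Next I would test this identity with $w = z(t)$. The time-derivative term gives $\tfrac12\frac{d}{dt}\|z\|_{\boldsymbol{H}}^2$ by \eqref{partial_t y}, the viscous term gives $\nu\|z\|_{\boldsymbol{V}}^2$, and the delicate piece is the nonlinearity. Writing the standard decomposition $B(y,y) - B(\bar y,\bar y) = B(z,\bar y) + B(\bar y, z) + B(z,z)$ and invoking the orthogonality \eqref{B-nul}, namely $\langle B(u,z),z\rangle = 0$, the terms $B(\bar y,z)$ and $B(z,z)$ drop out when paired with $z$, leaving only $\langle B(z,\bar y),z\rangle$ to control. Using the trilinear estimate \eqref{2D-Case} together with the interpolation bound \eqref{estim-y-L-infini}, this term should be absorbable into the viscous dissipation via Young's inequality, at the price of a constant depending on $M$ through the bound $\sqrt\nu\|\nabla\bar y\|_{L^2(Q_T)}\le M$. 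The right-hand side $\langle \partial_t v + B_1(v),z\rangle$ is estimated by $\|\partial_t v\|_{\boldsymbol{V}'}\|z\|_{\boldsymbol{V}} + \|v\|_{\boldsymbol{V}}\|z\|_{\boldsymbol{V}}$ and again split off by Young's inequality, producing a term proportional to $\|v\|_{\boldsymbol{V}}^2 + \|\partial_t v\|_{\boldsymbol{V}'}^2$, i.e. to $E(y)$.

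After these manipulations I expect a differential inequality of Gronwall type,
\begin{equation*}
\frac{d}{dt}\|z\|_{\boldsymbol{H}}^2 + \nu\|z\|_{\boldsymbol{V}}^2 \le C(M)\,\|z\|_{\boldsymbol{H}}^2\,g(t) + C(M)\,\big(\|v\|_{\boldsymbol{V}}^2 + \|\partial_t v\|_{\boldsymbol{V}'}^2\big),
\end{equation*}
where $g \in L^1(0,T)$ comes from the coefficients of $\bar y$. Integrating in time with $z(0)=0$ and applying Gronwall's lemma yields control of $\|z\|_{L^\infty(0,T;\boldsymbol{H})}^2 + \nu\|z\|_{L^2(0,T;\boldsymbol{V})}^2$ by $C(M)\,E(y)$, which handles the first two terms of \eqref{coercivity}. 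For the third term $\|\partial_t z\|_{L^2(0,T;\boldsymbol{V}')}$, I would return to the error equation, take the $\boldsymbol{V}'$ norm of $\partial_t z = -\nu B_1(z) - (B(y,y)-B(\bar y,\bar y)) - \partial_t v - B_1(v)$, and bound each piece using \eqref{Estim-B}, \eqref{Estim-B_2}, the already-established $L^\infty(\boldsymbol{H})$ and $L^2(\boldsymbol{V})$ bounds on $z$, and the uniform bounds $M$ on $y,\bar y$.

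The main obstacle I anticipate is the careful treatment of the trilinear term $\langle B(z,\bar y),z\rangle$: closing the Gronwall argument requires bounding it by $\epsilon\,\nu\|z\|_{\boldsymbol{V}}^2$ plus an $L^1$-in-time multiple of $\|z\|_{\boldsymbol{H}}^2$, and it is precisely here that the two-dimensional structure (through \eqref{2D-Case} and the interpolation \eqref{estim-y-L-infini}) and the dependence of the constant on $M$ and on $\nu$ enter. Keeping explicit track of how the constant $c(M)$ degenerates as $\nu \to 0$ is the technically demanding bookkeeping step, but it should go through by the standard absorption argument.
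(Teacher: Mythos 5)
Your proposal is correct and follows essentially the same route as the paper: subtract \eqref{NS} from \eqref{corrector} to get a linear equation for the error with $\partial_t v + B_1(v)$ as source, perform the energy estimate using the orthogonality \eqref{B-nul} and the trilinear bound \eqref{2D-Case}, close via Gronwall, and then recover $\|\partial_t z\|_{L^2(0,T;\boldsymbol{V}')}$ directly from the equation using \eqref{Estim-B}, \eqref{Estim-B_2} and \eqref{estim-y-L-infini}. The only cosmetic difference is your three-term splitting $B(z,\bar y)+B(\bar y,z)+B(z,z)$ versus the paper's two-term $B(Y,y)+B(\bar y,Y)$, which after \eqref{B-nul} leaves the surviving trilinear term with coefficient $\bar y$ instead of $y$ — immaterial, since both carry the same bound $M/\sqrt{\nu}$ in $L^2(0,T;\boldsymbol{V})$.
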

\par\noindent
\begin{proof} Let $Y=y-\bar y$.
The functions $B(Y,y)$, $B(\bar y,Y)$ and $B_1(v)$ defined in $\mathcal{D}'(0,T)$ by 
$$
\langle B(Y,y),w\rangle =\int_{\Omega} Y\cdot \nabla y\cdot w,\ \langle B(\bar y,Y),w\rangle =\int_{\Omega} \bar y\cdot \nabla Y\cdot w 
\hbox{ and }   \langle B_1(v),w\rangle =\int_{\Omega} \nabla v\cdot \nabla w \qquad\forall w\in \boldsymbol{V}
$$
belong to $L^2(0,T;\boldsymbol{V}')$ (see Lemma  \ref{B-V'} and  \ref{B_1-V'}), and from (\ref{NS}), (\ref{corrector})    and  (\ref{partial_t y}) we deduce that 
$$
\left\{
\begin{aligned}
&\frac{d}{dt} \int_\Omega Y\cdot w+\nu\int_{\Omega} \nabla Y\cdot \nabla w  =-\langle \partial_t v+B_1(v)+ B(Y,y)+B(\bar y,Y),w \rangle_{\boldsymbol{V}'\times \boldsymbol{V}}, \quad \forall w\in \boldsymbol{V}\\
& Y(0)=0,
\end{aligned}
\right.
$$
and from (\ref{estimate-z-1}),  (\ref{estimate-z-2}), (\ref{Estim-B}), (\ref{B-nul}) and (\ref{Estim-B_2}) we deduce that for all $t\in[0,T]$
$$
\begin{aligned}
\int_\Omega \vert Y(t)\vert^2 +\nu\int_{Q_t} \vert \nabla Y\vert^2
&\le \frac{1}{\nu}\int_0^t\| \partial_t v+ B_1 (v)+ B(Y,y)\|_{\boldsymbol{V}'}^2\\
&\le \frac{4}{\nu}(\| \partial_t v\|^2_{L^2(0,T,\boldsymbol{V}')}+\|v\|^2_{L^2(0,T,\boldsymbol{V})}+c\int_0^t\|Y\|_2^2\| y\|_{\boldsymbol{V}}^2)\\
&\le \frac{4}{\nu}(2E(y)+c\int_0^t\|Y\|_2^2\| y\|_{\boldsymbol{V}}^2).
\end{aligned}
$$
Gronwall's lemma then implies that for all $t\in[0,T]$
$$
\int_\Omega \vert Y(t)\vert^2  +\nu\int_{Q_t} \vert \nabla Y\vert^2\leq \frac8\nu E(y)\exp\big(\frac{c}\nu\int_0^t \|y\|_{\boldsymbol{V}}^2\big)
\leq \frac8\nu E(y)\exp(\frac{c}{\nu^2}M^2)$$ 
which gives
$$\|Y\|_{L^\infty(0,T;\boldsymbol{H})}+\sqrt{\nu}\|Y\|_{L^2(0,T;\boldsymbol{V})}\le \frac{4\sqrt{2}}{\sqrt{\nu}}\sqrt{ E(y)}\exp(\frac{c}{\nu^2}M^2)\le C(M)\sqrt{E(y)}.$$
Now
$$
\begin{aligned}
\|\partial_t Y\|_{L^2(0,T,\boldsymbol{V}')}
&\le \| \partial_t v+ B_1 (v)+\nu B_1(Y)+ B(Y,y)+B(\bar y,Y)\|_{L^2(0,T;\boldsymbol{V}')}\\
&\le  \nu\|Y\|_{L^2(0,T,\boldsymbol{V})}+ \| \partial_t v\|_{L^2(0,T,\boldsymbol{V}')}+\| v\|_{L^2(0,T,\boldsymbol{V})}\\
&\quad +c\|Y\|_{L^\infty(0,T;\boldsymbol{H})}\| y\|_{L^2(0,T;\boldsymbol{V})}+c\|\bar y\|_{L^\infty(0,T;\boldsymbol{H})}\|Y\|_{L^2(0,T,\boldsymbol{V})}\\
&\le  \sqrt{E(y)}\biggl(2\sqrt{2}\exp(\frac{c}{\nu^2}M^2)+2\sqrt{2}+cM\frac{4\sqrt{2}}{\nu}\exp(\frac{c}{\nu^2}M^2)\biggr)
\end{aligned} 
$$
and thus
$$\|\partial_t Y\|_{L^2(0,T,\boldsymbol{V}')}\le c(M)\sqrt{E(y)}.$$
\end{proof}

We now proceed with the second part of the proof and would like to show that the only critical points for $E$ correspond to solutions of (\ref{NS}). In such a case, the search for an element $y$ solution of \eqref{NS} is reduced to the minimization of $E$.

For any  $y\in  \A$, we now look for an element $Y_1\in \A_0$ solution of the following formulation 
\begin{equation}
\label{solution_Y1}
\left\{
\begin{aligned}
& \frac{d}{dt}\int_\Omega Y_1\cdot w +\nu\int_{\Omega} \nabla Y_1\cdot \nabla w +\int_{\Omega} y\cdot \nabla Y_1\cdot w \\
&\hspace*{3cm}+\int_{\Omega} Y_1\cdot \nabla y\cdot w=-\frac{d}{dt}\int_\Omega v\cdot w -  \int_\Omega \nabla v\cdot \nabla w, \quad \forall w\in \boldsymbol{V} \\
& Y_1(0)=0,
\end{aligned}
\right.
\end{equation}
where $v\in \A_0$ is the corrector (associated to $y$) solution of \eqref{corrector}. $Y_1$ enjoys the following property:

\begin{proposition}\label{prop_boundY1}
For all $y\in\A$, there exists a unique  $Y_1\in\A_0$ solution of (\ref{solution_Y1}). Moreover if for some $M\in\mathbb{R}$,  $\Vert \partial_t y\Vert_{L^{2}(0,T,\boldsymbol{V}')}\leq M$ and $\sqrt{\nu}\Vert \nabla y\Vert_{L^2(Q_T)^4}\leq M$, then this solution satisfies 
$$
\|\partial_t Y_1\|_{L^2(0,T,\boldsymbol{V}^\prime)}+\sqrt{\nu}\Vert \nabla Y_1\Vert_{L^2(Q_T)^4}\le c(M)\sqrt{E(y)}
$$
for some constant $c(M)>0$.
\end{proposition}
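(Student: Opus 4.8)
The plan is to treat (\ref{solution_Y1}) as a linear non-autonomous parabolic problem for $Y_1$ driven by the right-hand side $F:=-\partial_t v-B_1(v)$, which belongs to $L^2(0,T;\boldsymbol{V}')$ since $v\in\A_0$ (using (\ref{Estim-B_2})). The left-hand side is the Stokes operator $\partial_t+\nu B_1$ perturbed by the two \emph{linear} (in $Y_1$) transport terms $B(y,Y_1)$ and $B(Y_1,y)$, so I would derive the stated bound by the same energy method already used in Propositions \ref{Existence} and \ref{prop_existence_v}, now accounting for these extra terms.

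First, existence and uniqueness. I would set up a Faedo--Galerkin scheme on a basis of $\boldsymbol{V}$ (e.g.\ the Stokes eigenfunctions): the projected problem is a linear ODE system with $L^2$-in-time coefficients, hence uniquely solvable on $[0,T]$. The a priori bounds below, uniform in the Galerkin dimension, together with Aubin--Lions compactness, let me pass to the limit and recover a solution $Y_1\in\A_0$. Uniqueness follows from linearity: the difference $W$ of two solutions solves the homogeneous problem with $W(0)=0$, and testing with $W$ (so that $\langle B(y,W),W\rangle=0$ by (\ref{B-nul})) followed by Gronwall forces $W\equiv0$.

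For the quantitative estimate I would test (\ref{solution_Y1}) with $w=Y_1(t)$. The term $\langle B(y,Y_1),Y_1\rangle$ vanishes by (\ref{B-nul}), so (\ref{partial_t y}) gives
$$\tfrac12\tfrac{d}{dt}\|Y_1\|_{\boldsymbol{H}}^2+\nu\|\nabla Y_1\|_2^2+\int_\Omega Y_1\cdot\nabla y\cdot Y_1=\langle F,Y_1\rangle.$$
The surviving trilinear term is controlled by (\ref{2D-Case}), $|\int_\Omega Y_1\cdot\nabla y\cdot Y_1|\le c\|Y_1\|_{\boldsymbol{H}}\|y\|_{\boldsymbol{V}}\|Y_1\|_{\boldsymbol{V}}$, and absorbed into the viscous term by Young's inequality, leaving a Gronwall coefficient $\tfrac{c}{\nu}\|y\|_{\boldsymbol{V}}^2\|Y_1\|_{\boldsymbol{H}}^2$; the source term is split off the same way. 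Using the hypothesis $\int_0^T\|y\|_{\boldsymbol{V}}^2\le M^2/\nu$ and the bound $\|F\|_{L^2(0,T;\boldsymbol{V}')}\le\|\partial_t v\|_{L^2(0,T;\boldsymbol{V}')}+\|v\|_{L^2(0,T;\boldsymbol{V})}\le 2\sqrt{E(y)}$ (from (\ref{Estim-B_2}) and the definition of $E$), Gronwall's lemma yields
$$\|Y_1\|_{L^\infty(0,T;\boldsymbol{H})}^2+\nu\|Y_1\|_{L^2(0,T;\boldsymbol{V})}^2\le \frac{c}{\nu}E(y)\exp\Big(\frac{cM^2}{\nu^2}\Big),$$
which is exactly the claimed bound $\sqrt{\nu}\,\|\nabla Y_1\|_{L^2(Q_T)^4}\le c(M)\sqrt{E(y)}$, and in addition an $L^\infty(0,T;\boldsymbol{H})$ bound of the same type.

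Finally, to bound $\partial_t Y_1$ I would read it off the equation, $\partial_t Y_1=F-\nu B_1(Y_1)-B(y,Y_1)-B(Y_1,y)$, and estimate each term in $L^2(0,T;\boldsymbol{V}')$: the viscous term by (\ref{Estim-B_2}), and the two transport terms by (\ref{Estim-B}), which bring in the factors $\|y\|_{L^\infty(0,T;\boldsymbol{H})}$ and $\|Y_1\|_{L^\infty(0,T;\boldsymbol{H})}$. The first is bounded in terms of $M$ and $\nu$ via (\ref{estim-y-L-infini}) and the hypotheses on $y$, and the second is the $L^\infty(0,T;\boldsymbol{H})$ bound just obtained; combined with the $L^2(0,T;\boldsymbol{V})$ bound on $Y_1$, every term is $\le c(M)\sqrt{E(y)}$. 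The main obstacle is not any single estimate but the bookkeeping of how all constants depend on $M$ and $\nu$, and in particular the handling of $B(Y_1,y)$, which (unlike $B(y,Y_1)$) does \emph{not} vanish on testing with $Y_1$ and must be absorbed into the dissipation before Gronwall is applied.
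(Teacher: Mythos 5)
Your proof is correct, and its quantitative core coincides with the paper's: both test \eqref{solution_Y1} with $Y_1$ itself, use \eqref{B-nul} to kill $\langle B(y,Y_1),Y_1\rangle$, absorb $\int_\Omega Y_1\cdot\nabla y\cdot Y_1$ into the dissipation via \eqref{2D-Case} and Young, apply Gronwall with the coefficient $\frac{c}{\nu}\|y\|_{\boldsymbol{V}}^2$ (bounded through $\int_0^T\|y\|_{\boldsymbol{V}}^2\le M^2/\nu$) to obtain exactly the paper's estimate \eqref{esti1-Y_1}, and then read $\partial_t Y_1$ off the equation, pairing $B(y,Y_1)$ with $\|y\|_{L^\infty(0,T;\boldsymbol{H})}$ (controlled by $M$ via \eqref{estim-y-L-infini}) and $B(Y_1,y)$ with the $L^\infty(0,T;\boldsymbol{H})$ bound on $Y_1$, as in \eqref{esti2-Y_1}. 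Where you genuinely diverge is the existence--uniqueness step: you build $Y_1$ by Faedo--Galerkin on the Stokes eigenbasis with uniform energy bounds and Aubin--Lions, whereas the paper freezes only the zeroth-order term, defining $\mathcal{T}:y_1\mapsto z_1$ through \eqref{solution_Z_1} (keeping $B(y,z_1)$ implicit and citing Temam for the linearized problem), shows $\mathcal{T}$ is a contraction on $\mathcal{C}([0,t'];\boldsymbol{H})\cap L^2(0,t';\boldsymbol{V})$ for $t'$ with $\int_0^{t'}\|y\|_{\boldsymbol{V}}^2\le\frac{1}{2c}$, and covers $[0,T]$ using the uniform continuity of $t\mapsto\int_0^t\|\nabla y\|_2^2$. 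Your route is more self-contained and global in time in one stroke, at the price of the compactness machinery (which, since the equation is linear in $Y_1$, is actually heavier than needed: weak convergence of the Galerkin approximants already suffices to pass to the limit against fixed test functions, by the boundedness of $u\mapsto\int_0^T\varphi\int_\Omega u\cdot\nabla y\cdot w$ on $L^2(0,T;\boldsymbol{H})$); the paper's route avoids compactness altogether by leaning on a cited linear result, but must do the short-time-then-continuation step. Two negligible slips in your bookkeeping: the correct bound is $\|F\|_{L^2(0,T;\boldsymbol{V}')}\le 2\sqrt{2E(y)}$ rather than $2\sqrt{E(y)}$, and your uniqueness paragraph should state explicitly (as your closing remark implicitly does) that the surviving term $\langle B(W,y),W\rangle$ is absorbed before Gronwall; neither affects the conclusion.
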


\begin{proof} As in Proposition  \ref{main-estimatee},    (\ref{solution_Y1}) can be written as 
\begin{equation}
\label{solution_Z}
\left\{
\begin{aligned}
&\frac{d}{dt} \int_\Omega Y_1\cdot w +\nu\int_{\Omega} \nabla Y_1\cdot \nabla w +\int_{\Omega} y\cdot \nabla Y_1\cdot w +\int_{\Omega} Y_1\cdot \nabla y\cdot w=-
\langle \partial_t v+B_1(v),w\rangle_{\boldsymbol{V}'\times\boldsymbol{V}}, \quad \forall w\in \boldsymbol{V}\\
& Y_1(0)=0.
\end{aligned}
\right.
\end{equation}
(\ref{solution_Z}) admits a unique solution   $Y_1\in \A_0$.
Indeed, let $y_1\in L^2(0,T,\boldsymbol{V})\cap \C([0,T];\boldsymbol{H})$. Moreover,  there exists (see \cite{Temam}) a unique $z_1\in \A_0$ solution of  
\begin{equation}
\label{solution_Z_1}
\left\{
\begin{aligned}
&\frac{d}{dt} \int_\Omega z_1\cdot w +\nu\int_{\Omega} \nabla z_1\cdot \nabla w +\int_{\Omega} y\cdot \nabla z_1\cdot w +\int_{\Omega} y_1\cdot \nabla y\cdot w=-
\langle \partial_t v+B_1(v),w\rangle_{\boldsymbol{V}'\times\boldsymbol{V}}, \quad \forall w\in \boldsymbol{V} \\
& z_1(0)=0.
\end{aligned}
\right.
\end{equation}
Let $\mathcal{T}:y_1\mapsto z_1$. Then if $z_2=\mathcal{T}(y_2)$, $z_1-z_2$ is solution of 
$$
\left\{
\begin{aligned}
&\frac{d}{dt} \int_\Omega (z_1-z_2)\cdot w +\nu\int_{\Omega} \nabla (z_1-z_2)\cdot \nabla w +\int_{\Omega} y\cdot \nabla (z_1-z_2)\cdot w +\int_{\Omega} (y_1-y_2)\cdot \nabla y\cdot w=0, \quad \forall w\in \boldsymbol{V} \\
&(z_1-z_2)(0)=0,
\end{aligned}
\right.
$$
and thus, for $w=z_1-z_2$
$$\frac12\frac{d}{dt} \int_\Omega|z_1-z_2|^2 +\nu\int_{\Omega} |\nabla (z_1-z_2)|^2 =-\int_{\Omega} (y_1-y_2)\cdot \nabla y\cdot (z_1-z_2).$$
But
$$\Big|\int_{\Omega} (y_1-y_2)\cdot \nabla y\cdot (z_1-z_2)\Big|
\le c\| y_1-y_2\|_2\| y\|_{\boldsymbol{V}}\|\nabla(z_1-z_2)\|_2\le c\| y_1-y_2\|^2_2\| y\|^2_{\boldsymbol{V}}+\frac{\nu}2\|\nabla(z_1-z_2)\|^2_2$$
so that
$$
\frac{d}{dt} \int_\Omega|z_1-z_2|^2 +\nu\int_{\Omega} |\nabla (z_1-z_2)|^2 \le c \| y_1-y_2\|^2_2\| y\|^2_{\boldsymbol{V}},
$$
and for all $t\in[0,T]$ 
$$ \| z_1-z_2\|_{L^\infty(0,t,\boldsymbol{H})}^2+\nu\int_0^{t}\!\!\!\int_{\Omega} |\nabla (z_1-z_2)|^2 \le c \| y_1-y_2\|_{L^\infty(0,t,\boldsymbol{H})}^2\int_0^t\| y\|^2_{\boldsymbol{V}}.$$

Since $y\in L^2(0,T;\boldsymbol{V})$, there exists $t'\in]0,T]$ such that $\int_0^{t'}\| y\|^2_{\boldsymbol{V}}\le \frac1{2c}$. We then have 
$$ \| z_1-z_2\|_{L^\infty(0,t',\boldsymbol{H})}^2+\nu\int_0^{t'}\!\!\!\!\int_{\Omega} |\nabla (z_1-z_2)|^2 \le \frac12 \| y_1-y_2\|_{L^\infty(0,t',\boldsymbol{H})}^2$$
and the map $\mathcal{T}$ is a contraction mapping on $X=\C([0,t'];\boldsymbol{H})\cap L^2(0,t';\boldsymbol{V})$. So $\mathcal{T}$ admits a unique fixed point $Y_1\in X$.
Moreover, from (\ref{solution_Z_1}) we deduce that $\partial_tY_1\in L^2(0,t', \boldsymbol{V}')$. 
Since the map $t\mapsto \int_0^{t}\|\nabla y\|^2_2$ is a uniformly continuous function, we can take $t'=T$.

For this solution we have, for all $t\in[0,T]$, since $\int_{Q_t}y\cdot\nabla Y_1\cdot Y_1=0$
$$
\frac{1}{2}\int_\Omega \vert Y_1(t)\vert^2 +\nu\int_{Q_t} \vert \nabla Y_1\vert^2= - \int_0^t\langle B(Y_1,y)+\partial_t v+  B_1(v),Y_1\rangle_{\boldsymbol{V}'\times\boldsymbol{V}}. 
$$
Moreover, as in the proof of Proposition \ref{main-estimatee}, we have 
\begin{equation}\label{esti1-Y_1}
\int_\Omega \vert Y_1(t)\vert^2 +\nu\int_{Q_t} \vert \nabla Y_1\vert^2\le \frac8\nu E(y)\exp(\frac{c}\nu\int_0^t \|y\|_{\boldsymbol{V}}^2) 
\end{equation}
and thus
$$\sqrt{\nu}\|Y_1\|_{L^2(0,T;\boldsymbol{V})}\le \frac{2\sqrt{2}}{\sqrt{\nu}}\sqrt{ E(y)}\exp(\frac{c}\nu\int_0^T \|y\|_{\boldsymbol{V}}^2) \leq \frac{2\sqrt{2}}{\sqrt{\nu}}\sqrt{ E(y)}\exp(\frac{c}{\nu^2}M^2)\le c(M)\sqrt{E(y)}
$$
and
\begin{equation}\label{esti2-Y_1}
\begin{aligned}
\|\partial_t Y_1\|_{L^2(0,T,\boldsymbol{V}')}  
&\le \sqrt{E(y)}(2\sqrt{2}\exp(\frac{c}\nu\int_0^T \|y\|_{\boldsymbol{V}}^2)+2\sqrt{2}+c\|y\|_{L^2(0,T;\boldsymbol{V})}\frac{2\sqrt{2}}{\sqrt{\nu}}\exp(\frac{c}\nu\int_0^T \|y\|_{\boldsymbol{V}}^2) \\
&\hskip 3cm +c\|y\|_{L^\infty(0,T;\boldsymbol{H})}\frac{2\sqrt{2}}{{\nu}}\exp(\frac{c}\nu\int_0^T \|y\|_{\boldsymbol{V}}^2) )\le c(M)\sqrt{E(y)}\\
&\le \sqrt{E(y)}\biggl(2\sqrt{2}\exp(\frac{c}{\nu^2}M^2)+2\sqrt{2}+cM\frac{4\sqrt{2}}{\nu}\exp(\frac{c}{\nu^2}M^2)\biggr)\le c(M)\sqrt{E(y)}.
\end{aligned}
\end{equation}
 
\end{proof}

\begin{proposition}\label{differentiable}
For all $y\in\A$, the map $Y\mapsto E(y+Y)$ is a differentiable function on the Hilbert space $\A_0$ and for any $Y\in \A_0$, we have 
$$
E^\prime(y)\cdot Y=\langle v,V \rangle_{\A_0}=\int_0^T\langle v,V\rangle_{\boldsymbol{V}}+\int_0^T\langle \partial_tv,\partial_tV\rangle_{\boldsymbol{V}'}
$$
where $V\in\A_0$ is the unique solution in $\mathcal{D}^{\prime}(0,T)$ of 
\begin{equation}
\label{solution_V}
\left\{
\begin{aligned}
& \frac{d}{dt} \int_\Omega V\cdot w +\int_{\Omega} \nabla V\cdot \nabla w +\frac{d}{dt} \int_\Omega Y\cdot w +\nu\int_{\Omega} \nabla Y\cdot \nabla w +\int_{\Omega} y\cdot \nabla Y\cdot w \\
&\hspace*{7cm}+\int_{\Omega} Y\cdot \nabla y\cdot w=0, \quad \forall w\in \boldsymbol{V} \\
& V(0)=0.
\end{aligned}
\right.
\end{equation}
\end{proposition}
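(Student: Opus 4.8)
The plan is to prove Fréchet differentiability directly from the definition \eqref{foncE} by computing how the corrector $v$ reacts to a perturbation $y\mapsto y+Y$, and then isolating the part of the response that is linear in $Y$. First I would observe that since $Y\in\A_0$ satisfies $Y(0)=0$, the perturbed function $y+Y$ again belongs to $\A$, so by \eqref{corrector} its corrector $v_1:=v(y+Y)$ is well defined; write $v:=v(y)$. Subtracting the two corrector equations and expanding the nonlinear term as $(y+Y)\cdot\nabla(y+Y)-y\cdot\nabla y=y\cdot\nabla Y+Y\cdot\nabla y+Y\cdot\nabla Y$, one finds that $v_1-v$ solves a heat-type equation with $v_1-v$ vanishing at $t=0$ and a source that splits into a part linear in $Y$ and the purely quadratic part $-B(Y,Y)$.

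Next I would introduce $V\in\A_0$ as the solution of \eqref{solution_V} and note the key structural fact that in that equation $V$ enters only through the first two (heat) terms, while all remaining terms depend solely on the given data $y$ and $Y$. Thus \eqref{solution_V} is exactly the linear problem of Proposition \ref{Existence} with $z_0=0$ and source $F=-(\partial_t Y+\nu B_1(Y)+B(y,Y)+B(Y,y))\in L^2(0,T;\boldsymbol{V}')$, which yields both existence and uniqueness of $V$ and the bound $\|V\|_{\A_0}\le c\,\|F\|_{L^2(0,T;\boldsymbol{V}')}$. Using $\|\partial_t Y\|_{L^2(0,T;\boldsymbol{V}')}\le\|Y\|_{\A_0}$, estimate \eqref{Estim-B_2} for $B_1(Y)$, and estimate \eqref{Estim-B} combined with the embedding \eqref{estim-y-L-infini} for the two trilinear terms, I would check that $Y\mapsto V$ is linear and bounded, $\|V\|_{\A_0}\le c(y)\,\|Y\|_{\A_0}$. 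In particular $Y\mapsto\langle v,V\rangle_{\A_0}$ is a bounded linear functional, the candidate for $E'(y)$.

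Setting $r:=v_1-v-V$, the remainder solves the heat equation with $r(0)=0$ and source $-B(Y,Y)$, so Proposition \ref{Existence} together with \eqref{Estim-B} gives
\[
\|r\|_{\A_0}\le c\,\|B(Y,Y)\|_{L^2(0,T;\boldsymbol{V}')}\le c\,\|Y\|_{L^\infty(0,T;\boldsymbol{H})}\|Y\|_{L^2(0,T;\boldsymbol{V})}\le c\,\|Y\|_{\A_0}^2,
\]
the last inequality using \eqref{estim-y-L-infini}. It then remains to expand the square:
\[
E(y+Y)-E(y)=\tfrac12\|v+V+r\|_{\A_0}^2-\tfrac12\|v\|_{\A_0}^2=\langle v,V\rangle_{\A_0}+\langle v,r\rangle_{\A_0}+\tfrac12\|V+r\|_{\A_0}^2 .
\]
The first term is the proposed derivative, while the last two are bounded by $\|v\|_{\A_0}\|r\|_{\A_0}+\|V\|_{\A_0}^2+\|r\|_{\A_0}^2=O(\|Y\|_{\A_0}^2)=o(\|Y\|_{\A_0})$. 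This gives both differentiability of $Y\mapsto E(y+Y)$ and the stated formula.

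The only genuinely delicate point is the quadratic control of the remainder $r$: one must confirm that the source $B(Y,Y)$ produces a term of order $\|Y\|_{\A_0}^2$ and not merely $\|Y\|_{\A_0}$, and this rests precisely on controlling $\|Y\|_{L^\infty(0,T;\boldsymbol{H})}$ by $\|Y\|_{\A_0}$ through \eqref{estim-y-L-infini} together with the sharp bilinear estimate \eqref{Estim-B}. Everything else reduces to a routine application of the linear theory of Proposition \ref{Existence}.
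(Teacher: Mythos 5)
Your proof is correct and follows essentially the same route as the paper's: your remainder $r=v_1-v-V$ is exactly the paper's auxiliary corrector $v'$ (the heat problem with source $-B(Y,Y)$), and both arguments conclude by expanding $\frac12\|v+V+r\|^2_{\A_0}$ and killing the higher-order terms via Proposition \ref{Existence}, \eqref{Estim-B}, \eqref{Estim-B_2} and \eqref{estim-y-L-infini}. The only cosmetic difference is that the paper establishes the decomposition by checking that $\overline{V}-v-v'-V$ solves the homogeneous heat equation with zero initial data, whereas you obtain the same identity by subtracting the corrector equations directly.
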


\begin{proof} 
Let $y\in\A$ and $Y\in \A_0$.
We have $E(y+Y)=\frac12\|\overline{V}\|^2_{\A_0}$
where $\overline{V}\in\A_0$ is the unique solution of
$$
\left\{\begin{aligned}
& \frac{d}{dt} \int_\Omega \overline{V}\cdot w +\int_{\Omega} \nabla \overline{V}\cdot \nabla w +\frac{d}{dt} \int_\Omega (y+Y)\cdot w +\nu\int_{\Omega} \nabla (y+Y)\cdot \nabla w +\int_{\Omega} (y+Y)\cdot \nabla (y+Y)\cdot w \\
&\hspace*{3cm}-\langle f,w\rangle_{\boldsymbol{V}'\times \boldsymbol{V}} =0, \quad \forall w\in \boldsymbol{V} \\
& \overline{V}(0)=0.
\end{aligned}
\right.
$$

If $v\in\A_0$ is the solution of (\ref{corrector}) associated to $y$, $v'\in\A_0$ is the unique solution of
$$
\left\{\begin{aligned}
&\frac{d}{dt} \int_\Omega v'\cdot w +\int_{\Omega} \nabla v'\cdot \nabla w +\int_{\Omega} Y\cdot \nabla Y\cdot w =0, \quad \forall w\in \boldsymbol{V}\\
&v'(0)=0
\end{aligned}\right.$$
and 
 $V\in\A_0$ is the unique solution of (\ref{solution_V}), then it is straightforward to check that $\overline{V}-v-v'-V\in\A_0$ is   solution of
 $$
\left\{\begin{aligned}
&\frac{d}{dt} \int_\Omega (\overline{V}-v-v'-V)\cdot w +\int_{\Omega} \nabla (\overline{V}-v-v'-V)\cdot \nabla w =0, \quad \forall w\in \boldsymbol{V}\\
&(\overline{V}-v-v'-V)(0)=0
\end{aligned}\right.$$
and therefore $\overline{V}-v-v'-V=0$. Thus
$$E(y+Y)=\frac12\| v+v'+V\|^2_{\A_0}=\frac12\|  v\|^2_{\A_0}+\frac12\| v'\|^2_{\A_0}+\frac12\|V\|^2_{\A_0}+\langle V,v'\rangle_{\A_0}+\langle V,v\rangle_{\A_0}+\langle v,v'\rangle_{\A_0}.$$
 
 We deduce from (\ref{solution_V}) and (\ref{estimate-z-1}) that
$$\|V\|^2_{L^2(0,T,\boldsymbol{V})}\le c(\|\partial_t Y\|^2_{L^2(0,T,\boldsymbol{V}')}+\nu^2\|B_1(Y)\|^2_{L^2(0,T,\boldsymbol{V}')}+\|B(y,Y)\|^2_{L^2(0,T,\boldsymbol{V}')}+
\|B(Y,y)\|^2_{L^2(0,T,\boldsymbol{V}')})$$
and from (\ref{Estim-B_2}),  (\ref{Estim-B}) and (\ref{estim-y-L-infini}) that
$$\|V\|^2_{L^2(0,T,\boldsymbol{V})}\le c\|Y\|^2_{\A_0}.$$

Similarly, we  deduce from  (\ref{estimate-z-2}) that
$$\|\partial_t V\|^2_{L^2(0,T,\boldsymbol{V}')}\le c\|Y\|^2_{\A_0}.$$
Thus 
$$\| V\|^2_{\A_0}\le c\|Y\|^2_{\A_0}=o(\|Y\|_{\A_0}).$$

From (\ref{estimate-z-1}), (\ref{estimate-z-2}) and (\ref{Estim-B}), we also deduce that
$$\|v'\|^2_{L^2(0,T,\boldsymbol{V})}\le \|B(Y,Y)\|^2_{L^2(0,T,\boldsymbol{V}')}\le  c\|Y\|^2_{L^\infty(0,T,\boldsymbol{H})}\|Y\|^2_{L^2(0,T,\boldsymbol{V})}\le c\|Y\|^4_{\A_0}$$
and
$$\|\partial_t v'\|^2_{L^2(0,T,\boldsymbol{V}')}\le c\|Y\|^2_{L^\infty(0,T,\boldsymbol{H})}\|Y\|^2_{L^2(0,T,\boldsymbol{V})}\le c\|Y\|^4_{\A_0},$$
thus we also have 
$$\| v'\|^2_{\A_0}\le c\|Y\|^4_{\A_0}=o(\|Y\|_{\A_0}).$$

From the previous estimates, we then obtain
$$|\langle V,v'\rangle_{\A_0}|\le \|V\|_{\A_0}\|v'\|_{\A_0}\le c\|Y\|^3_{\A_0}=o(\|Y\|_{\A_0})$$
and
$$|\langle v,v'\rangle_{\A_0}|\le \|v\|_{\A_0}\|v'\|_{\A_0}\le c\sqrt{E(y)}\|Y\|^2_{\A_0}=o(\|Y\|_{\A_0}),$$
thus
$$
E(y+Y)=E(y)+\langle v,V\rangle_{\A_0}+o(\|Y\|_{\A_0}).
$$
Eventually, the estimate
$$|\langle v,V\rangle_{\A_0}|\le \|v\|_{\A_0}\|V\|_{\A_0}\le c\sqrt{E(y)} \|Y\|_{\A_0}$$
gives the continuity of the linear map $Y\mapsto \langle v,V\rangle_{\A_0}$.
\end{proof}

We are now in position to prove the following result.
\begin{proposition}\label{convergence}
If $\{y_k\}_{k\in \mathbb{N}}$ is a  sequence of $\A$ bounded in $L^2(0,T,\boldsymbol{V})\cap H^1(0,T;\boldsymbol{V}')$ satisfying   $E^{\prime}(y_k)\to 0$ as $k\to \infty$, then $E(y_k)\to 0$ as $k\to \infty$.
\end{proposition}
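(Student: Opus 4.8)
The plan is to test the derivative $E'(y_k)$ against the specific direction $Y_1\in\A_0$ defined by (\ref{solution_Y1}), exploiting the fact that along this direction $E'$ is proportional to $E$ itself.

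First I would establish the identity $E'(y)\cdot Y_1=2E(y)$ valid for every $y\in\A$. By Proposition \ref{differentiable}, $E'(y)\cdot Y_1=\langle v,V\rangle_{\A_0}$, where $V\in\A_0$ is the solution of (\ref{solution_V}) with the choice $Y=Y_1$. Inserting the defining relation (\ref{solution_Y1}) of $Y_1$ into this equation, all terms involving $Y_1$ cancel and $V$ is seen to satisfy, for all $w\in\boldsymbol{V}$,
$$
\frac{d}{dt}\int_\Omega (V-v)\cdot w+\int_\Omega \nabla (V-v)\cdot\nabla w=0,\qquad (V-v)(0)=0.
$$
The uniqueness part of Proposition \ref{Existence} (applied with zero data) then forces $V=v$, whence $E'(y)\cdot Y_1=\langle v,v\rangle_{\A_0}=\|v\|^2_{\A_0}=2E(y)$.

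Next, since $\{y_k\}$ is bounded in $L^2(0,T,\boldsymbol{V})\cap H^1(0,T;\boldsymbol{V}')$, there is a single constant $M$ with $\|\partial_t y_k\|_{L^2(0,T,\boldsymbol{V}')}\le M$ and $\sqrt{\nu}\|\nabla y_k\|_{L^2(Q_T)^4}\le M$ for all $k$, so Proposition \ref{prop_boundY1} applies uniformly and yields $\|Y_1\|_{\A_0}\le c(M)\sqrt{E(y_k)}$ (using that $\|Y_1\|_{L^2(0,T,\boldsymbol{V})}=\|\nabla Y_1\|_{L^2(Q_T)^4}$). Combining this with the identity of the previous step,
$$
2E(y_k)=E'(y_k)\cdot Y_1\le \|E'(y_k)\|_{(\A_0)'}\,\|Y_1\|_{\A_0}\le c(M)\,\|E'(y_k)\|_{(\A_0)'}\,\sqrt{E(y_k)},
$$
so that $\sqrt{E(y_k)}\le \tfrac{c(M)}{2}\,\|E'(y_k)\|_{(\A_0)'}$. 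Since $E'(y_k)\to 0$, the right-hand side tends to zero and therefore $E(y_k)\to 0$.

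The main obstacle is the first step: recognizing that $Y_1$ has been engineered precisely so that the source term it generates in (\ref{solution_V}) reproduces the linear parabolic part of the corrector equation (\ref{corrector}), forcing $V=v$. Once this parallelism $E'(y)\cdot Y_1=2E(y)$ is secured, the conclusion follows from a one-line estimate using the uniform bound $\|Y_1\|_{\A_0}\le c(M)\sqrt{E(y_k)}$ of Proposition \ref{prop_boundY1}; the hypothesis of boundedness of $\{y_k\}$ is used only to make the constant $c(M)$ independent of $k$.
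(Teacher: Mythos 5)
Your proposal is correct and follows essentially the same route as the paper: you establish the identity $E'(y)\cdot Y_1=2E(y)$ by showing $V=v$ through uniqueness for the homogeneous heat problem (exactly the paper's summation of \eqref{solution_V1} and \eqref{solution_Y1}), and then conclude via the uniform bound $\|Y_{1,k}\|_{\A_0}\le c(M)\sqrt{E(y_k)}$ from Proposition \ref{prop_boundY1}. Your explicit inequality $\sqrt{E(y_k)}\le \tfrac{c(M)}{2}\|E'(y_k)\|_{(\A_0)'}$ is a slightly more quantitative phrasing of the paper's final sentence, but the argument is the same.
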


\begin{proof}    For any $y\in\A$ and $Y\in \A_0$, we have 
$$
E^\prime(y)\cdot Y=\langle v,V \rangle_{\A_0}=\int_0^T\langle v,V\rangle_{\boldsymbol{V}}+\int_0^T\langle \partial_tv,\partial_tV\rangle_{\boldsymbol{V}'}
$$
where $V\in \A_0$ is the unique solution in $\mathcal{D}^{\prime}(0,T)$ of (\ref{solution_V}).
In particular, taking $Y=Y_1$ defined by (\ref{solution_Y1}), we define an element $V_1$ solution of 
\begin{equation}
\label{solution_V1}
\left\{
\begin{aligned}
&\frac{d}{dt} \int_\Omega V_1\cdot w + \int_{\Omega} \nabla V_1\cdot \nabla w +\frac{d}{dt}\int_\Omega Y_1\cdot w +\nu\int_{\Omega} \nabla Y_1\cdot \nabla w +\int_{\Omega} y\cdot \nabla Y_1\cdot w \\
&\hspace*{3cm}+\int_{\Omega} Y_1\cdot \nabla y\cdot w=0, \quad \forall w\in \boldsymbol{V} \\
& V_1(0)=0.
\end{aligned}
\right.
\end{equation}
Summing (\ref{solution_V1}) and the (\ref{solution_Y1}), we obtain that $V_1-v$ solves (\ref{sol_z}) with $F\equiv 0$ and $z_0=0$. This implies that $V_1$ and $v$ coincide, and then 
\begin{equation}\label{E_Eprime}
E^{\prime}(y)\cdot Y_1=\int_0^T\|  v\|^2_{\boldsymbol{V}}+\int_0^T\|\partial_t v\|^2_{\boldsymbol{V}'}=2E(y), \quad \forall y\in\A.
\end{equation}
Let now, for any $k\in \mathbb{N}$, $Y_{1,k}$ be the solution of (\ref{solution_Y1}) associated to $y_k$. The previous equality writes $E^{\prime}(y_k)\cdot Y_{1,k}=2E(y_k)$ and implies our statement, since from Proposition \ref{prop_boundY1}, 
$Y_{1,k}$ is uniformly bounded in $\A_0$.
\end{proof}

\subsection{Minimizing sequence for $E$ - Link with the damped Newton method}

Very interestingly,  equality (\ref{E_Eprime}) shows that $-Y_1$ given by the solution of (\ref{solution_Y1}) is a descent direction for the functional $E$. Remark also, in view of (\ref{solution_Y1}), that the corrector $V$
associated to $Y_1$, given by (\ref{solution_V}) with $Y=Y_1$, is nothing else than the corrector $v$ itself. Therefore, we can define, for any $m\geq 1$, a minimizing sequence $\{y_k\}_{(k\in \mathbb{N})}$  for $E$ as follows:
\begin{equation}
\label{algo_LS_Y}
\left\{
\begin{aligned}
&y_0 \in \A, \\
&y_{k+1}=y_k-\lambda_k Y_{1,k}, \quad k\ge 0, \\
& E(y_k-\lambda_k Y_{1,k})   =\min_{\lambda\in [0,m]} E(y_k-\lambda Y_{1,k})    
\end{aligned}
\right.
\end{equation}
with $Y_{1,k}\in \A_0$ the solution of the formulation
\begin{equation}
\label{solution_Y1k}
\left\{
\begin{aligned}
& \frac{d}{dt} \int_\Omega Y_{1,k}\cdot w +\nu\int_{\Omega} \nabla Y_{1,k}\cdot \nabla w +\int_{\Omega} y_k\cdot \nabla Y_{1,k}\cdot w \\
&\hspace*{3cm}+\int_{\Omega} Y_{1,k}\cdot \nabla y_k\cdot w=-\frac{d}{dt}\int_\Omega v_k\cdot w -  \int_\Omega \nabla v_k\cdot \nabla w, \quad \forall w\in \boldsymbol{V}\\
& Y_{1,k}(0)=0,
\end{aligned}
\right.
\end{equation}
where $v_k\in \A_0$ is the corrector (associated to $y_k$) solution of \eqref{corrector}  leading (see (\ref{E_Eprime})) to $E^{\prime}(y_k)\cdot Y_{1,k}=2E(y_k)$. For any $k>0$, the direction $Y_{1,k}$ vanishes when $E(y_k)$ vanishes. 

\begin{lemma}\label{bounded}
Let $\{y_k\}_{k\in \mathbb{N}}$ the  sequence of $\A$ defined by (\ref{algo_LS_Y}). Then $\{y_k\}_{k\in \mathbb{N}}$ is a bounded  sequence of $H^1(0,T;\boldsymbol{V}')\cap L^{2}(0,T;\boldsymbol{V})$ and $\{E(y_k)\}_{k\in \mathbb{N}}$ is a decreasing sequence.
\end{lemma}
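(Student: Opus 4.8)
The plan is to prove the two assertions of Lemma~\ref{bounded} in the natural order: first that $\{E(y_k)\}_{k\in\mathbb{N}}$ is decreasing, and then, using that monotonicity, that $\{y_k\}_{k\in\mathbb{N}}$ remains bounded in $\A$. For the monotonicity, the key observation is already encoded in the construction \eqref{algo_LS_Y}: at each step $y_{k+1}$ is chosen as $y_k-\lambda_k Y_{1,k}$ where $\lambda_k$ realizes the minimum of $\lambda\mapsto E(y_k-\lambda Y_{1,k})$ over $[0,m]$. Since $\lambda=0$ is an admissible choice, we immediately have $E(y_{k+1})=E(y_k-\lambda_k Y_{1,k})\le E(y_k-0\cdot Y_{1,k})=E(y_k)$. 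Thus $\{E(y_k)\}_k$ is nonincreasing, and being bounded below by $0$, it converges. This first part is essentially immediate from the definition of the line search.

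For the boundedness of $\{y_k\}_k$ in $\A_0$-type norms (equivalently in $L^2(0,T;\boldsymbol{V})\cap H^1(0,T;\boldsymbol{V}')$), the strategy is to control $\partial_t y_k$ and $\sqrt{\nu}\,\nabla y_k$ by a constant depending only on the initial data and $E(y_0)$. First I would record that $E(y_k)\le E(y_0)$ for all $k$ by the monotonicity just established. The bound on the $L^\infty(0,T;\boldsymbol H)$ and $L^2(0,T;\boldsymbol V)$ norms should come from the energy identity for the corrector equation \eqref{corrector}: writing $y_k$ in terms of its corrector $v_k$ and using that $y_k$ satisfies an equation with right-hand side controlled by $v_k$, $\partial_t v_k$, and the data $f,u_0$, one derives an a priori estimate. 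The cleanest route is to exploit the relation, implicit in the definition of $v_k$, that $y_k$ solves the Navier-Stokes system up to the residual $v_{k,t}+B_1(v_k)$, so that testing against $y_k$ and using the skew-symmetry \eqref{B-nul} gives an energy balance in which the nonlinear term drops out.

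Concretely, I would test the equation governing $y_k$ with $w=y_k$, use $\langle B(y_k,y_k),y_k\rangle=0$ from \eqref{B-nul} and the identity $\langle\partial_t y_k,y_k\rangle=\frac12\frac{d}{dt}\|y_k\|_{\boldsymbol H}^2$ from \eqref{partial_t y}, and absorb the residual and forcing terms via Young's inequality. This yields $\|y_k\|_{L^\infty(0,T;\boldsymbol H)}^2+\nu\|y_k\|_{L^2(0,T;\boldsymbol V)}^2\le C\big(\|u_0\|_{\boldsymbol H}^2+\tfrac1\nu\|f\|_{L^2(0,T;\boldsymbol V')}^2+E(y_0)\big)$, where the appearance of $E(y_0)$ reflects the residual bounded through $\sqrt{E(y_k)}\le\sqrt{E(y_0)}$. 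Once $\sqrt{\nu}\|\nabla y_k\|_{L^2(Q_T)^4}$ is bounded uniformly in $k$, the bound on $\|\partial_t y_k\|_{L^2(0,T;\boldsymbol V')}$ follows exactly as in Proposition~\ref{Existence-NS}: from the equation, $\partial_t y_k=f-\nu B_1(y_k)-B(y_k,y_k)+(\text{corrector residual})$, and each term on the right is estimated in $L^2(0,T;\boldsymbol V')$ via Lemmas~\ref{B-V'} and \ref{B_1-V'} together with \eqref{estim-y-L-infini}, using the $\boldsymbol V$- and $\boldsymbol H$-bounds just obtained.

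The main obstacle I anticipate is making the energy estimate genuinely uniform in $k$: the nonlinear term $B(y_k,y_k)$ must be handled so that it does not degrade the bound, and this is exactly where the skew-symmetry \eqref{B-nul} is essential, since a naive estimate of the trilinear form would force a Gronwall factor depending on $\|y_k\|_{L^2(0,T;\boldsymbol V)}$ and risk circularity. The correct approach is to test with $y_k$ itself so the cubic term vanishes identically, leaving only linear and forcing contributions that are controlled a priori by the data and $E(y_0)$. Once the energy estimate closes without invoking the quantity it is meant to bound, the uniformity in $k$ is automatic because the right-hand side depends only on $u_0$, $f$, $\nu$, and $E(y_0)$, none of which vary with $k$.
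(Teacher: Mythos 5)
Your proposal is correct and takes essentially the same route as the paper: the monotonicity of $\{E(y_k)\}_{k\in\mathbb{N}}$ is immediate because $\lambda=0$ is admissible in the line search of (\ref{algo_LS_Y}), and the uniform bound is obtained by viewing $y_k$ as the solution of the Navier--Stokes system with perturbed right-hand side $f-\partial_t v_k-B_1(v_k)$, whose $L^2(0,T;\boldsymbol{V}')$-norm is controlled by $\sqrt{E(y_k)}\le\sqrt{E(y_0)}$, so that the energy estimate (in which the trilinear term vanishes by the skew-symmetry (\ref{B-nul})) gives the $L^\infty(0,T;\boldsymbol{H})$ and $L^2(0,T;\boldsymbol{V})$ bounds, and the $H^1(0,T;\boldsymbol{V}')$ bound is then read off from the equation exactly as in the paper. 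The only difference is cosmetic: the paper invokes the a priori estimates in the style of Proposition \ref{Existence-NS} as a black box, whereas you re-derive the same energy identity by testing with $y_k$; this changes nothing of substance.
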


\begin{proof}
From (\ref{algo_LS_Y}) we deduce that, for all $k\in\mathbb{N}$ : 
$$E(y_{k+1})=E(y_k-\lambda_kY_{1,k})=\min_{\lambda\in[0,m]} E(y_k-\lambda Y_{1,k})  \le E(y_k)$$
and thus  the sequence $\{E(y_k)\}_{k\in \mathbb{N}}$ decreases and, for all $k\in \mathbb{N}$: $E(y_{k})\le E(y_0)$. Moreover, from the construction of the corrector $v_k\in\A_0$ associated to $y_k\in\A$ given by (\ref{corrector}), we deduce from Proposition \ref{Existence-NS} that $y_k\in \A$ is the unique solution  of 
$$
\left\{
\begin{aligned}
&\frac{d}{dt} \int_\Omega y_k\cdot w+\nu\int_{\Omega} \nabla y_k\cdot \nabla w  +\int_{\Omega} y_k\cdot \nabla y_k\cdot w =<f,w>_{\boldsymbol{V}'\times \boldsymbol{V}}\\
&\hspace*{6cm}
- \frac{d}{dt} \int_\Omega v_k\cdot w -\int_{\Omega} \nabla v_k\cdot \nabla w , \quad \forall w\in \boldsymbol{V}\\
&y_k(0)=u_0,
\end{aligned}
\right.
$$
and, using (\ref{Estim-B_2}) and (\ref{Estim-B})
\begin{equation}\label{estimation-y_k-1}
\begin{aligned}
\|y_k\|^2_{L^{\infty}(0,T;\boldsymbol{H})}
&\le \|u_0\|^2_{\boldsymbol{H}}+\frac1\nu \|f-\partial_t v_k-B_1(v_k)\|^2_{L^{2}(0,T;\boldsymbol{V}')}\\
&\le \|u_0\|^2_{\boldsymbol{H}}+\frac2{{\nu}} \|f\|^2_{L^{2}(0,T;\boldsymbol{V}')}+ \frac2{{\nu}}\|\partial_t v_k\|^2_{L^{2}(0,T;\boldsymbol{V}')}+\frac2{{\nu}} \|v_k\|^2_{L^{2}(0,T;\boldsymbol{V})}\\
&\le \|u_0\|^2_{\boldsymbol{H}}+\frac2{{\nu}}\|f\|^2_{L^{2}(0,T;\boldsymbol{V}')}+\frac4{\nu} {E(y_k)}\\
&\le \|u_0\|^2_{\boldsymbol{H}}+\frac2{{\nu}}\|f\|^2_{L^{2}(0,T;\boldsymbol{V}')}+\frac4{{\nu}} {E(y_0)}
\end{aligned}
\end{equation}
\begin{equation}\label{estimation-y_k}
\begin{aligned}
{\nu}\|y_k\|^2_{L^{2}(0,T;\boldsymbol{V})}
&\le  \|u_0\|^2_{\boldsymbol{H}}+\frac1\nu \|f-\partial_t v_k-B_1(v_k)\|^2_{L^{2}(0,T;\boldsymbol{V}')}\\
&\le \|u_0\|^2_{\boldsymbol{H}}+\frac2{{\nu}}\|f\|^2_{L^{2}(0,T;\boldsymbol{V}')}+\frac4{{\nu}} {E(y_0)}
\end{aligned}
\end{equation}
and
$$
\begin{aligned}
\|\partial_t y_k\|_{L^{2}(0,T;\boldsymbol{V}')}
&\le \|f-\partial_t v_k-B_1(v_k)-B(y_k,y_k)-\nu B_1(y_k)\|_{L^{2}(0,T;\boldsymbol{V}')}\\
&\le \|f\|_{L^{2}(0,T;\boldsymbol{V}')}+\|\partial_t v_k\|_{L^{2}(0,T;\boldsymbol{V}')}+\| v_k\|_{L^{2}(0,T;\boldsymbol{V})}\\
&\hskip 3cm+c\|y_k\|_{L^{\infty}(0,T;\boldsymbol{H})}\|y_k\|_{L^{2}(0,T;\boldsymbol{V})} +\nu \|y_k\|_{L^{2}(0,T;\boldsymbol{V})}\\
&\le \|f\|_{L^{2}(0,T;\boldsymbol{V}')}+2\sqrt{E(y_k)}+\sqrt{\nu}\|u_0\|_{\boldsymbol{H}}+\sqrt{2} \|f\|_{L^{2}(0,T;\boldsymbol{V}')}+2\sqrt{E(y_0)}\\
&\hskip 5cm+c\biggl(\|u_0\|^2_{\boldsymbol{H}}+\frac2{{\nu}}\|f\|^2_{L^{2}(0,T;\boldsymbol{V}')}+\frac4{{\nu}} {E(y_0)}\biggr)\\
&\le 3\|f\|_{L^{2}(0,T;\boldsymbol{V}')}+4\sqrt{E(y_0)}+\sqrt{\nu}\|u_0\|_{\boldsymbol{H}}\\
&\hskip 5cm+c\biggl(\|u_0\|^2_{\boldsymbol{H}}+\frac2{{\nu}}\|f\|^2_{L^{2}(0,T;\boldsymbol{V}')}+\frac4{{\nu}} {E(y_0)}\biggr).
\end{aligned}
$$
\end{proof}

\begin{lemma}\label{*}
Let $\{y_k\}_{k\in \mathbb{N}}$ the  sequence of $\A$ defined by (\ref{algo_LS_Y}). Then  for all $\lambda\in [0,m]$, the following estimate holds
\begin{equation} \label{E_expansion}
E(y_k-\lambda Y_{1,k})  \leq  {E(y_k)} \biggl(\vert 1-\lambda\vert +\lambda^2 \frac{c}{\nu\sqrt{\nu}} \sqrt{E(y_k)}\exp(\frac{c}{\nu}\int_0^T \|y_k\|_{\boldsymbol{V}}^2)\biggl)^2.
\end{equation}
\end{lemma}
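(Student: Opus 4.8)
The plan is to reuse the decomposition of $E(y_k+Y)$ obtained in the proof of Proposition \ref{differentiable}, specialized to the descent direction $Y=-\lambda Y_{1,k}$. Recall that for any $Y\in\A_0$ one has $E(y_k+Y)=\frac12\|v+v'+V\|^2_{\A_0}$, where $v$ is the corrector of $y_k$ solving (\ref{corrector}), $V\in\A_0$ is the linear corrector of $Y$ solving (\ref{solution_V}), and $v'\in\A_0$ solves the purely quadratic problem with source $\int_\Omega Y\cdot\nabla Y\cdot w$. I would substitute $Y=-\lambda Y_{1,k}$ and exploit the homogeneity of the two auxiliary problems in $Y$.

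The conceptual crux is to identify $V$ explicitly. The problem (\ref{solution_V}) defines a \emph{linear} map $Y\mapsto V$, and when $Y=Y_{1,k}$ this map returns exactly the corrector $v$: this is precisely the identity established when deriving (\ref{E_Eprime}), by comparing (\ref{solution_V1}) with (\ref{solution_Y1}) and observing that $V_1-v$ solves (\ref{sol_z}) with zero data. Hence, by linearity, the corrector associated to $Y=-\lambda Y_{1,k}$ is $V=-\lambda v$. On the other hand, since the source of the $v'$-equation is quadratic in $Y$, replacing $Y$ by $-\lambda Y_{1,k}$ scales it by $\lambda^2$, so $v'=\lambda^2\tilde v'$, where $\tilde v'\in\A_0$ solves (\ref{sol_z}) with $z_0=0$ and $F=-B(Y_{1,k},Y_{1,k})$. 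Collecting these two facts yields
$$
E(y_k-\lambda Y_{1,k})=\tfrac12\bigl\|(1-\lambda)\,v+\lambda^2\,\tilde v'\bigr\|^2_{\A_0}.
$$

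It then remains to estimate the two summands. By the triangle inequality and $\|v\|_{\A_0}=\sqrt{2E(y_k)}$, one is reduced to bounding $\|\tilde v'\|_{\A_0}$. Applying Proposition \ref{Existence} (estimates (\ref{estimate-z-1})--(\ref{estimate-z-2}) with $z_0=0$) gives $\|\tilde v'\|_{\A_0}\le c\,\|B(Y_{1,k},Y_{1,k})\|_{L^2(0,T;\boldsymbol{V}')}$, and (\ref{Estim-B}) bounds the right-hand side by $c\,\|Y_{1,k}\|_{L^\infty(0,T;\boldsymbol{H})}\|Y_{1,k}\|_{L^2(0,T;\boldsymbol{V})}$. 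Finally, the a priori bound (\ref{esti1-Y_1}) controls each factor: taking square roots there yields $\|Y_{1,k}\|_{L^\infty(0,T;\boldsymbol{H})}\le \frac{c}{\sqrt\nu}\sqrt{E(y_k)}\exp(\frac{c}{2\nu}\int_0^T\|y_k\|^2_{\boldsymbol{V}})$ and $\|Y_{1,k}\|_{L^2(0,T;\boldsymbol{V})}\le \frac{c}{\nu}\sqrt{E(y_k)}\exp(\frac{c}{2\nu}\int_0^T\|y_k\|^2_{\boldsymbol{V}})$, whose product gives $\|\tilde v'\|_{\A_0}\le \frac{c}{\nu\sqrt\nu}E(y_k)\exp(\frac{c}{\nu}\int_0^T\|y_k\|^2_{\boldsymbol{V}})$. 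Substituting and factoring out $\sqrt{E(y_k)}$, then squaring, delivers exactly (\ref{E_expansion}).

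I expect the only genuinely delicate point to be the identification $V=-\lambda v$; everything downstream (the quadratic rescaling of $v'$ and the chain of a priori estimates) is routine once the homogeneity of (\ref{solution_V}) and (\ref{corrector}) in $Y$ is used together with the already-proven identity $V_1=v$. The appearance of the factor $\lambda^2$ multiplying the nonlinear remainder, versus the $|1-\lambda|$ multiplying the leading term, is precisely what will later force the near-quadratic convergence of the algorithm (\ref{algo_LS_Y}) for $\lambda$ close to $1$.
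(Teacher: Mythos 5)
Your proof is correct and follows essentially the same route as the paper: the paper's proof rests on the identical decomposition $V_k=(1-\lambda)v_k+\lambda^2\overline{\overline{v}}_k$ (your $\tilde v'$ is exactly the paper's $\overline{\overline{v}}_k$ from (\ref{correcteur2b})), followed by the same triangle inequality and the same chain of estimates via (\ref{Estim-B}) and (\ref{esti1-Y_1}). The only difference is cosmetic: where the paper asserts the decomposition is ``easy to check,'' you justify it cleanly through the linearity of $Y\mapsto V$ in (\ref{solution_V}), the identity $V_1=v$ from the proof of Proposition \ref{convergence}, and the quadratic homogeneity of the $v'$-problem.
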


\begin{proof}
Let $V_k$ be the corrector associated to $y_k-\lambda Y_{1,k}$. It is easy to check that $V_k$ is given by $(1-\lambda)v_k+\lambda^2 \overline{\overline{v}}_k$ where $\overline{\overline{v}}_k\in \A_0$ solves 
\begin{equation}
\label{correcteur2b}
\left\{
\begin{aligned}
& \frac{d}{dt}  \int_\Omega \overline{\overline{v}}_k\cdot w +\int_{\Omega} \nabla \overline{\overline{v}}_k\cdot \nabla w +\int_{\Omega} Y_{1,k}\cdot \nabla Y_{1,k}\cdot w=0, \quad \forall w\in \boldsymbol{V}\\
& \overline{\overline{v}}_k(0)=0,
\end{aligned}
\right.
\end{equation}
 and thus
\begin{equation}\label{exact-expansion}
\begin{aligned}
2E(y_k-\lambda Y_{1,k}) &=\|V_k\|^2_{\A_0} =\|(1-\lambda)v_k+\lambda^2 \overline{\overline{v}}_k\|^2_{\A_0}\\
&\le (|1-\lambda|\|v_k\|_{\A_0}+\lambda^2\|\overline{\overline{v}}_k\|_{\A_0})^2\\
&\le (\sqrt{2}|1-\lambda| \sqrt{E(y_k)}+\lambda^2\|\overline{\overline{v}}_k\|_{\A_0})^2,
\end{aligned}
\end{equation}
which gives
\begin{equation}\label{estim-E(y_k-lambda-Y}
E(y_k-\lambda Y_{1,k})   \le \biggl(|1-\lambda| \sqrt{E(y_k)}+\frac{\lambda^2}{\sqrt{2}}\|\overline{\overline{v}}_k\|_{\A_0}\biggr)^2:= g(\lambda,y_k).
\end{equation}

From (\ref{correcteur2b}), (\ref{estimate-z-1}), (\ref{estimate-z-2}) and   (\ref{Estim-B}) we deduce that
 $$\|\overline{\overline{v}}_k\|_{L^{2}(0,T;\boldsymbol{V})}\le \|B(Y_{1,k},Y_{1,k})\|_{L^{2}(0,T;\boldsymbol{V}')}\le c\|Y_{1,k}\|_{L^{\infty}(0,T;\boldsymbol{H})}\|Y_{1,k}\|_{L^{2}(0,T;\boldsymbol{V})}$$
 and
 $$
 \begin{aligned}
 \|\partial_t \overline{\overline{v}}_k\|_{L^{2}(0,T;\boldsymbol{V}')}
 &\le \|-B_1(\overline{\overline{v}}_k)-B(Y_{1,k},Y_{1,k})\|_{L^{2}(0,T;\boldsymbol{V}')}\\
 &\le \|\overline{\overline{v}}_k\|_{L^{2}(0,T;\boldsymbol{V})}+ c\|Y_{1,k}\|_{L^{\infty}(0,T;\boldsymbol{H})}\|Y_{1,k}\|_{L^{2}(0,T;\boldsymbol{V})}\\
 &\le   c\|Y_{1,k}\|_{L^{\infty}(0,T;\boldsymbol{H})}\|Y_{1,k}\|_{L^{2}(0,T;\boldsymbol{V})}.
 \end{aligned}$$

On the other hand, from (\ref{esti1-Y_1})  we deduce that
\begin{equation}
\label{estim-Y-1k}
\|Y_{1,k}\|^2_{L^{\infty}(0,T;\boldsymbol{H})}+\nu  \|Y_{1,k}\|^2_{L^{2}(0,T;\boldsymbol{V})} \leq \frac{16}\nu E(y_k)\exp\biggl(\frac{c}\nu\int_0^T \|y_k\|_{\boldsymbol{V}}^2\biggr). 
 \end{equation}
 Thus
 $$\|\overline{\overline{v}}_k\|_{L^{2}(0,T;\boldsymbol{V})}\le \frac{c}{\nu\sqrt{\nu}} E(y_k)\exp\biggl(\frac{c}{\nu}\int_0^T \|y_k\|_{\boldsymbol{V}}^2\biggr) $$
 and
  $$ \|\partial_t \overline{\overline{v}}_k\|_{L^{2}(0,T;\boldsymbol{V}')}\le \frac{c}{\nu\sqrt{\nu}} E(y_k)\exp\biggl(\frac{c}{\nu}\int_0^T \|y_k\|_{\boldsymbol{V}}^2\biggr)$$
  which gives 
$$\|\overline{\overline{v}}_k\|_{\A_0}=\sqrt{\|\overline{\overline{v}}_k\|^2_{L^{2}(0,T;\boldsymbol{V})}+
\|\partial_t \overline{\overline{v}}_k\|^2_{L^{2}(0,T;\boldsymbol{V})}}\leq  \frac{c}{\nu\sqrt{\nu}} E(y_k)\exp\biggl(\frac{c}{\nu}\int_0^T \|y_k\|_{\boldsymbol{V}}^2\biggr).
$$
From (\ref{estim-E(y_k-lambda-Y}) we then deduce  (\ref{E_expansion}).
\end{proof}

\begin{lemma}\label{E-go-0}
Let $\{y_k\}_{k\in \mathbb{N}}$ the  sequence of $\A$ defined by (\ref{algo_LS_Y}). Then $E(y_k)\to 0$ as $k\to \infty$. Moreover, there exists a $k_0\in \mathbb{N}$ such that 
the sequence $\{E(y_k)\}_{k\geq k_0}$ decays quadratically.
\end{lemma}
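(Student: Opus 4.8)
The plan is to leverage the key inequality \eqref{E_expansion} from Lemma \ref{*} together with the uniform boundedness established in Lemma \ref{bounded}. Since the sequence $\{y_k\}$ is bounded in $L^2(0,T;\boldsymbol{V})$, there is a uniform bound $\int_0^T\|y_k\|_{\boldsymbol{V}}^2 \le C$ independent of $k$, so the exponential factor $\exp(\frac{c}{\nu}\int_0^T\|y_k\|_{\boldsymbol{V}}^2)$ in \eqref{E_expansion} is bounded by a constant depending only on the data. Writing $\alpha_k:=\frac{c}{\nu\sqrt{\nu}}\exp(\frac{c}{\nu}\int_0^T\|y_k\|_{\boldsymbol{V}}^2)\le\alpha$ for some uniform $\alpha$, the optimality of $\lambda_k$ in the line search over $[0,m]$ gives, for every admissible $\lambda$,
\begin{equation}
E(y_{k+1})\le E(y_k)\bigl(|1-\lambda|+\lambda^2\alpha\sqrt{E(y_k)}\bigr)^2.
\end{equation}

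First I would show $E(y_k)\to 0$. The idea is to exploit the freedom in choosing $\lambda\in[0,m]$. Denote $e_k:=E(y_k)$, a nonincreasing sequence by Lemma \ref{bounded}, hence convergent to some limit $\ell\ge 0$. To rule out $\ell>0$, I would pick the test value $\lambda=1$ (assuming $m\ge 1$), which collapses the first term and yields $e_{k+1}\le e_k(\alpha\sqrt{e_k})^2=\alpha^2 e_k^2$. If at some index $e_{k}<1/\alpha^2$ this already forces quadratic decay to zero; to reach that regime from an arbitrary start, I would instead use a small $\lambda$: for $\lambda\in(0,1]$ the bound reads $e_{k+1}\le e_k(1-\lambda+\lambda^2\alpha\sqrt{e_k})^2$, and choosing $\lambda$ proportional to $1/(\alpha\sqrt{e_k}+1)$ (truncated to $[0,m]$) makes the bracket strictly less than $1$ by a definite amount whenever $e_k\ge\ell>0$, contradicting convergence of $e_k$ to a positive limit. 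This shows $\ell=0$, i.e. $e_k\to 0$.

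Next I would establish the eventual quadratic rate. Once $E(y_k)\to 0$, there exists $k_0$ such that for all $k\ge k_0$ we have $\alpha\sqrt{E(y_k)}\le 1/2$ (say), so in particular the value $\lambda=1\in[0,m]$ is admissible provided $m\ge 1$. Taking $\lambda=1$ in \eqref{E_expansion} then gives
\begin{equation}
E(y_{k+1})\le E(y_k)\bigl(\alpha\sqrt{E(y_k)}\bigr)^2=\alpha^2\,E(y_k)^2,\qquad k\ge k_0,
\end{equation}
which is exactly the quadratic decay $E(y_{k+1})\le \alpha^2 E(y_k)^2$ claimed in the statement.

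The main obstacle is the first part, namely ruling out convergence of $e_k$ to a strictly positive limit using only the minimization over the \emph{bounded} interval $[0,m]$: one must verify that the line-search infimum genuinely decreases $E$ by a factor bounded away from $1$ when $E(y_k)$ is bounded below, which requires choosing the test $\lambda$ carefully (small enough that $\lambda^2\alpha\sqrt{e_k}$ does not dominate $1-\lambda$, yet within $[0,m]$) and using that $\alpha_k$ is uniformly controlled via the boundedness of $\{y_k\}$ in $L^2(0,T;\boldsymbol{V})$. The quadratic phase, by contrast, is immediate once $\lambda=1$ is admissible and $E(y_k)$ is small, which is why the quadratic rate holds only after the finite number $k_0$ of preliminary iterates, as announced.
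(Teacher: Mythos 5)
Your proof is correct and follows essentially the paper's own argument: the same uniform constant (your $\alpha$, the paper's $C_1$) obtained from Lemma \ref{bounded} and \eqref{E_expansion}, a small test value $\lambda\sim 1/(2C_1\sqrt{E(y_k)})$ forcing a fixed geometric decrease of $\sqrt{E(y_k)}$ while $C_1\sqrt{E(y_k)}\ge 1$, and the admissible choice $\lambda=1$ yielding $C_1\sqrt{E(y_{k+1})}\le\bigl(C_1\sqrt{E(y_k)}\bigr)^2$ once $C_1\sqrt{E(y_k)}<1$. The only cosmetic difference is that you frame the first phase as a contradiction with a positive limit of the monotone sequence $\{E(y_k)\}$, whereas the paper directly shows the set $I=\{k:\ C_1\sqrt{E(y_k)}\ge 1\}$ is finite via the same geometric bound.
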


\begin{proof}
We  deduce from  (\ref{E_expansion}), using  (\ref{estimation-y_k}) that, for all $\lambda\in [0,m]$ and $k\in \mathbb{N}^\star$ :
\begin{equation} \label{estimateC1}
\sqrt{E(y_{k+1}) } 
\leq  \sqrt{E(y_k)} \biggl(\vert 1-\lambda\vert +\lambda^2 C_1\sqrt{E(y_k)}\biggr)
\end{equation}
where $C_1= \frac{c}{\nu\sqrt{\nu}} \exp\biggl(\frac{c}{\nu^2}\|u_0\|^2_{\boldsymbol{H}}+ \frac{c}{\nu^3} \|f\|^2_{L^{2}(0,T;\boldsymbol{V}')}+ \frac{c}{\nu^3} E(y_0) \biggr)$ does not depend on $y_k$.

Let us denote the polynomial $p_k$ by $p_k(\lambda)=\vert 1-\lambda\vert +\lambda^2 C_1\sqrt{E(y_k)}$ for all $\lambda\in [0,m]$. 
If $C_1\sqrt{E(y_0)}< 1$ (and thus $C_1\sqrt{E(y_k)}<1$ for all $k\in\mathbb{N}$) then 
$$\min_{\lambda\in[0,m]}p_k(\lambda)\le p_k(1)=C_1\sqrt{E(y_k)}$$
and thus
\begin{equation}
\label{C1E}
C_1\sqrt{E(y_{k+1})}\le \big(C_1\sqrt{E(y_k)}\big)^2
\end{equation}
implying that $C_1\sqrt{E(y_k)}\to 0$  as $k\to \infty$ with a quadratic rate.

Suppose now that $C_1\sqrt{E(y_0)}\ge 1$ and   denote $ I=\{k\in \mathbb{N},\ C_1\sqrt{E(y_k)}\ge 1\}$. Let us prove that $I$ is a finite subset of $\mathbb{N}$. For all $k\in I$, since $C_1\sqrt{E(y_k)}\ge 1$,
$$\min_{\lambda\in[0,m]}p_k(\lambda)=\min_{\lambda\in[0,1]}p_k(\lambda)=p_k\Big(\frac{1}{2C_1\sqrt{E(y_k)}}\Big)=1-\frac{1}{4C_1\sqrt{E(y_k)}}\le 1-\frac{1}{4C_1\sqrt{E(y_0)}}<1$$
and thus, for all $k\in I$,
$$\sqrt{E(y_{k+1}) } \le \Big(1-\frac{1}{4C_1\sqrt{E(y_0)}}\Big)\sqrt{E(y_k)}\le \Big(1-\frac{1}{4C_1\sqrt{E(y_0)}}\Big)^{k+1}\sqrt{E(y_0)}.$$

Since $\Big(1-\frac{1}{4C_1\sqrt{E(y_0)}}\Big)^{k+1}\to 0$ as $k\to+\infty$, there exists $k_0\in\mathbb{N}$ such that for all $k\ge k_0$, $C_1\sqrt{E(y_{k+1}) }<1$. 
Thus  $I$ is a finite subset of $\mathbb{N}$. Arguing as in the first case, it follows that $C_1\sqrt{E(y_k)}\to 0$  as $k\to \infty$.

\end{proof}

\begin{remark}
In view of the estimate above of the constant $C_1$, the number of iterates $k_1$ necessary to achieve the quadratic regime (from which the convergence is very fast) is of the order 
$\nu^{-3/2}\exp\biggl(\frac{c}{\nu^2}\|u_0\|^2_{\boldsymbol{H}}+ \frac{c}{\nu^3} \|f\|^2_{L^{2}(0,T;\boldsymbol{V}')}+ \frac{c}{\nu^3} E(y_0)\biggr)$, and therefore increases rapidly as $\nu\to 0$. In order to reduce the effect of the term $\nu^{-3}E(y_0)$, it is thus relevant, for small values of $\nu$, to couple the algorithm with a continuation approach with respect to $\nu$ (i.e. start the sequence $\{y_k\}_{(k\geq 0)}$ with an element $y_0$ solution of \eqref{NS} associated to $\overline{\nu}>\nu$ so that $\nu^{-3}E(y_0)$ be at most of the order $\mathcal{O}(\nu^{-2})$).
\end{remark}

\begin{lemma}\label{lambda-k-go-1}
Let $\{y_k\}_{k\in \mathbb{N}}$ the  sequence of $\A$ defined by (\ref{algo_LS_Y}). Then $\lambda_k\to 1$ as $k\to \infty$.\end{lemma}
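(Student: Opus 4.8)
The plan is to \emph{sandwich} $\lambda_k$ using the \emph{exact} expansion of $E(y_k-\lambda Y_{1,k})$ established in the proof of Lemma~\ref{*},
\begin{equation*}
2E(y_k-\lambda Y_{1,k})=\|(1-\lambda)v_k+\lambda^2\overline{\overline{v}}_k\|^2_{\A_0},
\end{equation*}
together with the two facts $\|v_k\|_{\A_0}=\sqrt{2E(y_k)}$ and $\|\overline{\overline{v}}_k\|_{\A_0}\le C_1 E(y_k)$ (the last bound obtained by combining the estimate on $\|\overline{\overline{v}}_k\|_{\A_0}$ from Lemma~\ref{*} with \eqref{estimation-y_k}, exactly as in the passage from \eqref{E_expansion} to \eqref{estimateC1}, $C_1$ being the constant therein). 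I shall treat the generic case $E(y_k)>0$ for all $k$; if $E(y_k)=0$ for some $k$ then $Y_{1,k}=0$, the sequence is stationary and there is nothing to prove.

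First I would record an upper bound for $E(y_{k+1})$. Since $m\ge 1$, the value $\lambda=1$ is admissible in the minimization defining $\lambda_k$, so
\begin{equation*}
E(y_{k+1})\le E(y_k-Y_{1,k})=\tfrac12\|\overline{\overline{v}}_k\|^2_{\A_0},
\qquad\text{hence}\qquad
\sqrt{E(y_{k+1})}\le \tfrac{1}{\sqrt2}\|\overline{\overline{v}}_k\|_{\A_0}.
\end{equation*}
This is the same computation that produced the quadratic decay \eqref{C1E}. Next I would produce a matching \emph{lower} bound, which is the step that genuinely locates $\lambda_k$: applying the reverse triangle inequality in $\A_0$ to the exact expansion evaluated at $\lambda=\lambda_k$ gives
\begin{equation*}
\sqrt{2E(y_{k+1})}=\|(1-\lambda_k)v_k+\lambda_k^2\overline{\overline{v}}_k\|_{\A_0}
\ge |1-\lambda_k|\,\|v_k\|_{\A_0}-\lambda_k^2\|\overline{\overline{v}}_k\|_{\A_0},
\end{equation*}
that is, $\sqrt{E(y_{k+1})}\ge |1-\lambda_k|\sqrt{E(y_k)}-\tfrac{\lambda_k^2}{\sqrt2}\|\overline{\overline{v}}_k\|_{\A_0}$.

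Combining the two displays and using $\lambda_k\le m$ together with $\|\overline{\overline{v}}_k\|_{\A_0}\le C_1 E(y_k)$, I would obtain
\begin{equation*}
|1-\lambda_k|\sqrt{E(y_k)}\le \sqrt{E(y_{k+1})}+\tfrac{\lambda_k^2}{\sqrt2}\|\overline{\overline{v}}_k\|_{\A_0}
\le \tfrac{1+m^2}{\sqrt2}\,\|\overline{\overline{v}}_k\|_{\A_0}\le \tfrac{1+m^2}{\sqrt2}\,C_1\,E(y_k).
\end{equation*}
Dividing by $\sqrt{E(y_k)}>0$ yields $|1-\lambda_k|\le \tfrac{1+m^2}{\sqrt2}C_1\sqrt{E(y_k)}$, and since $E(y_k)\to 0$ by Lemma~\ref{E-go-0}, this forces $\lambda_k\to 1$. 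The main obstacle is conceptual rather than computational: the polynomial majorant $p_k$ used to prove $E(y_k)\to 0$ controls $E(y_{k+1})$ only from above and cannot by itself pin down the minimizer, so one must return to the exact $\A_0$-expansion of Lemma~\ref{*} to extract a lower bound on $E(y_{k+1})$ that is truly proportional to $|1-\lambda_k|$. The gap between this lower bound and the $\lambda=1$ upper bound is then absorbed by the quadratically small quantity $\|\overline{\overline{v}}_k\|_{\A_0}\le C_1E(y_k)$, which is precisely what delivers the rate $|1-\lambda_k|=O(\sqrt{E(y_k)})$.
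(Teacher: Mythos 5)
Your proof is correct, and while it starts from the same two ingredients as the paper's proof --- the exact expansion $2E(y_k-\lambda Y_{1,k})=\|(1-\lambda)v_k+\lambda^2\overline{\overline{v}}_k\|^2_{\A_0}$ and the quadratic bound $\|\overline{\overline{v}}_k\|_{\A_0}\le C_1E(y_k)$ --- the mechanism is genuinely different. The paper expands the square and divides the resulting identity by $E(y_k)$ to isolate $(1-\lambda_k)^2$,
\[
(1-\lambda_k)^2=\frac{E(y_{k+1})}{E(y_{k})}-\lambda_k^2(1-\lambda_k)\frac{\langle v_k, \overline{\overline{v}}_k \rangle_{\A_0}}{E(y_{k})}-\lambda_k^4 \frac{\| \overline{\overline{v}}_k\|^2_{\A_0}}{2E(y_{k})},
\]
then shows each right-hand term vanishes: the cross term and the last term via Cauchy--Schwarz and the bound on $\overline{\overline{v}}_k$, and the ratio $E(y_{k+1})/E(y_k)\to 0$, which requires the quadratic decay regime established in Lemma \ref{E-go-0}, not merely $E(y_k)\to 0$. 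You never expand the square: you sandwich $\sqrt{2E(y_{k+1})}$ from below by the reverse triangle inequality and from above by testing the admissible value $\lambda=1$ (valid since $m\ge 1$), which eliminates both the inner-product term and the ratio argument, and needs only $E(y_k)\to 0$ at the last step. Your route also buys a sharper explicit rate: $|1-\lambda_k|=O(\sqrt{E(y_k)})$, whereas the paper's identity, estimated as written (the cross term, of order $\sqrt{E(y_k)}$, being dominant), yields $(1-\lambda_k)^2=O(\sqrt{E(y_k)})$, i.e. only $|1-\lambda_k|=O(E(y_k)^{1/4})$. One small observation on your degenerate case: if $E(y_k)=0$ then indeed $v_k=0$, hence $Y_{1,k}=0$ by uniqueness for \eqref{solution_Y1k} and the sequence is stationary, but then $\lambda\mapsto E(y_k-\lambda Y_{1,k})$ is identically zero and $\lambda_k$ is not uniquely determined, so the conclusion holds only under a selection convention; this is exactly the tacit restriction in the paper, which argues ``as long as $E(y_k)\neq 0$'', so your proposal is no weaker than the original on this point.
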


\begin{proof}
We have 
$$
\begin{aligned}
2E(y_{k+1})=2E(y_k-\lambda_k Y_{1,k}) 
&=(1-\lambda_k)^2\|v_k\|^2_{\A_0}+2\lambda_k^2(1-\lambda_k)\langle v_k, \overline{\overline{v}}_k \rangle_{\A_0}+\lambda_k^4 \| \overline{\overline{v}}_k\|^2_{\A_0}\\
& =2(1-\lambda_k)^2E(y_k)+2\lambda_k^2(1-\lambda_k)\langle v_k, \overline{\overline{v}}_k \rangle_{\A_0}+\lambda_k^4 \| \overline{\overline{v}}_k\|^2_{\A_0},
\end{aligned}$$
and thus, as long as $E(y_k)\not=0$,
$$
(1-\lambda_k)^2=\frac{E(y_{k+1})}{E(y_{k})}-\lambda_k^2(1-\lambda_k)\frac{\langle v_k, \overline{\overline{v}}_k \rangle_{\A_0}}{E(y_{k})}-\lambda_k^4 \frac{\| \overline{\overline{v}}_k\|^2_{\A_0}}{2E(y_{k})}.
$$
Since 
$$
\|\overline{\overline{v}}_k\|_{\A_0}\leq  \frac{c}{\nu\sqrt{\nu}} E(y_k)\exp(\frac{c}{\nu}\int_0^T \|y_k\|_{\boldsymbol{V}}^2),
$$
we then have
$$\Big|\frac{\langle v_k, \overline{\overline{v}}_k \rangle_{\A_0}}{E(y_{k})}\Big|\le \frac{\|v_k\|_{\A_0}\| \overline{\overline{v}}_k\|_{\A_0}}{E(y_{k})}\le  \frac{c}{\nu\sqrt{\nu}}\sqrt{E(y_k)}\exp(\frac{c}{\nu}\int_0^T \|y_k\|_{\boldsymbol{V}}^2)\to 0$$
and
$$0\le \frac{\| \overline{\overline{v}}_k\|^2_{\A_0}}{E(y_{k})}\le  \frac{c}{\nu^3}{E(y_k)}\exp(\frac{c}{\nu}\int_0^T \|y_k\|_{\boldsymbol{V}}^2)\to 0$$
as $k\to +\infty$.
Consequently, since $\lambda_k\in[0,m]$ and $\frac{E(y_{k+1})}{E(y_{k})}\to 0$, we deduce that $(1-\lambda_k)^2\to0$, that is $\lambda_k\to 1$ as $k\to \infty$.
\end{proof}

From Lemmas \ref{bounded}, \ref{E-go-0} and Proposition \ref{main-estimatee} we can deduce that :

\begin{proposition}\label{conv-y-k-y}
Let $\{y_k\}_{k\in \mathbb{N}}$ the  sequence of $\A$ defined by (\ref{algo_LS_Y}). Then $y_k\to \bar y$ in  $H^1(0,T;\boldsymbol{V}')\cap L^{2}(0,T;\boldsymbol{V})$ where $\bar y\in\A$ is the unique solution of (\ref{NS}) given in Proposition \ref{Existence-NS}.
\end{proposition}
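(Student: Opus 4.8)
The plan is to obtain this proposition as a direct synthesis of three results already established: the uniform bounds of Lemma \ref{bounded}, the decay of the error furnished by Lemma \ref{E-go-0}, and the coercivity-type estimate of Proposition \ref{main-estimatee}. The whole argument amounts to a single limiting passage once a bound $M$ admissible simultaneously for $\bar y$ and for the entire sequence has been fixed.

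First I would select such an $M$. On one hand, Proposition \ref{Existence-NS} provides explicit a priori estimates for the solution $\bar y$ of (\ref{NS}), controlling $\Vert \partial_t \bar y\Vert_{L^2(0,T;\boldsymbol{V}')}$ and $\sqrt{\nu}\Vert \nabla \bar y\Vert_{L^2(Q_T)^4}$ in terms of $u_0$ and $f$. On the other hand, Lemma \ref{bounded} shows that $\{y_k\}$ is bounded in $H^1(0,T;\boldsymbol{V}')\cap L^2(0,T;\boldsymbol{V})$; the decisive point is that $\{E(y_k)\}$ is decreasing, so $E(y_k)\le E(y_0)$ for every $k$, and the bounds derived there for $\Vert\partial_t y_k\Vert_{L^2(0,T;\boldsymbol{V}')}$ and $\sqrt{\nu}\Vert\nabla y_k\Vert_{L^2(Q_T)^4}$ are therefore uniform in $k$. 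Taking $M$ as the larger of these two families of bounds, both $\bar y$ and every $y_k$ satisfy the hypotheses of Proposition \ref{main-estimatee} with this one value of $M$.

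Next I would apply Proposition \ref{main-estimatee} with $y=y_k$. This yields, with a single constant $c(M)$ independent of $k$,
$$
\| y_k-\bar y\|_{L^\infty(0,T;\boldsymbol{H})}+\sqrt{\nu}\| y_k-\bar y\|_{L^2(0,T;\boldsymbol{V})}+\|\partial_t y_k-\partial_t \bar y\|_{L^2(0,T;\boldsymbol{V}')}\le c(M)\sqrt{E(y_k)}.
$$
Since $\nu>0$ is fixed, the two last terms on the left control the $\A_0$-norm of $y_k-\bar y$, hence (the embedding $\boldsymbol{V}\hookrightarrow\boldsymbol{V}'$ absorbing the remaining $L^2(0,T;\boldsymbol{V}')$ contribution) the full norm of $H^1(0,T;\boldsymbol{V}')\cap L^2(0,T;\boldsymbol{V})$. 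By Lemma \ref{E-go-0} we have $E(y_k)\to 0$ as $k\to\infty$, so the right-hand side tends to $0$, and consequently $y_k\to\bar y$ in $H^1(0,T;\boldsymbol{V}')\cap L^2(0,T;\boldsymbol{V})$, which is the claim. The identity of the limit with the unique solution of (\ref{NS}) is guaranteed by Proposition \ref{Existence-NS}.

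There is essentially no obstacle in this proof: it is a wrap-up of already proven facts. The only subtlety requiring a moment of attention is verifying that $M$, and hence $c(M)$, may be chosen once and for all independently of $k$ — a uniformity that is precisely secured by the monotonicity $E(y_k)\le E(y_0)$ rendering the estimates of Lemma \ref{bounded} uniform.
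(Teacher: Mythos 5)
Your proof is correct and follows exactly the route the paper intends: the paper states Proposition \ref{conv-y-k-y} as a direct consequence of Lemmas \ref{bounded}, \ref{E-go-0} and Proposition \ref{main-estimatee}, which is precisely your synthesis, including the key uniformity point that $E(y_k)\le E(y_0)$ makes the bounds of Lemma \ref{bounded} (together with Proposition \ref{Existence-NS} for $\bar y$) yield a single admissible $M$, hence a constant $c(M)$ independent of $k$. Nothing is missing.
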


\begin{remark}
The strong convergence of the sequence $\{y_k\}_{k>0}$ is a consequence of the coercivity inequality (\ref{coercivity}), which is itself a consequence of the uniqueness of the solution $y$ of (\ref{NS}).
Actually, we can directly prove that $\{y_k\}_{k>0}$ is a convergent sequence in the Hilbert space $\A$ as follows; 
from (\ref{algo_LS_Y}) and the previous proposition, we deduce that the serie $\sum_{k\geq 0} \lambda_kY_{1k}$ converges in $H^1(0,T;\boldsymbol{V}')\cap L^{2}(0,T;\boldsymbol{V})$ and $\bar y=y_0+\sum_{k=0}^{+\infty} \lambda_kY_{1k}$. Moreover
 $\sum \lambda_k\|Y_{1k}\|_{ L^{2}(0,T;\boldsymbol{V})}$ converges and, if we denote by $k_0$ one $k\in\mathbb{N}$ such that  $C_1\sqrt{E(y_k)}<1$ (see Lemma \ref{E-go-0}), then for all $k\ge k_0$, using (\ref{estim-Y-1k}), (\ref{estimation-y_k}) and (\ref{C1E}) (since we can choose $C_1>1$), we have 
 \begin{equation}\label{**}
 \begin{aligned}
 \|\bar y-y_k\|_{L^{2}(0,T;\boldsymbol{V})}
 &=\| \sum_{i=k+1}^{+\infty} \lambda_iY_{1i}\|_{L^{2}(0,T;\boldsymbol{V})}\le  \sum_{i=k+1}^{+\infty} \lambda_i\|Y_{1i}\|_{L^{2}(0,T;\boldsymbol{V})}\\
& \le m \sum_{i=k+1}^{+\infty} \sqrt{C_1 E(y_i)}
\le  m\sum_{i=k+1}^{+\infty} C_1 \sqrt{ E(y_i)} \\
& \le  m\sum_{i=k+1}^{+\infty} (C_1 \sqrt{ E(y_{k_0})})^{2 ^{i-k_0}}
  \le  m\sum_{i=0}^{+\infty} (C_1 \sqrt{ E(y_{k_0})})^{2 ^{i+k+1-k_0}}\\
  &\le m (C_1 \sqrt{ E(y_{k_0})})^{2 ^{k+1-k_0}} \sum_{i=0}^{+\infty} (C_1 \sqrt{ E(y_{k_0})})^{2 ^{i}}\\
 &\le cm(C_1 \sqrt{ E(y_{k_0})})^{2 ^{k+1-k_0}}.
 \end{aligned}
 \end{equation}

From (\ref{esti2-Y_1}) we deduce that 
$$
\begin{aligned}
\|\partial_t Y_{1,k}\|_{L^2(0,T,\boldsymbol{V}')}
&\le \sqrt{E(y_k)}(2\sqrt{2}\exp(\frac{c}\nu\int_0^T\|y_k\|_{\boldsymbol{V}}^2)+2\sqrt{2}+c\|y_k\|_{L^2(0,T;\boldsymbol{V})}\frac{2\sqrt{2}}{\sqrt{\nu}}\exp(\frac{c}\nu\int_0^T \|y_k\|_{\boldsymbol{V}}^2) \\
&\hskip 3cm +c\|y_k\|_{L^\infty(0,T;\boldsymbol{H})}\frac{2\sqrt{2}}{{\nu}}\exp(\frac{c}\nu\int_0^T \|y_k\|_{\boldsymbol{V}}^2) )
\end{aligned}
$$
which gives, using (\ref{estimation-y_k-1}) and (\ref{estimation-y_k})
 \begin{equation}
\label{estim-dtY-1k}
\begin{aligned}
\|\partial_t Y_{1,k}\|_{L^2(0,T,\boldsymbol{V}')}
&\le \sqrt{E(y_k)}(2\sqrt{2}\exp(\frac{c}\nu\int_0^T\|y_k\|_{\boldsymbol{V}}^2)+2\sqrt{2}+\\
&+\frac{c}{\nu\sqrt{\nu}}( \sqrt{\nu}\|u_0\|_{\boldsymbol{H}}+\sqrt{2}\|f\|_{L^{2}(0,T;\boldsymbol{V}')}+2 \sqrt{E(y_0)})\exp(\frac{c}\nu\int_0^T \|y_k\|_{\boldsymbol{V}}^2) \\
&\le C_2\sqrt{E(y_k)}
\end{aligned}
 \end{equation}
where 
$$
 C_2
=c\exp\biggl(\frac{c}{\nu^2}\|u_0\|^2_{\boldsymbol{H}}+ \frac{c}{\nu^3} \|f\|^2_{L^{2}(0,T;\boldsymbol{V}')}+ \frac{c}{\nu^3} E(y_0) \biggr)\biggl(1+
\frac{1}{\nu\sqrt{\nu}}( \sqrt{\nu}\|u_0\|_{\boldsymbol{H}}+\sqrt{2}\|f\|_{L^{2}(0,T;\boldsymbol{V}')}+2 \sqrt{E(y_0)})\biggr).$$

Arguing as in the proof of Lemma \ref{E-go-0}, there exists $k_1\in\mathbb{N}$ such that, for all $k\ge k_1$ 
$$C_2\sqrt{E(y_{k+1})}\le \big(C_2\sqrt{E(y_k)}\big)^2$$
thus
 \begin{equation}\label{***}
 \begin{aligned}
 \|\partial_t \bar y-\partial_t y_k\|_{L^{2}(0,T;\boldsymbol{V}')}
 &=\| \sum_{i=k+1}^{+\infty} \lambda_i\partial_t Y_{1i}\|_{L^{2}(0,T;\boldsymbol{V}')}\le  \sum_{i=k+1}^{+\infty} \lambda_i\|\partial_t Y_{1i}\|_{L^{2}(0,T;\boldsymbol{V}')}\\
 &\le  m\sum_{i=k+1}^{+\infty} \sqrt{C_2 E(y_i)}
 \le m(C_2 \sqrt{ E(y_{k_1})})^{2 ^{k+1-k_1}} \sum_{i=0}^{+\infty} (C_2 \sqrt{ E(y_{k_1})})^{2 ^{i}}  \\
 &\le mc(C_2 \sqrt{ E(y_{k_1})})^{2 ^{k+1-k_1}}.
 \end{aligned}
 \end{equation}
\end{remark}

\begin{remark}\label{remg}
Lemmas \ref{bounded}, \ref{*}, \ref{E-go-0} and Proposition \ref{conv-y-k-y} remain true if we replace the minimization of $\lambda$ over $[0,m]$ by the minimization over $\mathbb{R}_+$. However, the sequence $\{\lambda_k\}_{k>0}$ may not be bounded in $\mathbb{R}_+$ (the fourth order polynomial $\lambda\to E(y_k-\lambda Y_k)$ may admit a critical point far from $1$.  In that case, (\ref{**}) and (\ref{***}) may not hold for some $m>0$.

Similarly, Lemmas \ref{E-go-0}, \ref{lambda-k-go-1} and Proposition \ref{conv-y-k-y} remain true for the sequence $\{y_k\}_{k\geq 0}$ 
defined as follows 
\begin{equation}
\label{algo_LS_Ybis}
\left\{
\begin{aligned}
&y_0 \in \A, \\
&y_{k+1}=y_k-\lambda_k Y_{1,k}, \quad k\ge 0, \\
& g(\lambda_k,y_k)   =\min_{\lambda\in \mathbb{R}^+} g(\lambda,y_k)
\end{aligned}
\right.
\end{equation}
leading to $\lambda_k\in ]0,1]$ for all $k\geq 0$ and $lim_{k\to \infty}\lambda_k\to 1^-$. The fourth order polynomial $g$ is defined in \eqref{estim-E(y_k-lambda-Y}.
\end{remark}

\begin{remark}
It seems surprising that the algorithm (\ref{algo_LS_Y}) achieves a quadratic rate for $k$ large.  
Let us consider the application $F: \mathcal{A}\to L^2(0,T,\boldsymbol{V}^\prime)$ defined as $F(y)=y_t+\nu B_1(y) + B(y,y)-f$.
The sequence $\{y_k\}_{k>0}$ associated to the Newton method to find the zero of $F$ is defined as follows: 
\begin{equation}
\nonumber
\left\{
\begin{aligned}
& y_0\in\mathcal{A}, \\
& F^{\prime}(y_k)\cdot (y_{k+1}-y_{k})=-F(y_k), \quad k\geq 0.
\end{aligned}
\right.
\end{equation} 
We check that this sequence coincides with the sequence obtained from (\ref{algo_LS_Y}) if $\lambda_k$ is fixed equal to one. The algorithm (\ref{algo_LS_Y}) which consists to optimize the parameter $\lambda_k\in [0,m]$, $m\geq 1$, in order to minimize $E(y_k)$, equivalently $\Vert F(y_k)\Vert_{L^2(0,T,\boldsymbol{V}^{\prime})}$ corresponds therefore to the so-called damped Newton method for the application $F$ (see \cite{deuflhard}). As the iterates increase, the optimal parameter $\lambda_k$ converges to one (according to Lemma \ref{lambda-k-go-1}), this globally convergent method behaves like the standard Newton method (for which $\lambda_k$ is fixed equal to one): this explains the quadratic rate after a finite number of iterates. Among the few works devoted to the damped Newton method for partial differential equations, we mention \cite{saramito} for computing viscoplastic fluid flows. 
\end{remark}

\section{The three dimensional case}\label{section-3D}

Let $\Omega\subset \mathbb{R}^3$ be a bounded connected open set whose boundary $\partial\Omega$ is $\C^2$ and let $P$ be the orthogonal projector in $L^2(\Omega)^3$ onto $\boldsymbol{H}$ 
   
We recall (see for instance \cite{Temam}) that for $f\in L^2(Q_T)^3$ and $u_0\in \boldsymbol{V}$, there exists  $T^*>0$ and a unique solution $y\in L^\infty(0,T^*;\boldsymbol{V})\cap L^2(0,T^*; H^2(\Omega)^3)$, $\partial_t y\in L^2(0,T^*; \boldsymbol{H})$ of the equation
\begin{equation}\label{NS-3D}
\left\{
\begin{aligned}
 &  
\frac{d}{dt}\int_\Omega y\cdot w+\nu\int_\Omega \nabla y\cdot\nabla w+\int_\Omega y\cdot\nabla y\cdot w=\int_\Omega f\cdot w,\quad \forall w\in \boldsymbol{V} \\
& y(\cdot,0)=u_0, \quad \text{in}\quad  \Omega.
\end{aligned}
\right.
\end{equation}

Following the methodology of the previous section, we introduce a least-squares method allowing to approximate the solution of (\ref{NS-3D}).
Arguments are similar so that details are omitted. 

\subsection{Preliminary results}

In the following, we repeatedly use the following classical estimates.

The first one is (see \cite{tartar}) :
\begin{lemma} Let any $u\in H^2(\Omega)^3$. Then $u\in L^\infty(\Omega)$ and there exists a constant $c=c(\Omega)$ such that
\begin{equation}
\label{L-infini}
\|u\|_\infty \leq c \|u\|^{1/2}_{\boldsymbol{V}}\|\Delta u\|_2^{1/2} .\end{equation}
\end{lemma}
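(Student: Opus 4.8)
The plan is to recognize \eqref{L-infini} as the three-dimensional Agmon inequality and to prove it in two stages: first the sharp homogeneous estimate on the whole space $\mathbb{R}^3$ by Fourier analysis, then a transfer to the bounded domain $\Omega$. Note that the estimate cannot hold for arbitrary $H^2$ fields (constants violate it), so it is to be read for the fields actually used later, which lie in $\boldsymbol V\cap H^2(\Omega)^3$ and hence have zero trace; this is also what makes $\|u\|_{\boldsymbol V}=\|\nabla u\|_2$ a genuine norm, and it is precisely what allows Poincar\'e's inequality and elliptic regularity to enter in the transfer step.

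For the whole-space step I would take $u\in\mathcal{S}(\mathbb{R}^3)$ (the general case following by density of $\mathcal{S}$ in $H^2(\mathbb{R}^3)$) and start from $\|u\|_\infty\le (2\pi)^{-3/2}\|\hat u\|_{L^1}$. Introducing the weight $w(\xi)=|\xi|^2+|\xi|^4$ and applying Cauchy--Schwarz to the splitting $|\hat u|=w^{-1/2}\,(|\hat u|\,w^{1/2})$,
\[
\|\hat u\|_{L^1}=\int_{\mathbb{R}^3}w^{-1/2}\,\big(|\hat u|\,w^{1/2}\big)\le\Big(\int_{\mathbb{R}^3}\frac{d\xi}{|\xi|^2+|\xi|^4}\Big)^{1/2}\Big(\int_{\mathbb{R}^3}(|\xi|^2+|\xi|^4)|\hat u|^2\,d\xi\Big)^{1/2}.
\]
By Plancherel the last factor equals $(\|\nabla u\|_2^2+\|\Delta u\|_2^2)^{1/2}$, while in polar coordinates $\int_{\mathbb{R}^3}(|\xi|^2+|\xi|^4)^{-1}d\xi=4\pi\int_0^\infty(1+r^2)^{-1}dr<\infty$. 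This yields the non-homogeneous bound $\|u\|_\infty\le c(\|\nabla u\|_2^2+\|\Delta u\|_2^2)^{1/2}$.

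The homogeneous form then follows by scaling: applying the last bound to $u_\lambda(x)=u(\lambda x)$, for which $\|u_\lambda\|_\infty=\|u\|_\infty$, $\|\nabla u_\lambda\|_2^2=\lambda^{-1}\|\nabla u\|_2^2$ and $\|\Delta u_\lambda\|_2^2=\lambda\|\Delta u\|_2^2$, and minimizing $\lambda^{-1}\|\nabla u\|_2^2+\lambda\|\Delta u\|_2^2$ over $\lambda>0$ at $\lambda=\|\nabla u\|_2/\|\Delta u\|_2$ gives $\|u\|_\infty\le c\sqrt2\,\|\nabla u\|_2^{1/2}\|\Delta u\|_2^{1/2}$ on $\mathbb{R}^3$. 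To reach $\Omega$, I would fix an extension operator $P$ that is simultaneously bounded $H^1(\Omega)^3\to H^1(\mathbb{R}^3)^3$ and $H^2(\Omega)^3\to H^2(\mathbb{R}^3)^3$ (available since $\partial\Omega\in C^2$), and apply the whole-space inequality to $Pu$; using $\|\nabla Pu\|_2\le c\|u\|_{H^1(\Omega)}$ and $\|\Delta Pu\|_2\le c\|u\|_{H^2(\Omega)}$ this produces the non-homogeneous Agmon inequality on $\Omega$, namely $\|u\|_{L^\infty(\Omega)}\le c\|u\|_{H^1(\Omega)}^{1/2}\|u\|_{H^2(\Omega)}^{1/2}$. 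Finally, for $u\in\boldsymbol V\cap H^2(\Omega)^3$ the zero trace lets me replace $\|u\|_{H^1(\Omega)}$ by $c\|\nabla u\|_2=c\|u\|_{\boldsymbol V}$ (Poincar\'e) and $\|u\|_{H^2(\Omega)}$ by $c\|\Delta u\|_2$ ($H^2$ elliptic regularity for the Dirichlet problem on a $C^2$ domain), yielding \eqref{L-infini} with $c=c(\Omega)$.

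The main obstacle is the transfer to the bounded domain while retaining the homogeneous product structure, not the Fourier computation. Extension by zero is inadmissible here, since a zero-trace $H^2$ field acquires a jump in its normal derivative and is therefore only $H^1$ across $\partial\Omega$; one genuinely needs the $C^2$-regularity of the boundary both for a bounded $H^2$-extension and for the elliptic estimate $\|u\|_{H^2}\le c\|\Delta u\|_2$. It is also worth stressing how the dimension is pinned down: the weight integral $\int_{\mathbb{R}^d}(|\xi|^2+|\xi|^4)^{-1}d\xi$ converges if and only if $2<d<4$, i.e. exactly for $d=3$, the low frequencies failing when $d\le 2$ and the high frequencies when $d\ge 4$.
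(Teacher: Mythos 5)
Your proof is correct, and there is in fact no internal argument to compare it against: the paper states this lemma as a classical estimate and simply points to \cite{tartar}, so your write-up supplies a complete proof where the paper gives only a reference. The route you take is the standard one for Agmon's inequality in dimension three: the $L^\infty$--$L^1$ Fourier bound, Cauchy--Schwarz against the weight $|\xi|^2+|\xi|^4$ (whose integrability, as you note, holds exactly when $2<d<4$, pinning down $d=3$), a scaling argument to pass from the inhomogeneous to the homogeneous product form, and then a transfer to $\Omega$ via an extension operator bounded simultaneously on $H^1$ and $H^2$, followed by Poincar\'e and Dirichlet $H^2$-regularity. All the computations check out (the weight integral equals $2\pi^2$, the scalings $\|\nabla u_\lambda\|_2^2=\lambda^{-1}\|\nabla u\|_2^2$ and $\|\Delta u_\lambda\|_2^2=\lambda\|\Delta u\|_2^2$ are right, and the optimization over $\lambda$ is elementary), and the density step from $\mathcal S(\mathbb{R}^3)$ is unproblematic.

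Two of your side remarks deserve emphasis because they sharpen the statement itself. First, as literally written (``any $u\in H^2(\Omega)^3$'') the inequality is false --- a nonzero constant field has positive left side and vanishing right side --- and your reading, restricting to $u\in\boldsymbol{V}\cap H^2(\Omega)^3$, is exactly how the lemma is used in the paper: it is applied to elements of $\A(t)\subset L^2(0,t;H^2(\Omega)^3\cap\boldsymbol{V})$, and the same implicit boundary condition is what makes the elliptic bound $\|u\|_{H^2(\Omega)}\le c\|\Delta u\|_2$ (and likewise the companion estimate \eqref{Proj}) available. Second, your observation that extension by zero is inadmissible --- a zero-trace $H^2$ field generically has a jump in its normal derivative across $\partial\Omega$, hence its zero extension is only $H^1$ --- correctly identifies why the $C^2$ regularity of the boundary assumed in this section is genuinely used, both for the bounded $H^2$-extension and for the Dirichlet regularity estimate.
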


We also have (see \cite{Simon}):

\begin{lemma} Let any $u\in   H^2(\Omega)^3$. There exists a constant $c=c(\Omega)$ such that
\begin{equation}
\label{Proj}
\|\Delta u\|_2 \leq c \|P(\Delta u)\|_2 .\end{equation}
\end{lemma}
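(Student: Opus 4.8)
The plan is to read the estimate for a divergence-free field vanishing on the boundary, i.e. for $u\in\boldsymbol{V}\cap H^2(\Omega)^3$, which is the only setting in which it can hold and is precisely the one in which it is applied to solutions of \eqref{NS-3D}. Indeed, for a pure gradient $u=\nabla\phi$ one has $P(\Delta u)=0$ while $\Delta u\ne 0$, so an incompressibility and boundary constraint on $u$ is indispensable. First I would invoke the orthogonal Helmholtz--Leray decomposition $L^2(\Omega)^3=\boldsymbol{H}\oplus\boldsymbol{H}^{\perp}$, with $\boldsymbol{H}^{\perp}=\{\nabla\pi:\pi\in H^1(\Omega)\}$ and $P$ the orthogonal projection onto $\boldsymbol{H}$. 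Applying it to $\Delta u\in L^2(\Omega)^3$ gives
$$
\Delta u=P(\Delta u)+\nabla\pi,\qquad \pi\in H^1(\Omega)/\mathbb{R},
$$
the two summands being $L^2$-orthogonal; in particular $\|P(\Delta u)\|_2\le\|\Delta u\|_2$, so the content of the lemma is the reverse bound, equivalently the control $\|\nabla\pi\|_2\le c\,\|P(\Delta u)\|_2$ of the Stokes pressure gradient.

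The key observation is then that $(u,\pi)$ solves a stationary Stokes system. Rewriting the decomposition as
$$
-\Delta u+\nabla\pi=-P(\Delta u),\qquad \nabla\cdot u=0\ \text{in}\ \Omega,\qquad u=0\ \text{on}\ \partial\Omega,
$$
I recognize the Stokes equations with right-hand side $g:=-P(\Delta u)\in L^2(\Omega)^3$. Since $\partial\Omega$ is assumed to be of class $\C^2$ throughout Section~\ref{section-3D}, the classical $H^2$-regularity theory for the Stokes problem (the Cattabriga--Solonnikov / Agmon--Douglis--Nirenberg estimates, as recorded in \cite{Temam, Simon}) applies and yields a constant $c=c(\Omega)$ with
$$
\|u\|_{H^2(\Omega)^3}+\|\nabla\pi\|_2\le c\,\|g\|_2=c\,\|P(\Delta u)\|_2.
$$
In particular $\|\Delta u\|_2\le\|u\|_{H^2(\Omega)^3}\le c\,\|P(\Delta u)\|_2$, which is exactly \eqref{Proj}.

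The only non-elementary ingredient, and hence the main obstacle, is the $H^2$ a priori estimate for the Stokes operator: once the Helmholtz splitting is performed, everything reduces to the regularity statement $\|u\|_{H^2}\le c\,\|P\Delta u\|_2$ for $u$ in the domain $\boldsymbol{V}\cap H^2(\Omega)^3$ of the Stokes operator $-P\Delta$, whose proof rests on the $\C^2$ regularity of $\partial\Omega$ and is the reason this hypothesis is imposed here. Granting this estimate, the conclusion is immediate, and one may quote it directly from \cite{Temam, Simon} as the authors do.
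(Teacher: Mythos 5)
Your argument is correct and is precisely the standard proof behind the paper's treatment of this lemma, which consists solely of the citation ``(see \cite{Simon})'': Helmholtz--Leray splitting of $\Delta u$, identification of $(u,\pi)$ as a solution of the stationary Stokes system with datum $-P(\Delta u)$ and homogeneous Dirichlet condition, and the Cattabriga $H^2$ a priori estimate, which is available because $\partial\Omega$ is assumed of class $\C^2$ throughout this section. Your preliminary correction of the statement is also warranted and worth keeping: as literally written, for arbitrary $u\in H^2(\Omega)^3$, the inequality is false --- for $u=\nabla\phi$ with, say, $\phi=x_1^4$ one has $\Delta u=\nabla(12x_1^2)\neq 0$ while $P(\Delta u)=0$ --- so the estimate only holds on the domain $\boldsymbol{V}\cap H^2(\Omega)^3$ of the Stokes operator, where the divergence-free constraint and the vanishing trace make the Stokes step legitimate, and this is exactly the setting in which the paper invokes \eqref{Proj}.
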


and

\begin{lemma} \label{EDO}
Let $g\in W^{1,1}(0,T)$ and $k\in L^1(0,T)$, $\|k\|_{L^1(0,T)}\le k_0$ satisfying
$$\frac{dg}{dt}\le F(g)+k\hbox{ in } [0,T],\quad g(0)\le g_0$$
where $F$ is bounded on bounded sets from $\mathbb{R}$ into $\mathbb{R}$, that is
$$\forall a>0,\ \exists A>0\hbox{ such that } |x|\le a\Longrightarrow |F(x)|\le A.$$
Then, for every $\varepsilon>k_0$, there exists $T^*_1=T_1^*(F,g_0,k_0)>0$ independent of $g$ such that 
$$g(t)\le g_0+\varepsilon\quad \forall t\le T^*_1.$$
\end{lemma}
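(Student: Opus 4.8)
The plan is to run a first-exit-time (continuation) argument. The only genuinely delicate point is that $F$ is controlled merely on bounded sets, whereas the inequality $\frac{dg}{dt}\le F(g)+k$ bounds $g'$ only from above, so a priori $g$ could drift off to $-\infty$ and escape the range on which $F$ is bounded. First I would fix the constants: set $a:=|g_0|+\varepsilon$ and $A:=\sup\{|F(x)|:|x|\le a\}$, which is finite by the hypothesis on $F$. The key elementary observation is that the whole band $[g_0,g_0+\varepsilon]$ lies inside $[-a,a]$, since $g_0\ge -|g_0|\ge -a$ and $g_0+\varepsilon\le |g_0|+\varepsilon=a$; hence $|F(g)|\le A$ whenever $g\in[g_0,g_0+\varepsilon]$. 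I would then propose
$$
T_1^*:=\min\Big(T,\ \frac{\varepsilon-k_0}{A}\Big),
$$
which is strictly positive because $\varepsilon>k_0$ (if $A=0$ the inequality reduces to $g'\le k$, which one integrates directly to get $g(t)\le g_0+k_0<g_0+\varepsilon$, so one may assume $A>0$). Note that $A$, $a$ and $T_1^*$ depend only on $F$, $g_0$, $k_0$ (and the fixed datum $T$), never on $g$, which is exactly the required uniformity.

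Next I would introduce the first hitting time. If $g<g_0+\varepsilon$ throughout $[0,T]$ there is nothing to prove; otherwise let $\tau$ be the first instant with $g(\tau)=g_0+\varepsilon$, so that $g(t)<g_0+\varepsilon$ for $t<\tau$. The decisive step is to locate the last moment before $\tau$ at which $g$ sits at the lower level $g_0$: set $\sigma:=\sup\{t\le\tau:\ g(t)\le g_0\}$, which is well defined since $g(0)\le g_0$. Because the set $\{t\le\tau:\ g(t)\le g_0\}$ is closed, $g(\sigma)\le g_0$; and since $g(\tau)=g_0+\varepsilon>g_0$ we have $\sigma<\tau$, so by continuity $g(\sigma)=g_0$ and $g(t)>g_0$ for $t\in(\sigma,\tau]$. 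Thus on the whole interval $[\sigma,\tau]$ one has $g_0\le g(t)\le g_0+\varepsilon$, hence $|F(g(t))|\le A$ there.

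I would then integrate the differential inequality over $[\sigma,\tau]$, using that $g\in W^{1,1}(0,T)$ is absolutely continuous and that $F(g(\cdot))$ is bounded (so integrable) on $[\sigma,\tau]$:
$$
\varepsilon=g(\tau)-g(\sigma)=\int_\sigma^\tau \frac{dg}{dt}\,dt\le \int_\sigma^\tau\big(F(g)+k\big)\,dt\le A(\tau-\sigma)+\|k\|_{L^1(0,T)}\le A\,\tau+k_0,
$$
whence $\tau\ge(\varepsilon-k_0)/A\ge T_1^*$. Consequently $g$ cannot reach the level $g_0+\varepsilon$ before $T_1^*$, so $g(t)<g_0+\varepsilon$ for $t<T_1^*$ and, by continuity, $g(t)\le g_0+\varepsilon$ for all $t\le T_1^*$. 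The main obstacle, as flagged above, is precisely the possible unboundedness of $g$ from below, which the one-sided inequality does not rule out; it is resolved by the remark that in order to climb from a value $\le g_0$ up to $g_0+\varepsilon$ the function must traverse the band $[g_0,g_0+\varepsilon]$, on which $F(g)$ is uniformly bounded by $A$, and such a crossing therefore cannot be completed in time shorter than $(\varepsilon-k_0)/A$.
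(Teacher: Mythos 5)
Your proof is correct and follows essentially the same route as the paper's (an adaptation of Simon's Lemma 6): locate the first time $\tau$ at which $g$ reaches $g_0+\varepsilon$ and the last time $\sigma<\tau$ at which $g$ sits at $g_0$, bound $F(g)$ on the band $[g_0,g_0+\varepsilon]$, and integrate the inequality over $[\sigma,\tau]$ to get $\varepsilon\le A\tau+k_0$, hence $T_1^*=\min\big(T,(\varepsilon-k_0)/A\big)$. If anything, your write-up is slightly more careful than the paper's, since you treat the degenerate case $A=0$ explicitly (the paper simply asserts $A>0$) and you justify $g(\sigma)=g_0$ via closedness and continuity.
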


\begin{proof} The proof is an adaptation of the proof given in  \cite[Lemma 6]{Simon}.

First, since $g\in W^{1,1}(0,T)$, $g$ is continuous.  Let $\varepsilon>k_0$,  let $m$ be the smallest real value such that $g(m)=g_0+\varepsilon$ (if $m$ does not exist, we can take $T_1^*=T$) and let $n$ be the largest real value lower than $m$ such that $g(n)=g_0$. On $[n,m]$ 
 there holds $F(g)\le A$ where $A=\sup\{ |F(x)|: \ g_0\le x\le g_0+\varepsilon\}>0.$
 Then integrating by part we have
 $$\varepsilon =g(m)-g(n)\le \int_n^mA+k(t)dt\le mA+k_0$$
 thus $m\ge \frac{\varepsilon-k_0}{A}>0$ and we can take $T_1^*=\min( \frac{\varepsilon-k_0}{A},T).$ 
\end{proof}

We also denote, for all $0<t\le T$ 
$$
\A(t)=\{y\in L^2(0,t;H^2(\Omega)^3\cap \boldsymbol{V})\cap H^1(0,t;\boldsymbol{H}),\ y(0)=u_0\}
$$
and 
$$
\A_0(t)=\{y\in L^2(0,t;H^2(\Omega)^3\cap \boldsymbol{V})\cap H^1(0,t;\boldsymbol{H}),\ y(0)=0\}.
$$
Endowed with the scalar product
$$
\langle y,z\rangle_{\A_0(t)}=\int_0^t\langle P(\Delta y),P(\Delta z)\rangle_{\boldsymbol{H}}+\langle \partial_t y,\partial_t z\rangle_{\boldsymbol{H}}
$$
and the associated norm
$$
\|y\|_{\A_0(t)}=\sqrt{\|P(\Delta y)\|^2_{ L^2(0,t;\boldsymbol{H})}+\|\partial_t y\|^2_{L^2(0,t;\boldsymbol{H})}}
$$
$\A_0(t)$ is a Hilbert space.

We now recall and introduce several technical results. The first one is well-known  (we refer to \cite{Lions69,Temam}). 
 \begin{proposition}\label{Existence-NS-3D}
There exists $T^*=T^*(\Omega,\nu,u_0,f)$, $0<T^*\le T$ and a unique $\bar y\in  \A(T^*)$  solution in $\mathcal{D}^{\prime}(0,T^*)$ of (\ref{NS}).\end{proposition}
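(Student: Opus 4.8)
The plan is to establish this local existence and uniqueness result by the classical Galerkin--compactness scheme for \emph{strong} solutions, with the local-in-time differential inequality of Lemma \ref{EDO} playing the role of the (unavailable) global energy bound. First I would fix a spectral basis of $\boldsymbol{V}$ made of eigenfunctions of the Stokes operator $A=-P\Delta$, build the finite dimensional Galerkin approximations $y_m$ of \eqref{NS-3D} with $y_m(0)=P_m u_0$, and test the approximate equation with $Ay_m$ rather than with $y_m$ itself, since it is the $H^2$-in-space regularity that is sought.

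The heart of the matter is the a priori estimate. Testing with $Ay_m$ and using the self-adjointness of $A$ produces
$$\frac12\frac{d}{dt}\|y_m\|_{\boldsymbol{V}}^2+\nu\|P(\Delta y_m)\|_2^2=\langle f,Ay_m\rangle-\int_\Omega (y_m\cdot\nabla)y_m\cdot Ay_m.$$
For the nonlinear term I would invoke the interpolation inequality \eqref{L-infini} together with \eqref{Proj}, which give $\bigl|\int_\Omega (y_m\cdot\nabla)y_m\cdot Ay_m\bigr|\le c\|y_m\|_{\boldsymbol{V}}^{3/2}\|P(\Delta y_m)\|_2^{3/2}$; a Young inequality then absorbs the factor $\|P(\Delta y_m)\|_2^{3/2}$ into $\nu\|P(\Delta y_m)\|_2^2$ at the price of a term $C_\nu\|y_m\|_{\boldsymbol{V}}^6$, while the forcing term is bounded by $\frac{\nu}{4}\|P(\Delta y_m)\|_2^2+c\|f\|_2^2$. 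Writing $g(t)=\|y_m(t)\|_{\boldsymbol{V}}^2$, this reads
$$\frac{dg}{dt}\le F(g)+k(t),\qquad F(s)=C_\nu s^3,\quad k(t)=c\|f(\cdot,t)\|_2^2,$$
with $g(0)\le\|u_0\|_{\boldsymbol{V}}^2$ and $k\in L^1(0,T)$ since $f\in L^2(Q_T)^3$. This is exactly the structure required by Lemma \ref{EDO}: because $F$ is bounded on bounded sets, there is a time $T^*=T_1^*(\Omega,\nu,u_0,f)>0$, independent of $m$, on which $g(t)\le\|u_0\|_{\boldsymbol{V}}^2+\varepsilon$. Integrating the inequality over $[0,T^*]$ then bounds $\|P(\Delta y_m)\|_{L^2(0,T^*;\boldsymbol{H})}$, hence by \eqref{Proj} and elliptic regularity $\|y_m\|_{L^2(0,T^*;H^2(\Omega)^3)}$, uniformly in $m$; reading $\partial_t y_m$ off the equation yields a uniform bound in $L^2(0,T^*;\boldsymbol{H})$.

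With these $m$-uniform bounds secured I would extract, via the Aubin--Lions--Simon compactness lemma, a subsequence converging weakly in $L^2(0,T^*;H^2(\Omega)^3\cap\boldsymbol{V})$, weakly-$*$ in $L^\infty(0,T^*;\boldsymbol{V})$, and strongly in $L^2(0,T^*;\boldsymbol{V})$; the strong convergence is precisely what lets one pass to the limit in the quadratic term $(y_m\cdot\nabla)y_m$, producing a solution $\bar y\in\A(T^*)$ of \eqref{NS-3D}. Uniqueness would then follow from the usual energy estimate on the difference $Y=\bar y_1-\bar y_2$ of two solutions: testing its equation with $Y$, the only troublesome term $\int_\Omega (Y\cdot\nabla)\bar y_1\cdot Y$ is controlled by $\|\bar y_1\|_\infty\le c\|\bar y_1\|_{\boldsymbol{V}}^{1/2}\|P(\Delta\bar y_1)\|_2^{1/2}$ coming from \eqref{L-infini} and \eqref{Proj}, after which Gronwall's lemma forces $Y\equiv0$. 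The genuinely delicate point is the nonlinear estimate, whose cubic growth $F(s)=C_\nu s^3$ is exactly what prevents a global bound and restricts the construction to the finite horizon $T^*$ furnished by Lemma \ref{EDO}; the remaining steps are routine once the $m$-uniform estimates are in place.
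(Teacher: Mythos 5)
Your proposal is correct, and it coincides with the argument behind the statement: the paper does not prove Proposition \ref{Existence-NS-3D} itself but quotes it as well known from \cite{Lions69,Temam}, and your Galerkin scheme in a Stokes eigenbasis, tested against $Ay_m$, with the nonlinear term bounded via \eqref{L-infini} and \eqref{Proj}, the resulting cubic differential inequality $g'\le C_\nu g^3+k$ closed on a uniform horizon by Lemma \ref{EDO}, Aubin--Lions compactness, and the Gronwall uniqueness estimate is precisely the classical proof given in those references. It is also consistent with the paper's internal logic, since the same combination of the $\|y\|_{\boldsymbol{V}}^6$ estimate and Lemma \ref{EDO} is exactly what the authors use later (Lemma \ref{bounded-3D}) to bound the minimizing sequence on $[0,T_1^*]$.
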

We also introduce the following result :
\begin{proposition}\label{prop_existence_v-3D}
For all $y\in\A(T^*)$,  there exists a unique $v\in \A_0(T^*)$  solution in $\mathcal{D}^{\prime}(0,T^*)$ of
\begin{equation}\label{corrector-3D}
\left\{
\begin{aligned}
& \frac{d}{dt} \int_\Omega v\cdot w +\int_{\Omega} \nabla v\cdot \nabla w +\frac{d}{dt} \int_\Omega y\cdot w+\nu\int_{\Omega} \nabla y\cdot \nabla w  \\
&\hspace*{6cm}+\int_{\Omega} y\cdot \nabla y\cdot w =\int_\Omega f\cdot w, \quad \forall w\in \boldsymbol{V}\\
&v(0)=0.
\end{aligned}
\right.
\end{equation}
\end{proposition}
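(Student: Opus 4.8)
The plan is to transcribe the proof of Proposition~\ref{prop_existence_v} to the three-dimensional setting: I rewrite the corrector system \eqref{corrector-3D} as a non-stationary Stokes problem with homogeneous initial datum and a single source term $F$, verify that $F$ lies in $L^2(0,T^*;\boldsymbol{H})$, and then invoke the standard maximal $L^2$-regularity theory for the Stokes operator to obtain a unique $v\in\A_0(T^*)$. Since $v,y\in H^1(0,T^*;\boldsymbol{H})$ we may replace $\frac{d}{dt}\int_\Omega v\cdot w$ and $\frac{d}{dt}\int_\Omega y\cdot w$ by $\int_\Omega\partial_t v\cdot w$ and $\int_\Omega\partial_t y\cdot w$; integrating the viscous terms by parts against $w\in\boldsymbol{V}$ and using that $\int_\Omega\Delta u\cdot w=\int_\Omega P(\Delta u)\cdot w$ for every $w\in\boldsymbol{H}$, the system \eqref{corrector-3D} becomes equivalent to
$$
\left\{
\begin{aligned}
&\partial_t v-P(\Delta v)=F,\\
&v(0)=0,
\end{aligned}
\right.
$$
where $F:=Pf-\partial_t y+\nu\,P(\Delta y)-P(y\cdot\nabla y)$.

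The only genuinely three-dimensional step — and the one I expect to be the main obstacle — is to prove that $F\in L^2(0,T^*;\boldsymbol{H})$, specifically the control of the convective term $P(y\cdot\nabla y)$. Three of the four contributions are immediate: $Pf\in L^2(0,T^*;\boldsymbol{H})$ because $f\in L^2(Q_T)^3$ and $P$ is an orthogonal projection; $\partial_t y\in L^2(0,T^*;\boldsymbol{H})$ by the very definition of $\A(T^*)$; and $\nu\,P(\Delta y)\in L^2(0,T^*;\boldsymbol{H})$ since $y\in L^2(0,T^*;H^2(\Omega)^3)$. For the convective term I would first observe that, because $y\in L^2(0,T^*;H^2(\Omega)^3\cap\boldsymbol{V})\cap H^1(0,T^*;\boldsymbol{H})$, a classical interpolation (trace) theorem (see \cite{Lions69,Temam}) gives $y\in \C([0,T^*];\boldsymbol{V})$, hence $y\in L^\infty(0,T^*;\boldsymbol{V})$. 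Then, using the embedding estimate \eqref{L-infini} in the form $\|y\|_\infty^2\le c\|y\|_{\boldsymbol{V}}\|\Delta y\|_2$ together with $\|y\cdot\nabla y\|_2\le\|y\|_\infty\|\nabla y\|_2$, I obtain
$$
\int_0^{T^*}\|y\cdot\nabla y\|_2^2\le c\int_0^{T^*}\|y\|_{\boldsymbol{V}}^3\,\|\Delta y\|_2\le c\,\|y\|_{L^\infty(0,T^*;\boldsymbol{V})}^3\,\sqrt{T^*}\,\|\Delta y\|_{L^2(0,T^*;\boldsymbol{H})}<+\infty,
$$
the last bound following from Cauchy--Schwarz in time. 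Since $P$ is bounded on $L^2(\Omega)^3$, this yields $P(y\cdot\nabla y)\in L^2(0,T^*;\boldsymbol{H})$, and therefore $F\in L^2(0,T^*;\boldsymbol{H})$.

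Finally, existence and uniqueness of $v\in\A_0(T^*)$ follow from the standard maximal $L^2$-regularity result for the non-stationary Stokes problem (the three-dimensional counterpart of Proposition~\ref{Existence}, see \cite{Temam,Lions69}): for any $F\in L^2(0,T^*;\boldsymbol{H})$ there is a unique $v$ with $v\in L^2(0,T^*;H^2(\Omega)^3\cap\boldsymbol{V})$, $\partial_t v\in L^2(0,T^*;\boldsymbol{H})$ and $v(0)=0$ solving the Stokes system above; uniqueness is immediate by linearity, since the difference of two solutions solves the homogeneous problem. Everything except the convective estimate is a direct transcription of the two-dimensional argument; if one prefers to keep all quantities in the intrinsic norm of $\A_0(T^*)$, the estimate \eqref{Proj} may additionally be used to express $\|\Delta y\|_2$ through $\|P(\Delta y)\|_2$.
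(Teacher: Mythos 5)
Your proof is correct and follows essentially the same route as the paper: rewrite \eqref{corrector-3D} as the non-stationary Stokes problem \eqref{sol_z-3D} with $F=f-y\cdot\nabla y+\nu\Delta y-\partial_t y$ and zero initial datum, and apply Proposition \ref{Existence-3D} with $\nu_1=1$. The only difference is that you spell out the membership $y\cdot\nabla y\in L^2(Q_{T^*})^3$ (via $y\in L^\infty(0,T^*;\boldsymbol{V})$ and the estimate \eqref{L-infini}), which the paper simply asserts, so your write-up is a valid, slightly more detailed version of the same argument.
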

The proof of this proposition is a consequence of the following standard  result (see \cite{Temam, Lions69}).

\begin{proposition}\label{Existence-3D}
For all $z_0\in \boldsymbol{V}$, all $\nu_1>0$ and all $F\in L^2(Q_{T^*})^3$, there exists a unique $z\in L^2(0,T^*;H^2(\Omega)^3\cap \boldsymbol{V})\cap H^1(0,T^*;\boldsymbol{H})$  solution in $\mathcal{D}^\prime(0,T^*)$ of 
\begin{equation}\label{sol_z-3D}
\left\{
\begin{aligned}
&\frac{d}{dt} \int_\Omega z\cdot w +\nu_1\int_{\Omega} \nabla z\cdot \nabla w =\int_\Omega F\cdot w, \quad \forall w\in \boldsymbol{V} \\
& z(0)=z_0.
\end{aligned}
\right.
\end{equation}
Moreover, for all $t\in [0,T^*]$, 
\begin{equation}\label{estimate-z-1-3D}
\|z(t)\|_{\boldsymbol{V}}^2+\nu_1\|P(\Delta z)\|_{L^2(0,t;\boldsymbol{H})}^2  \le  \frac1{\nu_1}\|P(F)\|^2_{L^2(Q_t)^3}+\| z_0\|^2_{\boldsymbol{V}}
\end{equation}
and
\begin{equation}\label{estimate-z-2-3D}
\nu_1\|z(t)\|_{\boldsymbol{V}}^2+\|\partial_t z\|_{L^2(0,t;\boldsymbol{H})}^2  \le  \|P(F)\|^2_{L^2(Q_t)^3}+\nu_1 \| z_0\|^2_{\boldsymbol{V}}.
\end{equation}
\end{proposition}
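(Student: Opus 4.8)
The plan is to construct the solution by the Faedo--Galerkin method and to recover the two regularity estimates \eqref{estimate-z-1-3D} and \eqref{estimate-z-2-3D} as \emph{a priori} bounds that survive the passage to the limit. Since we require the strong regularity $z\in L^2(0,T^*;H^2(\Omega)^3\cap\boldsymbol{V})\cap H^1(0,T^*;\boldsymbol{H})$, the choice of basis is not innocent: I would take $\{w_j\}_{j\ge 1}$ to be the eigenfunctions of the Stokes operator $A=-P\Delta$ with domain $D(A)=H^2(\Omega)^3\cap\boldsymbol{V}$, which (using the $\C^2$ regularity of $\partial\Omega$) form an orthonormal basis of $\boldsymbol{H}$, orthogonal in $\boldsymbol{V}$ and in $D(A)$. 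Writing $z_m=\sum_{j=1}^m c_j(t)\,w_j$, the Galerkin system becomes a linear system of ODEs for the $c_j$ with $L^2$-in-time right-hand side, hence admits a unique absolutely continuous solution on $[0,T^*]$, with $z_m(0)$ the $\boldsymbol{H}$-projection of $z_0$ onto $\mathrm{span}\{w_1,\dots,w_m\}$.

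The decisive feature of this basis is that $Az_m=\sum_j\lambda_j c_j w_j$ again belongs to $\mathrm{span}\{w_1,\dots,w_m\}$, so that both $Az_m$ and $\partial_t z_m$ are admissible test functions in the Galerkin relations. To obtain \eqref{estimate-z-1-3D} I would test with $w=Az_m=-P(\Delta z_m)$, using the identities $\int_\Omega\partial_t z_m\cdot Az_m=\tfrac12\frac{d}{dt}\|z_m\|_{\boldsymbol{V}}^2$, $\int_\Omega\nabla z_m\cdot\nabla(Az_m)=\|P(\Delta z_m)\|_2^2$ and $\int_\Omega F\cdot Az_m=\int_\Omega P(F)\cdot Az_m$ (valid since $Az_m\in\boldsymbol{H}$); Young's inequality to absorb the right-hand side then yields
$$
\frac{d}{dt}\|z_m\|_{\boldsymbol{V}}^2+\nu_1\|P(\Delta z_m)\|_2^2\le\frac1{\nu_1}\|P(F)\|_2^2 ,
$$
and integration in time gives \eqref{estimate-z-1-3D}. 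For \eqref{estimate-z-2-3D} I would instead test with $w=\partial_t z_m$ and use $\int_\Omega\nabla z_m\cdot\nabla\partial_t z_m=\tfrac12\frac{d}{dt}\|z_m\|_{\boldsymbol{V}}^2$ to obtain
$$
\|\partial_t z_m\|_2^2+\nu_1\frac{d}{dt}\|z_m\|_{\boldsymbol{V}}^2\le\|P(F)\|_2^2 ,
$$
which integrates to \eqref{estimate-z-2-3D}. Estimate \eqref{Proj} then converts the control of $\|P(\Delta z_m)\|_{L^2(0,t;\boldsymbol{H})}$ into control of the full $\|\Delta z_m\|_{L^2(Q_t)^3}$, hence a uniform $L^2(0,T^*;H^2(\Omega)^3)$ bound by Stokes elliptic regularity on the $\C^2$ domain.

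With these uniform bounds I would extract a subsequence converging weakly in $L^2(0,T^*;H^2(\Omega)^3\cap\boldsymbol{V})$ and in $H^1(0,T^*;\boldsymbol{H})$, and weakly-$*$ in $L^\infty(0,T^*;\boldsymbol{V})$; because the equation is \emph{linear}, the passage to the limit in the Galerkin relations is immediate (no compactness of a nonlinear term is required), and the initial condition $z(0)=z_0$ is preserved since the embedding $H^1(0,T^*;\boldsymbol{H})\hookrightarrow\C([0,T^*];\boldsymbol{H})$ makes the trace at $t=0$ continuous. The estimates \eqref{estimate-z-1-3D}--\eqref{estimate-z-2-3D} then follow for the limit by weak lower semicontinuity of the norms. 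Uniqueness is immediate from linearity: the difference of two solutions solves the homogeneous problem with zero data, and testing with $w$ equal to that difference forces it to vanish. The only genuinely delicate point is the regularity bookkeeping—guaranteeing that the special basis exists and lies in $D(A)$, and that \eqref{Proj} together with the $\C^2$ regularity truly upgrades the bound on the projected Laplacian to a bound in $H^2$; the energy computations themselves are entirely routine.
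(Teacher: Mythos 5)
Your proof is correct, and it coincides with the paper's treatment in the only sense available: the paper does not prove this proposition itself but quotes it as a standard result from Temam and Lions, and your Faedo--Galerkin argument with the spectral basis of the Stokes operator --- testing with $Az_m$ to get \eqref{estimate-z-1-3D}, with $\partial_t z_m$ to get \eqref{estimate-z-2-3D}, then passing to the limit by weak lower semicontinuity and using \eqref{Proj} plus Stokes elliptic regularity for the $H^2$ bound --- is precisely that classical proof. The one point worth stating explicitly is that the $\boldsymbol{V}$-orthogonality of the eigenbasis gives $\|z_m(0)\|_{\boldsymbol{V}}\le\|z_0\|_{\boldsymbol{V}}$, which your integrated estimates tacitly use to land on the constants $\|z_0\|^2_{\boldsymbol{V}}$ and $\nu_1\|z_0\|^2_{\boldsymbol{V}}$ appearing in the statement.
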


\begin{proof} (of Proposition \ref{prop_existence_v-3D}) 
Let $y\in  \A(T^*)$. Then $y\cdot \nabla y$, $\Delta y$ and $\partial_t y$ 
belong to $L^2(Q_{T^*})^3$. 
Then (\ref{corrector-3D}) may be rewriten as
$$
\left\{
\begin{aligned}
& \frac{d}{dt} \int_\Omega v\cdot w +\int_{\Omega} \nabla v\cdot \nabla w =\int_\Omega F\cdot w, \quad \forall w\in \boldsymbol{V}\\
& v(0)=0,
\end{aligned}
\right.
$$
where $F=f-y\cdot \nabla y+\nu \Delta y-\partial_t y\in L^2(Q_{T^*})^3$,  Proposition \ref{prop_existence_v-3D} is therefore a consequence of Proposition \ref{Existence-3D} with $\nu_1=1$.

Moreover, for all $t\in [0,T^*]$, 
$$
\|v(t)\|_{\boldsymbol{V}}^2+\|P(\Delta v)\|_{L^2(0,t;\boldsymbol{H})}^2 \le  \|P(f-y\cdot\nabla  y+\nu \Delta y-\partial_t y)\|^2_{L^2(Q_t)^3}
$$
and
$$
\|v(t)\|_{\boldsymbol{V}}^2+\|\partial_t v\|_{L^2(0,t;\boldsymbol{H})}^2  \le \|P(f-y\cdot\nabla y+\nu \Delta y-\partial_t y)\|^2_{L^2(Q_t)^3}.$$
\end{proof}

\subsection{The least-squares functional}

We now introduce our least-squares functional $E: \A(T^*)\to\mathbb{R}^+$ by putting
\begin{equation}\label{foncE-3D}
E(y)=\frac{1}{2}\int_0^{T^*}\| P(\Delta v)\|^2_{\boldsymbol{H}}+\frac{1}{2}\int_0^{T^*} \| \partial_t v\|^2_{\boldsymbol{H}}=\frac12\|v\|^2_{\A_0(T^*)}
\end{equation}
where the corrector $v$ is the unique solution of (\ref{corrector-3D}). The infimum of $E$ is equal to zero and is reached by a solution of (\ref{NS-3D}).

\begin{theorem}\label{th-convergence-3D}
Let $\{y_k\}_{k\in \mathbb{N}}$ be a sequence of $\A(T^*)$ bounded in $L^2(0,T^*;H^2(\Omega)^3\cap \boldsymbol{V})\cap H^1(0,T^*;\boldsymbol{H})$. If  $E^{\prime}(y_k)\to 0$ as $k\to \infty$, then the whole sequence $\{y_k\}_{k\in\mathbb{N}}$ converges strongly as $k\to \infty$ in $L^2(0,T^*;H^2(\Omega)^3\cap \boldsymbol{V})\cap H^1(0,T^*;\boldsymbol{H})$ to the solution $\bar y$ of \eqref{NS-3D}.
\end{theorem}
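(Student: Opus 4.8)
The plan is to mirror the two-step architecture used for Theorem \ref{th-convergence}, now working in the stronger norm of $\A_0(T^*)$ built on $\|P(\Delta\cdot)\|_{\boldsymbol{H}}$ and $\|\partial_t\cdot\|_{\boldsymbol{H}}$. First I would establish the coercivity estimate that is the analog of Proposition \ref{main-estimatee}: for $y\in\A(T^*)$ with $\|\partial_t y\|_{L^2(0,T^*;\boldsymbol{H})}$ and $\sqrt{\nu}\|P(\Delta y)\|_{L^2(0,T^*;\boldsymbol{H})}$ bounded by some $M$, there should be a constant $c(M)$ with
\begin{equation}
\nonumber
\|y-\bar y\|_{L^\infty(0,T^*;\boldsymbol{V})}+\sqrt{\nu}\|y-\bar y\|_{L^2(0,T^*;H^2(\Omega)^3)}+\|\partial_t(y-\bar y)\|_{L^2(0,T^*;\boldsymbol{H})}\le c(M)\sqrt{E(y)}.
\end{equation}
Writing $Y=y-\bar y$, I would subtract \eqref{NS-3D} from \eqref{corrector-3D}, test the resulting equation with $-P(\Delta Y)$, and use the Stokes regularity (\ref{Proj}) to turn the viscous contribution into a full $\nu\|P(\Delta Y)\|^2_{\boldsymbol{H}}$; the nonlinear terms $y\cdot\nabla Y$ and $Y\cdot\nabla y$ would be estimated through the interpolation inequality (\ref{L-infini}), after a Young splitting so that their top-order part is absorbed by the viscous term. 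Since the remaining cubic-in-$Y$ contributions cannot be dominated globally, I would close the inequality not by plain Gronwall but through the comparison Lemma \ref{EDO}, which is exactly what confines the argument to $[0,T^*]$.

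This coercivity bound already gives the theorem once one knows $E(y_k)\to 0$, so the remaining task is the analog of Proposition \ref{convergence}: that $E'(y_k)\to 0$ forces $E(y_k)\to 0$. I would first redo Proposition \ref{prop_boundY1} in three dimensions, solving for $Y_1\in\A_0(T^*)$ the linearized problem with the same structure as \eqref{solution_Y1}, where now $v\in\A_0(T^*)$ is the corrector of \eqref{corrector-3D}. Existence would again follow from a contraction-mapping argument on $\C([0,T^*];\boldsymbol{V})\cap L^2(0,T^*;H^2(\Omega)^3)$ together with Proposition \ref{Existence-3D}, and the bound $\|\partial_t Y_1\|_{L^2(0,T^*;\boldsymbol{H})}+\sqrt{\nu}\|P(\Delta Y_1)\|_{L^2(0,T^*;\boldsymbol{H})}\le c(M)\sqrt{E(y)}$ would be obtained by the same energy computation (test with $-P(\Delta Y_1)$, use (\ref{L-infini}), (\ref{Proj}) and Lemma \ref{EDO}).

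Next I would establish differentiability of $E$ on $\A_0(T^*)$ exactly as in Proposition \ref{differentiable}: writing $E(y+Y)=\tfrac12\|v+v'+V\|^2_{\A_0(T^*)}$, where $v$ is the corrector of $y$, $V$ solves the linearized corrector equation (the 3D analog of \eqref{solution_V}), and $v'$ absorbs the quadratic term $Y\cdot\nabla Y$. The crucial point is to verify $\|V\|_{\A_0(T^*)}\le c\|Y\|_{\A_0(T^*)}$ and $\|v'\|_{\A_0(T^*)}=o(\|Y\|_{\A_0(T^*)})$; the latter reduces to estimating $Y\cdot\nabla Y$ in $L^2(Q_{T^*})^3$, which the embedding (\ref{L-infini}) makes $\mathcal{O}(\|Y\|_{\A_0(T^*)}^2)$ once $\A_0(T^*)$ is seen to control $\|Y\|_{L^\infty(0,T^*;\boldsymbol{V})}$ and $\|Y\|_{L^2(0,T^*;H^2(\Omega)^3)}$. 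This yields $E'(y)\cdot Y=\langle v,V\rangle_{\A_0(T^*)}$. Taking $Y=Y_1$ and observing, as in \eqref{E_Eprime}, that the corrector $V_1$ associated to $Y_1$ satisfies $V_1-v\equiv 0$ by the uniqueness in Proposition \ref{Existence-3D}, gives the key identity $E'(y)\cdot Y_1=2E(y)$. Combining this with the uniform $\A_0(T^*)$-bound on $Y_{1,k}$ then produces $2E(y_k)=E'(y_k)\cdot Y_{1,k}\to 0$, and the coercivity estimate upgrades this to the strong convergence $y_k\to\bar y$ claimed in the statement.

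The \emph{main obstacle}, compared with the two dimensional case, is the control of the trilinear term in the stronger topology: in 2D the estimate (\ref{2D-Case}) gave $\int u\cdot\nabla v\cdot w\le c\|u\|_{\boldsymbol{H}}\|v\|_{\boldsymbol{V}}\|w\|_{\boldsymbol{V}}$ with no derivative loss, whereas in 3D one is forced to spend half a derivative through (\ref{L-infini}) and to rely on the Stokes regularity (\ref{Proj}) to recover it, which only closes on a finite interval $[0,T^*]$ via Lemma \ref{EDO}. Ensuring that all the constants $c(M)$ produced this way stay finite and uniform in $k$ along the bounded sequence $\{y_k\}$, so that neither the contraction radius nor the comparison time degenerates, is where the care is required; once this is in place, the algebraic identity behind \eqref{E_Eprime} carries over verbatim.
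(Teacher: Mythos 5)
Your architecture is exactly the paper's: a coercivity estimate (Proposition \ref{main-estimatee-3D}) in the $\A_0(T^*)$-topology, existence and the $c(M)\sqrt{E(y)}$ bound for $Y_1$ via a contraction mapping (Proposition \ref{prop_boundY1-3D}), differentiability of $E$ through the decomposition $\bar V=v+v'+V$ with $v'$ quadratic in $Y$ (Proposition \ref{differentiable-3D}), the identity $E'(y)\cdot Y_1=2E(y)$ obtained from $V_1\equiv v$ by uniqueness in Proposition \ref{Existence-3D}, and finally $2E(y_k)=E'(y_k)\cdot Y_{1,k}\to 0$ using the uniform bound on $Y_{1,k}$, upgraded to strong convergence by coercivity. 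All of that matches Propositions \ref{main-estimatee-3D}--\ref{convergence3D} step for step, including the role of (\ref{L-infini}) and (\ref{Proj}) in handling the trilinear term.

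The one point where you deviate is a misdiagnosis: you claim the coercivity step has ``cubic-in-$Y$ contributions'' that ``cannot be dominated globally'' and therefore must be closed by the comparison Lemma \ref{EDO} rather than Gronwall. In fact, subtracting \eqref{NS-3D} from \eqref{corrector-3D} yields an equation for $Y=y-\bar y$ that is \emph{linear} in $Y$ (the nonlinearity splits as $y\cdot\nabla y-\bar y\cdot\nabla\bar y=Y\cdot\nabla y+\bar y\cdot\nabla Y$), with time-dependent coefficient $t\mapsto\|P(\Delta y)\|_2^2+\|P(\Delta\bar y)\|_2^2$ in $L^1(0,T^*)$ controlled by the assumed bound $M$; the same is true of the equation for $Y_1$. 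So plain Gronwall closes the estimate on the whole of $[0,T^*]$ with a constant $c(M)$, which is precisely what the paper does in Propositions \ref{main-estimatee-3D} and \ref{prop_boundY1-3D}. Your route through Lemma \ref{EDO} would matter: that lemma only delivers a bound up to some $T_1^*$ possibly strictly smaller than $T^*$, so followed literally it would prove convergence only on a shortened interval, not the theorem as stated. Lemma \ref{EDO} is genuinely needed in the paper only in Lemma \ref{bounded-3D}, where the equation satisfied by the algorithmic iterates $y_k$ carries the nonlinear term $\frac{c}{\nu}\|\nabla y_k\|_2^6$ and no a priori bound is available; in Theorem \ref{th-convergence-3D} the boundedness of $\{y_k\}$ is a hypothesis, so the issue you worry about (degenerating comparison time along the sequence) simply does not arise. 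With that correction your proof goes through verbatim as the paper's.
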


As in the previous section, we first check  that leading the error functional $E$ down to zero implies strong convergence to the unique solution of \eqref{NS-3D}. 

\begin{proposition}\label{main-estimatee-3D}
Let $\bar y\in\A(T^*) $ be the solution of (\ref{NS-3D}), $M\in \mathbb{R}$ such that $\Vert \partial_t \bar y\Vert_{L^{2}(Q_{T^*})^3}\leq M$ and $\sqrt{\nu}\Vert P(\Delta  \bar y)\Vert_{L^2(Q_{T^*})^3}\leq M$ and let $y\in \A(T^*)$. If $\Vert \partial_t y\Vert_{L^{2}(Q_{T^*})^3}\leq M$ and $\sqrt{\nu}\Vert P(\Delta  y)\Vert_{L^2(Q_{T^*})^3}\leq M$, then there exists a constant $c(M)$ such that 
$$
\| y-\bar y\|_{L^\infty(0,T^*;\boldsymbol{V})}  +\sqrt{\nu}\| P(\Delta y)-P(\Delta \bar y)\|_{L^2(Q_{T^*})^3)}+\|\partial_t y-\partial_t \bar y\|_{L^2(Q_{T^*})^3)}\leq c(M) \sqrt{E(y)}.
$$
\end{proposition}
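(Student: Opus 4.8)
The plan is to follow the proof of Proposition~\ref{main-estimatee}, but now at the level of the strong ($H^2$) norm, drawing the a~priori estimates from Proposition~\ref{Existence-3D} rather than from an $L^2$ energy identity. Set $Y=y-\bar y$; since $y,\bar y\in\A(T^*)$ share the initial datum $u_0$, we have $Y\in\A_0(T^*)$. Subtracting \eqref{NS-3D} (satisfied by $\bar y$) from \eqref{corrector-3D} (satisfied by $y$ and its corrector $v$), using the identity $y\cdot\nabla y-\bar y\cdot\nabla\bar y=Y\cdot\nabla y+\bar y\cdot\nabla Y$ together with $\int_\Omega\nabla v\cdot\nabla w=-\int_\Omega P(\Delta v)\cdot w$ for $w\in\boldsymbol V$, I would verify that $Y$ solves \eqref{sol_z-3D} with $\nu_1=\nu$, $z_0=0$ and right-hand side
$$F=P(\Delta v)-\partial_t v-Y\cdot\nabla y-\bar y\cdot\nabla Y\in L^2(Q_{T^*})^3.$$
The structural difference with the two-dimensional case is that the estimates \eqref{estimate-z-1-3D}--\eqref{estimate-z-2-3D} are of $L^\infty(\boldsymbol V)\cap L^2(H^2)$ type (morally obtained by testing with $P(\Delta Y)$), so the cancellation $\langle B(\bar y,Y),Y\rangle=0$ that removed one convective term in Proposition~\ref{main-estimatee} is unavailable: both convective terms must be kept as genuine source terms.

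Applying \eqref{estimate-z-1-3D} to $Y$ gives, for every $t\in[0,T^*]$,
$$\|Y(t)\|_{\boldsymbol V}^2+\nu\int_0^t\|P(\Delta Y)\|_{\boldsymbol H}^2\le\frac1\nu\|P(F)\|^2_{L^2(Q_t)^3}.$$
The corrector part of $F$ is controlled exactly by the functional, since $\|\partial_t v\|^2_{L^2(0,t;\boldsymbol H)}+\|P(\Delta v)\|^2_{L^2(0,t;\boldsymbol H)}\le\|v\|^2_{\A_0(T^*)}=2E(y)$ by \eqref{foncE-3D}. The crux is to estimate the two convective contributions in $L^2(Q_t)^3$ by a quantity of the form $\int_0^t\phi(s)\,\|Y(s)\|_{\boldsymbol V}^2\,ds$ with $\int_0^{T^*}\phi\le c(M)$. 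For the first, \eqref{L-infini} and \eqref{Proj} give $\|\bar y\cdot\nabla Y\|_{\boldsymbol H}\le\|\bar y\|_\infty\|Y\|_{\boldsymbol V}$ with $\|\bar y\|_\infty^2\le c\,\|\bar y\|_{\boldsymbol V}\|P(\Delta\bar y)\|_{\boldsymbol H}$. For the second, the embedding $\|Y\|_{L^6}\le c\|Y\|_{\boldsymbol V}$ and the Gagliardo--Nirenberg bound $\|\nabla y\|_{L^3}^2\le c\,\|y\|_{\boldsymbol V}\|y\|_{H^2}\le c\,\|y\|_{\boldsymbol V}\|P(\Delta y)\|_{\boldsymbol H}$ (the last step by \eqref{Proj} and $H^2$ elliptic regularity) yield $\|Y\cdot\nabla y\|_{\boldsymbol H}\le c\,\|Y\|_{\boldsymbol V}\|\nabla y\|_{L^3}$.

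To check that the weights $\phi$ are integrable with an $M$-dependent bound, I would first record the three-dimensional analogue of \eqref{estim-y-L-infini}: integrating $\frac{d}{dt}\|\nabla y\|_2^2=-2\int_\Omega P(\Delta y)\cdot\partial_t y$ gives $\|y\|^2_{L^\infty(0,T^*;\boldsymbol V)}\le\|u_0\|_{\boldsymbol V}^2+2\|P(\Delta y)\|_{L^2(0,T^*;\boldsymbol H)}\|\partial_t y\|_{L^2(0,T^*;\boldsymbol H)}\le c(M)$, and likewise for $\bar y$. Hence $\int_0^{T^*}\|\bar y\|_\infty^2\le c\,\|\bar y\|_{L^\infty(0,T^*;\boldsymbol V)}\sqrt{T^*}\,\|P(\Delta\bar y)\|_{L^2(0,T^*;\boldsymbol H)}\le c(M)$ and $\int_0^{T^*}\|\nabla y\|_{L^3}^2\le c\,\|y\|_{L^\infty(0,T^*;\boldsymbol V)}\sqrt{T^*}\,\|P(\Delta y)\|_{L^2(0,T^*;\boldsymbol H)}\le c(M)$, using $\sqrt\nu\|P(\Delta\,\cdot\,)\|_{L^2(Q_{T^*})^3}\le M$. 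Collecting the bounds, $\|P(F)\|^2_{L^2(Q_t)^3}\le4\bigl(2E(y)+\int_0^t\phi\,\|Y\|_{\boldsymbol V}^2\bigr)$, and Gronwall's lemma applied to $t\mapsto\|Y(t)\|_{\boldsymbol V}^2$ yields $\|Y\|_{L^\infty(0,T^*;\boldsymbol V)}^2\le\frac8\nu E(y)\exp(c(M)/\nu)\le c(M)E(y)$. Reinserting this into the displayed inequality controls $\sqrt\nu\,\|P(\Delta Y)\|_{L^2(Q_{T^*})^3}$, and \eqref{estimate-z-2-3D} applied to the same $F$ controls $\|\partial_t Y\|_{L^2(Q_{T^*})^3}$, each by $c(M)\sqrt{E(y)}$; summing the three estimates and recalling $P(\Delta Y)=P(\Delta y)-P(\Delta\bar y)$ gives the claim.

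The main obstacle is exactly the $L^2(Q_{T^*})^3$ control of the convective terms. Because no antisymmetric cancellation is available at the $H^2$ level, one is forced to absorb the full gradient of $y$ (respectively the $L^\infty$ norm of $\bar y$) through the strong-solution regularity, via \eqref{L-infini}, \eqref{Proj} and Gagliardo--Nirenberg, and to control the resulting time weights through the $L^\infty(0,T^*;\boldsymbol V)$ bounds; this is precisely what confines the three-dimensional analysis to regular solutions on the finite interval $[0,T^*]$ and renders the dependence of the constant on $M$ unavoidable.
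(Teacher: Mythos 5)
Your proposal is correct and follows essentially the same route as the paper: the same decomposition $Y=y-\bar y$ recast as the Stokes-type problem \eqref{sol_z-3D} with $F$ built from the corrector and the two convective differences, the same estimates \eqref{estimate-z-1-3D}--\eqref{estimate-z-2-3D}, the same treatment of $\|\bar y\cdot\nabla Y\|_2$ and $\|Y\cdot\nabla y\|_2$ via \eqref{L-infini}, \eqref{Proj} and Gagliardo--Nirenberg, and Gronwall's lemma. The only (cosmetic) difference is in bounding the Gronwall weight: you derive an $L^\infty(0,T^*;\boldsymbol V)$ bound from the hypotheses and use Cauchy--Schwarz in time, whereas the paper dominates $\|\nabla y\|_3^2+\|\bar y\|_\infty^2$ pointwise in time by $c\bigl(\|P(\Delta y)\|_2^2+\|P(\Delta\bar y)\|_2^2\bigr)$ (using $\|y\|_{\boldsymbol V}\le c\|P(\Delta y)\|_2$) and feeds $\sqrt{\nu}\|P(\Delta\,\cdot\,)\|_{L^2(Q_{T^*})^3}\le M$ directly into the exponent, yielding $\exp(cM^2/\nu^2)$.
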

\par\noindent

This proposition establishes that as we take down the error $E$ to zero, we get closer, in the norm of $\A_0(T^*)$  to the solution $\bar y$ of the problem (\ref{NS-3D}), and so, it justifies to look for global minimizers of \eqref{foncE-3D}.

\begin{proof} Let $Y=y-\bar y$. Then $Y\cdot \nabla y$, $\bar y\cdot\nabla Y$, $\partial_t v$  and $\Delta v$  belong to $L^2(Q_{T^*})^3$, and from (\ref{NS-3D}) and  (\ref{corrector-3D})  we deduce that 
$$
\left\{
\begin{aligned}
&\frac{d}{dt} \int_\Omega Y\cdot w+\nu\int_{\Omega} \nabla Y\cdot \nabla w  =-\int_\Omega  (\partial_t v+\Delta v+ Y\cdot\nabla  y+\bar y\cdot\nabla  Y)\cdot w , \quad \forall w\in \boldsymbol{V}\\
& Y(0)=0
\end{aligned}
\right.
$$
and from (\ref{estimate-z-1-3D})  we deduce that for all $t\in[0,T^*]$
$$
\begin{aligned}
\| Y(t)\|^2_{\boldsymbol{V}}+\nu\int_{Q_t} |P(\Delta Y)|^2
&\le \frac{1}{\nu}\int_0^t\| \partial_t v+\Delta v+ Y\cdot \nabla y+\bar y\cdot \nabla Y\|_{2}^2\\
&\le \frac{4}{\nu}(\| \partial_t v\|^2_{L^2(Q_t)^3}+\|\Delta v\|^2_{L^2(Q_t)^3}+\|Y\cdot \nabla y\|^2_{L^2(Q_t)^3}+\|\bar y\cdot \nabla Y\|^2_{L^2(Q_t)^3})\\
&\le \frac{c}{\nu}\biggl(E(y)+\int_0^t(\|\nabla y\|_3^2+\|\bar y\|_\infty^2) \| Y\|_{\boldsymbol{V}}^2\biggr)\\
&\le \frac{c}{\nu}\biggl(E(y)+\int_0^t(\| P(\Delta y)\|_2^2+\|P(\Delta \bar y)\|_2^2) \| Y\|_{\boldsymbol{V}}^2\biggr).
\end{aligned}
$$
Gronwall's lemma then implies that for all $t\in[0,T^*]$
$$
\| Y(t)\|^2_{\boldsymbol{V}}+\nu\int_{Q_t} |P(\Delta Y)|^2\leq \frac{c}\nu E(y)\exp(\frac{c}\nu\int_0^t \| P(\Delta y)\|_2^2+\|P(\Delta \bar y)\|_2^2)
\leq \frac{c}\nu E(y)\exp(\frac{c}{\nu^2}M^2)$$ 
which gives
$$\| Y\|^2_{L^\infty(0,T^*;\boldsymbol{V})}+\nu\int_{Q_{T^*}} |P(\Delta Y)|^2\le \frac{c}{\sqrt{\nu}}\sqrt{ E(y)}\exp(\frac{c}{\nu^2}M^2)\le C(M)\sqrt{E(y)}.$$

Similarly, using (\ref{estimate-z-2-3D}) instead of (\ref{estimate-z-1-3D}), we also obtain
$$\nu \| Y\|^2_{L^\infty(0,T^*;\boldsymbol{V})}+\int_{Q_{T^*}} |\partial_t Y|^2\le \frac{c}{\sqrt{\nu}}\sqrt{ E(y)}\exp(\frac{c}{\nu^2}M^2)\le C(M)\sqrt{E(y)}.$$
\end{proof}

For any  $y\in  \A(T^*)$, we now look for an element $Y_1\in \A_0(T^*)$ solution of the following formulation 
\begin{equation}
\label{solution_Y1-3D}
\left\{
\begin{aligned}
& \frac{d}{dt}\int_\Omega Y_1\cdot w +\nu\int_{\Omega} \nabla Y_1\cdot \nabla w +\int_{\Omega} y\cdot \nabla Y_1\cdot w \\
&\hspace*{3cm}+\int_{\Omega} Y_1\cdot \nabla y\cdot w=-\frac{d}{dt}\int_\Omega v\cdot w -  \int_\Omega \nabla v\cdot \nabla w, \quad \forall w\in \boldsymbol{V} \\
& Y_1(0)=0,
\end{aligned}
\right.
\end{equation}
where $v\in \A_0(T^*)$ is the corrector (associated to $y$) solution of \eqref{corrector-3D}. $Y_1$ enjoys the following property.

\begin{proposition}\label{prop_boundY1-3D}
For all $y\in\A(T^*)$, there exists an unique  $Y_1\in\A_0(T^*)$ solution of (\ref{solution_Y1-3D}). Moreover if for some $M\in\mathbb{R}$,  $\Vert \partial_t y\Vert_{L^{2}(Q_{T^*})^3}\leq M$, $\Vert  y\Vert_{L^{\infty}(0,T^*;{\boldsymbol{V}})}\leq M$ and $\sqrt{\nu}\Vert P(\Delta  y)\Vert_{L^2(Q_{T^*})^3}\leq M$, then this solution satisfies 
$$\Vert \partial_t Y_1\Vert_{L^{2}(Q_{T^*})^3}+ \Vert  Y_1\Vert_{L^{\infty}(0,T^*;{\boldsymbol{V}})}+\sqrt{\nu}\Vert P(\Delta  Y_1)\Vert_{L^2(Q_{T^*})^3}\leq c(M)\sqrt{E(y)}.$$\end{proposition}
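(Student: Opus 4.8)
The statement is the three-dimensional analogue of Proposition \ref{prop_boundY1}, so I would follow the same two-stage strategy: first establish existence and uniqueness of $Y_1$ by a fixed-point (contraction) argument on a short time interval and then propagate it to $[0,T^*]$, and second derive the quantitative bound via an energy estimate combined with Gronwall's lemma. For the existence part, I would rewrite \eqref{solution_Y1-3D} in the abstract form used in Proposition \ref{prop_boundY1}, namely as a linear Stokes-type problem with the nonlinearity $y\cdot\nabla Y_1+Y_1\cdot\nabla y$ and right-hand side $-\partial_t v-\Delta v\in L^2(Q_{T^*})^3$. Fixing $y_1\in L^2(0,t';H^2\cap\boldsymbol{V})\cap\C([0,t'];\boldsymbol{V})$, define $z_1=\mathcal{T}(y_1)$ as the solution of the linearized problem in which the term $Y_1\cdot\nabla y$ is frozen to $y_1\cdot\nabla y$; appeal to Proposition \ref{Existence-3D} (with $\nu_1=\nu$) for well-posedness of this linear problem.

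\textbf{The contraction estimate.} For the difference $z_1-z_2=\mathcal{T}(y_1)-\mathcal{T}(y_2)$, I would test the equation with $P(\Delta(z_1-z_2))$ (the natural test function for the $\A_0(T^*)$ norm in 3D, in contrast with the $w=z_1-z_2$ used in the 2D proof). The critical terms to control are $\int_\Omega(y_1-y_2)\cdot\nabla y\cdot P(\Delta(z_1-z_2))$ and the convective term $\int_\Omega y\cdot\nabla(z_1-z_2)\cdot P(\Delta(z_1-z_2))$; here I would use the embedding $H^2(\Omega)^3\hookrightarrow L^\infty(\Omega)$ from \eqref{L-infini} together with \eqref{Proj} to bound $\|\nabla y\|_3$ and $\|y\|_\infty$ by $\|P(\Delta y)\|_2$ and $\|y\|_{\boldsymbol{V}}$. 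By absorbing the highest-order terms into $\frac{\nu}{2}\|P(\Delta(z_1-z_2))\|_2^2$ via Young's inequality, I obtain a bound of the form
$$
\|z_1-z_2\|^2_{L^\infty(0,t';\boldsymbol{V})}+\nu\int_0^{t'}\!\!\!\int_\Omega|P(\Delta(z_1-z_2))|^2\le c\,\|y_1-y_2\|^2_{L^\infty(0,t';\boldsymbol{V})}\int_0^{t'}\!\big(\|P(\Delta y)\|_2^2+1\big).
$$
Since $t\mapsto\int_0^t\big(\|P(\Delta y)\|_2^2+1\big)$ is continuous and $y\in\A(T^*)$, I can choose $t'$ small so that the factor is below $\tfrac12$, making $\mathcal{T}$ a contraction on $X=\C([0,t'];\boldsymbol{V})\cap L^2(0,t';H^2\cap\boldsymbol{V})$; the unique fixed point gives $Y_1$ locally. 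Uniform continuity of the integral then allows me to cover $[0,T^*]$ in finitely many steps exactly as in the 2D argument.

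\textbf{The quantitative bound.} For the estimate, test \eqref{solution_Y1-3D} with $P(\Delta Y_1)$, using that $\int_{Q_t}y\cdot\nabla Y_1\cdot P(\Delta Y_1)$ and $\int_{Q_t}Y_1\cdot\nabla y\cdot P(\Delta Y_1)$ are controlled through \eqref{L-infini} and \eqref{Proj}, and that $\|\partial_t v+\Delta v\|_{L^2(Q_t)^3}^2\le c\,E(y)$ by the definition \eqref{foncE-3D} of $E$. This yields a differential inequality whose Gronwall integration produces
$$
\|Y_1\|^2_{L^\infty(0,T^*;\boldsymbol{V})}+\nu\int_{Q_{T^*}}|P(\Delta Y_1)|^2\le \frac{c}{\nu}E(y)\exp\!\Big(\tfrac{c}{\nu}\!\int_0^{T^*}\!\|P(\Delta y)\|_2^2\Big)\le c(M)E(y),
$$
where the $M$-bounds on $\sqrt{\nu}\|P(\Delta y)\|$ and $\|y\|_{L^\infty(0,T^*;\boldsymbol{V})}$ absorb the exponential into $c(M)$. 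The bound on $\|\partial_t Y_1\|_{L^2(Q_{T^*})^3}$ then follows by reading $\partial_t Y_1$ off the equation, as $\partial_t Y_1=-\nu P(\Delta Y_1)\;(\text{after projection})-P(y\cdot\nabla Y_1)-P(Y_1\cdot\nabla y)-\partial_t v-\Delta v$ and estimating each term, exactly paralleling \eqref{esti2-Y_1}. \textbf{The main obstacle} is the contraction estimate: unlike the 2D case where $w=z_1-z_2$ closes the energy identity cleanly because of the $\boldsymbol{V}'$-based norm, here the $\A_0(T^*)$ norm forces testing against $P(\Delta\cdot)$, so the convective terms are genuinely of the highest order and must be absorbed delicately using the $L^\infty$ and projection estimates \eqref{L-infini}--\eqref{Proj}, which is precisely where the $\C^2$ regularity of $\partial\Omega$ and the $H^2$ setting are essential.
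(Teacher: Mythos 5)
Your energy-estimate half is sound and matches the paper: testing \eqref{solution_Y1-3D} with $P(\Delta Y_1)$, using \eqref{L-infini}--\eqref{Proj} and $\|\partial_t v\|^2_{L^2(Q_{T^*})^3}+\|P(\Delta v)\|^2_{L^2(Q_{T^*})^3}\le 2E(y)$ from \eqref{foncE-3D}, then Gronwall, is exactly how the paper obtains \eqref{esti1-Y_1-3D}; the paper's Gronwall exponent is $\frac c\nu\int_0^{T^*}\bigl(\|y\|_{\boldsymbol{V}}\|P(\Delta y)\|_{\boldsymbol{H}}+\nu^{-2}\|y\|^4_{\boldsymbol{V}}\bigr)$ rather than your $\frac c\nu\int_0^{T^*}\|P(\Delta y)\|_2^2$, but under the stated $M$-hypotheses both are $c(M)$, and this is cosmetic. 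For $\partial_t Y_1$ the paper uses the symmetric estimate in the spirit of \eqref{estimate-z-2-3D} to get \eqref{esti2-Y_1-3D}, while you read it off the equation; both work (watch your signs: from \eqref{solution_Y1-3D} one has $\partial_t Y_1=\nu P(\Delta Y_1)-P(y\cdot\nabla Y_1)-P(Y_1\cdot\nabla y)-\partial_t v+P(\Delta v)$, and $\Delta v$ must be projected).

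The genuine gap is in the definition of your fixed-point map. You freeze only $Y_1\cdot\nabla y$ at $y_1\cdot\nabla y$, keep the transport term $y\cdot\nabla z_1$ inside the linear operator (mimicking the 2D proof of Proposition \ref{prop_boundY1}), and then invoke Proposition \ref{Existence-3D} for the well-posedness of each iterate. Proposition \ref{Existence-3D} only covers the Stokes problem with a \emph{given} right-hand side $F\in L^2(Q_{T^*})^3$; it says nothing about the Oseen-type problem $\partial_t z_1-\nu P(\Delta z_1)+P(y\cdot\nabla z_1)=F$ in the maximal-regularity class $\A_0(t')$, and proving that well-posedness would require essentially the same fixed-point or Galerkin work you are in the middle of, so as written the argument is circular. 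In 2D this step was legitimate because the linearized problem in the $\boldsymbol{V}'$ framework could be cited from Temam; in 3D the paper has no such reference, and this is precisely why its iteration \eqref{solution_Z_1-3D} freezes \emph{both} convective terms, $y\cdot\nabla y_1$ and $y_1\cdot\nabla y$, so that each iterate is a pure Stokes solve with $F=-\partial_t v-\Delta v-y\cdot\nabla y_1-y_1\cdot\nabla y\in L^2(Q_{T^*})^3$, directly covered by Proposition \ref{Existence-3D}. The price, which your contraction computation would have to be rewritten to pay, is that the difference estimate must control $\|y\cdot\nabla(y_1-y_2)+(y_1-y_2)\cdot\nabla y\|_2$, hence involves $\|P(\Delta(y_1-y_2))\|_{\boldsymbol{H}}$: the contraction takes place in the full norm $\|\cdot\|_{X(t')}$ (not merely in $L^\infty(0,t';\boldsymbol{V})$ as in your displayed inequality), and smallness comes from the factor $(t')^{1/2}$ produced by Cauchy--Schwarz in time, not from Gronwall absorption. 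With that replacement your covering argument for reaching $t'=T^*$ goes through as in the paper.
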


\begin{proof} For all $t\in[0,T^*]$, $X(t)= \mathcal{C}([0,t]; \boldsymbol{V})\cap L^2(0,t;H^2(\Omega)^3)$ endowed with the norm $ \|z\|_{X(t)}=\sqrt{\| z\|_{L^\infty(0,t,\boldsymbol{V})}^2+\nu\|P(\Delta (z))\|^2_{L^2(0,t;\boldsymbol{H})}}$ is a Banach space.
Let $y_1\in X(T^*)$. Then $y_1\cdot \nabla y\in L^2(Q_{T^*})^3$.
As in Proposition \ref{Existence-3D}, there exists a unique $z_1\in \A_0(T^*)\subset X(T^*)$ solution of 
\begin{equation}
\label{solution_Z_1-3D}
\left\{
\begin{aligned}
&\frac{d}{dt} \int_\Omega z_1\cdot w +\nu\int_{\Omega} \nabla z_1\cdot \nabla w +\int_{\Omega} y\cdot \nabla y_1\cdot w +\int_{\Omega} y_1\cdot \nabla y\cdot w=-
\int_\Omega (\partial_t v+\Delta v)\cdot w, \quad \forall w\in \boldsymbol{V} \\
& z_1(0)=0.
\end{aligned}
\right.
\end{equation}
Let $\mathcal{T} :y_1\mapsto z_1$. Then if $z_2=\mathcal{T}(y_2)$, $z_1-z_2$ is solution of 
$$
\left\{
\begin{aligned}
&\frac{d}{dt} \int_\Omega (z_1-z_2)\cdot w +\nu\int_{\Omega} \nabla (z_1-z_2)\cdot \nabla w +\int_{\Omega} y\cdot \nabla (y_1-y_2)\cdot w +\int_{\Omega} (y_1-y_2)\cdot \nabla y\cdot w=0, \quad \forall w\in \boldsymbol{V} \\
&(z_1-z_2)(0)=0,
\end{aligned}
\right.
$$
and thus, for all $t\in[0,T^*]$
$$
\frac d{dt}\|z_1-z_2\|^2_{\boldsymbol{V}}+\nu\|P(\Delta (z_1-z_2))\|^2_{\boldsymbol{H}}
\le \frac 2\nu\|y\cdot \nabla (y_1-y_2)+ (y_1-y_2)\cdot \nabla y\|^2_2.
$$
But
$$
\|y\cdot \nabla (y_1-y_2)+ (y_1-y_2)\cdot \nabla y\|^2_2
\le c(\|y\|_{\boldsymbol{V}}\|P(\Delta y)\|_{\boldsymbol{H}}\|y_1-y_2\|^2_{\boldsymbol{V}}+\|y\|^2_{\boldsymbol{V}}\|y_1-y_2\|_{\boldsymbol{V}}\|P(\Delta (y_1-y_2))\|_{\boldsymbol{H}})
$$
so that
$$
\frac d{dt}\|z_1-z_2\|^2_{\boldsymbol{V}}+\nu\|P(\Delta (z_1-z_2))\|^2_{\boldsymbol{H}}\le \frac c\nu (\|y\|_{\boldsymbol{V}}\|P(\Delta y)\|_{\boldsymbol{H}}\|y_1-y_2\|^2_{\boldsymbol{V}}+\|y\|^2_{\boldsymbol{V}}\|y_1-y_2\|_{\boldsymbol{V}}\|P(\Delta (y_1-y_2))\|_{\boldsymbol{H}})
$$
and for all $t\in[0,T^*]$ 
$$
\begin{aligned}
&\| z_1-z_2\|_{L^\infty(0,t,\boldsymbol{V})}^2+\nu\|P(\Delta (z_1-z_2))\|^2_{L^2(0,t;\boldsymbol{H})} \\
&\le \frac c\nu(\|y\|_{L^\infty(0,t;\boldsymbol{V})}\|y_1-y_2\|^2_{L^\infty(0,t:\boldsymbol{V})}\int_0^t\|P(\Delta y)\|_{\boldsymbol{H}}
+\|y\|^2_{L^\infty(0,t;\boldsymbol{V})}\|y_1-y_2\|_{L^\infty(0,t;\boldsymbol{V})}\int_0^t\|P(\Delta (y_1-y_2))\|_{\boldsymbol{H}}\\
&\le  \frac c\nu t^{\frac12}\|y\|_{L^\infty(0,t;\boldsymbol{V})}(\|P(\Delta y)\|_{L^2(0,t;\boldsymbol{H})}\|y_1-y_2\|^2_{L^\infty(0,t:\boldsymbol{V})}+\|y_1-y_2\|_{L^\infty(0,t:\boldsymbol{V})}\|P(\Delta (y_1-y_2))\|_{L^2(0,t;\boldsymbol{H})})\\
&\le  \frac c\nu t^{\frac12}\|y\|_{L^\infty(0,T^*;\boldsymbol{V})}(1+\|P(\Delta y)\|_{L^2(0,T^*;\boldsymbol{H})})(\|y_1-y_2\|^2_{L^\infty(0,t:\boldsymbol{V})}+\|P(\Delta (y_1-y_2))\|^2_{L^2(0,t;\boldsymbol{H})}).
\end{aligned}
$$

Let now $t'\in]0,T^*]$ such that $\frac c\nu (t')^{\frac12}\|y\|_{L^\infty(0,T^*;\boldsymbol{V})}(1+\|P(\Delta y)\|_{L^2(0,T^*;\boldsymbol{H})})\le \min(\frac12,\frac\nu2)$. We then have 
$$
\| z_1-z_2\|_{L^\infty(0,t',\boldsymbol{V})}^2+\nu\|P(\Delta (z_1-z_2))\|^2_{L^2(0,t';\boldsymbol{H})}
 \le\frac12( \| y_1-y_2\|_{L^\infty(0,t',\boldsymbol{V})}^2+\nu\|P(\Delta (y_1-y_2))\|^2_{L^2(0,t';\boldsymbol{H})})
 $$
and the map $\mathcal{T}$ is a contraction mapping on $X(t')$. We deduce that $\mathcal{T}$ admits a unique fixed point $Y_1\in X(t')$.
Moreover, from (\ref{solution_Z_1-3D}) we deduce that $\partial_tY_1\in L^2(0,t', \boldsymbol{H})$ and thus $Y_1\in\A_0(t')$.
Since the map $t\mapsto t^{\frac12}$ is a uniformly continuous function on $[0,T^*]$, we can take $t'=T^*$.

For this solution we have, for all $t\in[0,T]$, 
$$
\begin{aligned}
\frac d{dt}\|Y_1\|^2_{\boldsymbol{V}}+\nu\|P(\Delta Y_1)\|^2_{\boldsymbol{H}}
&\le \frac 2\nu\|y\cdot \nabla Y_1+ Y_1\cdot \nabla y-\partial_t v+P(\Delta v)\|^2_2\\
&\hspace*{-2cm}\le \frac c\nu(\|y\|_{\boldsymbol{V}}\|P(\Delta y)\|_{\boldsymbol{H}}\|Y_1\|^2_{\boldsymbol{V}}+\|Y_1\|_{\boldsymbol{V}}\|P(\Delta Y_1)\|_{\boldsymbol{H}}\|y\|^2_{\boldsymbol{V}}+\|\partial_t v\|^2_{\boldsymbol{H}}+\|P(\Delta v)\|^2_{\boldsymbol{H}})\\
&\hspace*{-2cm}\le \frac \nu 2\|P(\Delta Y_1)\|^2_{\boldsymbol{H}}+\frac c\nu(\|y\|_{\boldsymbol{V}}\|P(\Delta y)\|_{\boldsymbol{H}}\|Y_1\|^2_{\boldsymbol{V}}+\frac 1{\nu^2}\|Y_1\|^2_{\boldsymbol{V}}\|y\|^4_{\boldsymbol{V}}+\|\partial_t v\|^2_{\boldsymbol{H}}+\|P(\Delta v)\|^2_{\boldsymbol{H}})
\end{aligned}$$
thus
$$\frac d{dt}\|Y_1\|^2_{\boldsymbol{V}}+{\nu}\|P(\Delta Y_1)\|^2_{\boldsymbol{H}}\le \frac c\nu(\|y\|_{\boldsymbol{V}}\|P(\Delta y)\|_{\boldsymbol{H}}\|Y_1\|^2_{\boldsymbol{V}}+\frac 1{\nu^2}\|Y_1\|^2_{\boldsymbol{V}}\|y\|^4_{\boldsymbol{V}}+\|\partial_t v\|^2_{\boldsymbol{H}}+\|P(\Delta v)\|^2_{\boldsymbol{H}}).$$

Gronwall's lemma then implies that for all $t\in[0,T^*]$ :
\begin{equation}
\label{esti1-Y_1-3D}
\begin{aligned}
\|Y_1\|^2_{\boldsymbol{V}}(t)+{\nu}\int_0^t\|P(\Delta Y_1)\|^2_{\boldsymbol{H}}
&\le \frac c\nu E(y)
\exp\Big( \frac c\nu \int_0^t (\|y\|_{\boldsymbol{V}}\|P(\Delta y)\|_{\boldsymbol{H}}+\frac 1{\nu^2}\|y\|^4_{\boldsymbol{V}}\Big)\\
&\le  c(M)E(y)
\end{aligned}
\end{equation}

Similar arguments give
\begin{equation}
\label{esti2-Y_1-3D}
{\nu}\|Y_1\|^2_{\boldsymbol{V}}(t)+\int_0^t\|\partial_t Y_1\|^2_{\boldsymbol{H}}\le  c(M)E(y).
\end{equation}
\end{proof}

\begin{proposition}\label{differentiable-3D}
For all $y\in\A(T^*)$, the map $Y\mapsto E(y+Y)$ is a differentiable function on the Hilbert space $\A_0(T^*)$ and for any $Y\in \A_0(T^*)$, we have 
$$
E^\prime(y)\cdot Y=\langle v,V \rangle_{\A_0(T^*)}=\int_0^{T^*} \langle  P(\Delta v), P(\Delta V) \rangle_{\boldsymbol{H}}+\int_0^{T^*} \langle  \partial_tv, \partial_tV \rangle_{\boldsymbol{H}}
$$
where $V\in\A_0(T^*)$ is the unique solution in $\mathcal{D}^{\prime}(0,T^*)$ of 
\begin{equation}
\label{solution_V-3D}
\left\{
\begin{aligned}
& \frac{d}{dt} \int_\Omega V\cdot w +\int_{\Omega} \nabla V\cdot \nabla w +\frac{d}{dt} \int_\Omega Y\cdot w +\nu\int_{\Omega} \nabla Y\cdot \nabla w +\int_{\Omega} y\cdot \nabla Y\cdot w \\
&\hspace*{7cm}+\int_{\Omega} Y\cdot \nabla y\cdot w=0, \quad \forall w\in \boldsymbol{V} \\
& V(0)=0.
\end{aligned}
\right.
\end{equation}
\end{proposition}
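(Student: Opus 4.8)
The plan is to follow verbatim the argument of Proposition~\ref{differentiable}, replacing the $\A_0$ inner product by the $\A_0(T^*)$ one and the clean two-dimensional product estimate~(\ref{2D-Case}) by the three-dimensional bounds~(\ref{L-infini}) and~(\ref{Proj}). First I would write $E(y+Y)=\frac12\|\overline{V}\|^2_{\A_0(T^*)}$, where $\overline{V}\in\A_0(T^*)$ is the corrector associated with $y+Y$ through~(\ref{corrector-3D}). Expanding the nonlinearity as $(y+Y)\cdot\nabla(y+Y)=y\cdot\nabla y+y\cdot\nabla Y+Y\cdot\nabla y+Y\cdot\nabla Y$ suggests the decomposition $\overline V=v+V+v'$, where $v$ is the corrector of $y$, $V\in\A_0(T^*)$ solves~(\ref{solution_V-3D}) (the part linear in $Y$), and $v'\in\A_0(T^*)$ solves the auxiliary problem
\begin{equation}
\nonumber
\left\{
\begin{aligned}
&\frac{d}{dt}\int_\Omega v'\cdot w+\int_\Omega\nabla v'\cdot\nabla w+\int_\Omega Y\cdot\nabla Y\cdot w=0,\quad\forall w\in\boldsymbol{V}\\
&v'(0)=0,
\end{aligned}
\right.
\end{equation}
carrying the quadratic part. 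Existence and uniqueness of $V$ and $v'$ follow from Proposition~\ref{Existence-3D} with $\nu_1=1$, after rewriting the viscous and convective contributions through the projector $P$ exactly as in the proof of Proposition~\ref{prop_existence_v-3D}.

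Next I would justify the splitting: subtracting the equations for $v$, $V$ and $v'$ from the one for $\overline V$, the time derivatives, the viscous terms, the source $f$ and the full nonlinearity cancel identically, so that $\overline V-v-V-v'$ solves~(\ref{sol_z-3D}) with $F\equiv0$, $z_0=0$ and $\nu_1=1$; by the uniqueness part of Proposition~\ref{Existence-3D} it vanishes and $\overline V=v+V+v'$. Expanding the square then gives
$$E(y+Y)=E(y)+\langle v,V\rangle_{\A_0(T^*)}+\tfrac12\|V\|^2_{\A_0(T^*)}+\tfrac12\|v'\|^2_{\A_0(T^*)}+\langle v,v'\rangle_{\A_0(T^*)}+\langle V,v'\rangle_{\A_0(T^*)}.$$
It then remains to show that every term after $\langle v,V\rangle_{\A_0(T^*)}$ is $o(\|Y\|_{\A_0(T^*)})$ and that $Y\mapsto\langle v,V\rangle_{\A_0(T^*)}$ is a continuous linear form; the latter is immediate from Cauchy--Schwarz and $\|v\|_{\A_0(T^*)}=\sqrt{2E(y)}$.

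The core is therefore two a priori bounds. Applying~(\ref{estimate-z-1-3D})--(\ref{estimate-z-2-3D}) to~(\ref{solution_V-3D}) controls $\|V\|_{\A_0(T^*)}$ by $c\,(\|\partial_t Y\|_{L^2(Q_{T^*})^3}+\nu\|P(\Delta Y)\|_{L^2(Q_{T^*})^3}+\|y\cdot\nabla Y\|_{L^2(Q_{T^*})^3}+\|Y\cdot\nabla y\|_{L^2(Q_{T^*})^3})$, and applying them to the $v'$ problem gives $\|v'\|_{\A_0(T^*)}\le c\,\|Y\cdot\nabla Y\|_{L^2(Q_{T^*})^3}$. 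To close these I would bound the products in $L^2(Q_{T^*})^3$ by~(\ref{L-infini}) and~(\ref{Proj}), e.g. $\|Y\cdot\nabla Y\|_2\le\|Y\|_\infty\|\nabla Y\|_2\le c\|Y\|_{\boldsymbol{V}}^{3/2}\|P(\Delta Y)\|_{\boldsymbol{H}}^{1/2}$, together with the parabolic embedding $\A_0(T^*)\hookrightarrow L^\infty(0,T^*;\boldsymbol{V})$ used throughout Section~\ref{section-3D}. This yields $\|V\|_{\A_0(T^*)}\le c\|Y\|_{\A_0(T^*)}$ (linear) and $\|v'\|_{\A_0(T^*)}\le c\|Y\|_{\A_0(T^*)}^2$ (quadratic), whence $\tfrac12\|V\|^2=O(\|Y\|^2)$, $\tfrac12\|v'\|^2=O(\|Y\|^4)$, $\langle v,v'\rangle\le c\sqrt{E(y)}\|Y\|^2$ and $\langle V,v'\rangle\le c\|Y\|^3$ are all $o(\|Y\|_{\A_0(T^*)})$, which proves the claim.

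The main obstacle is exactly this passage from the two-dimensional estimate~(\ref{2D-Case}) to the borderline three-dimensional setting: controlling $Y\cdot\nabla Y$, $y\cdot\nabla Y$ and $Y\cdot\nabla y$ in $L^2(Q_{T^*})^3$ forces the use of the $H^2$-regularity of $\A_0(T^*)$ through $\|\cdot\|_\infty\le c\|\cdot\|_{\boldsymbol{V}}^{1/2}\|\Delta\cdot\|_2^{1/2}$ and the coercivity~(\ref{Proj}) of $P\Delta$, as well as of the time-continuity into $\boldsymbol{V}$. The resulting constants depend only on the fixed element $y$, which is all that is required for differentiability at $y$.
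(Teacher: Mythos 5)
Your proposal is correct and follows essentially the same route as the paper's proof: the same decomposition $\overline V=v+V+v'$, the same uniqueness argument via Proposition~\ref{Existence-3D} with $F\equiv0$, the same expansion of $\frac12\|v+V+v'\|^2_{\A_0(T^*)}$, and the same bounds $\|V\|_{\A_0(T^*)}\le c\|Y\|_{\A_0(T^*)}$, $\|v'\|_{\A_0(T^*)}\le c\|Y\|^2_{\A_0(T^*)}$ driving the $o(\|Y\|_{\A_0(T^*)})$ bookkeeping. The only (harmless) variation is in the trilinear estimates: you control $y\cdot\nabla Y$, $Y\cdot\nabla y$ and $Y\cdot\nabla Y$ via the interpolation inequality~(\ref{L-infini}) together with~(\ref{Proj}) and the embedding $\A_0(T^*)\hookrightarrow L^\infty(0,T^*;\boldsymbol{V})$, whereas the paper uses an $L^6\times L^3$ H\"older bound of the form $\|y\cdot\nabla Y\|^2_{L^2(Q_{T^*})^3}\le c\|y\|^2_{L^\infty(0,T^*;L^3(\Omega)^3)}\|P(\Delta Y)\|^2_{L^2(0,T^*;\boldsymbol{H})}$ --- both yield the same conclusion, and your choice is consistent with the technique the paper itself uses elsewhere in Section~\ref{section-3D}.
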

\begin{proof} 
Let $y\in\A(T^*)$ and $Y\in \A_0(T^*)$.
We have $E(y+Y)=\frac12\|\bar V\|^2_{\A_0(T^*)}$
where $\bar V\in\A_0(T^*)$ is the unique solution of
$$
\left\{
\begin{aligned}
& \frac{d}{dt} \int_\Omega \bar V\cdot w +\int_{\Omega} \nabla \bar V\cdot \nabla w +\frac{d}{dt} \int_\Omega (y+Y)\cdot w +\nu\int_{\Omega} \nabla (y+Y)\cdot \nabla w +\int_{\Omega} (y+Y)\cdot \nabla (y+Y)\cdot w \\
&\hspace*{3cm}-\int_{\Omega} f\cdot w =0, \quad \forall w\in \boldsymbol{V} \\
& \bar V(0)=0.
\end{aligned}
\right.
$$
If $v\in\A_0(T^*)$ is the solution of (\ref{corrector-3D}) associated to $y$, $v'\in\A_0(T^*)$ is the unique solution of
$$
\left\{\begin{aligned}
&\frac{d}{dt} \int_\Omega v'\cdot w +\int_{\Omega} \nabla v'\cdot \nabla w +\int_{\Omega} Y\cdot \nabla Y\cdot w =0, \quad \forall w\in \boldsymbol{V}\\
&v'(0)=0
\end{aligned}\right.$$
and 
 $V\in\A_0(T^*)$ is the unique solution of (\ref{solution_V-3D}), it is easy to check that $\bar V-v-v'-V\in\A_0(T^*)$ is solution of
 $$
\left\{\begin{aligned}
&\frac{d}{dt} \int_\Omega (\bar V-v-v'-V)\cdot w +\int_{\Omega} \nabla (\bar V-v-v'-V)\cdot \nabla w =0, \quad \forall w\in \boldsymbol{V}\\
&(\bar V-v-v'-V)(0)=0
\end{aligned}\right.$$
and therefore $\bar V-v-v'-V=0$. Thus
$$\begin{aligned}
E(y+Y)
&=\frac12\| v+v'+V\|^2_{\A_0(T^*)}\\
&=\frac12\|  v\|^2_{\A_0(T^*)}+\frac12\| v'\|^2_{\A_0(T^*)}+\frac12\|V\|^2_{\A_0(T^*)}+\langle V,v'\rangle_{\A_0(T^*)}+\langle V,v\rangle_{\A_0(T^*)}+\langle v,v'\rangle_{\A_0(T^*)}.
\end{aligned}
$$
 We deduce from (\ref{solution_V-3D}) and (\ref{estimate-z-1-3D}) that
$$
\begin{aligned}
\|V\|^2_{L^\infty(0,T^*;\boldsymbol{V})}+ \|P(\Delta V)\|^2_{L^2(0,T^*;\boldsymbol{H})}\le 
&c\big(\|\partial_t Y\|^2_{L^2(0,T^*;\boldsymbol{H})}+\nu^2\|P(\Delta Y)\|^2_{L^2(0,T^*;\boldsymbol{H})}\\
&+\|y\cdot \nabla Y\|^2_{L^2(Q_{T^*})^3}+ \|Y\cdot\nabla y\|^2_{L^2(Q_{T^*})^3)}\big)
\end{aligned}
$$
and, since
$$\begin{aligned}
\|y\cdot \nabla Y\|^2_{L^2(Q_{T^*})^3}
&\le \int_0^{T^*}\|\nabla Y\|_6^2\|y\|_3^2\le c\|y\|^2_{L^\infty(0,T^*;L^3(\Omega)^3)}\|P(\Delta Y)\|^2_{L^2(0,T^*;\boldsymbol{H})}\\
&\le c\|y\|^2_{\A_0(T^*)}\|Y\|^2_{\A_0(T^*)}
\end{aligned}$$
and
$$
 \|Y\cdot\nabla y\|^2_{L^2(Q_{T^*})^3)}\le c\|y\|^2_{\A_0(T^*)} \|Y\|^2_{\A_0(T^*)}$$
we deduce that
$$
\|V\|^2_{L^\infty(0,T^*;\boldsymbol{V})}+ \|P(\Delta V)\|^2_{L^2(0,T^*;\boldsymbol{H})} \le c\|Y\|^2_{\A_0(T^*)}.
$$
In the same way, we  deduce from  (\ref{estimate-z-2-3D}) that
$$
\|\partial_t V\|^2_{L^2(0,T^*;\boldsymbol{H})}\le c\|Y\|^2_{\A_0(T^*)}.
$$
Thus 
$$
\| V\|^2_{\A_0(T^*)}\le c\|Y\|^2_{\A_0(T^*)}=o(\|Y\|_{\A_0(T^*)}).
$$
From (\ref{estimate-z-1-3D}) and (\ref{estimate-z-2-3D}), we also deduce that
$$\|v'\|^2_{L^\infty(0,T^*;\boldsymbol{V})}+ \|P(\Delta v')\|^2_{L^2(0,T^*;\boldsymbol{H})}  \le \|Y\cdot\nabla Y\|^2_{L^2(Q_{T^*})^3}\le    c\|Y\|^4_{\A_0(T^*)}$$
and
$$
\|\partial_t v'\|^2_{L^2(0,T^*;\boldsymbol{H})}\le   c\|Y\|^4_{\A_0(T^*)},
$$
thus we also have 
$$
\| v'\|^2_{\A_0(T^*)}\le c\|Y\|^4_{\A_0(T^*)}=o(\|Y\|_{\A_0(T^*)}).
$$
From the previous estimates, we then obtain
$$
|\langle V,v'\rangle_{\A_0(T^*)}|\le \|V\|_{\A_0(T^*)}\|v'\|_{\A_0(T^*)}\le c\|Y\|^3_{\A_0(T^*)}=o(\|Y\|_{\A_0(T^*)})
$$
and
$$
|\langle v,v'\rangle_{\A_0(T^*)}|\le \|v\|_{\A_0(T^*)}\|v'\|_{\A_0(T^*)}\le c\sqrt{E(y)}\|Y\|^2_{\A_0(T^*)}=o(\|Y\|_{\A_0(T^*)}),
$$
thus
$$
E(y+Y)=E(y)+\langle v,V\rangle_{\A_0(T^*)}+o(\|Y\|_{\A_0(T^*)}).
$$
Eventually, the estimate
$$
|\langle v,V\rangle_{\A_0(T^*)}|\le \|v\|_{\A_0(T^*)}\|V\|_{\A_0(T^*)}\le cE(y) \|Y\|_{\A_0(T^*)}
$$
gives the continuity of the linear map $Y\mapsto \langle v,V\rangle_{\A_0(T^*)}$.
\end{proof}

We are now in position to prove the following result :
\begin{proposition}\label{convergence3D}
If $\{y_k\}_{k\in \mathbb{N}}$ is a  sequence of $\A(T^*)$ bounded in $L^2(0,T^*;H^2(\Omega)^3\cap \boldsymbol{V})\cap H^1(0,T^*;\boldsymbol{H})$ satisfying   $E^{\prime}(y_k)\to 0$ as $k\to \infty$, then $E(y_k)\to 0$ as $k\to \infty$.
\end{proposition}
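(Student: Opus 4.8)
The plan is to reproduce verbatim the structure of the two–dimensional argument of Proposition \ref{convergence}: the whole statement rests on the single algebraic identity $E'(y)\cdot Y_1 = 2E(y)$, valid for every $y\in\A(T^*)$, where $Y_1\in\A_0(T^*)$ is the element furnished by Proposition \ref{prop_boundY1-3D}. Once this identity is available, testing it along the sequence and exploiting the uniform bound on the associated directions gives the conclusion at once.

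First I would establish the identity. Fixing $y\in\A(T^*)$, I apply the differentiability formula of Proposition \ref{differentiable-3D} with the particular choice $Y=Y_1$, so that $E'(y)\cdot Y_1 = \langle v, V_1\rangle_{\A_0(T^*)}$, where $V_1\in\A_0(T^*)$ solves (\ref{solution_V-3D}) with $Y$ replaced by $Y_1$, and $v$ is the corrector (\ref{corrector-3D}) associated with $y$. The key observation is that the block $\frac{d}{dt}\int_\Omega Y_1\cdot w + \nu\int_\Omega\nabla Y_1\cdot\nabla w + \int_\Omega y\cdot\nabla Y_1\cdot w + \int_\Omega Y_1\cdot\nabla y\cdot w$ occurring in (\ref{solution_V-3D}) equals, by the defining equation (\ref{solution_Y1-3D}) of $Y_1$, exactly $-\frac{d}{dt}\int_\Omega v\cdot w - \int_\Omega\nabla v\cdot\nabla w$. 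Substituting this shows that $V_1-v$ solves the linear problem (\ref{sol_z-3D}) with $\nu_1=1$, right-hand side $F\equiv 0$ and initial datum $z_0=0$; the uniqueness part of Proposition \ref{Existence-3D} then forces $V_1=v$. Consequently $E'(y)\cdot Y_1 = \langle v,v\rangle_{\A_0(T^*)} = \|v\|^2_{\A_0(T^*)} = 2E(y)$, the desired identity.

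Next I would transfer this to the sequence. For each $k$ let $Y_{1,k}\in\A_0(T^*)$ be the element of Proposition \ref{prop_boundY1-3D} associated with $y_k$, so that $E'(y_k)\cdot Y_{1,k} = 2E(y_k)$. The hypothesis that $\{y_k\}$ is bounded in $L^2(0,T^*;H^2(\Omega)^3\cap\boldsymbol{V})\cap H^1(0,T^*;\boldsymbol{H})$ controls $\|\partial_t y_k\|_{L^2(Q_{T^*})^3}$ and $\sqrt{\nu}\|P(\Delta y_k)\|_{L^2(Q_{T^*})^3}$ directly, and—through the continuous embedding of this space into $\C([0,T^*];\boldsymbol{V})$—also $\|y_k\|_{L^\infty(0,T^*;\boldsymbol{V})}$, so a single $M$ serves for all $k$ in the hypotheses of Proposition \ref{prop_boundY1-3D}. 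That proposition then yields $\|Y_{1,k}\|_{\A_0(T^*)}\le c(M)\sqrt{E(y_k)}$, while $E(y_k)$ is itself uniformly bounded (the corrector estimate of Proposition \ref{prop_existence_v-3D} bounds $E(y_k)$ by the $L^2(Q_{T^*})^3$–norm of $f-y_k\cdot\nabla y_k+\nu\Delta y_k-\partial_t y_k$, which is bounded along the sequence). Hence
\[
2E(y_k) = E'(y_k)\cdot Y_{1,k} \le \|E'(y_k)\|_{\A_0(T^*)'}\,\|Y_{1,k}\|_{\A_0(T^*)} \le c(M)\,\|E'(y_k)\|_{\A_0(T^*)'}\,\sqrt{E(y_k)},
\]
so that $\sqrt{E(y_k)}\le \tfrac12 c(M)\|E'(y_k)\|_{\A_0(T^*)'}\to 0$, which proves $E(y_k)\to 0$.

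The argument is essentially bookkeeping once Propositions \ref{differentiable-3D} and \ref{prop_boundY1-3D} are in place, so I do not expect a deep obstacle. The only genuinely three–dimensional point to be careful about is the verification that the boundedness hypothesis on $\{y_k\}$ really does produce a \emph{uniform} constant $M$ in Proposition \ref{prop_boundY1-3D}; this hinges on the embedding into $\C([0,T^*];\boldsymbol{V})$ that controls $\|y_k\|_{L^\infty(0,T^*;\boldsymbol{V})}$, a control that was automatic in the weaker two–dimensional functional setting but must be invoked explicitly here.
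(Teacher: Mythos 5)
Your proof is correct and follows essentially the same route as the paper: you establish the identity $E'(y)\cdot Y_1=2E(y)$ by observing that $V_1-v$ solves the homogeneous problem (\ref{sol_z-3D}) with $F\equiv 0$ and $z_0=0$ so that $V_1=v$ by uniqueness, and then conclude via the bound $\|Y_{1,k}\|_{\A_0(T^*)}\le c(M)\sqrt{E(y_k)}$ of Proposition \ref{prop_boundY1-3D}, exactly as in the paper's proof of Proposition \ref{convergence3D}. Your explicit verification that the boundedness hypothesis furnishes a uniform $M$ (in particular the control of $\|y_k\|_{L^\infty(0,T^*;\boldsymbol{V})}$ through the embedding into $\C([0,T^*];\boldsymbol{V})$) is a detail the paper leaves implicit, and is a welcome addition rather than a deviation.
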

\begin{proof}    For any $y\in\A({T^*})$ and $Y\in \A_0({T^*})$, we have 
$$
E^\prime(y)\cdot Y=\langle  v,  V \rangle_{\A_0({T^*})}=\int_0^{T^*}\langle P(\Delta v),P(\Delta V)\rangle_{\boldsymbol{H}}+\int_0^{T^*}\langle \partial_tv,\partial_tV\rangle_{\boldsymbol{H}}
$$
where $V\in \A_0({T^*})$ is the unique solution in $\mathcal{D}^{\prime}(0,{T^*})$ of (\ref{solution_V-3D}).
In particular, taking $Y=Y_1$ defined by (\ref{solution_Y1-3D}), we define an element $V_1$ solution of 
\begin{equation}
\label{solution_V1-3D}
\left\{
\begin{aligned}
&\frac{d}{dt} \int_\Omega V_1\cdot w + \int_{\Omega} \nabla V_1\cdot \nabla w +\frac{d}{dt}\int_\Omega Y_1\cdot w +\nu\int_{\Omega} \nabla Y_1\cdot \nabla w +\int_{\Omega} y\cdot \nabla Y_1\cdot w \\
&\hspace*{3cm}+\int_{\Omega} Y_1\cdot \nabla y\cdot w=0, \quad \forall w\in \boldsymbol{V} \\
& V_1(0)=0.
\end{aligned}
\right.
\end{equation}
Summing (\ref{solution_V1-3D}) and the (\ref{solution_Y1-3D}), we obtain that $V_1-v$ solves (\ref{sol_z-3D}) with $F\equiv 0$ and $z_0=0$. This implies that $V_1$ and $v$ coincide, and then 
\begin{equation}\label{E_Eprime-3D}
E^{\prime}(y)\cdot Y_1=\int_0^{T^*}\|  P(\Delta v)\|^2_{\boldsymbol{H}}+\int_0^{T^*}\|\partial_t v\|^2_{\boldsymbol{H}}=2E(y), \quad \forall y\in\A({T^*}).
\end{equation}
Let now, for any $k\in \mathbb{N}$, $Y_{1,k}$ be the solution of (\ref{solution_Y1-3D}) associated to $y_k$. The previous equality writes $E^{\prime}(y_k)\cdot Y_{1,k}=2E(y_k)$ and implies our statement, since from Proposition \ref{prop_boundY1-3D}, 
$Y_{1,k}$ is uniformly bounded in $\A_0({T^*})$.
\end{proof}

\subsection{Minimizing sequence for $E$}

Equality (\ref{E_Eprime-3D}) shows that $-Y_1$ given by the solution of (\ref{solution_Y1-3D}) is a descent direction for the functional $E$. Remark also, in view of (\ref{solution_Y1-3D}), that the corrector $V$
associated to $Y_1$, given by (\ref{solution_V-3D}) with $Y=Y_1$, is nothing else than the corrector $v$ itself. Therefore, we can define, for any $m\geq 1$, a minimizing sequence $y_k$ as follows: 
\begin{equation}
\label{algo_LS_Y-3D}
\left\{
\begin{aligned}
&y_0 \in \A(T^*), \\
&y_{k+1}=y_k-\lambda_k Y_{1,k}, \quad k\ge 0, \\
& E(y_k-\lambda_k Y_{1,k})   =\min_{\lambda\in [0,m]} E(y_k-\lambda Y_{1,k})    
\end{aligned}
\right.
\end{equation}
where $Y_{1,k}$ in $\A_0(T^*)$ solves the formulation
\begin{equation}
\label{solution_Y1k-3D}
\left\{
\begin{aligned}
& \frac{d}{dt} \int_\Omega Y_{1,k}\cdot w +\nu\int_{\Omega} \nabla Y_{1,k}\cdot \nabla w +\int_{\Omega} y_k\cdot \nabla Y_{1,k}\cdot w \\
&\hspace*{3cm}+\int_{\Omega} Y_{1,k}\cdot \nabla y_k\cdot w=-\frac{d}{dt}\int_\Omega v_k\cdot w -  \int_\Omega \nabla v_k\cdot \nabla w, \quad \forall w\in \boldsymbol{V}\\
& Y_{1,k}(0)=0,
\end{aligned}
\right.
\end{equation}
and $v_k$ in $\A_0(T^*)$ is the corrector (associated to $y_k$) solution of \eqref{corrector-3D}  leading (see (\ref{E_Eprime-3D})) to $E^{\prime}(y_k)\cdot Y_{1,k}=2E(y_k)$. 

It is easy to check that the corrector $V_k$ associated to $y_k-\lambda Y_{1,k}$ is given by $(1-\lambda)v_k+\lambda^2 \overline{\overline{v}}_k$ where $\overline{\overline{v}}_k\in \A_0(T^*)$ solves 
\begin{equation}
\label{correcteur2b-3D}
\left\{
\begin{aligned}
& \frac{d}{dt}  \int_\Omega \overline{\overline{v}}_k\cdot w +\int_{\Omega} \nabla \overline{\overline{v}}_k\cdot \nabla w +\int_{\Omega} Y_{1,k}\cdot \nabla Y_{1,k}\cdot w=0, \quad \forall w\in \boldsymbol{V}\\
& \overline{\overline{v}}_k(0)=0,
\end{aligned}
\right.
\end{equation}
 and thus
$$
\begin{aligned}
2E(y_k-\lambda Y_{1,k}) &=\|V_k\|^2_{\A_0(T^*)} =\|(1-\lambda)v_k+\lambda^2 \overline{\overline{v}}_k\|^2_{\A_0(T^*)}\\
&=(1-\lambda)^2\|v_k\|^2_{\A_0(T^*)}+2\lambda^2(1-\lambda)\langle v_k, \overline{\overline{v}}_k \rangle_{\A_0(T^*)}+\lambda^4 \| \overline{\overline{v}}_k\|^2_{\A_0(T^*)}.
\end{aligned}
$$
It is then easy to see that if $v_k\not=0$, $E(y_k-\lambda Y_{1,k})\to+\infty$ as $\lambda\to +\infty$ and thus there exists $\lambda_k\in\mathbb{R}_+$ such that $E(y_k-\lambda_k Y_{1,k})   =\min_{\lambda\in \mathbb{R}^+} E(y_k-\lambda Y_{1,k})$.

\begin{lemma}\label{bounded-3D}
Let $\{y_k\}_{k\in \mathbb{N}}$ the  sequence of $\A$ defined by (\ref{algo_LS_Y-3D}). Then $\{y_k\}_{k\in \mathbb{N}}$ is a bounded  sequence of $H^1(0,T^*;\boldsymbol{H})\cap L^{2}(0,T^*;H^2(\Omega)\cap \boldsymbol{V})$ and $\{E(y_k)\}_{k\in \mathbb{N}}$ is a decreasing sequence.
\end{lemma}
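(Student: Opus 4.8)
The plan is to follow the pattern of Lemma \ref{bounded} from the two dimensional case, with the weak energy estimates replaced by the $H^2$-type estimates available in three dimensions. The monotonicity of $\{E(y_k)\}$ is immediate: by the very definition (\ref{algo_LS_Y-3D}), choosing $\lambda=0$ in the minimization over $[0,m]$ gives $E(y_{k+1})=\min_{\lambda\in[0,m]}E(y_k-\lambda Y_{1,k})\le E(y_k)$, whence $\{E(y_k)\}$ decreases and $E(y_k)\le E(y_0)$ for every $k$.

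For the boundedness, I would first rewrite $y_k$ as the solution of a perturbed Navier-Stokes system. From the corrector equation (\ref{corrector-3D}) associated to $y_k$, the pair $(y_k,v_k)$ satisfies, for all $w\in\boldsymbol{V}$, $\frac{d}{dt}\int_\Omega y_k\cdot w+\nu\int_\Omega\nabla y_k\cdot\nabla w+\int_\Omega y_k\cdot\nabla y_k\cdot w=\int_\Omega f\cdot w-\frac{d}{dt}\int_\Omega v_k\cdot w-\int_\Omega\nabla v_k\cdot\nabla w$ with $y_k(0)=u_0$; equivalently, after projection, $\partial_t y_k-\nu P(\Delta y_k)+P(y_k\cdot\nabla y_k)=g_k$ with $g_k:=P(f)-\partial_t v_k+P(\Delta v_k)$. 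The crucial observation is that $g_k$ is bounded uniformly in $k$: indeed $\|\partial_t v_k\|_{L^2(0,T^*;\boldsymbol{H})}^2+\|P(\Delta v_k)\|_{L^2(0,T^*;\boldsymbol{H})}^2=\|v_k\|_{\A_0(T^*)}^2=2E(y_k)\le 2E(y_0)$, so that $\|g_k\|_{L^2(0,T^*;\boldsymbol{H})}\le\|f\|_{L^2(Q_{T^*})^3}+\sqrt{2E(y_0)}$.

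The heart of the argument is the $\boldsymbol{V}$-estimate. Testing the projected equation with $-P(\Delta y_k)$ (rigorously through the Galerkin scheme underlying Proposition \ref{Existence-NS-3D}) yields $\frac12\frac{d}{dt}\|y_k\|_{\boldsymbol{V}}^2+\nu\|P(\Delta y_k)\|_{\boldsymbol{H}}^2=\int_\Omega (y_k\cdot\nabla y_k)\cdot P(\Delta y_k)-\int_\Omega g_k\cdot P(\Delta y_k)$. The trilinear term is controlled by $\|y_k\|_\infty\|y_k\|_{\boldsymbol{V}}\|P(\Delta y_k)\|_{\boldsymbol{H}}$, and the interpolation inequality (\ref{L-infini}) together with (\ref{Proj}) gives $\|y_k\|_\infty\le c\|y_k\|_{\boldsymbol{V}}^{1/2}\|P(\Delta y_k)\|_{\boldsymbol{H}}^{1/2}$; absorbing the resulting $\|P(\Delta y_k)\|_{\boldsymbol{H}}^{3/2}$ and the source term by Young's inequality into $\nu\|P(\Delta y_k)\|_{\boldsymbol{H}}^2$ leads to the differential inequality $\frac{d}{dt}\|y_k\|_{\boldsymbol{V}}^2+\nu\|P(\Delta y_k)\|_{\boldsymbol{H}}^2\le \frac{c}{\nu^3}\|y_k\|_{\boldsymbol{V}}^6+\frac{c}{\nu}\|g_k\|_{\boldsymbol{H}}^2$. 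Setting $g(t)=\|y_k(t)\|_{\boldsymbol{V}}^2$, this has exactly the form required by Lemma \ref{EDO}, with $F(s)=\frac{c}{\nu^3}s^3$ bounded on bounded sets, $g(0)=\|u_0\|_{\boldsymbol{V}}^2$ and $\|k\|_{L^1}\le \frac{c}{\nu}\|g_k\|_{L^2(0,T^*;\boldsymbol{H})}^2=:k_0$. Since $F$, $g(0)$ and $k_0$ do not depend on $k$, Lemma \ref{EDO} produces a time $T_1^*$, independent of $k$, and a uniform bound $\|y_k\|_{L^\infty(0,T_1^*;\boldsymbol{V})}^2\le \|u_0\|_{\boldsymbol{V}}^2+\varepsilon$. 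Integrating the same differential inequality then bounds $\sqrt{\nu}\,\|P(\Delta y_k)\|_{L^2(0,T^*;\boldsymbol{H})}$ uniformly, and reading $\partial_t y_k=g_k+\nu P(\Delta y_k)-P(y_k\cdot\nabla y_k)$ as an identity in $L^2(0,T^*;\boldsymbol{H})$ — using the bounds just obtained and (\ref{Proj}) to recover the full $H^2$ norm — gives the uniform bound on $\|\partial_t y_k\|_{L^2(0,T^*;\boldsymbol{H})}$.

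The main obstacle is precisely the cubic nonlinearity $\frac{c}{\nu^3}\|y_k\|_{\boldsymbol{V}}^6$, which, unlike in two dimensions, prevents a global-in-time energy bound and only yields control on the interval $[0,T_1^*]$ furnished by Lemma \ref{EDO}. The point that makes the whole scheme work is that this interval is independent of $k$, because $E(y_k)\le E(y_0)$ renders the source $g_k$ (hence $k_0$) uniform in $k$, while $g(0)=\|u_0\|_{\boldsymbol{V}}^2$ is the same for every $k$. One therefore understands $T^*$ throughout this section to be chosen so that $T^*\le T_1^*$, after which the uniform bound holds on all of $[0,T^*]$ and the proof is complete.
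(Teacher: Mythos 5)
Your proof is correct and follows essentially the same route as the paper's: monotonicity of $E(y_k)$ from the definition of $\lambda_k$, rewriting $y_k$ as a solution of a Navier--Stokes system with the uniformly bounded source $P(f)-\partial_t v_k+P(\Delta v_k)$ (via $\|v_k\|^2_{\A_0(T^*)}=2E(y_k)\le 2E(y_0)$), the $\boldsymbol{V}$-energy inequality with the cubic term handled by (\ref{L-infini}), (\ref{Proj}) and Young, Lemma \ref{EDO} yielding a $T_1^*$ independent of $k$ (with the same implicit redefinition $T^*=T_1^*$ the paper makes), and finally reading off $\partial_t y_k$ from the equation. Your bookkeeping of the $\nu$-dependence ($c/\nu^3$ in front of $\|y_k\|_{\boldsymbol{V}}^6$) is in fact slightly more careful than the paper's generic constant, but this changes nothing in the argument.
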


\begin{proof}
From (\ref{algo_LS_Y-3D}) we deduce that, for all $k\in\mathbb{N}$ : 
$$E(y_{k+1})=E(y_k-\lambda_kY_{1,k})=\min_{\lambda\in \mathbb{R}^+} E(y_k-\lambda Y_{1,k})  \le E(y_k)$$
and thus  the sequence $\{E(y_k)\}_{k\in \mathbb{N}}$ decreases and, for all $k\in \mathbb{N}$, $E(y_{k})\le E(y_0).$

From the construction of the corrector $v_k\in\A_0(T^*)$ associated to $y_k\in\A(T^*)$ given by (\ref{corrector-3D}), we deduce from Proposition \ref{Existence-NS-3D} that $y_k\in \A(T^*)$ is the unique solution  of 
$$
\left\{
\begin{aligned}
&\frac{d}{dt} \int_\Omega y_k\cdot w+\nu\int_{\Omega} \nabla y_k\cdot \nabla w  +\int_{\Omega} y_k\cdot \nabla y_k\cdot w =\int_\Omega f\cdot w\\
&\hspace*{6cm}
- \frac{d}{dt} \int_\Omega v_k\cdot w -\int_{\Omega} \nabla v_k\cdot \nabla w , \quad \forall w\in \boldsymbol{V}\\
&y_k(0)=u_0.
\end{aligned}
\right.
$$
For this solution we have the classical estimates
$$\begin{aligned}
\frac{d}{dt}\int_\Omega |\nabla y_k|^2+\nu \int_\Omega |P(\Delta y_k)|^2 &\le \frac {c}{\nu}\|\nabla y_k\|_2^6+\frac{2}{\nu}\|P(f-\partial_t v_k+\Delta v_k)\|_2^2\\
& \le \frac {c}{\nu}\|\nabla y_k\|_2^6+\frac{4}{\nu}\|P(f)\|_2^2+\frac{4}{\nu}(\|\partial_t v_k\|_2^2+\|P(\Delta v_k)\|_2^2)
\end{aligned}$$
Let us remark that 
$$\begin{aligned}
\int_0^{T ^*}\frac{4}{\nu}\|P(f)\|_2^2+\frac{4}{\nu}(\|\partial_t v_k\|_2^2+\|P(\Delta v_k)\|_2^2)
&\le\frac{4}{\nu}\|f\|_{L^2(Q_{T^*})^3}^2+\frac{8}{\nu}E(y_k)\\
&\le\frac{4}{\nu}\|f\|_{L^2(Q_{T^*})^3}^2+\frac{8}{\nu}E(y_0)=k_0
\end{aligned}$$
thus we deduce from Lemma \ref{EDO} that there exists $T_1^*=T_1^*(\Omega, \nu, u_0, E(y_0), f)$, $0<T_1^*\le T^*$ such that, for all $t\in[0,T_1^*]$ 
$$\int_\Omega |\nabla y_k|^2(t)+\nu \int_0^t\!\!\!\int_\Omega |P(\Delta y_k)|^2\le \|u_0\|_{\boldsymbol{V}}^2+\frac{8}{\nu}\|f\|_{L^2(Q_{T^*})^3}^2+\frac{16}{\nu}E(y_0).$$
Then it suffices to take $T^*=T_1^*$ in Proposition \ref{Existence-NS-3D}.

We then have 
\begin{equation}\label{estimation-y_k-1-3D}
\|y_k\|^2_{L^{\infty}(0,T^*;\boldsymbol{V})}
\le \|u_0\|^2_{\boldsymbol{V}}+\frac{8}{\nu} \|f\|^2_{L^{2}(Q_{T^*})^3}+ \frac{16}{{\nu}}{E(y_0)},
\end{equation}
\begin{equation}\label{estimation-y_k-3D}
\nu \|P(\Delta y_k)\|^2_{L^{2}(0,T^*;\boldsymbol{H})}
\le \|u_0\|^2_{\boldsymbol{V}}+\frac{8}{\nu} \|f\|^2_{L^{2}(Q_{T^*})^3}+ \frac{16}{{\nu}}{E(y_0)}
\end{equation}
and
$$
\begin{aligned}
\|\partial_t y_k\|_{L^{2}(0,T^*;\boldsymbol{H})}
&\le \|P(f-\partial_t v_k+\Delta v_k-y_k\cdot\nabla y_k+\nu \Delta y_k)\|_{L^{2}(0,T^*;\boldsymbol{H})}\\
&\le \|f\|_{L^{2}(Q_{T^*})^3}+\|\partial_t v_k\|_{L^{2}(0,T^*;\boldsymbol{H})}+\| P(\Delta v_k)\|_{L^{2}(0,T^*;\boldsymbol{H})}\\
&\hskip 3cm+c\|y_k\|^{\frac32}_{L^{\infty}(0,T^*;\boldsymbol{V})}\|P(\Delta y_k)\|^{\frac12}_{L^{2}(0,T^*;\boldsymbol{H})} +\nu \|P(\Delta y_k)\|_{L^{2}(0,T^*;\boldsymbol{H})}\\
&\le  \|f\|_{L^{2}(Q_{T^*})^3}+2\sqrt{E(y_k)}+ \frac{c}{\nu^{1/4}}\|u_0\|^2_{\boldsymbol{V}}+\frac{c}{\nu^{5/4}} \|f\|^2_{L^{2}(Q_{T^*})^3}+ \frac{c}{{\nu^{5/4}}}{E(y_0)}\\
&\hskip 5cm+ \frac{1}{\nu^{1/2}}\|u_0\|_{\boldsymbol{V}}+\frac{2\sqrt{2}}{\nu} \|f\|_{L^{2}(Q_{T^*})^3}+ \frac 4{{\nu}}\sqrt{E(y_0)}\\
&\le  \|f\|_{L^{2}(Q_{T^*})^3}+(2+\frac 4{\nu} )\sqrt{E(y_0)}+ \frac{c}{\nu^{1/4}}\|u_0\|^2_{\boldsymbol{V}}+\frac{c}{\nu^{5/4}} \|f\|^2_{L^{2}(Q_{T^*})^3}+ \frac{c}{{\nu^{5/4}}}{E(y_0)}\\
&\hskip 5cm+ \frac{1}{\nu^{1/2}}\|u_0\|_{\boldsymbol{V}}+\frac{2\sqrt{2}}{\nu} \|f\|_{L^{2}(Q_{T^*})^3}.
\end{aligned}
$$
\end{proof}
\begin{lemma}
Let $\{y_k\}_{k\in \mathbb{N}}$ be the  sequence of $\A(T^*)$ defined by (\ref{algo_LS_Y-3D}). Then  for all $\lambda>0$, the following estimate holds
\begin{equation} \label{E_expansion-3D}
E(y_k-\lambda Y_{1,k})  \leq  {E(y_k)} \biggl(\vert 1-\lambda\vert +\lambda^2 \frac c{\nu^{5/4}} \sqrt{E(y_k)}
\exp\Big( \frac c\nu \int_0^{T^*} \|y_k\|_{\boldsymbol{V}}\|P(\Delta y_k)\|_{\boldsymbol{H}}+\frac 1{\nu^2}\|y_k\|^4_{\boldsymbol{V}}\Big)\biggl)^2.
\end{equation}
\end{lemma}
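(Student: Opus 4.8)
The plan is to imitate the two-dimensional argument of Lemma \ref{*}, transporting each ingredient into the $H^2$-based functional framework of the three-dimensional setting. The starting point is already available: the corrector associated with $y_k-\lambda Y_{1,k}$ is $V_k=(1-\lambda)v_k+\lambda^2\overline{\overline{v}}_k$, where $\overline{\overline{v}}_k$ solves \eqref{correcteur2b-3D}, so that $2E(y_k-\lambda Y_{1,k})=\|V_k\|^2_{\A_0(T^*)}$. Applying the triangle inequality in $\A_0(T^*)$ and using $\|v_k\|^2_{\A_0(T^*)}=2E(y_k)$ coming from \eqref{foncE-3D}, I would first reduce the claim to
\begin{equation*}
E(y_k-\lambda Y_{1,k})\le\Big(|1-\lambda|\sqrt{E(y_k)}+\tfrac{\lambda^2}{\sqrt 2}\,\|\overline{\overline{v}}_k\|_{\A_0(T^*)}\Big)^2.
\end{equation*}
Everything then hinges on a sharp bound for $\|\overline{\overline{v}}_k\|_{\A_0(T^*)}$.

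To control this norm, I would read \eqref{correcteur2b-3D} as an instance of \eqref{sol_z-3D} with $\nu_1=1$, $z_0=0$ and forcing $F=-Y_{1,k}\cdot\nabla Y_{1,k}$. The energy estimates \eqref{estimate-z-1-3D} and \eqref{estimate-z-2-3D} then bound both $\|P(\Delta\overline{\overline{v}}_k)\|_{L^2(0,T^*;\boldsymbol{H})}$ and $\|\partial_t\overline{\overline{v}}_k\|_{L^2(0,T^*;\boldsymbol{H})}$ by $\|P(Y_{1,k}\cdot\nabla Y_{1,k})\|_{L^2(Q_{T^*})^3}\le\|Y_{1,k}\cdot\nabla Y_{1,k}\|_{L^2(Q_{T^*})^3}$, whence $\|\overline{\overline{v}}_k\|_{\A_0(T^*)}\le\sqrt 2\,\|Y_{1,k}\cdot\nabla Y_{1,k}\|_{L^2(Q_{T^*})^3}$. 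This reduces the whole matter to estimating the quadratic term $\|Y_{1,k}\cdot\nabla Y_{1,k}\|_{L^2(Q_{T^*})^3}$.

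The main obstacle — and the genuine difference from the two-dimensional case — lies precisely here. In dimension two the analogous term was controlled by the product estimate \eqref{Estim-B}, exploiting the $L^\infty(\boldsymbol{H})\times L^2(\boldsymbol{V})$ structure; in dimension three this is unavailable for finite-energy fields and must be replaced by the $H^2$-regularity of the corrector framework. The key step is to use the Sobolev embedding \eqref{L-infini}, $\|Y_{1,k}\|_\infty\le c\|Y_{1,k}\|_{\boldsymbol{V}}^{1/2}\|\Delta Y_{1,k}\|_2^{1/2}$, together with \eqref{Proj} to replace $\|\Delta Y_{1,k}\|_2$ by $\|P(\Delta Y_{1,k})\|_2$. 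Bounding pointwise $\|Y_{1,k}\cdot\nabla Y_{1,k}\|_2\le\|Y_{1,k}\|_\infty\|\nabla Y_{1,k}\|_2$ and integrating in time, one reaches (up to the fixed factor $(T^*)^{1/2}$ absorbed into $c$) an estimate of the form
\begin{equation*}
\|Y_{1,k}\cdot\nabla Y_{1,k}\|_{L^2(Q_{T^*})^3}\le c\,\|Y_{1,k}\|^{3/2}_{L^\infty(0,T^*;\boldsymbol{V})}\,\|P(\Delta Y_{1,k})\|^{1/2}_{L^2(0,T^*;\boldsymbol{H})}.
\end{equation*}
It is this asymmetric split of the exponents $3/2$ and $1/2$ that ultimately produces the power $\nu^{-5/4}$ in \eqref{E_expansion-3D}, in contrast to the $\nu^{-3/2}$ of the two-dimensional case.

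To finish, I would insert the a priori bound \eqref{esti1-Y_1-3D}, which controls $\|Y_{1,k}\|^2_{L^\infty(0,T^*;\boldsymbol{V})}+\nu\|P(\Delta Y_{1,k})\|^2_{L^2(0,T^*;\boldsymbol{H})}$ by $\tfrac c\nu E(y_k)\exp(\cdots)$. Distributing the $\nu$-weights gives a factor $\nu^{-3/4}$ from $\|Y_{1,k}\|^{3/2}_{L^\infty(\boldsymbol{V})}$ and a factor $\nu^{-1/2}$ from $\|P(\Delta Y_{1,k})\|^{1/2}_{L^2(\boldsymbol{H})}$, together with matching powers of $E(y_k)\exp(\cdots)$, whose product is exactly $\nu^{-5/4}E(y_k)\exp(\cdots)$; hence
\begin{equation*}
\|\overline{\overline{v}}_k\|_{\A_0(T^*)}\le\frac{c}{\nu^{5/4}}\,E(y_k)\exp\Big(\frac c\nu\int_0^{T^*}\|y_k\|_{\boldsymbol{V}}\|P(\Delta y_k)\|_{\boldsymbol{H}}+\frac1{\nu^2}\|y_k\|^4_{\boldsymbol{V}}\Big).
\end{equation*}
Substituting this into the reduced inequality above and factoring out $\sqrt{E(y_k)}$ yields \eqref{E_expansion-3D}. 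The only points requiring care are the bookkeeping of the $\nu$-powers in this final chase and checking that the fixed horizon $T^*$ can be absorbed into the generic constant $c$.
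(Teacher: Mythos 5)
Your proposal is correct and follows essentially the same route as the paper: the same decomposition $V_k=(1-\lambda)v_k+\lambda^2\overline{\overline{v}}_k$ with the triangle inequality in $\A_0(T^*)$, the same heat-equation estimates \eqref{estimate-z-1-3D}--\eqref{estimate-z-2-3D} reducing everything to $\|Y_{1,k}\cdot\nabla Y_{1,k}\|_{L^2(Q_{T^*})^3}$, the same interpolation $\|Y_{1,k}\|_\infty\le c\|Y_{1,k}\|_{\boldsymbol{V}}^{1/2}\|P(\Delta Y_{1,k})\|_2^{1/2}$ via \eqref{L-infini} and \eqref{Proj} yielding the $3/2$--$1/2$ split, and the same insertion of \eqref{esti1-Y_1-3D} with the $\nu^{-3/4}\cdot\nu^{-1/2}=\nu^{-5/4}$ bookkeeping. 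Your accounting of the powers of $\nu$, of $E(y_k)$, and of the exponential factor matches the paper's proof exactly.
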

\begin{proof}
Since
$$
2E(y_k-\lambda Y_{1,k}) =\|V_k\|^2_{\A_0(T^*)} =(1-\lambda)^2\|v_k\|^2_{\A_0(T^*)}+2\lambda^2(1-\lambda)\langle v_k, \overline{\overline{v}}_k \rangle_{\A_0(T^*)}+\lambda^4 \| \overline{\overline{v}}_k\|^2_{\A_0(T^*)},
$$
it follows that
$$
\begin{aligned}
2E(y_k-\lambda Y_{1,k}) 
&\le (1-\lambda)^2\|v_k\|^2_{\A_0(T^*)}+2\lambda^2|1-\lambda|\|v_k\|_{\A_0(T^*)}\|\overline{\overline{v}}_k\|_{\A_0(T^*)}+\lambda^4 \| \overline{\overline{v}}_k\|^2_{\A_0(T^*)}\\
&\le (|1-\lambda|\|v_k\|_{\A_0(T^*)}+\lambda^2\|\overline{\overline{v}}_k\|_{\A_0(T^*)})^2\\
&\le (\sqrt{2}|1-\lambda| \sqrt{E(y_k)}+\lambda^2\|\overline{\overline{v}}_k\|_{\A_0(T^*)})^2,
\end{aligned}
$$
which gives
\begin{equation}\label{estim-Ey_k-lambda-Y-3D}
E(y_k-\lambda Y_{1,k})   \le \big(|1-\lambda| \sqrt{E(y_k)}+\frac{\lambda^2}{\sqrt{2}}\|\overline{\overline{v}}_k\|_{\A_0(T^*)}\big)^2.
\end{equation}

From (\ref{correcteur2b-3D}), (\ref{estimate-z-1-3D}) and (\ref{estimate-z-2-3D}) we deduce that
 $$
 \begin{aligned}
 \|\overline{\overline{v}}_k\|^2_{L^{\infty}(0,T^*;\boldsymbol{V})}+\|P(\Delta \overline{\overline{v}}_k)\|^2_{L^{2}(0,T^*;\boldsymbol{H})} &\le \|P(Y_{1,k}\cdot\nabla Y_{1,k})\|^2_{L^{2}(0,T^*;\boldsymbol{H})}\\ 
 & \le c\|Y_{1,k}\|^{3}_{L^{\infty}(0,T^*;\boldsymbol{V})}\|P(\Delta Y_{1,k})\|_{L^{2}(0,T^*;\boldsymbol{H})}
 \end{aligned}
  $$
 and
 $$
 \begin{aligned}
 \|\partial_t \overline{\overline{v}}_k\|_{L^{2}(0,T^*;\boldsymbol{H})}
 &\le \|P(\Delta \overline{\overline{v}}_k-Y_{1,k}\cdot\nabla Y_{1,k})\|_{L^{2}(0,T^*;\boldsymbol{H})}\\
 &\le \|P(\Delta \overline{\overline{v}}_k)\|_{L^{2}(0,T^*;\boldsymbol{H})}+ c\|Y_{1,k}\|^{\frac32}_{L^{\infty}(0,T^*;\boldsymbol{V})}\|P(\Delta Y_{1,k})\|^{\frac12}_{L^{2}(0,T^*;\boldsymbol{H})} \\
 &\le    c\|Y_{1,k}\|^{\frac32}_{L^{\infty}(0,T^*;\boldsymbol{V})}\|P(\Delta Y_{1,k})\|^{\frac12}_{L^{2}(0,T^*;\boldsymbol{H})}.
 \end{aligned}$$

On the other hand, we deduce from (\ref{esti1-Y_1-3D}) that
\begin{equation}
\label{estim-Y-1k-3D}
\|Y_{1,k}\|^2_{L^{\infty}(0,T^*;\boldsymbol{V})}+\nu  \|P(\Delta Y_{1,k)}\|^2_{L^{2}(0,T^*;\boldsymbol{H})} \leq  \frac c\nu E(y_k)
\exp\Big( \frac c\nu \int_0^{T^*} \|y_k\|_{\boldsymbol{V}}\|P(\Delta y_k)\|_{\boldsymbol{H}}+\frac 1{\nu^2}\|y_k\|^4_{\boldsymbol{V}}\Big).
 \end{equation}
 Thus
 $$\|P(\Delta \overline{\overline{v}}_k)\|_{L^{2}(0,T^*;\boldsymbol{H})}\le \frac c{\nu^{5/4}} E(y_k)
\exp\Big( \frac c{\nu} \int_0^{T^*} \|y_k\|_{\boldsymbol{V}}\|P(\Delta y_k)\|_{\boldsymbol{H}}+\frac 1{\nu^2}\|y_k\|^4_{\boldsymbol{V}}\Big)
$$
 and
  $$ \|\partial_t \overline{\overline{v}}_k\|_{L^{2}(0,T^*;\boldsymbol{H})}\le  \frac c{\nu^{5/4}} E(y_k)
\exp\Big( \frac c{\nu} \int_0^{T^*} \|y_k\|_{\boldsymbol{V}}\|P(\Delta y_k)\|_{\boldsymbol{H}}+\frac 1{\nu^2}\|y_k\|^4_{\boldsymbol{V}}\Big)
$$
  which gives 
$$
\begin{aligned}
\|\overline{\overline{v}}_k\|_{\A_0(T^*)}=&\sqrt{\|\overline{\overline{v}}_k\|^2_{L^{2}(0,T^*;\boldsymbol{V})}+
\|\partial_t \overline{\overline{v}}_k\|^2_{L^{2}(0,T^*;\boldsymbol{V})}}\\
&\leq  \frac c{\nu^{5/4}} E(y)
\exp\Big( \frac c\nu \int_0^{T^*} \|y_k\|_{\boldsymbol{V}}\|P(\Delta y_k)\|_{\boldsymbol{H}}+\frac 1{\nu^2}\|y_k\|^4_{\boldsymbol{V}}\Big).
\end{aligned}
$$

From (\ref{estim-Ey_k-lambda-Y-3D}) we then deduce  (\ref{E_expansion-3D}).

\end{proof}

\begin{lemma}\label{E-go-0-3D}
Let $\{y_k\}_{k\in \mathbb{N}}$ the  sequence of $\A(T^*)$ defined by (\ref{algo_LS_Y-3D}). Then $E(y_k)\to 0$ as $k\to \infty$.\end{lemma}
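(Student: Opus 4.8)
The plan is to transcribe the argument of Lemma \ref{E-go-0} into the three dimensional framework; the only genuinely new point is that the exponential factor appearing in (\ref{E_expansion-3D}) must be controlled uniformly in $k$, and this is exactly what the a priori bounds of Lemma \ref{bounded-3D} provide. First I would set $R^2=\|u_0\|^2_{\boldsymbol{V}}+\frac{8}{\nu}\|f\|^2_{L^2(Q_{T^*})^3}+\frac{16}{\nu}E(y_0)$, which is the common right-hand side of (\ref{estimation-y_k-1-3D}) and (\ref{estimation-y_k-3D}), so that $\|y_k\|_{L^\infty(0,T^*;\boldsymbol{V})}\le R$ and $\sqrt{\nu}\,\|P(\Delta y_k)\|_{L^2(0,T^*;\boldsymbol{H})}\le R$ for every $k$. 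By Cauchy--Schwarz,
$$\int_0^{T^*}\|y_k\|_{\boldsymbol{V}}\|P(\Delta y_k)\|_{\boldsymbol{H}}\le (T^*)^{1/2}\|y_k\|_{L^\infty(0,T^*;\boldsymbol{V})}\|P(\Delta y_k)\|_{L^2(0,T^*;\boldsymbol{H})}\le \frac{(T^*)^{1/2}}{\sqrt{\nu}}R^2,$$
and likewise $\int_0^{T^*}\|y_k\|_{\boldsymbol{V}}^4\le T^*R^4$. Hence the exponent in (\ref{E_expansion-3D}) is bounded by a quantity depending only on $\Omega,\nu,u_0,f$ and $E(y_0)$, but not on $k$, and there exists a constant $C_1$ independent of $y_k$ such that, for every admissible $\lambda$ and every $k$,
$$\sqrt{E(y_{k+1})}\le \sqrt{E(y_k)}\bigl(|1-\lambda|+\lambda^2 C_1\sqrt{E(y_k)}\bigr).$$

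Next I would reason exactly as in Lemma \ref{E-go-0} with the polynomial $p_k(\lambda)=|1-\lambda|+\lambda^2 C_1\sqrt{E(y_k)}$. Since $\{E(y_k)\}$ is nonincreasing by Lemma \ref{bounded-3D}, if $C_1\sqrt{E(y_0)}<1$ then $C_1\sqrt{E(y_k)}<1$ for all $k$, and the choice $\lambda=1$ yields $C_1\sqrt{E(y_{k+1})}\le\bigl(C_1\sqrt{E(y_k)}\bigr)^2$, so $E(y_k)\to 0$ (indeed quadratically). If instead $C_1\sqrt{E(y_0)}\ge 1$, I would prove that the set $I=\{k:\ C_1\sqrt{E(y_k)}\ge 1\}$ is finite: for $k\in I$ the minimum of $p_k$ over $[0,1]$ is attained at $\lambda=(2C_1\sqrt{E(y_k)})^{-1}\in[0,\tfrac12]$ and equals $1-(4C_1\sqrt{E(y_k)})^{-1}\le 1-(4C_1\sqrt{E(y_0)})^{-1}<1$, so that
$$\sqrt{E(y_{k+1})}\le\Bigl(1-\frac{1}{4C_1\sqrt{E(y_0)}}\Bigr)^{k+1}\sqrt{E(y_0)}\xrightarrow[k\to\infty]{}0;$$
this forces $C_1\sqrt{E(y_k)}<1$ for $k$ large, whence $I$ is finite and the first case applies from some index onward. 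In both cases $E(y_k)\to 0$.

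The only real obstacle is the uniform bound on the exponential factor, and this is precisely the reason the estimates (\ref{estimation-y_k-1-3D})--(\ref{estimation-y_k-3D}) were established; once $C_1$ is seen to be independent of $k$, the rest is a verbatim repetition of the two dimensional analysis. I would also remark that every test value of $\lambda$ used above lies in $[0,1]$, which is contained in the minimization set of (\ref{algo_LS_Y-3D}), so that each bound $E(y_{k+1})=\min_\lambda E(y_k-\lambda Y_{1,k})\le E(y_k-\lambda Y_{1,k})$ is legitimate for the chosen $\lambda$.
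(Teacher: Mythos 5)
Your proposal is correct and follows essentially the same route as the paper: the uniform-in-$k$ constant $C_1$ is extracted from (\ref{E_expansion-3D}) via the a priori bounds (\ref{estimation-y_k-1-3D})--(\ref{estimation-y_k-3D}) (your $R^2$ is exactly the paper's $C_2$, and your Cauchy--Schwarz step simply makes explicit what the paper absorbs into $C_1=\frac{c}{\nu^{5/4}}\exp\bigl(c(\frac{C_2}{\nu^2}+(\frac{C_2}{\nu^2})^2)\bigr)$), after which the two-case analysis of the polynomial $p_k(\lambda)=|1-\lambda|+\lambda^2 C_1\sqrt{E(y_k)}$, including the finiteness of $I=\{k:\ C_1\sqrt{E(y_k)}\ge 1\}$, is verbatim the paper's argument. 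Your closing remark that all test values of $\lambda$ lie in the admissible minimization set is a harmless clarification (the paper itself minimizes over $\mathbb{R}_+$ at this point) and changes nothing.
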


\begin{proof}
Denoting $C_2= \|u_0\|^2_{\boldsymbol{V}}+\frac{8}{\nu} \|f\|^2_{L^{2}(Q_{T^*})^3}+ \frac{16}{{\nu}}{E(y_0)}$, we  deduce from  (\ref{E_expansion-3D}), using  (\ref{estimation-y_k-1-3D}) and (\ref{estimation-y_k-3D}) that, for all $\lambda\in\mathbb{R}_+$ :
$$
\sqrt{E(y_{k+1}) } 
\leq  \sqrt{E(y_k)} \biggl(\vert 1-\lambda\vert +\lambda^2 C_1\sqrt{E(y_k)}\biggr)
$$
where $C_1=  \frac c{\nu^{5/4}}\exp\biggl(c (\frac{C_2}{\nu^2}+(\frac{C_2}{\nu^2})^2)\biggr)$ does not depend on $y_k$, $k\in\mathbb{N}^*$.

Let us denote, for all $\lambda\in\mathbb{R}_+$,  $p_k(\lambda)=\vert 1-\lambda\vert +\lambda^2 C_1\sqrt{E(y_k)}$. 
If $C_1\sqrt{E(y_0)}< 1$ (and thus $C_1\sqrt{E(y_k)}<1$ for all $k\in\mathbb{N}$) then 
$$
\min_{\lambda\in\mathbb{R}_+}p_k(\lambda)\le p_k(1)=C_1\sqrt{E(y_k)}
$$
and thus $\sqrt{E(y_{k+1})}\le C_1E(y_k)$. This gives
\begin{equation}
C_1\sqrt{E(y_{k+1})}\le \big(C_1\sqrt{E(y_k)}\big)^2
\end{equation}
and then $C_1\sqrt{E(y_k)}\to 0$  as $k\to \infty$.

Suppose now that  $C_1\sqrt{E(y_0)}\ge 1$ and   denote $ I=\{k\in \mathbb{N},\ C_1\sqrt{E(y_k)}\ge 1\}$. Let us prove that $I$ is a finite subset of $\mathbb{N}$. 

For all $k\in I$, since $C_1\sqrt{E(y_k)}\ge 1$,
$$\min_{\lambda\in\mathbb{R}_+}p_k(\lambda)=\min_{\lambda\in[0,1]}p_k(\lambda)=p_k\Big(\frac{1}{2C_1\sqrt{E(y_k)}}\Big)=1-\frac{1}{4C_1\sqrt{E(y_k)}}\le 1-\frac{1}{4C_1\sqrt{E(y_0)}}<1$$
and thus, for all $k\in I$
$$\sqrt{E(y_{k+1}) } \le \Big(1-\frac{1}{4C_1\sqrt{E(y_0)}}\Big)\sqrt{E(y_k)}\le \Big(1-\frac{1}{4C_1\sqrt{E(y_0)}}\Big)^{k+1}\sqrt{E(y_0)}.$$

Since $\Big(1-\frac{1}{4C_1\sqrt{E(y_0)}}\Big)^{k+1}\to 0$ as $k\to+\infty$, there exists $k_0\in\mathbb{N}$ such that for all $k\ge k_0$, $C_1\sqrt{E(y_{k+1}) }<1$. 
Thus  $I$ is a finite subset of $\mathbb{N}$.  Arguing as in the first case, it follows that $C_1\sqrt{E(y_k)}\to 0$  as $k\to \infty$.
\end{proof}
 From Lemmas \ref{bounded-3D}, \ref{E-go-0-3D} and Proposition \ref{main-estimatee-3D} we can deduce that :

\begin{proposition}\label{prop3D_convergence}
Let $\{y_k\}_{k\in \mathbb{N}}$ the  sequence of $\A(T^*)$ defined by (\ref{algo_LS_Y-3D}). Then $y_k\to \bar y$ in  $H^1(0,T^*;\boldsymbol{H})\cap L^{2}(0,T^*;H^2(\Omega)^3\cap \boldsymbol{V})$ where $\bar y\in\A(T^*)$ is the unique solution of (\ref{NS}) given in Proposition \ref{Existence-NS-3D}.
\end{proposition}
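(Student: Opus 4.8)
The plan is to reproduce, in the three-dimensional setting, the short deduction that closed the two-dimensional analysis (Proposition \ref{conv-y-k-y}): I would assemble Lemma \ref{bounded-3D}, Lemma \ref{E-go-0-3D} and Proposition \ref{main-estimatee-3D} into a single convergence statement. The whole argument pivots on the coercivity estimate of Proposition \ref{main-estimatee-3D}, which guarantees that $\sqrt{E(y)}$ controls the distance from $y$ to $\bar y$ in the norm of $\A_0(T^*)$, as soon as a common a priori bound $M$ governs both $y$ and $\bar y$.

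First I would invoke Lemma \ref{bounded-3D}: because $\{y_k\}_{k\in\mathbb{N}}$ is bounded in $H^1(0,T^*;\boldsymbol{H})\cap L^{2}(0,T^*;H^2(\Omega)^3\cap \boldsymbol{V})$, there is a constant $M$ such that
$$\|\partial_t y_k\|_{L^2(Q_{T^*})^3}\le M,\qquad \sqrt{\nu}\|P(\Delta y_k)\|_{L^2(Q_{T^*})^3}\le M$$
for every $k$. Enlarging $M$ if necessary, the solution $\bar y\in\A(T^*)$ produced by Proposition \ref{Existence-NS-3D} meets the same two inequalities, so the hypotheses of Proposition \ref{main-estimatee-3D} are satisfied simultaneously by $y=y_k$ and by $\bar y$ with this common $M$. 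That proposition then furnishes, for each $k$,
$$\| y_k-\bar y\|_{L^\infty(0,T^*;\boldsymbol{V})}+\sqrt{\nu}\| P(\Delta y_k)-P(\Delta \bar y)\|_{L^2(Q_{T^*})^3}+\|\partial_t y_k-\partial_t \bar y\|_{L^2(Q_{T^*})^3}\le c(M)\sqrt{E(y_k)}.$$

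Next I would pass to the limit via Lemma \ref{E-go-0-3D}, which yields $E(y_k)\to 0$, so that the right-hand side above tends to zero and hence so does the left-hand side. Finally I would observe that the control of these three quantities is precisely control in the target space: the first term governs the $L^\infty(0,T^*;\boldsymbol{V})$ component, the second combined with the equivalence $\|\Delta\cdot\|_2\le c\|P(\Delta\cdot)\|_2$ from \eqref{Proj} (and elliptic regularity on $\Omega$) recovers the $L^2(0,T^*;H^2(\Omega)^3)$ component, and the third is the $H^1(0,T^*;\boldsymbol{H})$ component. Thus $y_k\to\bar y$ strongly in $H^1(0,T^*;\boldsymbol{H})\cap L^{2}(0,T^*;H^2(\Omega)^3\cap \boldsymbol{V})$, which is the claim.

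I do not anticipate any serious obstacle, as the substantive work has already been carried out in the three cited results---the boundedness of Lemma \ref{bounded-3D} (itself relying on the local-in-time argument through Lemma \ref{EDO}), the decay of Lemma \ref{E-go-0-3D}, and the coercivity of Proposition \ref{main-estimatee-3D}. The single point requiring attention is the uniformity of $M$: the coercivity inequality is only applicable when one and the same $M$ bounds both the sequence and the limit, so one must combine the uniform-in-$k$ bound with the a priori membership $\bar y\in\A(T^*)$ before invoking it. Everything else is the routine implication that $\sqrt{E(y_k)}\to 0$ forces strong convergence.
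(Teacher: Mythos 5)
Your proposal is correct and is precisely the paper's own argument: the paper states Proposition \ref{prop3D_convergence} as an immediate consequence of Lemma \ref{bounded-3D}, Lemma \ref{E-go-0-3D} and Proposition \ref{main-estimatee-3D}, which is exactly the chain you assemble. Your two added clarifications---choosing a single $M$ bounding both $\{y_k\}$ and $\bar y$ before invoking the coercivity estimate, and recovering the $L^2(0,T^*;H^2(\Omega)^3)$ convergence from $\|P(\Delta\cdot)\|_2$ via \eqref{Proj} and elliptic regularity---are left implicit in the paper but are the right way to fill in the deduction.
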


From (\ref{algo_LS_Y-3D}) and Proposition \ref{prop3D_convergence}, we deduce that the serie $\sum \lambda_kY_{1k}$ converges in $H^1(0,T^*;\boldsymbol{H})\cap L^{2}(0,T^*;H^2(\Omega)^3\cap \boldsymbol{V})$ and $\bar y=y_0+\sum_{k=1}^{+\infty} \lambda_kY_{1k}$. Moreover
 $\sum \lambda_k\|Y_{1k}\|_{ \A_0(T^*)}$ converges and, if we denote $k_0$ one $k\in\mathbb{N}$ such that  $C_1\sqrt{E(y_k)}<1$ (see Lemma \ref{E-go-0-3D}), then for all $k\ge k_0$, using (\ref{estim-Y-1k-3D}), (\ref{estimation-y_k-3D}) and (\ref{C1E}) (since we can choose $C_1>1$) 
 $$\begin{aligned}
 \|\bar y-y_k\|_{L^{\infty}(0,T^*;\boldsymbol{V})}
 &=\| \sum_{i=k+1}^{+\infty} \lambda_iY_{1i}\|_{L^{\infty}(0,T^*;\boldsymbol{V})}\le  \sum_{i=k+1}^{+\infty} \lambda_i\|Y_{1i}\|_{L^{\infty}(0,T^*;\boldsymbol{V})}
 \le  m\sum_{i=k+1}^{+\infty} \sqrt{C_1 E(y_i)}\\
 & \le  m\sum_{i=k+1}^{+\infty} C_1 \sqrt{ E(y_i)}  \le  m\sum_{i=k+1}^{+\infty} (C_1 \sqrt{ E(y_{k_0})})^{2 ^{i-k_0}}
  \le  m\sum_{i=0}^{+\infty} (C_1 \sqrt{ E(y_{k_0})})^{2 ^{i+k+1-k_0}}\\
  &\le m(C_1 \sqrt{ E(y_{k_0})})^{2 ^{k+1-k_0}} \sum_{i=0}^{+\infty} (C_1 \sqrt{ E(y_{k_0})})^{2 ^{i}}
  =mc(C_1 \sqrt{ E(y_{k_0})})^{2 ^{k+1-k_0}}
 \end{aligned}$$
 and
  $$\begin{aligned}
 \|P(\Delta(\bar y-y_k))\|_{L^{2}(0,T^*;\boldsymbol{H})}
 &=\frac1\nu\| \sum_{i=k+1}^{+\infty} \lambda_iP(\Delta Y_{1i})\|_{L^{2}(0,T^*;\boldsymbol{H})}\le  \frac1\nu\sum_{i=k+1}^{+\infty} \lambda_i\|P(\Delta(Y_{1i)}\|_{L^{2}(0,T^*;\boldsymbol{H})}\\
 &\le  \frac1\nu\sum_{i=k+1}^{+\infty} \sqrt{C_1 E(y_i)}
  \le\frac c\nu(C_1 \sqrt{ E(y_{k_0})})^{2 ^{k+1-k_0}}.
 \end{aligned}$$

A similar estimate can be obtained for $\|\partial_t \bar y-\partial_t y_k\|_{L^{2}(0,T^\star;\boldsymbol{H})}$ (we refer to \eqref{***} for the 2D case).
 
 \begin{remark}
 All the results of this section remain true in the 2D case with $T^*=T$; in other words, results of Section \ref{section-2D} are valid to approximate regular solution associated to $u_0\in \boldsymbol{V}$ and $f\in L^2(Q_T)^3$ and $E$ defined by \eqref{foncE-3D}.
 \end{remark}

 \begin{remark}
 In a different functional framework, a similar approach is considered in \cite{PedregalNS}; more precisely, the author introduces the functional $E:\mathcal{V}\to \mathbb{R}$ defined $E(y)=\frac{1}{2}\Vert \nabla v\Vert^2_{L^2(Q_T)}$ 
with $\mathcal{V}:=y_0+\mathcal{V}_0$, $y_0\in H^1(Q_T)$ and $\mathcal{V}_0:=\{u\in H^1(Q_T; \mathbb{R}^d), u(t,\cdot)\in \boldsymbol{V}\,\forall t\in (0,T), u(0,\cdot)=0\}$
where $v(t,\cdot)$ solves for all $t\in (0,T)$, the steady Navier-Stokes equation with source term equal to $y_t(t,\cdot)-\nu \Delta y(t,\cdot)+(y(t,\cdot)\cdot \nabla)y(t,\cdot)-f(t,\cdot)$. Strong solutions are therefore considered assuming $u_0\in \boldsymbol{V}$ and $f\in (L^2(Q_T))^d$. Bound of $E(y)$ implies bound of $y$ in $L^2(0,T;\boldsymbol{V})$ but not in $H^1(0,T, L^2(\Omega)^d)$. This prevents to get the convergence of minimizing sequences in $\mathcal{V}$. 
\end{remark}

\section{Numerical illustrations} \label{section-numeric}

\subsection{Algorithm - Approximation}\label{section-algo}

We detail the main steps of the iterative algorithm \eqref{algo_LS_Y}. First, we define the initial term $y_0$ of the sequence $\{y_k\}_{(k\geq 0)}$ as the solution of the Stokes problem, solved by the backward Euler scheme: 
\begin{equation}
\label{initialization_sys}
\left\{
\begin{aligned}
 &  
\int_\Omega \frac{y_0^{n+1}-y_0^n}{\delta t}\cdot w+\overline{\nu}\int_\Omega \nabla y_0^{n+1}\cdot\nabla w=\langle f^n,w\rangle_{\boldsymbol{V}'\times \boldsymbol{V}}, \,\, \forall w\in \boldsymbol{V}, \,\, \forall n\geq 0, \\
& y_0^0(\cdot,0)=u_0, \quad \text{in}\quad  \Omega.
\end{aligned}
\right.
\end{equation}
for some $\overline{\nu}>0$. The incompressibility constraint is taken into account through a lagrange multiplier $\lambda\in L^2(\Omega)$ leading to the mixed formulation 
\begin{equation}
\left\{
\begin{aligned}
 &  \int_\Omega \frac{y_0^{n+1}-y_0^n}{\delta t}\cdot w+\nu\int_\Omega \nabla y_0^{n+1}\cdot\nabla w + \int_\Omega \lambda^{n+1}\,\nabla\cdot w   =\langle f^n,w\rangle_{\boldsymbol{V}'\times \boldsymbol{V}},\, \, \forall w\in (H^1_0(Q_T))^2, \,\, \forall n\geq 0,\\
 &  \int_\Omega \mu\,\nabla\cdot y_0^{n+1}   =0,\,\, \forall \mu\in L^2(\Omega), \,\,\forall n\geq 0,\\
 & y_0^0=u_0, \quad \text{in}\quad  \Omega.
\end{aligned}
\right.
\end{equation}
A conformal approximation in space is used for $(H_0^1(\Omega))^2\times L^2(\Omega)$ based on the inf-sup stable $\mathbb{P}_2/\mathbb{P}_1$ Taylor-Hood finite element. 
Then, assuming that (an approximation $\{y^n_{h,k}\}_{\{n,h\}}$ of) $y_k$ has been obtained for some $k\geq0$, $y_{k+1}$ is obtained as follows.

$\boldsymbol{(i)}$ From $y_k$, computation of (an approximation of) the corrector $v_k$ through the backward Euler scheme 
\begin{equation}
\label{pb1}
\left\{
\begin{aligned}
& \int_\Omega \frac{v_k^{n+1}-v_k^n}{\delta t}\cdot w +\int_{\Omega} \nabla v_k^{n+1}\cdot \nabla w +\int_\Omega \frac{y_k^{n+1}-y_k^n}{\delta t}\cdot w+\nu\int_{\Omega} \nabla y_k^{n+1}\cdot \nabla w  \\
&\hspace*{5cm}+\int_{\Omega} y_k^{n+1}\cdot \nabla y_k^{n+1}\cdot w =<f^n,w>_{\boldsymbol{V}'\times \boldsymbol{V}}, \,\, \forall w\in \boldsymbol{V},\,\, \forall n\geq 0,\\
&v_k^0=0.
\end{aligned}
\right.
\end{equation}

$\boldsymbol{(ii)}$ Then, in order to compute the term $\Vert v_{k,t}\Vert_{L^2(0,T;\boldsymbol{V}^\prime)}$ of $E(y_k)$, introduction of the function $w_k\in L^2(\boldsymbol{V})$ solution of 
\begin{equation}
\int_{0}^T\!\!\!\int_\Omega \nabla w_k \cdot \nabla w + v_{k,t} \cdot w = 0, \,\,\forall  w\in L^2(\boldsymbol{V})
\end{equation}
so that $\Vert v_{k,t}\Vert_{L^2(\boldsymbol{V}^\prime)}=\Vert \nabla w_k\Vert_{L^2(Q_T)}$. An approximation of $w_k$ is obtained through the scheme
\begin{equation}
\label{pb2}
\int_\Omega \nabla w_k^n \cdot \nabla w + \frac{v_k^{n+1}-v_k^n}{\delta t} \cdot w = 0, \forall  w\in \boldsymbol{V}, \,\forall n\in \mathbb{N}.
\end{equation}

$\boldsymbol{(iii)}$ Computation of an approximation of $Y_{1,k}$ solution of (\ref{solution_Y1k}) through the scheme 
\begin{equation}
\label{solution_Y1k_sch}
\left\{
\begin{aligned}
& \int_\Omega \frac{Y^{n+1}_{1,k}-Y^{n}_{1,k}}{\delta t}\cdot w +\nu\int_{\Omega} \nabla Y^{n+1}_{1,k}\cdot \nabla w +\int_{\Omega} y_k^{n+1}\cdot \nabla Y^{n+1}_{1,k}\cdot w+\int_{\Omega} Y_{1,k}^{n+1}\cdot \nabla y^{n+1}_k\cdot w \\
&\hspace*{3cm}=-\int_\Omega \frac{v^{n+1}_k-v^{n}_k}{\delta t}\cdot w -  \int_\Omega \nabla v_k^{n+1}\cdot \nabla w, \,\, \forall w\in \boldsymbol{V},\,\, \forall n\geq 0.\\
& Y^0_{1,k}=0.
\end{aligned}
\right.
\end{equation}

$\boldsymbol{(iv)}$ Computation of the corrector function $\overline{\overline{v}}_k$ solution of (\ref{correcteur2b}) through the scheme 
 \begin{equation}
\label{correcteur2b-dis}
\left\{
\begin{aligned}
& \int_\Omega \frac{\overline{\overline{v}}_k^{n+1}-\overline{\overline{v}}_k^n}{\delta t}\cdot w +\int_{\Omega} \nabla \overline{\overline{v}}_k^{n+1}\cdot \nabla w +\int_{\Omega} Y_{1,k}^{n+1}\cdot \nabla Y_{1,k}^{n+1}\cdot w=0, \,\,\forall w\in \boldsymbol{V}, \,\, n\geq 0,\\
& \overline{\overline{v}}_k(0)=0.
\end{aligned}
\right.
\end{equation}

$\boldsymbol{(v)}$ Computation of $\|v_k\|^2_{\A_0}$, $\langle v_k, \overline{\overline{v}}_k \rangle_{\A_0}$ and $\| \overline{\overline{v}}_k\|^2_{\A_0}$ appearing in $E(y_k-\lambda Y_{1,k})$ (see \eqref{exact-expansion}). The computation of $\|\overline{\overline{v}}_k\|_{\A_0}$ requires the computation of 
$\|\overline{\overline{v}}_k\|_{L^2(\boldsymbol{V}^\prime)}$, i.e. the introduction of  $\overline{\overline{w}}_k$ solution of
\begin{equation}
\nonumber
\int_{0}^T\!\!\!\int_\Omega \nabla \overline{\overline{w}}_k \cdot \nabla w + v_{k,t} \cdot w = 0, \,\,\forall  w\in L^2(\boldsymbol{V})
\end{equation}
so that $\Vert  \overline{\overline{v}}_{k,t}\Vert_{L^2(\boldsymbol{V}^\prime)}=\Vert \nabla  \overline{\overline{w}}_k\Vert_{L^2(Q_T)}$ through the scheme
\begin{equation}
\label{pb4}
\int_\Omega \nabla \overline{\overline{w}}_k^n \cdot \nabla w + \frac{\overline{\overline{v}}_k^{n+1}-\overline{\overline{v}}_k^n}{\delta t} \cdot w = 0, \,\forall  w\in \boldsymbol{V}, \,\forall n\in \mathbb{N}.
\end{equation}

$\boldsymbol{(vi)}$ Determination of the minimum $\lambda_k\in (0,m]$ of 
$$
\lambda\to E(y_k-\lambda Y_{1,k})=(1-\lambda)^2\|v_k\|^2_{\A_0}+2\lambda^2(1-\lambda)\langle v_k, \overline{\overline{v}}_k \rangle_{\A_0}+\lambda^4 \| \overline{\overline{v}}_k\|^2_{\A_0}
$$
through a Newton-Raphson method starting from $0$ and finally update of the sequence $y_{k+1}=y_k -\lambda_k Y_{1,k}$.

As a summary, the determination of $y_{k+1}$ from $y_k$ involves the resolution of four Stokes types formulations, namely \eqref{pb1},\eqref{pb2},\eqref{correcteur2b-dis} and \eqref{pb4} plus the resolution of the linearized Navier-Stokes formulation \eqref{solution_Y1k_sch}. This latter concentrates most of the computational times ressources since the operator (to be inverted) varies with the indexe $n$. 

Instead of minimizing exactly the fourth order polynomial $\lambda\to E(y_k-\lambda Y_{1,k})$ in step $\boldsymbol{(vi)}$, we may simpler minimize w.r.t. $\lambda\in (0,1]$ the right hand side of the estimate 
$$
E(y_k-\lambda Y_{1,k})   \le \biggl(|1-\lambda| \sqrt{E(y_k)}+\frac{\lambda^2}{\sqrt{2}}\|\overline{\overline{v}}_k\|_{\A_0}\biggr)^2
$$
(appearing in the proof of Lemma \ref{*}) leading to $\widehat{\lambda}_k=\min\biggl(1,\frac{\sqrt{E(y_k)}}{\sqrt{2}\Vert \overline{\overline{v}}_k\Vert_{\A_0}} \biggr)$.  (see remark \ref{remg}). This avoids the computation of the scalar product $\langle v_k, \overline{\overline{v}}_k \rangle_{\A_0}$ and one resolution of Stokes type formulations.

\begin{remark}
Similarly, we may also consider the equivalent functional $\widetilde{E}$ defined in \eqref{Etilde}. This avoids the introduction of the auxillary corrector function $v$ and reduces to three  (instead of four) the number of Stokes type formulations to be solved. Precisely, using the initialization defined in \eqref{initialization_sys}, the algorithm is as follows : 

$\boldsymbol{(i)}$ Computation of $\widetilde{E}(y_k)=\Vert h_k\Vert_{L^2(\boldsymbol{V})}=\Vert \nabla h_k\Vert_{L^2(Q_T)}$ where $h_k$ solves 
\begin{equation}
\nonumber
\int_{0}^T\!\!\!\int_\Omega \nabla h_k \cdot \nabla w + (y_{k,t}-\nu\Delta y_k+y_k\cdot \nabla y_k-f) \cdot w = 0, \,\,\forall  w\in L^2(\boldsymbol{V})
\end{equation}
through the scheme
\begin{equation}
\label{pb4bis}
\int_\Omega \nabla h_k^n \cdot \nabla w + \frac{y_k^{n+1}-y_k^n}{\delta t} \cdot w+\nu \nabla y_k^{n+1}\cdot \nabla w+ y^{n+1}_k\cdot \nabla y^{n+1}_k= <f^n,w>_{\boldsymbol{V^\prime},\boldsymbol{V}}, \,\forall  w\in \boldsymbol{V}, \,\forall n\in \mathbb{N}.
\end{equation}

$\boldsymbol{(ii)}$ Computation of an approximation of $Y_{1,k}$ from $y_k$ through the scheme
\begin{equation}
\label{solution_Y1k_sch_y1k}
\left\{
\begin{aligned}
&\int_\Omega \frac{Y^{n+1}_{1,k}-Y^{n}_{1,k}}{\delta t}\cdot w +\nu\int_{\Omega} \nabla Y^{n+1}_{1,k}\cdot \nabla w +\int_{\Omega} y_k^{n+1}\cdot \nabla Y^{n+1}_{1,k}\cdot w
 +\int_{\Omega} Y_k^{n+1}\cdot \nabla y^{n+1}_{1,k}\cdot w \\
  &\hspace*{4cm}=\int_\Omega \frac{y_k^{n+1}-y_k^n}{\delta t}\cdot w+\nu\int_{\Omega} \nabla y_k^{n+1}\cdot \nabla w  \\
&\hspace*{5cm}+\int_{\Omega} y_k^{n+1}\cdot \nabla y_k^{n+1}\cdot w -<f^n,w>_{\boldsymbol{V}'\times \boldsymbol{V}}, \,\, \forall w\in \boldsymbol{V},\,\, \forall n\geq 0,\\
&Y_{1,k}^0=0.
\end{aligned}
\right.
\end{equation}

$\boldsymbol{(iii)}$ Computation of $\Vert B(Y_{1,k},Y_{1,k})\Vert_{L^2(0,T;\boldsymbol{V}^\prime)}=\Vert h_k\Vert_{L^2(\boldsymbol{V})}=\Vert \nabla h_k\Vert_{L^2(Q_T)}$ where $h_k$ solves 
\begin{equation}
\nonumber
\int_{0}^T\!\!\!\int_\Omega \nabla h_k \cdot \nabla w + Y_{1,k}\cdot \nabla Y_{1,k}\cdot w = 0, \,\,\forall  w\in L^2(\boldsymbol{V})
\end{equation}
and similarly of the term $\langle y_{k,t}+\nu B_1(y_k)+B(y_k,y_k) , B(Y_{1,k},Y_{1,k}) \rangle_{L^2(0,T;\boldsymbol{V}^\prime)}$.

$\boldsymbol{(iv)}$ Determination of the minimum $\lambda_k\in (0,m]$ of 
$$
\begin{aligned}
\lambda\to \widetilde{E}(y_k-\lambda Y_{1,k})=& (1-\lambda)^2\widetilde{E}(y_k)+\lambda^2(1-\lambda)\langle y_{k,t}+\nu B_1(y_k)+B(y_k,y_k)-f, B(Y_{1,k},Y_{1,k}) \rangle_{L^2(0,T;\boldsymbol{V}^\prime)}\\
&+\frac{\lambda^4}{2} \Vert B(Y_{1,k},Y_{1,k})\Vert^2_{L^2(0,T;\boldsymbol{V}^\prime)}
\end{aligned}
$$
through a Newton-Raphson method starting from $0$ and finally update of the sequence $y_{k+1}=y_k -\lambda_k Y_{1,k}$ until $\widetilde{E}(y_k)$ is small enough.

We emphasize one more time that the case $\lambda_k$ coincides with the standard Newton algorithm to find zeros of the functional $F:\mathcal{A}\to L^2(0,T;\boldsymbol{V^\prime})$ defined by $F(y)=y_t+\nu B_1(y)+B(y,y)-f$. In term of computational time ressources, the determination of the optimal descent step $\lambda_k$ is negligible with respect to the resolution in the step $\boldsymbol{(ii)}$.
\end{remark}

\subsection{2D semi-circular driven cavity}
We illustrate our theoreticals results for the 2D semi-circular cavity discussed in \cite{Glowinski_disk}. The geometry is a semi-disk $\Omega=\{(x_1,x_2)\in \mathbb{R}^2, x_1^2+x_2^2< 1/4, x_2\leq 0\}$
depicted on Figure \ref{disk}. The velocity is imposed to $y=(g,0)$ on $\Gamma_0=\{(x_1,0)\in \mathbb{R}^2, \vert x_1\vert<1/2\}$ with $g$ vanishing at $x_1=\pm 1/2$
and close to one elsewhere: we take $g(x_1)=(1-e^{100(x_1-1/2)})(1-e^{-100(x_1+1/2)})$. On the complementary $\Gamma_1=\{(x_1,x_2)\in \mathbb{R}^2, x_2<0, x_1^2+x_2^2=1/4\}$ of the boundary the velocity is fixed to zero. 

\begin{figure}[http]
\begin{center}
\includegraphics[scale=0.7]{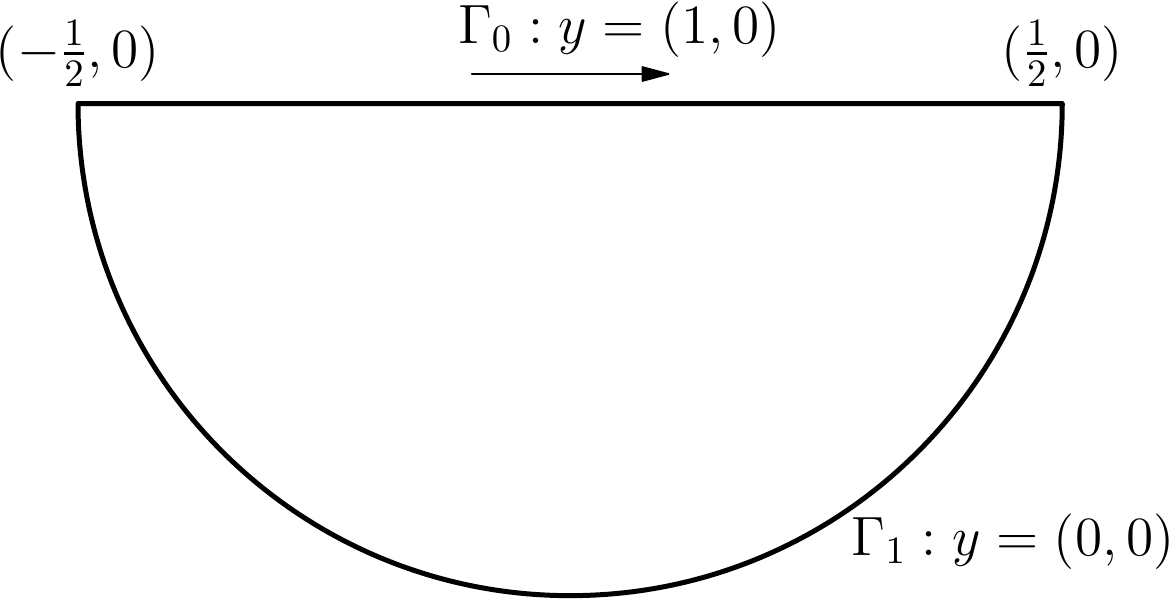}
\caption{Semi-disk geometry.}\label{disk}
\end{center}
\end{figure}

This example has been used in \cite{lemoinemunch} to solve the corresponding steady problem (for which the weak solution is not unique), using again an iterative least-squares strategy. There, the method proved to be robust enough for small values of $\nu$ of the order $10^{-4}$, while standard Newton method failed. Figures \ref{fig:streamlines_disk} depicts the streamlines of steady state solutions corresponding to $\nu^{-1}=500$ and to $\nu^{-1}=i\times 10^{3}$ for $i=1,\cdots, 7$. The figures are in very good agreements with those depicted in \cite{Glowinski_disk}. When the Reynolds number (here equal to $\nu^{-1}$) is small, the final steady state consists of one vortex. As the Reynolds number increases, first a secondary vortex then a tertiary vortex arises, whose size depends on the Reynolds number too. Moreover, according to \cite{Glowinski_disk}, when the Reynolds number exceeds approximatively $6\ 650$, an Hopf bifurcation phenomenon occurs in the sense that the unsteady solution  does not reach a steady state anymore (at time evolves) but shows an oscillatory behavior. We mention that the Navier-Stokes system is solved in  \cite{Glowinski_disk} using an operator-splitting/finite elements based methodology. In particular, concerning the time discretization, the forward Euler scheme is employed.

\begin{figure}[http]
\begin{center}
\includegraphics[scale=0.18]{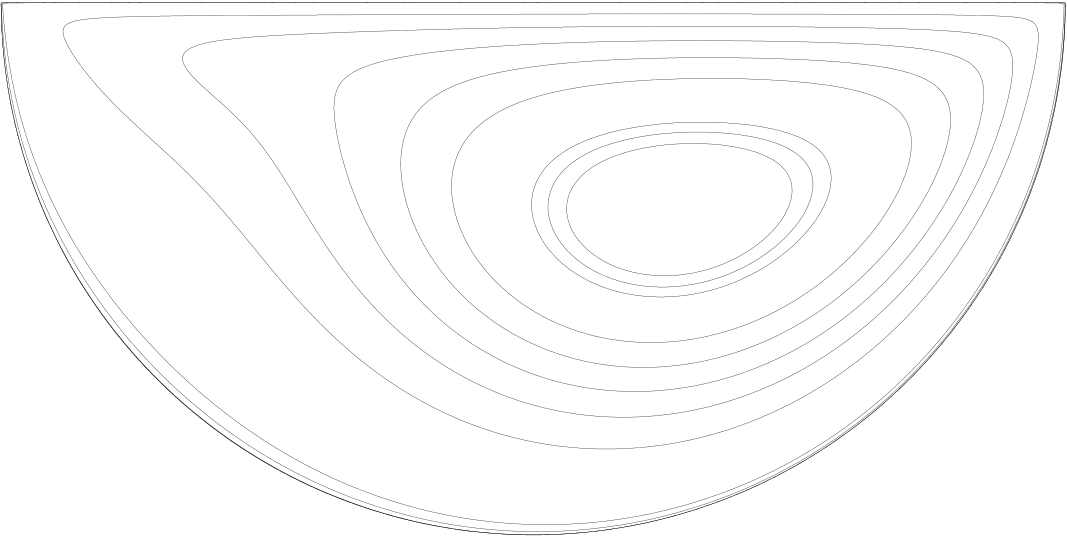} \includegraphics[scale=0.18]{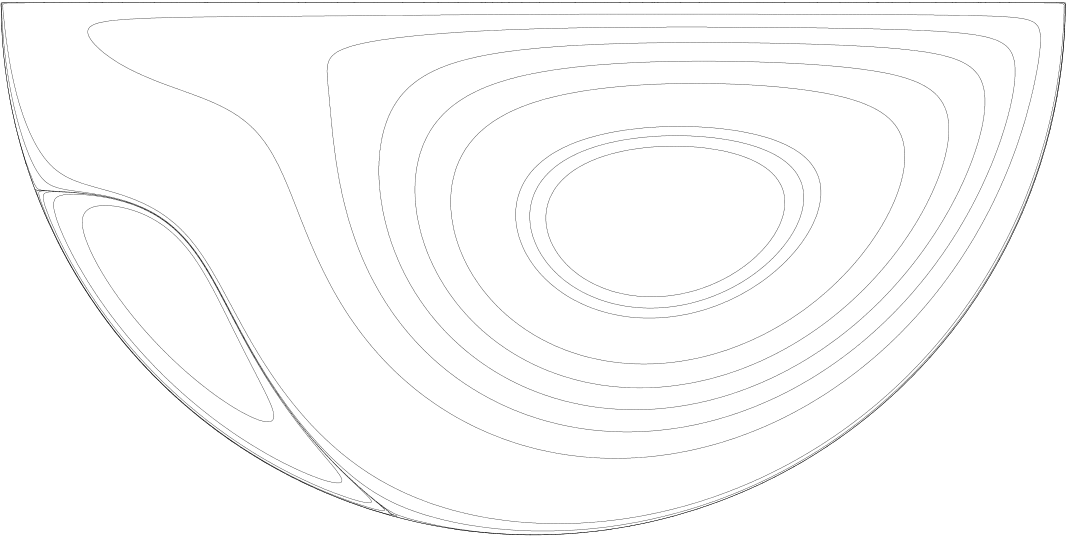}\\
\vspace*{0.3cm}
\includegraphics[scale=0.18]{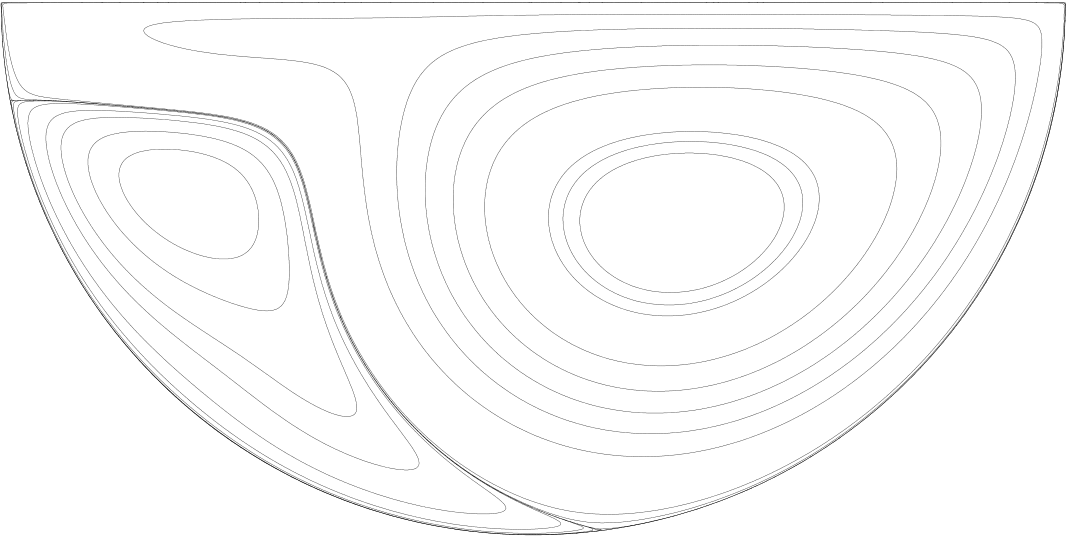} \includegraphics[scale=0.18]{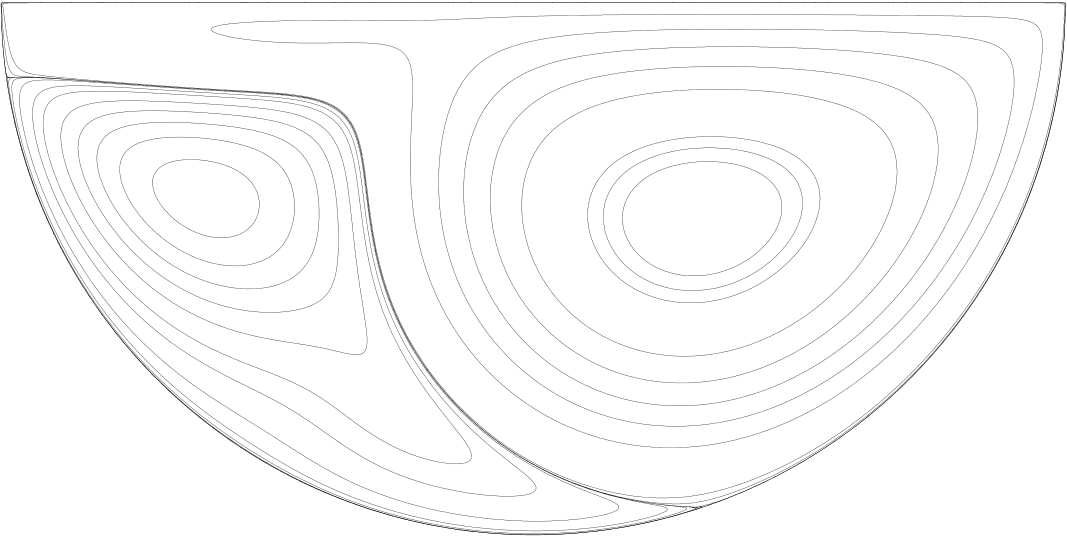}\\
\vspace*{0.3cm}
\includegraphics[scale=0.18]{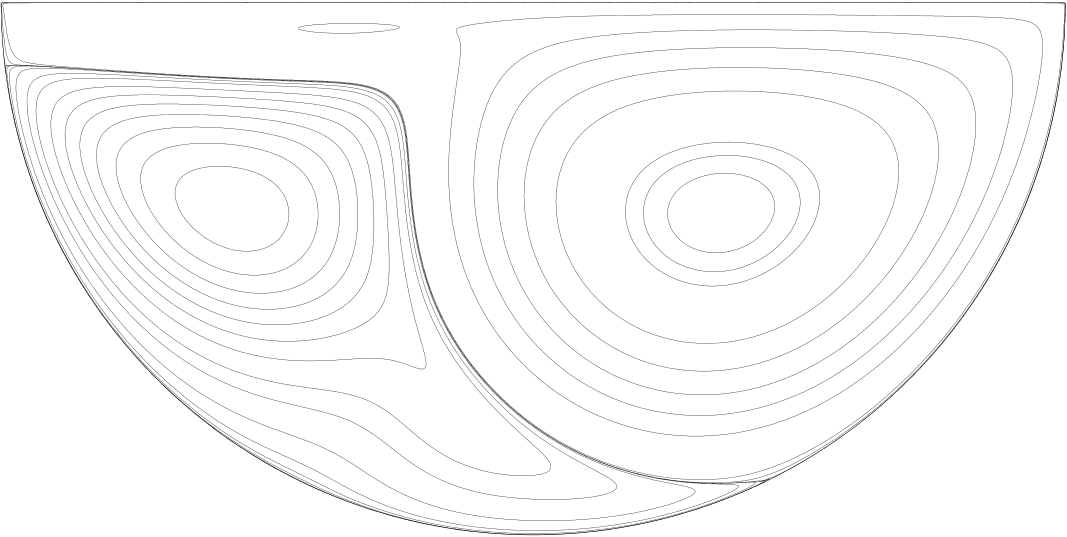} \includegraphics[scale=0.18]{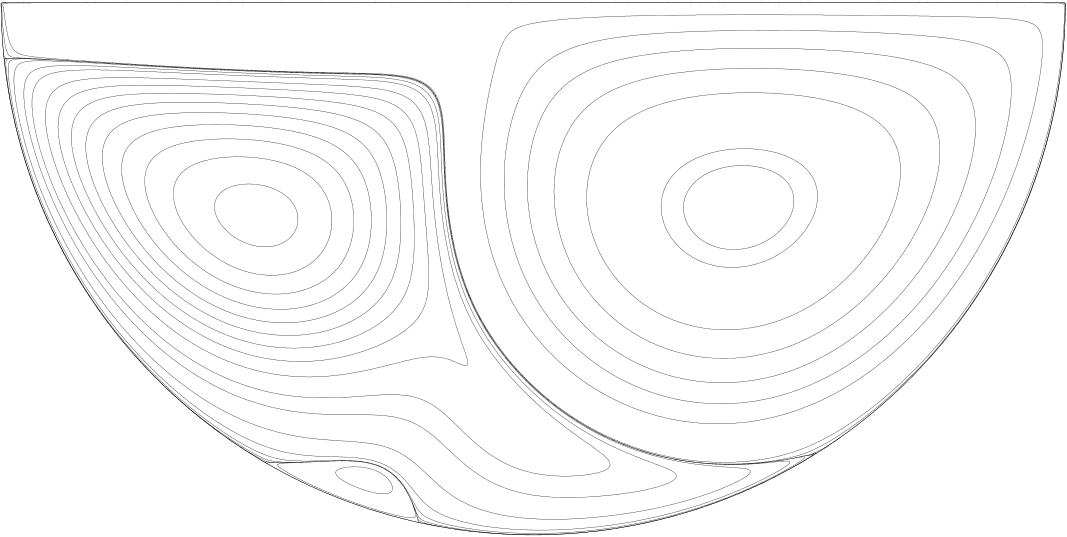}\\
\vspace*{0.3cm}
\includegraphics[scale=0.18]{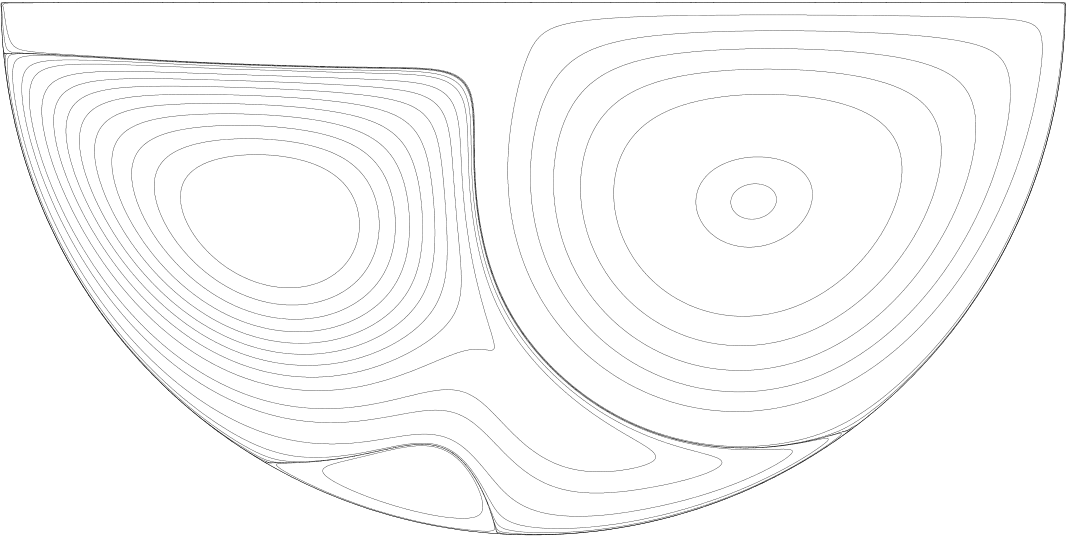} \includegraphics[scale=0.18]{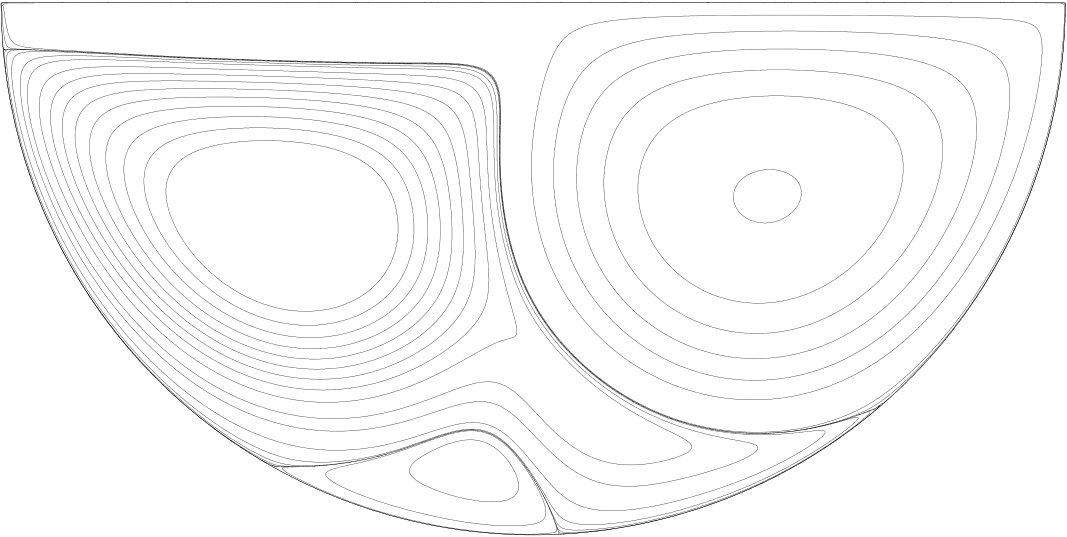}\\
\caption{Streamlines of the steady state solution for $\nu^{-1}=500, 1000,2000,3000,4000,5000,6000$ and $\nu^{-1}=7000$.}\label{fig:streamlines_disk}
\end{center}
\end{figure}

\subsection{Experiments}

We report some numerical results performed with the FreeFem++ package developed at Sorbonne university (see \cite{hecht}).  
Regular triangular meshes are used together with the $\mathbb{P}_2/\mathbb{P}_1$ Taylor-Hood finite element, satisfying the Ladyzenskaia-Babushka-Brezzi condition of stability.  An example of mesh composed of $9\ 063$ triangles is displayed in Figure \ref{diskh}.

\begin{figure}[http]
\begin{center}
\includegraphics[scale=0.6]{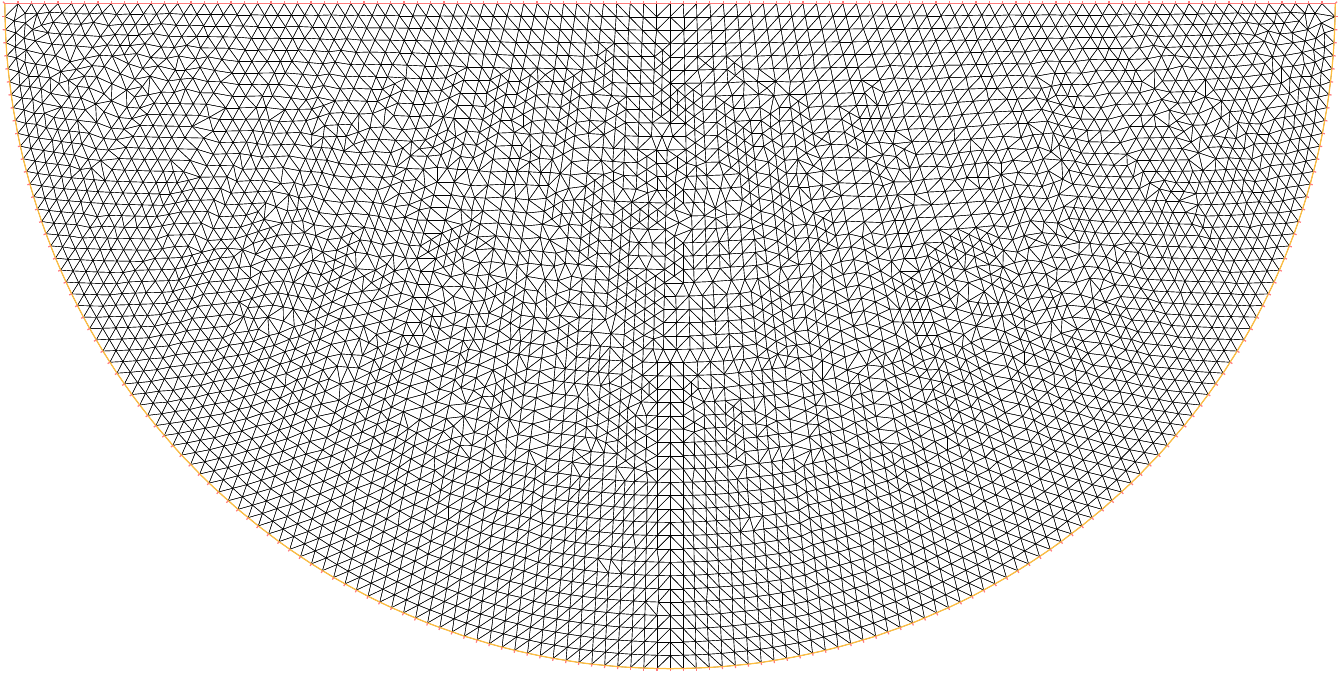}
\caption{A regular triangulation of the semi-disk geometry; $\sharp$triangles = 9\ 064; $\sharp$vertices = 4\ 663; size $h\approx 1.62\times 10^{-2}$.}\label{diskh}
\end{center}
\end{figure}

In order to deeply emphasize the influence of the value of $\nu$ on the behavior of the algorithm described in Section \ref{section-algo}, we consider an initial guess $y_0$ of the sequence $\{y_k\}_{(k>0)}$ independent of $\nu$.  Precisely, we define $y_0$ as the solution of the unsteady Stokes system with viscosity equal to one (i.e. $\overline{\nu}=1$ in \eqref{initialization_sys}) and source term $f\equiv 0$. The initial condition $u_0\in \boldsymbol{H}$ is defined as the solution of $ -\Delta u_0+\nabla p=0, \nabla\cdot u_0=0$ in $\Omega$ and boundary conditions $u_0=g$ on $\Gamma_0$ and $u_0=0$ on $\Gamma_1$. $u_0$ belongs actually to $\boldsymbol{V}$. 

Table \ref{tab:1sur500} and \ref{tab:1sur1000} report numerical values of the sequences $\{\sqrt{2E(y_k)}\}_{(k>0)}$, $\{\lambda_k\}_{(k>0)}$ and $\{\Vert y_k-y_{k-1}\Vert_{L^2(\boldsymbol{V})}/\Vert y_k\Vert_{L^2(\boldsymbol{V})}\}_{(k>0)}$ associated to $\nu=1/500$ and $\nu=1/1000$ respectively and $T=10.$, $f=0$. The tables also display (on the right part) 
the values obtained when the parameter $\lambda_k$ is fixed constant equal to one, corresponding to the standard Newton method. The algorithms are stopped when $\sqrt{2E(y_k)}\leq  10^{-8}$. The triangular mesh of Figure \ref{diskh} for which the discretization parameter $h$ is equal to $1.62\times 10^{-2}$ is employed. The number of degrees of freedom is $23\ 315$. Moreover, the time discretization parameter in $\delta t$ is taken equal to $10^{-2}$.  

For $\nu=1/500$, the optimal $\lambda_k$ are close to one ($\max_k \vert 1-\lambda_k\vert \leq 1/5$), so that the two algorithms produce very similar behaviors. The convergence is observed after 6 iterations. For $\nu=1/1000$, we observe that the optimal $\lambda_k$ are far from one during the first iterates. The optimization of the parameter allows a faster convergence (after 9 iterates) than the usual Newton method. For instance, after 8 iterates, $\sqrt{2E(y_k)}\approx 9.931\times 10^{-11}$ in the first case and  $\sqrt{2E(y_k)}\approx 5.669\times 10^{-5}$ in the second one. In agreement with the theoretical results, we also check that $\lambda_k$ goes to one. Moreover, the decrease of $\sqrt{2E(y_k)}$ is first linear, then (when $\lambda_k$ becomes close to one) quadratic. 

\begin{table}[http]
	\centering
		\begin{tabular}{|c|c|c|c||c|c|}
			\hline
			$\sharp$iterate $k$  			   & $\frac{\Vert y_{k}-y_{k-1}\Vert_{L^2(\boldsymbol{V})}}{\Vert y_{k-1}\Vert_{L^2(\boldsymbol{V})}}$ & $\sqrt{2 E(y_k)}$  & $\lambda_k$  & $\frac{\Vert y_{k}-y_{k-1}\Vert_{L^2(\boldsymbol{V})}}{\Vert y_{k-1}\Vert_{L^2(\boldsymbol{V})}}$ ($\lambda_k=1$) & $\sqrt{2 E(y_k)}$ ($\lambda_k=1$)\tabularnewline
			\hline
			$0$ 	        	& $-$	& $2.690\times10^{-2}$ & $0.8112$ & $-$ & $2.690\times10^{-2}$ \tabularnewline
			$1$              & $4.540\times 10^{-1}$ & $1.077\times10^{-2}$    & $0.7758$ & $5.597\times 10^{-1}$ & $1.254\times10^{-2}$\tabularnewline
			$2$ 	        &  $1.836\times 10^{-1}$ & $3.653\times10^{-3}$ 	  &  $0.8749$ & $2.236\times 10^{-1}$ & $5.174\times10^{-3}$	\tabularnewline
			$3$ 	        & $7.503\times 10^{-2}$ & $7.794\times10^{-4}$ 		& $0.9919$  & $7.830\times 10^{-2}$	& $6.133\times10^{-4}$\tabularnewline
			$4$ 	        & $1.437\times 10^{-2}$ & $2.564\times10^{-5}$ 		 & $1.0006$  & $9.403\times 10^{-3}$ & $1.253\times10^{-5}$	\tabularnewline
			$5$ 	        & $4.296\times 10^{-4}$ & $3.180\times10^{-8}$ 		 & $1.$  & $1.681\times 10^{-4}$ &$4.424\times10^{-9}$	\tabularnewline
			$6$ 	        & $5.630\times 10^{-7}$ & $6.384\times10^{-11}$ 		 & $-$  & $-$ & $-$	\tabularnewline
						\hline
		\end{tabular}
	\caption{$\nu=1/500$; Results for the algorithm (\ref{algo_LS_Y}).}
	\label{tab:1sur500}
	\end{table}
	
\begin{table}[http]
	\centering
		\begin{tabular}{|c|c|c|c||c|c|}
			\hline
			$\sharp$iterate $k$  			   & $\frac{\Vert y_{k}-y_{k-1}\Vert_{L^2(\boldsymbol{V})}}{\Vert y_{k-1}\Vert_{L^2(\boldsymbol{V})}}$ & $\sqrt{2 E(y_k)}$  & $\lambda_k$  & $\frac{\Vert y_{k}-y_{k-1}\Vert_{L^2(\boldsymbol{V})}}{\Vert y_{k-1}\Vert_{L^2(\boldsymbol{V})}}$ ($\lambda_k=1$) &  $\sqrt{2 E(y_k)}$ ($\lambda_k=1$)\tabularnewline
			\hline
			$0$ 	        	& $-$	                      & $2.690\times10^{-2}$               & $0.6344$ &  $-$ & $2.690\times10^{-2}$ \tabularnewline
			$1$          & $5.138\times 10^{-1}$ & $1.493\times10^{-2}$              & $0.5803$ &  $8.101\times 10^{-1}$ &  $2.234\times10^{-2}$\tabularnewline
			$2$ 	        & $2.534\times 10^{-1}$ & $7.608\times10^{-3}$ 	      &  $0.3496$ & $4.451\times 10^{-1}$ & $2.918\times10^{-2}$	\tabularnewline
			$3$ 	        & $1.345\times 10^{-1}$ & $5.477\times10^{-3}$ 		& $0.4025$  & $5.717\times 10^{-1}$ & $5.684\times10^{-2}$\tabularnewline
			$4$ 	        & $1.105\times 10^{-1}$ & $3.814\times10^{-3}$ 		 & $0.5614$  & $3.683\times 10^{-1}$ & $2.625\times10^{-2}$	\tabularnewline
			$5$ 	        & $8.951\times 10^{-2}$ & $2.295\times10^{-3}$ 		 & $0.8680$  & $2.864\times 10^{-1}$ & $1.828\times10^{-2}$	\tabularnewline
			$6$ 	        & $6.394\times 10^{-2}$ & $8.679\times10^{-4}$ 		 & $1.0366$  & $1.423\times 10^{-1}$ & $4.307\times10^{-3}$	\tabularnewline
			$7$ 	        & $1.788\times 10^{-2}$ & $4.153\times10^{-5}$ 		 & $0.9994$  & $6.059\times 10^{-2}$ & $9.600\times10^{-4}$	\tabularnewline
			$8$ 	        & $7.982\times 10^{-4}$ & $9.931\times10^{-8}$ 		 & $0.9999$  & $1.484\times 10^{-2}$ & $5.669\times10^{-5}$	\tabularnewline
			$9$ 	        & $2.256\times 10^{-6}$ & $4.000\times10^{-11}$ 		 & $-$  & $9.741\times 10^{-4}$ & $3.020\times10^{-7}$	\tabularnewline
			$10$ 	        & $-$ & $-$ 		 & $-$  & $4.267\times 10^{-6}$ & $3.846\times10^{-11}$	\tabularnewline
						\hline
		\end{tabular}
	\caption{$\nu=1/1000$; Results for the algorithm (\ref{algo_LS_Y}).}
	\label{tab:1sur1000}
	\end{table}
	
At time $T=10$, the unsteady state solution is close to the solution of the steady Navier-Stokes equation: the last element $y_{k=9}$ of the converged sequence satisfies $\Vert y_{k=9}(T,\cdot)-y_{k=9}(T-\delta t,\cdot)\Vert_{L^2(\Omega)}/\Vert y_{k=9}(T,\cdot)\Vert_{L^2(\Omega)}\approx 1.19\times 10^{-5}$. Figures \ref{fig:streamlines_disk_time} display the streamlines of the unsteady state solution corresponding to $\nu=1/1000$ at time $0,1,2,3,4,5,6$ and $7$ seconds to be compared with the streamlines of the steady solution depicted in Figure \ref{fig:streamlines_disk}.
	
\begin{figure}[http]
\begin{center}
\includegraphics[scale=0.5]{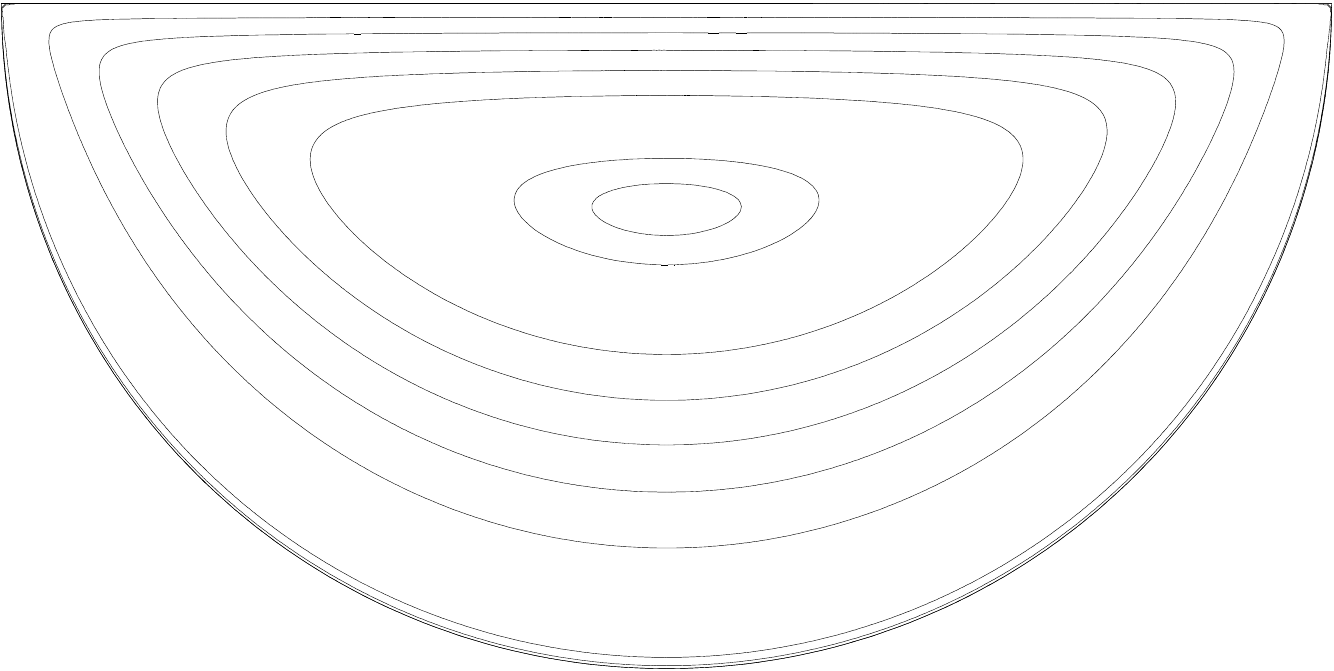} \includegraphics[scale=0.5]{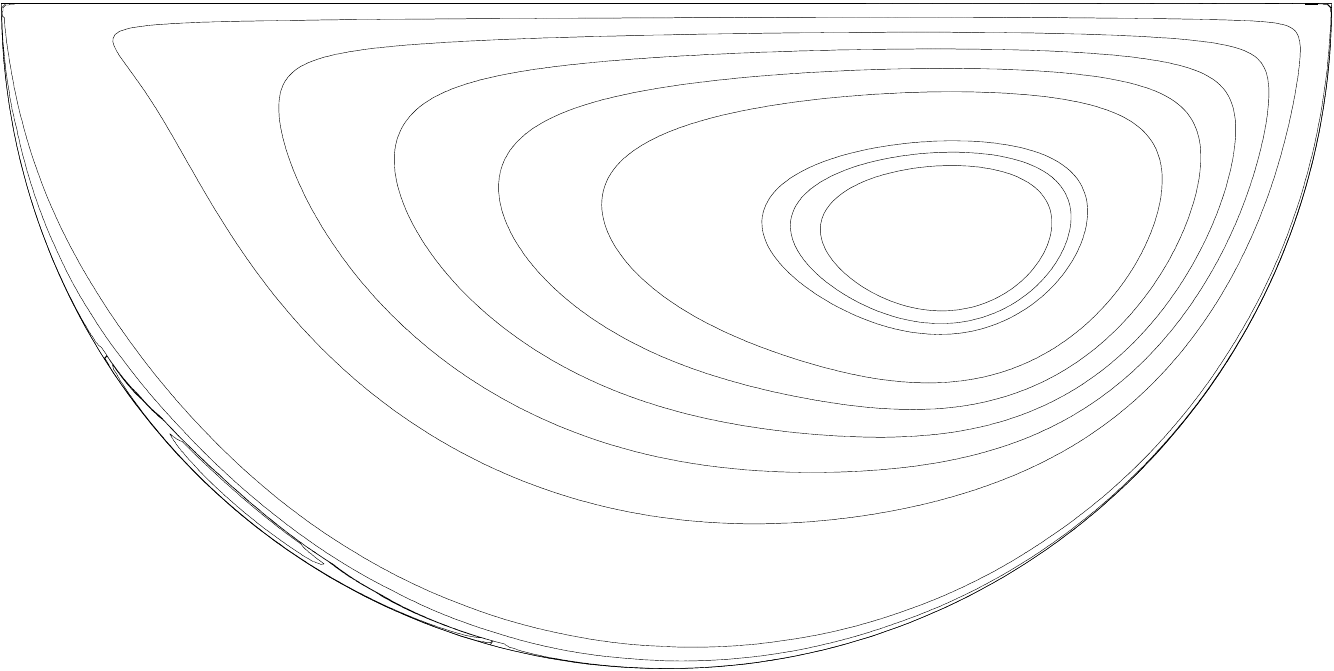}\\
\vspace*{0.3cm}
\includegraphics[scale=0.5]{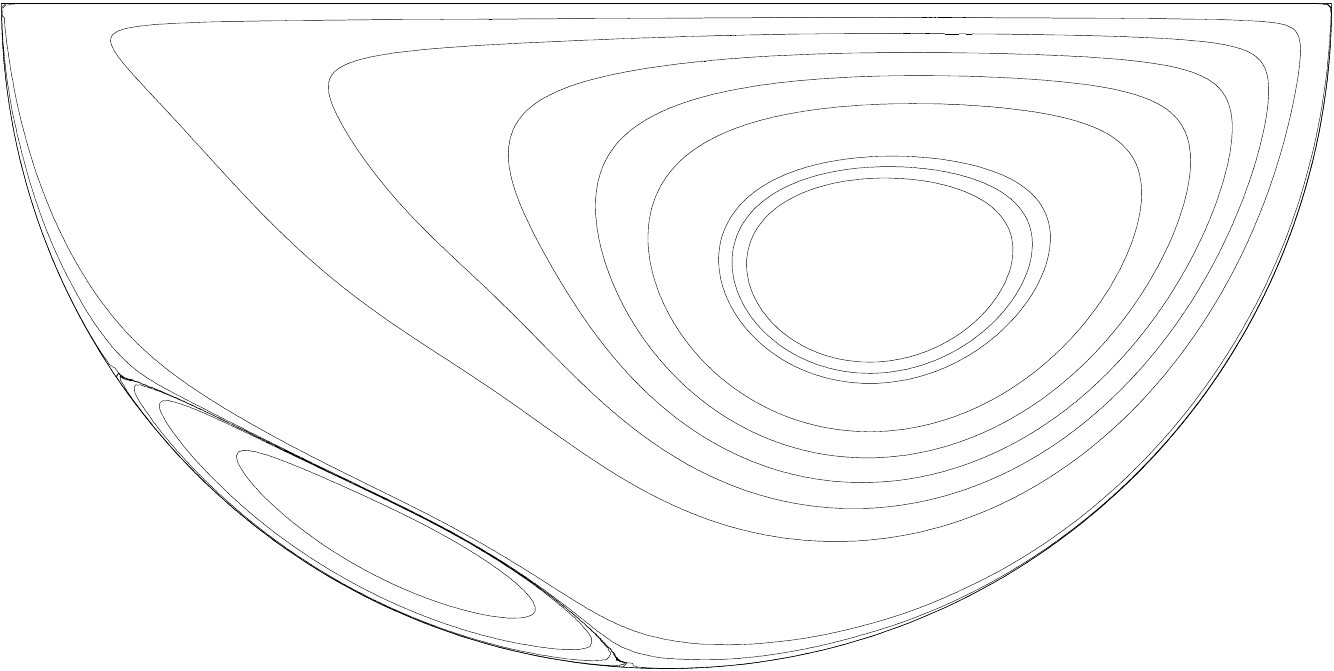} \includegraphics[scale=0.5]{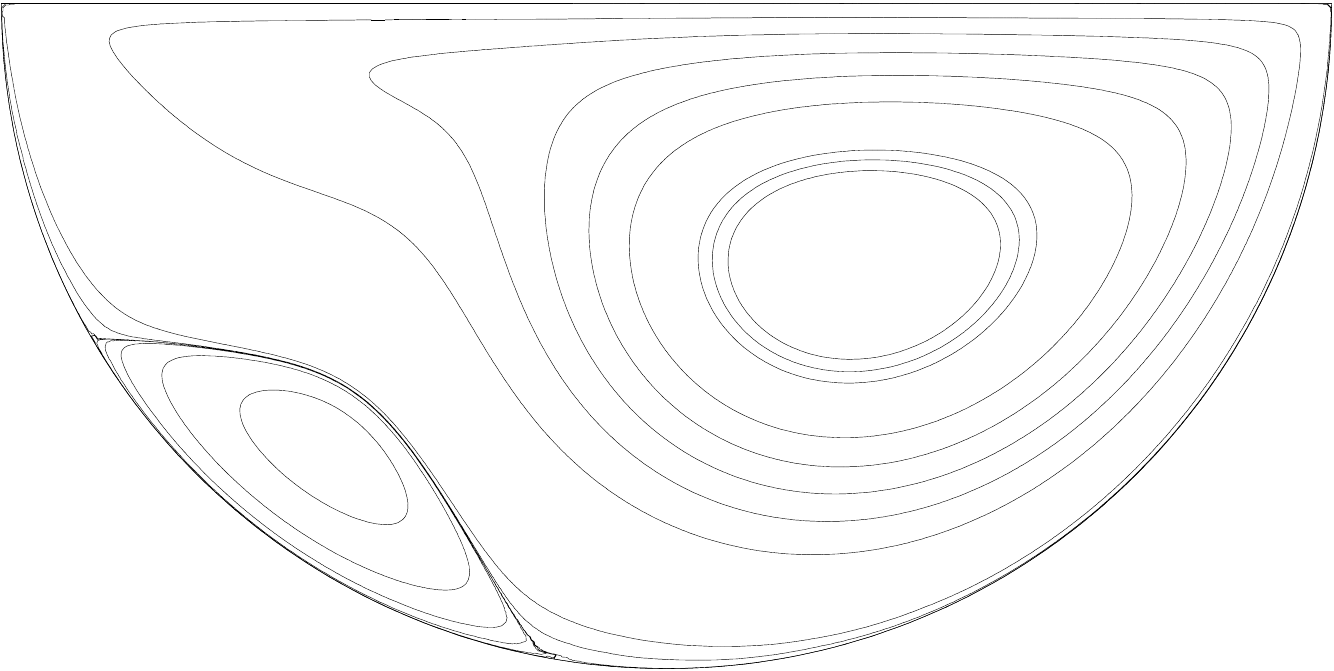}\\
\vspace*{0.3cm}
\includegraphics[scale=0.5]{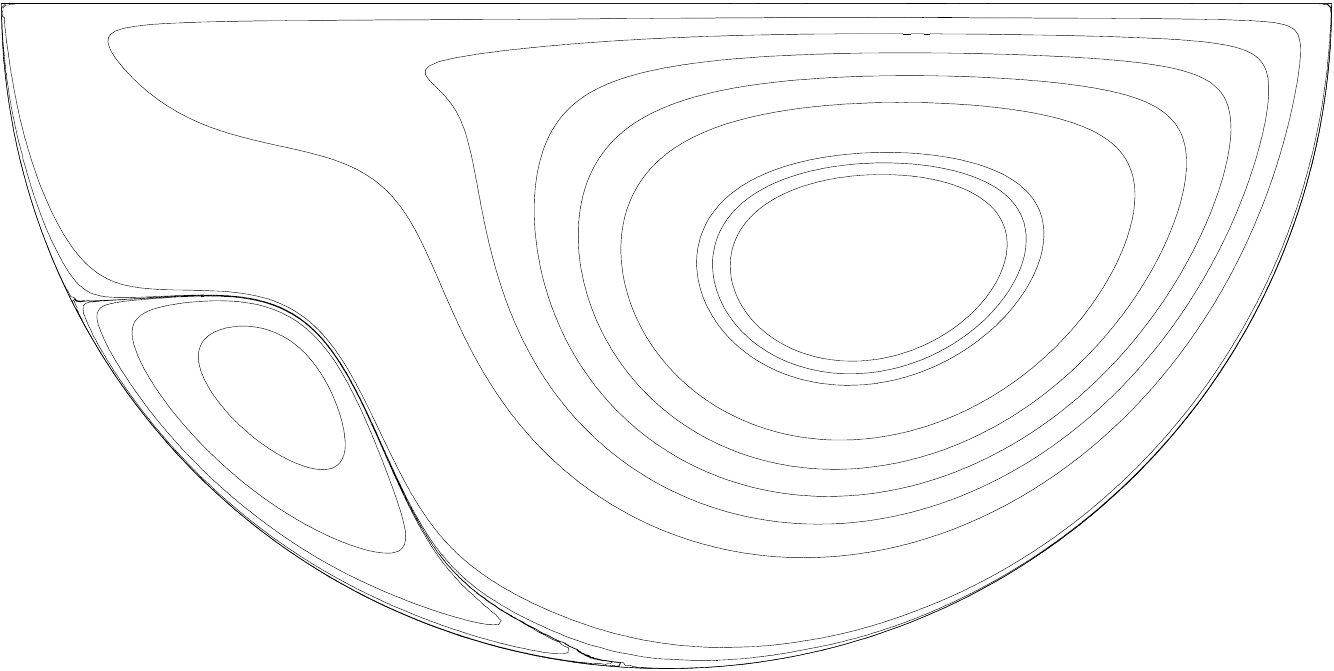} \includegraphics[scale=0.5]{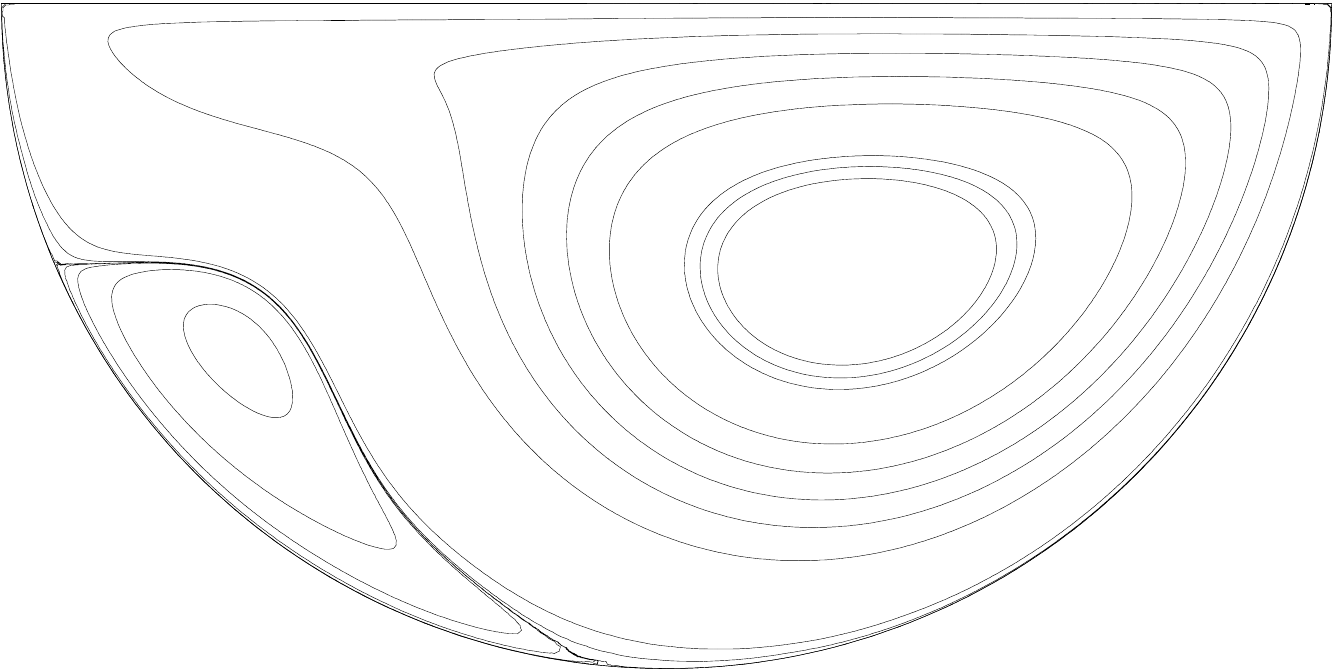}\\
\vspace*{0.3cm}
\includegraphics[scale=0.5]{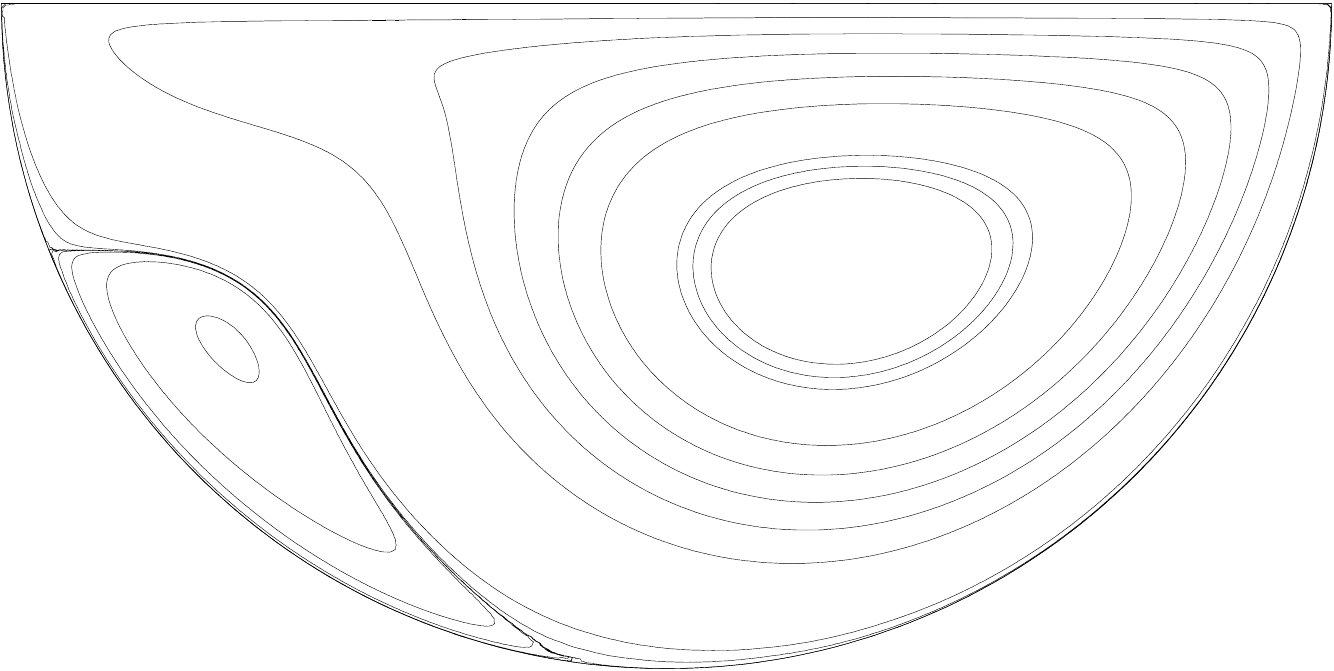} \includegraphics[scale=0.5]{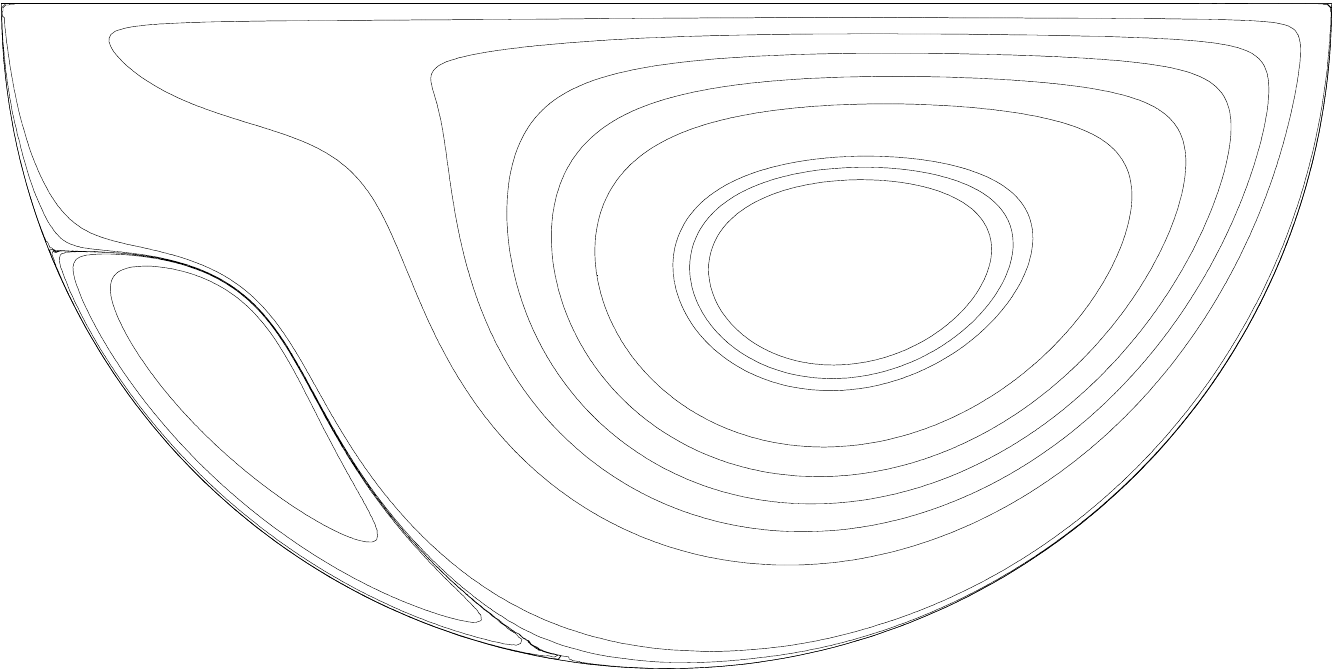}\\
\caption{Streamlines of the unsteady state solution for $\nu^{-1}=1000$ at time $t=i$, $i=0,\cdots,7$s.}\label{fig:streamlines_disk_time}
\end{center}
\end{figure}	
	
For lower values of the viscosity constant, precisely $\nu\leq 1/1100$ approximatively, the initial guess $y_0$ is too far from the zero of $E$ so that we observe the divergence after few iterates of the Newton method (case $\lambda_k=1$ for all $k>0$) but still the convergence of the algorithm described in section \ref{section-algo} (see Table \ref{tab:1sur1100}). The divergence in the case $\lambda_k=1$ is still observed with a refined discretization both in time and space, corresponding to $\delta t=0.5\times 10^{-3}$ and $h\approx 0.0110$ ($19\ 810$ triangles and $10\, 101$ vertices). The divergence of the Newton method suggests that the functional $E$ is not convex far away from the zero of $E$ and that the derivative $E^{\prime}(y)$ takes small values there.  We recall that, in view of the theoretical part, the functional $E$ is coercive and its derivative vanishes only at the zero of $E$. However, the equality $E^{\prime}(y_k)\cdot Y_{1,k}=2E(y_k)$
shows that $E^{\prime}(y_k)$ can be ``small" for ``large" $Y_{1,k}$, i.e. ``large" $y_k$. On the other hand, we observe the convergence (after $3$ iterates) of the Newton method, when initialized with the approximation corresponding to $\nu=1/1000$.
\begin{table}[http]
	\centering
		\begin{tabular}{|c|c|c|c||c|c|}
			\hline
			$\sharp$iterate $k$  			   & $\frac{\Vert y_{k}-y_{k-1}\Vert_{L^2(\boldsymbol{V})}}{\Vert y_{k-1}\Vert_{L^2(\boldsymbol{V})}}$ & $\sqrt{2 E(y_k)}$  & $\lambda_k$  & $\frac{\Vert y_{k}-y_{k-1}\Vert_{L^2(\boldsymbol{V})}}{\Vert y_{k-1}\Vert_{L^2(\boldsymbol{V})}}$ ($\lambda_k=1$) &  $\sqrt{2 E(y_k)}$ ($\lambda_k=1$)\tabularnewline
			\hline
			$0$ 	        	& $-$	                      & $2.691\times10^{-2}$               & $0.6145$ &  $-$ & $2.691\times10^{-2}$ \tabularnewline
			$1$          & $5.241\times 10^{-1}$ & $1.530\times10^{-2}$              & $0.5666$ &  $8.528\times 10^{-1}$ &  $2.385\times10^{-2}$\tabularnewline
			$2$ 	        & $2.644\times 10^{-1}$ & $8.025\times10^{-3}$ 	      &  $0.3233$ & $4.893\times 10^{-1}$ & $3.555\times10^{-2}$	\tabularnewline
			$3$ 	        & $1.380\times 10^{-1}$ & $5.982\times10^{-3}$ 		& $0.3302$  & $7.171\times 10^{-1}$ & $8.706\times10^{-2}$\tabularnewline
			$4$ 	        & $1.115\times 10^{-1}$ & $4.543\times10^{-3}$ 		 & $0.4204$  & $4.849\times 10^{-1}$ & $3.531\times10^{-2}$	\tabularnewline
			$5$ 	        & $9.429\times 10^{-2}$ & $3.221\times10^{-3}$ 		 & $0.5875$  & $1.125\times 10^{0}$ & $3.905\times10^{-1}$	\tabularnewline
			$6$ 	        & $7.664\times 10^{-2}$ & $1.944\times10^{-3}$ 		 & $0.9720$  & $-$ & $1.337\times10^{4}$	\tabularnewline
			$7$ 	        & $5.688\times 10^{-2}$ & $5.937\times10^{-4}$ 		 & $1.022$  & $-$ & $8.091\times 10^{27}$	\tabularnewline
			$8$ 	        & $1.009\times 10^{-2}$ & $1.081\times10^{-5}$ 		 & $0.9998$  & $-$ & $-$	\tabularnewline
			$9$ 	        & $2.830\times 10^{-4}$ & $1.332\times10^{-8}$ 		 & $1.$  & $-$ & $-$	\tabularnewline
			$10$ 	        & $2.893\times 10^{-7}$ & $4.611\times 10^{-11}$ 		 & $-$  & $-$ & $-$	\tabularnewline
						\hline
		\end{tabular}
	\caption{$\nu=1/1100$; Results for the algorithm (\ref{algo_LS_Y}).}
	\label{tab:1sur1100}
	\end{table}

Table \ref{tab:1sur2000} gives numerical values associated to $\nu=1/2000$ and $T=10$. We used a refined discretization: precisely, $\delta t =1/150$ and a mesh composed of  $15\ 190$ triangles, $7\, 765$ vertices ($h\approx 1.343\times 10^{-2}$). The convergence of the algorithm of section \ref{section-algo} is observed after $19$ iterates. In agreement with the theoretical results, the sequence $\{\lambda_k\}_{(k>0)}$ goes to one. Moreover, the variation of the error functional $E(y_k)$ is first quite slow, then increases to be very fast after $15$ iterates. This behavior is illustrated on Figure \ref{nu2000}.
For lower values of $\nu$, we still observed the convergence (provided a fine enough discretization so as to capture the third vortex) with an increasing number of iterates. For instance, $28$ iterates are necessary to achieve $\sqrt{2E(y_k)}\leq 10^{-8}$ for $\nu=1/3000$ and $49$ iterates for $\nu=1/4000$. This illustrates the global convergence of the algorithm. In view of the estimate \eqref{estimateC1}, a quadratic rate is achieved as soon as $\sqrt{E(y_k)}\leq C_1^{-1}$ with here (since $f\equiv 0$)
$$
C_1= \frac{c}{\nu\sqrt{\nu}} \exp\biggl(\frac{c}{\nu^2}\|u_0\|^2_{\boldsymbol{H}}+ \frac{c}{\nu^3} E(y_0) \biggr)
$$ so that $C_1^{-1}\to 0$ as $\nu\to 0$. Consequently, for small $\nu$, it is very likely more efficient (in term of computational ressources) to couple the algorithm with a continuation method w.r.t. $\nu$, in order to reach faster the quadratic regime. This aspect is not addressed in this work and we refer to \cite{lemoinemunch} where this is illustrated in the steady case. 
		
\begin{table}[http]
	\centering
		\begin{tabular}{|c|c|c|c|}
			\hline
			$\sharp$iterate $k$  			   & $\frac{\Vert y_{k}-y_{k-1}\Vert_{L^2(\boldsymbol{V})}}{\Vert y_{k-1}\Vert_{L^2(\boldsymbol{V})}}$ & $\sqrt{2 E(y_k)}$  & $\lambda_k$  \tabularnewline
			\hline
			$0$ 	        	& $-$	                      & $2.691\times10^{-2}$               & $0.5215$   \tabularnewline
			$1$          & $6.003\times 10^{-1}$ & $1.666\times10^{-2}$              & $0.4919$   \tabularnewline
			$2$ 	        & $3.292\times 10^{-1}$ & $9.800\times10^{-3}$ 	      &  $0.1566$  \tabularnewline
			$3$ 	        & $1.375\times 10^{-1}$ & $8.753\times10^{-3}$ 		& $0.1467$   \tabularnewline
			$4$ 	        & $1.346\times 10^{-1}$ & $7.851\times10^{-3}$ 		 & $0.0337$   	\tabularnewline
			$5$ 	        & $5.851\times 10^{-2}$ & $7.688\times10^{-3}$ 		 & $0.0591$   	\tabularnewline
			$6$ 	        & $7.006\times 10^{-2}$ & $7.417\times10^{-3}$ 		 & $0.1196$   	\tabularnewline
			$7$ 	        & $9.691\times 10^{-2}$ & $6.864\times10^{-3}$ 		 & $0.0977$   	\tabularnewline
			$8$ 	        & $8.093\times 10^{-2}$ & $6.465\times10^{-3}$ 		 & $0.0759$   	\tabularnewline
			$9$ 	        & $6.400\times 10^{-2}$ & $6.182\times10^{-3}$ 		 & $0.0968$   \tabularnewline
			$10$ 	        & $6.723\times 10^{-2}$ & $5.805\times10^{-3}$ 		 & $0.1184$   	\tabularnewline
			$11$ 	        & $6.919\times 10^{-2}$ & $5.371\times10^{-3}$ 		 & $0.1630$   	\tabularnewline
			$12$ 	        & $7.414\times 10^{-2}$ & $4.825\times10^{-3}$ 		 & $0.2479$   	\tabularnewline
			$13$ 	        & $8.228\times 10^{-2}$ & $4.083\times10^{-3}$ 		 & $0.3517$   	\tabularnewline
			$14$ 	        & $8.146\times 10^{-2}$ & $3.164\times10^{-3}$ 		 & $0.4746$   	\tabularnewline
			$15$ 	        & $7.349\times 10^{-2}$ & $2.207\times10^{-3}$ 		 & $0.7294$   	\tabularnewline
			$16$ 	        & $6.683\times 10^{-2}$ & $1.174\times10^{-3}$ 		 & $1.0674$   	\tabularnewline
			$17$ 	        & $3.846\times 10^{-2}$ & $2.191\times10^{-4}$ 		 & $1.0039$  	\tabularnewline
			$18$ 	        & $5.850\times 10^{-3}$ & $4.674\times10^{-5}$ 		 & $0.9998$   	\tabularnewline
			$19$ 	        & $1.573\times 10^{-4}$ & $5.843\times10^{-9}$ 		 & $-$  	\tabularnewline
									\hline
		\end{tabular}
	\caption{$\nu=1/2000$; Results for the algorithm (\ref{algo_LS_Y}).}
	\label{tab:1sur2000}
\end{table}

\begin{figure}[http]
\begin{center}
\includegraphics[scale=0.55]{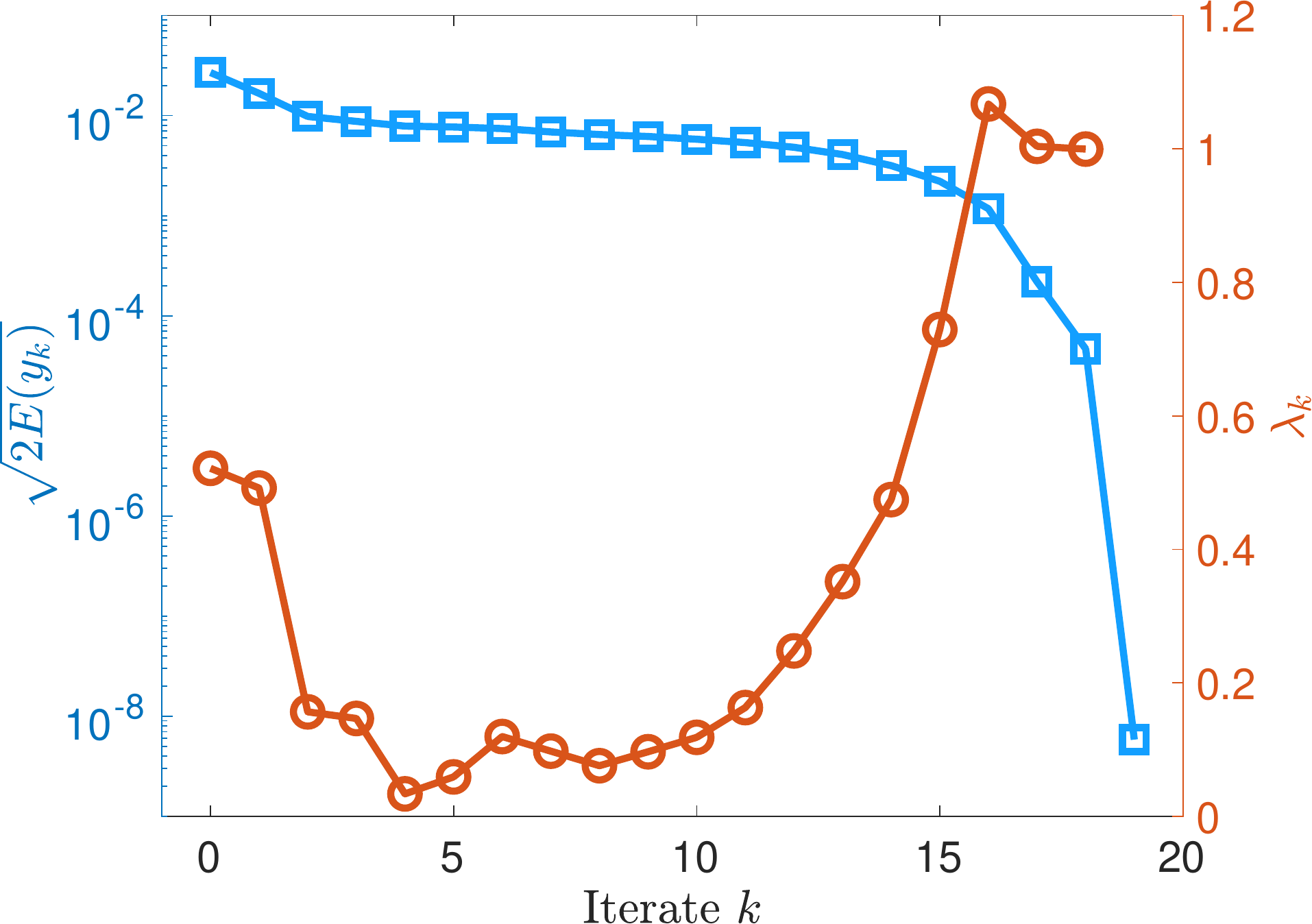}
\caption{Evolution of $\sqrt{2E(y_k)}$ and $\lambda_k$ w.r.t. $k$; $\nu=1/2000$ (see Table \ref{tab:1sur2000}).}\label{nu2000}
\end{center}
\end{figure}


\section{Conclusions and perspectives} \label{section-conclusion}

In order to get an approximation of the solutions of the unsteady Navier-Stokes equation, we have introduced and analyzed 
a least-squares method based on a minimization of an appropriate norm of the equation. In the two dimensional case, considering the weak solution associated to an initial condition in $\boldsymbol{H}\subset L^2(\Omega)^2$ and a source $f\in L^2(0,T,\boldsymbol{V^\prime})$, the least-square functional is based on the $L^2(0,T,\boldsymbol{V^{\prime}})$-norm of the state equation. In the three dimensional case, assuming $T$ small enough, the initial data in $\boldsymbol{V}\subset H^1(\Omega)^3$ and $f\in L^2(0,T;L^2(\Omega)^3)$, the functional is based on the $L^2(0,T;(L^2(\Omega))^3)$-norm of the equation. This leads to a regular solution. 
In both cases, using a particular descent direction, we construct explicitly a minimizing sequence for the functional converging strongly, for any initial guess, to the solution of the Navier-Stokes. Moreover, except for the first iterates, the convergence is quadratic. Actually, it turns out that this minimizing sequence coincides with the sequence obtained from the damped Newton method when used to solves the weak formulation associated to the Navier-Stokes equation. The numerical experiments performed in the two dimensional case illustrate the global convergence of the method and its robustness including for small values of the viscosity constant.

Moreover, the strong convergence of the whole minimizing sequence has been proved using a coercivity type property of the functional, consequence of the uniqueness of the solution. Actually, it is interesting to remark that this property is not necessary, since such minimizing sequence (which is completely determined by the initial term) is a Cauchy sequence. The approach can therefore be adapted to partial differential equations with multiple solutions or to optimization problem involving various solutions. We mention notably the approximation of null controls for (controllable) nonlinear partial differential equation: the source term $f$, possibly distributed over a non-empty set of $\Omega$ is now, together with the corresponding state, an argument of the least-squares functional. The controllability constraint is incorporated in the set $\mathcal{A}$ of admissible pair $(y,f)$. In spite of the non uniqueness of the minimizers, the approach introduced in this work still produces a strongly convergent approximation. We refer to \cite{jerome_irene_arnaud} for the analysis of this approach for surlinear (null controllable) heat equation.

\bibliographystyle{amsplain}

\providecommand{\MRhref}[2]{%
  \href{http://www.ams.org/mathscinet-getitem?mr=#1}{#2}
}
\providecommand{\href}[2]{#2}


\end{document}